\definecolor{Blue}{rgb}{0,0,1}
\definecolor{Red}{rgb}{1,0,0}
\definecolor{Green}{rgb}{0,1,0}
\newcommand{\KTC}[1]{{\textcolor{Blue}{KTC: #1}}}
\newcommand{\bds}[1]{{\boldsymbol{#1}}}
\newcommand{\defeq}{:=}
\newcommand{\domain}{\Omega}
\newcommand{\domaini}{\domain_i}
\newcommand{\domainArg}[1]{\domain_{#1}}
\newcommand{\boundary}{\Gamma}
\newcommand{\boundaryi}{\boundary_i}
\newcommand{\nsubdomains}{{n_\domain}}
\newcommand{\nconstraints}{n_{\bar A}}
\newcommand{\nconstraintsROM}{n_{A}}
\newcommand{\residual}{\bds{r}}
\newcommand{\residualArg}[1]{\residual_{#1}}
\newcommand{\residuali}{\residualArg{i}}
\newcommand{\energyCriterion}{\upsilon}
\newcommand{\viscosity}{\nu}
\newcommand{\stateSnapshotMatrix}{\bds{X}}
\newcommand{\columnPivot}{\bds{P}}
\newcommand{\Qmat}{\bds{Q}}
\newcommand{\Rmat}{\bds{R}}
\newcommand{\Qmati}{\Qmat_i}
\newcommand{\Rmati}{\Rmat_i}
\newcommand{\state}{\bds{x}}
\newcommand{\stateArg}[1]{\state_{#1}}
\newcommand{\statei}{\stateArg{i}}
\newcommand{\stateDummy}{\bds{w}}
\newcommand{\stateDummyTwo}{\bds{y}}
\newcommand{\stateSol}{\state}
\newcommand{\stateApprox}{\tilde\state}
\newcommand{\stateApproxArg}[1]{\stateApprox_{#1}}
\newcommand{\stateApproxi}{\stateApproxArg{i}}
\newcommand{\stateApproxInterior}{\tilde\state^\domain}
\newcommand{\stateApproxInteriorArg}[1]{\stateApproxInterior_{#1}}
\newcommand{\stateApproxInteriori}{\stateApproxInteriorArg{i}}
\newcommand{\stateApproxInteriorik}{\tilde\state^{\domain(k)}_i}
\newcommand{\stateApproxBoundary}{\tilde\state^\boundary}
\newcommand{\stateApproxBoundaryArg}[1]{\stateApproxBoundary_{#1}}
\newcommand{\stateApproxBoundaryi}{\stateApproxBoundaryArg{i}}
\newcommand{\stateApproxBoundaryik}{\tilde\state^{\boundary(k)}_i}
\newcommand{\stateInterior}{\bds{x}^{\Omega}}
\newcommand{\stateInteriorArg}[1]{\stateInterior_{#1}}
\newcommand{\stateInteriori}{\stateInteriorArg{i}}
\newcommand{\stateOptimal}{\bds{x}^{\star,2}}
\newcommand{\stateOptimalInf}{\bds{x}^{\star,\infty}}
\newcommand{\stateOptimalInfInteriori}{\bds{x}^{\Omega,\star,\infty}_i}
\newcommand{\stateDummyInteriori}{\bds{w}^{\Omega}_i}
\newcommand{\stateDummyInteriorTwoi}{\bds{y}^{\Omega}_i}
\newcommand{\stateBoundary}{\bds{x}^{\Gamma}}
\newcommand{\stateBoundaryArg}[1]{\stateBoundary_{#1}}
\newcommand{\stateBoundaryi}{\stateBoundaryArg{i}}
\newcommand{\stateDummyBoundaryi}{\bds{w}^{\Gamma}_i}
\newcommand{\stateDummyBoundaryTwoi}{\bds{y}^{\Gamma}_i}
\newcommand{\projection}{\bds{P}}
\newcommand{\projectionRes}{\projection^\residual}
\newcommand{\projectionResArg}[1]{\projectionRes_{#1}}
\newcommand{\projectionResi}{\projectionResArg{i}}
\newcommand{\projectionStateInterior}{\projection^\Omega}
\newcommand{\projectionStateInteriorArg}[1]{\projectionStateInterior_{#1}}
\newcommand{\projectionStateInteriori}{\projectionStateInteriorArg{i}}
\newcommand{\projectionStateBoundary}{\bds{P}^\Gamma}
\newcommand{\projectionStateBoundaryArg}[1]{\projectionStateBoundary_{#1}}
\newcommand{\projectionStateBoundaryi}{\projectionStateBoundaryArg{i}}
\newcommand{\rbGlobal}{\bds{\Phi}}
\newcommand{\rbArg}[1]{\rbGlobal_{#1}}
\newcommand{\rbi}{\rbArg{i}}
\newcommand{\rbBoundary}{\bds{\Phi}^\boundary}
\newcommand{\rbBoundaryArg}[1]{\rbBoundary_{#1}}
\newcommand{\rbBoundaryi}{\rbBoundaryArg{i}}
\newcommand{\rbBoundaryiTmp}{\bar{\bds{\Phi}}^\boundary_i}
\newcommand{\rbInterior}{\bds{\Phi}^\domain}
\newcommand{\rbInteriorArg}[1]{\rbInterior_{#1}}
\newcommand{\rbInteriori}{\rbInteriorArg{i}}
\newcommand{\redState}{\hat {\bds x}}
\newcommand{\redStateArg}[1]{\redState_{#1}}
\newcommand{\redStatei}{\redStateArg{i}}
\newcommand{\redStateArgk}[1]{\redState^{(k)}_{#1}}
\newcommand{\redStateArgkp}[1]{\redState^{(k+1)}_{#1}}
\newcommand{\redStateDummy}{\hat {\stateDummy}}
\newcommand{\redStateDummyArg}[1]{\redStateDummy_{#1}}
\newcommand{\redStateDummyi}{\redStateDummyArg{i}}
\newcommand{\redStateBoundary}{\redState^\boundary}
\newcommand{\redStateBoundaryArg}[1]{\redStateBoundary_{#1}}
\newcommand{\redStateBoundaryArgk}[1]{\redState^{\boundary(k)}_{#1}}
\newcommand{\redStateBoundaryArgkp}[1]{\redState^{\boundary(k+1)}_{#1}}
\newcommand{\redStateBoundaryi}{\redStateBoundaryArg{i}}
\newcommand{\redStateBoundaryik}{\redStateBoundaryArgk{i}}
\newcommand{\redStateBoundaryikp}{\redStateBoundaryArgkp{i}}
\newcommand{\redStateDummyBoundary}{\redStateDummy^\boundary}
\newcommand{\redStateDummyBoundaryArg}[1]{\redStateDummyBoundary_{#1}}
\newcommand{\redStateDummyBoundaryi}{\redStateDummyBoundaryArg{i}}
\newcommand{\redStateInterior}{\redState^\domain}
\newcommand{\redStateInteriorArg}[1]{\redStateInterior_{#1}}
\newcommand{\redStateInteriori}{\redStateInteriorArg{i}}
\newcommand{\redStateInteriorArgk}[1]{\redState^{\domain(k)}_{#1}}
\newcommand{\redStateInteriorArgkp}[1]{\redState^{\domain(k+1)}_{#1}}
\newcommand{\redStateInteriorik}{\redStateInteriorArgk{i}}
\newcommand{\redStateInteriorikp}{\redStateInteriorArgkp{i}}
\newcommand{\redStateDummyInterior}{\redStateDummy^\domain}
\newcommand{\redStateDummyInteriorArg}[1]{\redStateDummyInterior_{#1}}
\newcommand{\redStateDummyInteriori}{\redStateDummyInteriorArg{i}}
\newcommand{\ndof}{n}
\newcommand{\ndofi}{n_i}
\newcommand{\ndofInteriori}{n^{\Omega}_i}
\newcommand{\ndofInteriorArg}[1]{n^{\Omega}_{#1}}
\newcommand{\ndofBoundaryi}{n^{\Gamma}_i}
\newcommand{\nresi}{n^\residual_i}
\newcommand{\nrb}{p}
\newcommand{\nrbArg}[1]{\nrb_{#1}}
\newcommand{\nrbi}{\nrbArg{i}}
\newcommand{\nrbInterior}{\nrb^\domain}
\newcommand{\nrbInteriorArg}[1]{\nrbInterior_{#1}}
\newcommand{\nrbInteriori}{\nrbInteriorArg{i}}
\newcommand{\nrbBoundary}{\nrb^\boundary}
\newcommand{\nrbBoundaryArg}[1]{\nrbBoundary_{#1}}
\newcommand{\nrbBoundaryi}{\nrbBoundaryArg{i}}
\newcommand{\nrbPort}{\nrb^p}
\newcommand{\nrbPortArg}[1]{\nrbPort_{#1}}
\newcommand{\constraintMat}{\bds{\bar A}}
\newcommand{\constraintMatArg}[1]{\constraintMat_{#1}}
\newcommand{\constraintMati}{\constraintMatArg{i}}
\newcommand{\constraintMatROM}{\bds{A}}
\newcommand{\constraintMatROMArg}[1]{\constraintMatROM_{#1}}
\newcommand{\constraintMatROMi}{\constraintMatROMArg{i}}
\newcommand{\identity}{\bds{I}}
\newcommand{\hessianIntInt}{\bds{H}^{\domain\domain}}
\newcommand{\hessianIntIntArg}[1]{\hessianIntInt_{#1}}
\newcommand{\hessianIntInti}{\hessianIntIntArg{i}}
\newcommand{\hessianIntBound}{\bds{H}^{\domain\boundary}}
\newcommand{\hessianIntBoundArg}[1]{\hessianIntBound_{#1}}
\newcommand{\hessianIntBoundi}{\hessianIntBoundArg{i}}
\newcommand{\hessianBoundInt}{\bds{H}^{\boundary\domain}}
\newcommand{\hessianBoundIntArg}[1]{\hessianBoundInt_{#1}}
\newcommand{\hessianBoundInti}{\hessianBoundIntArg{i}}
\newcommand{\hessianBoundBound}{\bds{H}^{\boundary\boundary}}
\newcommand{\hessianBoundBoundArg}[1]{\hessianBoundBound_{#1}}
\newcommand{\hessianBoundBoundi}{\hessianBoundBoundArg{i}}
\newcommand{\hessianSubdomBF}[1]{\bds{H}_{#1}}
\newcommand{\searchDirInteriorArgk}[1]{\bds{p}^{\domain(k)}_{#1}}
\newcommand{\searchDirInteriorik}{\searchDirInteriorArgk{i}}
\newcommand{\searchDirLagrange}{\bds{p}^{\lagrangeROM}}
\newcommand{\searchDirLagrangek}{\bds{p}^{\lagrangeROM(k)}}
\newcommand{\searchDirBoundaryArgk}[1]{\bds{p}^{\boundary(k)}_{#1}}
\newcommand{\searchDirBoundaryik}{\searchDirBoundaryArgk{i}}
\newcommand{\searchDirSubdomBFk}[1]{\bds{p}^{(k)}_{#1}}
\newcommand{\redResBoundary}{\hat{\bds{r}}^\boundary}
\newcommand{\redResBoundaryArg}[1]{\redResBoundary_{#1}}
\newcommand{\redResBoundaryi}{\redResBoundaryArg{i}}
\newcommand{\redResInterior}{\hat{\bds{r}}^\domain}
\newcommand{\redResInteriorArg}[1]{\redResInterior_{#1}}
\newcommand{\redResInteriori}{\redResInteriorArg{i}}
\newcommand{\redResSubdom}[1]{\hat{\bds{r}}_{#1}}
\newcommand{\unitvec}{\bds{e}}
\newcommand{\unitvecArg}[1]{\unitvec_{#1}}
\newcommand{\nullspacemat}{\bar{\bds{N}}}
\newcommand{\nullspacematArg}[1]{\nullspacemat_{#1}}
\newcommand{\nullspacemati}{\nullspacematArg{i}}
\newcommand{\nullspaceCoords}{\hat{\bds{x}}^{\text{null}}}
\newcommand{\nullspaceCoordsDummy}{\hat{\bds{w}}^{\text{null}}}
\newcommand{\trialSpace}{\mathcal S}
\newcommand{\trialSpaceInterior}{\mathcal S_{\text{I/B}}}
\newcommand{\trialSpaceFull}{\mathcal S_{\text{F}}}
\newcommand{\inverseTmp}{P}
\newcommand{\inverseLipschitz}{\kappa_\ell}
\newcommand{\lipschitz}{\kappa_u}
\newcommand{\lagrangeSymbol}{\lambda}
\newcommand{\lagrangeDummySymbol}{\gamma}
\newcommand{\lagrangeROM}{\bds{\lagrangeSymbol}}
\newcommand{\lagrangeROMk}{\bds{\lagrangeSymbol}^{(k)}}
\newcommand{\lagrangeROMkp}{\bds{\lagrangeSymbol}^{(k+1)}}
\newcommand{\lagrangeROMDummy}{\bds{\lagrangeDummySymbol}}
\newcommand{\lagrangian}{L}
\newcommand{\zero}{\bds{0}}
\newcommand{\argmin}[1]{\underset{#1}{\text{argmin}}}
\newcommand{\range}[1]{\text{Ran}(#1)}
\newcommand{\rank}[1]{\text{rank}(#1)}
\newcommand{\subdomainPorts}[1]{P(#1)}
\newcommand{\portsSubdomains}[1]{Q(#1)}
\newcommand{\nports}{{n_p}}
\newcommand{\ndofPorts}[1]{n^p}
\newcommand{\ndofPortsArg}[1]{n_{#1}^p}
\newcommand{\ndofPortsj}{\ndofPortsArg{j}}
\newcommand{\projectionStatePort}[2]{\projection^{#1}_{#2}}
\newcommand{\projectionStatePortAll}[1]{\projection^{#1}}
\newcommand{\nconstraintsPort}[1]{n^\text{pair}_{#1}}
\newcommand{\card}[1]{|#1|}
\newcommand{\RRstar}[1]{\mathbb{R}_\star^{#1}}
\newcommand{\sampleMat}{\bds{Z}}
\newcommand{\sampleMatArg}[1]{\bds{Z}_{#1}}
\newcommand{\sampleMati}{\sampleMatArg{i}}
\newcommand{\sampleMatStateBoundary}{\bds{Z}^\Gamma}
\newcommand{\sampleMatStateBoundaryArg}[1]{\sampleMatStateBoundary_{#1}}
\newcommand{\sampleMatStateBoundaryi}{\sampleMatStateBoundaryArg{i}}
\newcommand{\sampleMatRes}{\bds{Z}^r}
\newcommand{\sampleMatResArg}[1]{\sampleMatRes_{#1}}
\newcommand{\sampleMatResi}{\sampleMatResArg{i}}
\newcommand{\sampleMatStateInterior}{\bds{Z}^\Omega}
\newcommand{\sampleMatStateInteriorArg}[1]{\sampleMatStateInterior_{#1}}
\newcommand{\sampleMatStateInteriori}{\sampleMatStateInteriorArg{i}}
\newcommand{\hyperMat}{\bds{B}}
\newcommand{\hyperMatArg}[1]{\bds{B}_{#1}}
\newcommand{\hyperMati}{\hyperMatArg{i}}
\newcommand{\nhyperMat}{n^B}
\newcommand{\nhyperMatArg}[1]{\nhyperMat_{#1}}
\newcommand{\nhyperMati}{\nhyperMatArg{i}}
\newcommand{\nsample}[1]{n^z}
\newcommand{\nsampleArg}[1]{n_{#1}^z}
\newcommand{\nsampleBoundary}[1]{n^{z,\Gamma}}
\newcommand{\nsampleBoundaryArg}[1]{n_{#1}^{z,\Gamma}}
\newcommand{\nsampleBoundaryi}{\nsampleBoundaryArg{i}}
\newcommand{\nsampleInterior}[1]{n^{z,\Omega}}
\newcommand{\nsampleInteriorArg}[1]{n_{#1}^{z,\Omega}}
\newcommand{\nsampleInteriori}{\nsampleInteriorArg{i}}
\newcommand{\nsampleRes}[1]{n^{z,r}}
\newcommand{\nsampleResArg}[1]{n_{#1}^{z,r}}
\newcommand{\nsampleResi}{\nsampleResArg{i}}
\newcommand{\rbRes}[1]{\bds{\Phi}^r}
\newcommand{\rbResArg}[1]{\bds{\Phi}_{#1}^r}
\newcommand{\rbResi}{\rbResArg{i}}
\newcommand{\nrbResArg}[1]{\nrb_{#1}^r}
\newcommand{\nrbResi}{\nrbResArg{i}}
\newcommand{\nrbNull}{\nrb^\text{null}}
\newcommand{\testFunctionArg}[1]{\boldsymbol C^{#1}}
\newcommand{\nportTestFunction}[1]{n_{#1}^c}
\newcommand{\sampleij}[2]{\xi_{#1}^{#2}}
\newtheorem{proposition}{Proposition}
\newtheorem{remarked}{Remark}
\newcommand{\param}{\bds{\mu}}
\newcommand{\params}{\param}
\newcommand{\paramComp}{\param}
\newcommand{\paramCompArg}[1]{\mu_{#1}}
\newcommand{\paramDomain}{\mathcal{D}}
\newcommand{\nparams}{n_{\param}}
\newcommand{\innatseq}[1]{=1,\ldots,#1}
\newcommand{\natseq}[1]{\{1,\ldots,#1\}}
\newcommand{\trainParam}[1]{\bds{\mu}^{#1}_{\rm train}}
\newcommand{\numTrainSample}{n_{\rm train}}
\newcommand{\trainSample}{
\{\trainParam{j}\}_{j=1}^{\numTrainSample}
}
\newcommand{\tempComp}{u}
\newcommand{\vel}{\bds{u}}
\newcommand{\velComp}{u}
\newcommand{\velCompArg}[1]{\velComp_{#1}}
\newcommand{\stateComp}{\boldsymbol{\mathsf{x}}}
\newcommand{\stateCompArg}[1]{\mathsf x_{#1}}
\newcommand{\flopsComputeOneEntryResiduali}{ c^r_i }
\newcommand{\flopsComputeOneEntryJacobiani}{ c^J_i }
\newcommand{\averageNNZperRowInteriori}{ w^{\domain}_i }
\newcommand{\averageNNZperRowBoundaryi}{ w^{\boundary}_i }
\newcommand{\sizeLinSysPrduMono}{ s_{dM} }
\newcommand{\bandwidthLinSysPrduMono}{ w_{dM} }
\newcommand{\sizeLinSysPrduMonoSubdomBF}{ s^s_{dM} }
\newcommand{\bandwidthLinSysPrduMonoSubdomBF}{ w^s_{dM} }
\newcommand{\setOfSnapshotsDummy}{\bds{X}}
\newcommand{\ndofDummy}{n}
\newcommand{\numSnapshotDummy}{m}
\newcommand{\nrbDummy}{p}
\newcommand{\rbDummy}{\bds{\Phi}}
\newcommand{\rbDummyArg}[1]{\bds{u}_{#1}}
\newcommand{\UDummy}{\bds{U}}
\newcommand{\SigmaDummy}{\bds{\Sigma}}
\newcommand{\VDummy}{\bds{V}}
\newcommand{\setOfGlobalResidualSnapshots}{\bds{X}^{\residual}_{g}}
\newcommand{\ndofPortjSubdomi}[2]{n^p_{#1,#2}}
\newcommand{\numWorkColumnsPhiRes}[1]{ n^c_{#1} }
\newcommand{\numUnknownPerNode}{\gamma}
\newcommand{\sampleSetOfSampleMesh}[1]{ \mathfrak{s}_{#1} }
\newcommand{\additionalNumNodesOfSampleMesh}[1]{ n^a_{#1} }
\newcommand{\counterOfNumWorkBasisVectorOfSampleMesh}[1]{ n^b_{#1} }
\newcommand{\numGreedyIterOfSampleMesh}[1]{ n^{\rm it}_{#1} }
\newcommand{\maxNumRHSOfSampleMesh}[1]{ n^{\rm RHS}_{#1} }
\newcommand{\minNumWorkBasisVectorPerIterOfSampleMesh}[1]{ n^{cj,\min}_{#1} }
\newcommand{\minNumSampleNodeToAddPerIterOfSampleMesh}[1]{ n^{aj,\min}_{#1} }
\newcommand{\numWorkBasisVectorForThisIterOfSampleMesh}[1]{ n^{cj}_{#1} }
\newcommand{\numSampleNodeToAddForThisIterOfSampleMesh}[1]{ n^{aj}_{#1} }
\newcommand{\randomMat}{\bds{C}}
\newcommand{\numNewtonIterationArg}[1]{k_{#1}}
\numberwithin{equation}{section}
\newcounter{lemctr}
\newcounter{pfctr}
\newcounter{remctr}
\newcounter{propctr}
\newcounter{proposctr}
\newcommand{\RR}[1]{\ensuremath{\mathbb{R}^{ #1 }}}
\title{Domain-decomposition least-squares Petrov--Galerkin (DD-LSPG)\\ nonlinear model
reduction}
\author{Chi Hoang\thanks{Extreme-scale Data Science and Analytics Department, Sandia National Laboratories, Livermore, CA 94550 (ckhoang@sandia.gov). Sandia National Laboratories is a multimission laboratory managed and operated by National Technology and Engineering Solutions of Sandia, LLC, a wholly owned subsidiary of Honeywell International, Inc., for the U.S. Department of Energy's National Nuclear Security Administration under contract DE-NA-0003525. } 
\and 
Youngsoo Choi\thanks{Lawrence Livermore National Laboratory, Livermore, CA 94550
(choi15@llnl.gov). Lawrence Livermore National Laboratory is operated by
Lawrence Livermore National Security, LLC, for the U.S. Department of Energy,
National Nuclear Security Administration under Contract DE-AC52-07NA27344
(LLNL-JRNL-812648).}
\and 
Kevin Carlberg\thanks{Mechanical Engineering and Applied Mathematics, University of Washington, Seattle, WA 98195 (ktcarlb@uw.edu).}}
\definecolor{orange}{rgb}{1,0.5,0}
\definecolor{Blue}{rgb}{0,0,1}
\definecolor{Red}{rgb}{1,0,0}
\definecolor{Green}{rgb}{0,1,0}
\definecolor{Bronze}{rgb}{0.8,0.5,0.2}
\definecolor{Violet}{rgb}{0.54,0.17,0.89}
\newcommand{\YC}[1]{#1}
\newcommand{\CH}[1]{#1}
\newcommand{\red}[1]{\textcolor{red}{#1}}   	% short for red-text
\newcommand{\blue}[1]{\textcolor{blue}{#1}}   	% short for blue-text
\newcolumntype{C}[1]{>{\centering\arraybackslash}m{#1}}
\begin{document}
\date{}
\maketitle

%\tableofcontents

\begin{abstract}
  A novel domain-decomposition least-squares Petrov--Galerkin (DD-LSPG)
  model-reduction method applicable to parameterized systems of nonlinear
  algebraic equations (e.g., arising from discretizing a parameterized
  partial-differential-equations problem) is proposed. In contrast with previous
  works, we adopt an algebraically non-overlapping decomposition strategy rather
  than a spatial-decomposition strategy, which facilitates application to
  different spatial-discretization schemes. Rather than constructing a
  low-dimensional subspace for \textit{the entire state space} in a monolithic
  fashion, the methodology constructs separate subspaces for the different
  subdomains/components characterizing the original model. During the offline
  stage, the method constructs low-dimensional bases for the interior and
  interface of subdomains/components. During the online stage, the approach
  constructs an LSPG reduced-order model for each subdomain/component (equipped
  with hyper-reduction in the case of nonlinear operators), and enforces strong
  or weak compatibility on the `ports' connecting them. We propose several
  different strategies for defining the ingredients characterizing the
  methodology: (i) four different ways to construct reduced bases on the
  interface/ports of subdomains, and (ii) different ways to enforce
  compatibility across connecting ports. In particular, we show that the
  appropriate compatibility-constraint strategy depends strongly on the basis
  choice.  In addition, we derive \textit{a posteriori} and \textit{a priori}
  error bounds for the DD-LSPG solutions.  Numerical results performed on
  nonlinear benchmark problems in heat transfer and fluid dynamics that employ
  both finite-element and finite-difference spatial discretizations demonstrate
  that the proposed method performs well in terms of both accuracy and
  (parallel) computational cost, with different choices of basis and
  compatibility constraints yielding different performance profiles.

\textbf{Keywords}: domain decomposition, substructuring, model reduction,
	least-squares Petrov--Galerkin projection, error bounds
\end{abstract}

%\tableofcontents

\section{Introduction}

%\blue{Advantages of ROM methods for computational science and engineering problems}

% \YC{Not all the model order reduction techniques go through the offline and online stages.}

Many tasks in computational science and engineering are \textit{many query} in
nature, as they require the repeated simulation of a parameterized large-scale
computational model. Model reduction has become a popular approach to make
such tasks tractable. Most of such techniques first perform an
``offline'' training stage that simulates the computational model for multiple
input-parameter instances;
then, during an ``online'' deployed stage, these techniques reduce the
dimensionality and complexity of the original computational model at arbitrary
input-parameter instances by performing a projection process of the original
computational model onto a low-dimensional
subspace or manifold. 

%\blue{Difficulties in applying ROM for decomposable systems}

While such reduced-order models (ROMs) have demonstrated success in many
applications, challenges arise when applying model reduction either to
\textit{extreme-scale models} or to \textit{decomposable systems}, i.e.,
systems composed of well-defined components. In the former case, the
extreme-scale nature of the original computational model renders the offline
training simulations infeasible. In the latter case, the many-query task often
involves design, wherein components are swapped or their interconnecting
topology is modified; in this case, the state space characterizing the
original computational model changes substantially between queries, rendering
training simulations (which assume a fixed state space)
challenging.

%\blue{What attempts have been made in the past to tackle. Literature review as we discussed before, including shortcomings}

%\blue{Linear PDEs}
To date, researchers have developed several methods to enable model reduction
for decomposable systems. During the offline stage, these approaches construct
a unique reduced basis for each component; during the online stage, they
formulate a
reduced-order model for the full system using
domain-decomposition approaches that enforce solution compatibility along
component interfaces. Most approaches to date have been developed for
parameterized \textit{linear} partial differential equations (PDEs). 

%\KTC{This next sentence confuses me. Do any existing methods in this category use overlapping methods? I thought Maday did this. Also, how are the interfaces handled? The way it's written makes this unclear} \CH{All existing methods in next paragraph are non-overlapping methods. I uploaded my own DD literature review in folder references/ in the repo, so have a look. We may also remove it if it is not clear. } \red{\sout{These include methods that employ \textit{non-overlapping} domain decomposition together with Galerkin projection and strong or weak compatibility constraints on subdomain interfaces, while subdomain interiors are approximated using a reduced basis.}}

%\KTC{Isn't Maday's work overlapping?} \CH{No, Maday work is nonoverlapping method. See above.} 

%\KTC{can you add citations here about the cardiovascular networks?} \CH{Done}

Reduced basis element (RBE) methods, which comprise a family of domain-decomposition reduced-order model (DDROM) techniques, are applicable to \textit{linear} PDEs \cite{maday2002reduced,maday2004reduced, iapichino2012reduced, huynh2013static_a, eftang2012adaptive, iapichino2016reduced}. Maday et al.\ \cite{maday2002reduced, maday2004reduced} proposed the very first work of this family; this approach combines the reduced-basis (RB) method with domain decomposition (DD), using full-subdomain bases\footnote{Note that ``full-subdomain bases'' here include all degrees of freedom (DOFs) of a subdomain: both interior and interface DOFs.} and ``gluing'' the subdomain interfaces weakly via Lagrange multipliers. The full-subdomain bases are built in the offline stage, while in the online stage a saddle point problem \cite{benzi2005numerical} is solved to compute the solution for any input-parameter instance. The reduced basis hybrid method (RBHM), which was proposed later by Iapichino and coworkers \cite{iapichino2012reduced}, modifies the RBE by including the finite element (FE) coarse solutions in the reduced bases (in the online stage) to recover the nonzero normal stress component of the final solution. 
The RBE and RBHM were employed to solve the steady Stokes problem with applications in cardiovascular networks \cite{maday2002reduced}, \cite{iapichino2012reduced}. In the reduced-basis--domain-decomposition--finite-element (RDF) method \cite{iapichino2016reduced}, the
same authors proposed to separate the global DOFs into all subdomain interior DOFs and ``skeleton'' DOFs, then approximate all subdomain interior DOFs by RB method. The unknowns in the final reduced linear system comprise the generalized coordinates associated with all subdomain interiors and FE degrees of freedom on the skeleton. Similar in concept, the static condensation reduced basis element (SCRBE) method proposed by Huynh et al.\ \cite{huynh2013static_a, huynh2013static_b} decomposes the ``skeleton'' DOFs further into ``port'' DOFs on each subdomain, where a subdomain can have multiple nonoverlapping ports. SCRBE employs a primal-Schur domain-decomposition method 
to assemble and solve the resulting system. In particular, Ref.\ \cite{huynh2013static_a}
carefully constructs interface bases to represent
\textit{all possible variations} of the solution on the skeleton of the global
domain. While this is a robust and comprehensive approach to compute the
skeleton solution, it also incurs a high computational cost: the dimension of the Schur-complement system is 
equal to the number of FE degrees of
freedom across all ports, which can remain large scale for fine spatial
discretizations. To address this, Ref.~\cite{eftang2012adaptive} applies ``adaptive port reduction'' to reduce the number of port degrees of freedom and hence the dimensionality and cost of solving the Schur-complement system. \CH{While the majority of the work on RBE deals with linear PDEs, we are aware that there is at least one work that deals with nonlinear PDEs \cite{lovgren2006reduced}.}

%for every input-parameter instance considered in the online stage, in order to assemble the Schur
%complement system, one needs to solve one RB problem for each component,
%component port, and FE DOF on each port. 
%%approximately $(I \times J \times K)$ RB
%%problems, where $I$ is the number of components, $J$ is the number of ports on
%%each component and $K$ is the number of FE DOFs on each port of each
%%component, respectively. 
%For problem of even modest scale, this can imply thousands of RB  number can be
%easily up to several thousands meaning that it is already very costly. To
%reduce partly this online cost, the adaptive port reduction SCRBE approach was
%proposed later by the same authors group \cite{eftang2012adaptive}. In
%particular, within the Schur complement system that needs to be solved online,
%the authors proposed to reduce the size of this linear system by separating
%further the \textit{active} from the \textit{inactive} port DOFs and then only
%retaining the \textit{active} ones. This approach reduced significantly the
%number of RB problems as well as the size of the resulting linear system that
%need to be solved in the online stage. 

%Thanks to this, the size of the online linear system is reduced significantly. 

Besides the RBE family mentioned above, researchers have developed other DDROM methods to solve
parameterized linear PDEs in the context of multiscale heterogeneous materials
analysis. These methods include the multiscale reduced basis method (MsRBM)
\cite{nguyen2008multiscale}, $\rm FE^2$-based model order reduction method
\cite{he2020situ},
the localized reduced basis multiscale method
(LRBMS) \cite{kaulmann2011new, ohlberger2015error},
the reduced basis localized orthogonal decomposition method (RB-LOD) \cite{abdulle2015reduced}, the reduced basis method for
heterogeneous domain decomposition (RBHDD)
\cite{martini2015reduced} and recently the ArbiLoMod method
\cite{buhr2017arbilomod}. In addition, we are also aware of the use of DDROM in the work of graphic community, for example (not a comprehensive list), \cite{barbivc2011real, kim2012physics} deal with nonlinear problems while \cite{yang2013boundary, peiret2019schur} handle linear problems. The work \cite{teng2015subspace} solves nonlinear problems using a FOM-ROM hybrid approach that will be described in next paragraph.

%\KTC{Can you at least see which of these deals with nonlinear systems? I think at least some do.}

%\blue{Nonlinear PDEs}

%\KTC{classical Schwarz is an overlapping DD method; please clarify} 

While some DDROM techniques have been applied to nonlinear PDEs, most of these
techniques are multiscale in nature, meaning that they apply a ROM to only a
\textit{subset} of the physical domain, and apply the high-fidelity model
elsewhere; compatibility between the ROM and high-fidelity-model solutions is
enforced using non-overlapping domain decomposition methods and some
multiscale homogenization assumptions \cite{he2020situ}. For example, in
the work by Buffoni and coworkers \cite{buffoni2009iterative}, the authors
implemented the overlapping classical Schwarz method (using Dirichlet--Neumann iterations
\cite{toselli2006domain}) and divided the computational domain into two
subdomains. The high-fidelity-model subdomain is discretized using a standard
method (e.g., finite difference, finite element), while the ROM subdomain
employes a snapshot-based proper orthogonal decomposition (POD) technique
\cite{sirovich1987turbulence} with subdomain bases. Solution compatibility on
the interface holds weakly through the enforcement of continuity of normal
derivatives of the trace of the solutions on the interface. In another work by
Kerfriden et al.\ \cite{kerfridengoury2012}, the authors used a primal-Schur
domain-decomposition method combined with a snapshot-based POD ROM subdomain
to solve nonlinear fracture-mechanics problems. In particular, the approach
approximates the interior DOFs of \textit{linear subdomains} with snapshot-POD
(further reduction with the hyper-reduction technique DEIM
\cite{chaturantabut2010nonlinear} due to nonaffine parameter dependence) and use a full-order model (FOM) on
nonlinear damaged subdomains. The Schur-complement system is formed by enforcing strong (i.e., node pairwise) compatibility between ROM and FOM subdomains and condenses out only the generalized coordinates characterizing the ROM subdomains, rendering the Schur-complement system high-dimensional. With similar
FOM/ROM hybrid idea, the DD-POD method \cite{corigliano2015model} uses the
Gravouil--Combescure domain-decomposition approach \cite{gravouil2001multi} to
solve elastic--plastic structural dynamics problems. The method divides the
domain of interest into subdomains; during the online stage, a plastic check
is performed on each subdomain to determine whether ROM or FOM approximations
will be implemented in that subdomain. Again, full-subdomain bases are used in the
linear-elastic subdomains and weak compatibility constraints are used on the interface. Baiges and coworkers \cite{baiges2013domain} used a
primal-dual monolithic approach to solve incompressible Navier--Stokes
equations with overlapping domain decomposition. The approach also comprises a
FOM/ROM hybrid wherein the physical domain is decomposed into FOM, ROM and
overlapping subdomains. The ROM subdomains use full-subdomain bases and are further
hyper-reduced by a discrete variant \cite{baiges2013explicit} of the best
point interpolation method, while the overlapping/interface regions enforce
velocity continuity, which corresponds to a weak compatibility constraint.

%\red{\sout{which are linear-elastic relationships between internal tractions and displacement jumps are enforced on the interface}} \KTC{Can you explain or clarify this? It confused me: ``which are linear-elastic relationships between internal tractions and displacement jumps are enforced on the interface''}

This work aims to overcome several shortcomings of existing works. \CH{First, \textit{most} available DDROM methods for nonlinear PDEs employed a hybrid ROM/FOM approach; a ``complete ROM'' methodology appears to be missing for nonlinear problems (except the work \cite{lovgren2006reduced}, to the best of our knowledge).} Second, most previously developed DDROM methods were applied to self-adjoint problems and thus constrained optimization problems could be derived from a Galerkin-projection perspective; the extension of many methods to non-self-adjoint problems is unclear. Finally, most of the above approaches (with the exception of SCRBE \cite{huynh2013static_a, huynh2013static_b}) employ ``full-subdomain'' bases with support over both interior and interface degrees of freedom. Such bases only are generally compatible only with weak constraints (see, e.g., \cite{maday2002reduced, iapichino2012reduced, buffoni2009iterative, corigliano2015model}), which precludes an equivalent global solution due to non-uniqueness of the solution on the interfaces. To address these shortcomings, this work is characterized by the following novel features, \YC{which, we believe, are valuable steps toward addressing the challenges arose from the nonlinear extreme-scale models (although we do not demonstrate our numerical results on a extreme-scale problem)}:

\begin{itemize}	 
\item We consider parameterized systems of nonlinear algebraic equations, and
	adopt an algebraically non-overlapping decomposition strategy rather than a
		spatial-decomposition strategy, which facilitates application to models
		derived using different discretization methods.
\item We develop a ``complete ROM'' approach that applies model reduction to
	all degrees of freedom characterizing the nonlinear algebraic system; thus it is not a ROM/FOM hybrid.
\item We formulate a constrained optimization problem for the global problem by
  equipping the least-squares Petrov--Galerkin (LSPG\footnote{\CH{For communities other than model reduction one, LSPG and `minimum residual' are completely equivalent.}}) \cite{bui2008model,
    legresley2006application, carlbergbou-mosleh2011, carlberg2017galerkin,
    carlberg2018conservative, choi2019space} projection (with hyper-reduction
    \cite{carlberg2013gnat, choi2020sns}) with interface-compatibility
    constraints. We employ a sequential quadrating programming (SQP) method to
    solve the resulting optimization problem. Critically, this formulation is
    valid for both self-adjoint and non-self-adjoint problems.
\item We propose four different subdomain basis types, including
	the classical ``full-subdomain'' type and three ``interface/boundary'' 
	types: port, skeleton, and full-interface. Consequently, the
	characterization of the solution on the interfaces  has much greater
	flexibility than in previous contributions.
\item Support for both strong and weak compatibility constraints on the
	interfaces for all basis types. In particular, we show that the best choice
		for compatibility constraints is strongly dependent on the subdomain-basis
		type (i.e., weak compatibility is best for full-subdomain and full-interface bases;
		strong compatibility is best for port and skeleton bases).
\item Both \textit{a posteriori} and \textit{a priori} error bounds for the
		method, which illustrate how the error on each subdomain and port can be
		bounded using global quantities.
\item \CH{Bottom-up (or subdomain) training (to be distinguished with top-down training bases above) is proposed (although still simple and not yet mature) and pave the way toward handling nonlinear extreme-scale models and decomposable systems.}
\item Numerical experiments on benchmark problems in heat transfer and fluid
	dynamics that employ both finite-element and finite-difference
		discretizations that systematically assess the effect of all method
		parameters on accuracy and computational cost, lending deep insights into
		the performance aspects of the proposed methodology.
\end{itemize}

%$\bullet$ Assembly procedure in the online stage exposes parallelism. 

%\blue{Structure of the paper}
The paper is structured as follows. Section \ref{sec_DDFOM_formulation}
formulates the full-order model and algebraically non-overlapping
decomposition that characterizes our domain-decomposition strategy.
Section \ref{subsect_DDLSPG_formulation} describes the proposed DD-LSPG
framework, including the two proposed choices for subdomain reduced bases
(Sections \ref{sec:intBound} and \ref{sec:fullSub}), and strong vs. weak
compatibility constraints (Section \ref{subsect_constraint_types}).
Section \ref{sec_solver} describes the proposed SQP solver used to numerically
solve the constrained optimization problem characterizing DD-LSPG projection,
its particularization to the two types of subdomain reduced bases (Sections
\ref{sec_sqp_interior} and \ref{sqpFull}) 
and its serial/parallel costs (Section \ref{sec_cost}).
Section \ref{sec_basis_construction} describes the offline algorithms for
constructing interior/boundary bases (Section \ref{sec:constrIntBound}) and 
full-subdomain bases (Section \ref{sec:constFull}). Section
\ref{sec_error_analysis_strong} derives \textit{a posteriori} and \textit{a
priori} error bounds for the method. Section \ref{sec_numerical_results}
reports numerical experiments on a benchmark problem in heat transfer that
employs a finite-element discretization (Section \ref{subsect_heat_equation})
and a benchmark problem in fluid dynamics that employs a finite-difference
discretization (Section \ref{sec:burg}).
Finally, Section \ref{sec_conclusions} concludes the paper.

\section{Domain-decomposition formulation}\label{sec_DDFOM_formulation}

We consider the (high-fidelity) full-order model to be expressed as a parameterized system of nonlinear algebraic equations
\begin{equation}\label{eq_originalAlgebraic}
\residual(\stateSol;\params) = \zero,
\end{equation}
where the residual $\residual:\RR{\ndof}\times \paramDomain\rightarrow\RR{\ndof}$
is nonlinear in (at least) its first argument,
$\params\in\paramDomain\subseteq \RR{\nparams}$ denotes the parameters, and 
$\stateSol:\paramDomain\rightarrow\RR{\ndof}$ denotes the state, which is implicitly defined as the
solution to Eq.~\eqref{eq_originalAlgebraic} given an instance of the
parameters. 
Such problems arise, for
example, after applying spatial discretization to a stationary
PDE problem; because we take Eq.~\eqref{eq_originalAlgebraic} to be our
full-order model, our methodology is \textit{spatial-discretization agnostic}. 
For notational simplicity, we suppress all dependence on the parameters
$\param$ until needed in Section \ref{sec_basis_construction}.

We consider an \textit{algebraic decomposition} of this problem into
$\nsubdomains(\leq\ndof)$ `subdomains' such that
the residual satisfies
\begin{equation}\label{eq_algebraicDecomposition}
\residual:\stateDummy \mapsto
\sum_{i=1}^\nsubdomains[\projectionResi]^T\residuali(\projectionStateInteriori\stateDummy,\projectionStateBoundaryi\stateDummy),\quad
\forall \stateDummy\in\RR{\ndof}.
\end{equation}
Here, 
$\residuali:\RR{\ndofInteriori}\times\RR{\ndofBoundaryi}\rightarrow\RR{\nresi}$
with 
$\residuali:(\stateDummyInteriori,\stateDummyBoundaryi)\mapsto\residuali(\stateDummyInteriori,\stateDummyBoundaryi)$
denotes the $i$th subdomain residual, $\projectionResi\in\{0,1\}^{\nresi\times
\ndof}$ denotes $i$th the residual sampling matrix, 
$\projectionStateInteriori\in\{0,1\}^{\ndofInteriori\times \ndof}$ denotes
the $i$th interior-state sampling matrix, and 
$\projectionStateBoundaryi\in\{0,1\}^{\ndofBoundaryi\times \ndof}$ denotes
the $i$th interface-state sampling matrix; each sampling matrix comprises
selected rows of the $\ndof\times\ndof$ identity matrix. The residual sampling
matrix is such that the decomposition is \textit{algebraically
non-overlapping}, i.e., $\projectionResArg{i}[\projectionResArg{j}]^T=\zero$ for
$i\neq j$ and
$\sum_{i=1}^\nsubdomains \nresi = \ndof$. Further, the interior-state sampling
matrix satisfies
$\projectionStateInteriorArg{i}[\projectionStateInteriorArg{j}]^T=\zero$ for
$i\neq j$; this implies that there is no overlap between the interior states
associated with different subdomains. Thus, the operators
$\projectionStateInteriorArg{i}$ and $\projectionStateBoundaryArg{i}$,
$i\innatseq{\nsubdomains}$ are determined from the sparsity patterns of the
sampled Jacobians $\projectionResi\frac{\partial\residual}{\partial\stateDummy}$,
$i\innatseq{\nsubdomains}$.
We define the total number of degrees of freedom for each subdomain as $\ndofi \defeq
\ndofInteriori + \ndofBoundaryi$; note that $\ndofi\geq\nresi$. 

If we set $\stateInteriori\defeq \projectionStateInteriori\state\in \RR{\ndofInteriori}$ and
$\stateBoundaryi\defeq \projectionStateBoundaryi\state\in\RR{\ndofBoundaryi}$, then from
Eqs.~\eqref{eq_originalAlgebraic}--\eqref{eq_algebraicDecomposition}, the solution
for each subdomain $\statei\defeq(\stateInteriori,\stateBoundaryi)$ satisfies
\begin{equation} 
\residuali(\stateInteriori,\stateBoundaryi) =\zero, \quad i\innatseq{\nsubdomains}
\end{equation} 
along with compatibility conditions that enforce consistency across the
boundary states for different subdomains. To reason about these compatibility
conditions, we define a set of $\nports$ `ports'; the $j$th port is
characterized by $\ndofPortsArg{j}\leq\ndof$ states that are shared across a
fixed set of
subdomains denoted by $\subdomainPorts{j}\subseteq\natseq{\nsubdomains}$. Then, the compatibility conditions
can be expressed as
\begin{equation} \label{eq_portConstraints}
\projectionStatePort{j}{i}\stateBoundaryArg{i} = 
\projectionStatePort{j}{\ell}\stateBoundaryArg{\ell},\quad
i,\ell\in\subdomainPorts{j}, \ j\innatseq{\nports},
\end{equation} 
where the port sampling matrix
$\projectionStatePort{j}{i}\in\{0,1\}^{\ndofPortsArg{j}\times\ndofBoundaryi}$
comprises selected rows of the $\ndofBoundaryi\times \ndofBoundaryi$ identity matrix. For a given
subdomain $i$, we require the ports to be non-overlapping such that
$\projectionStatePort{j}{i}[\projectionStatePort{\ell}{i}]^T = \zero$ for
$j,\ell\in\portsSubdomains{i}$ and $j\neq\ell$
and
$\sum_{j\in\portsSubdomains{i}}\ndofPortsArg{j} = \ndofBoundaryi$,
where
we have defined the set of ports associated with subdomain $i$ as
$\portsSubdomains{i}\defeq\{j\ |\
i\in\subdomainPorts{j}\}\subseteq\natseq{\nports}$.
%We also overload the port function for the two-argument case such
%that $\portsSubdomainsTwo{i}{j}$ denotes the $j$th port of subdomain $i$,
%where ordering of the ports is implied by the binary relation $\leq$ on the
%natural numbers.
We note that for a given port $j$,
% enforcing Eq.~\eqref{eq_portConstraints} for all $i,\ell\in\subdomainPorts{j}$ leads to one redundant condition; thus, we denote 
although the number of \textit{total} pairwise compatibility conditions 
 arising from
Eq.~\eqref{eq_portConstraints}
is
$k\choose 2$, the number of \textit{unique} pairwise compatibility conditions is
only $\nconstraintsPort{j}\defeq\card{\subdomainPorts{j}}-1$.
Using this formulation, the full-order model \eqref{eq_originalAlgebraic} can be
recast in decomposed form as
\begin{align} \label{eq_globaldecomposed}
\begin{split}
	&\residuali(\stateInteriori,\stateBoundaryi) =\zero,\quad i=1,\ldots,\nsubdomains\\
	&\sum_{i=1}^\nsubdomains\constraintMati \stateBoundaryi = \zero,
\end{split}
\end{align} 
where $\constraintMati\in\{-1,0,1\}^{\nconstraints\times \ndofBoundaryi}$ with $\nconstraints = \sum_{j=1}^\nports\nconstraintsPort{j}\ndofPortsArg{j}$ denote the
constraint matrices associated with port-compatibility conditions
\eqref{eq_portConstraints}. Note that
Eqs.~\eqref{eq_globaldecomposed} comprise $\sum_{i=1}^\nsubdomains\nresi +
\nconstraints = \ndof + \nconstraints$ equations in $\sum_{i=1}^\nsubdomains\ndofInteriori +
\sum_{i=1}^\nsubdomains\ndofBoundaryi$ unknowns; as there exists a unique solution to
these equations\footnote{\CH{With the assumption that equation \eqref{eq_originalAlgebraic} is well-posed.}}, we have
$
\ndof + \nconstraints \geq\sum_{i=1}^\nsubdomains\ndofInteriori +
\sum_{i=1}^\nsubdomains\ndofBoundaryi.
$

For illustration, Figure \ref{fig_divided_models} shows an example of a
decomposition using $\nsubdomains=4$ subdomains and $\nports=5$ global ports
for the case of a full-order model derived from discretizing  a PDE in two
spatial dimensions using a residual operator with a 9-point stencil. Figure
\ref{fig_SP_model_nodes_definition} shows the degrees of freedom and residual
elements associated with subdomain $\domain_{1}$.

%%%%%%%%%%%%%% fig 1: DDFOM model %%%%%%%%%%%%%
\begin{figure}[h!] 
\centering 
\includegraphics[width=12cm]{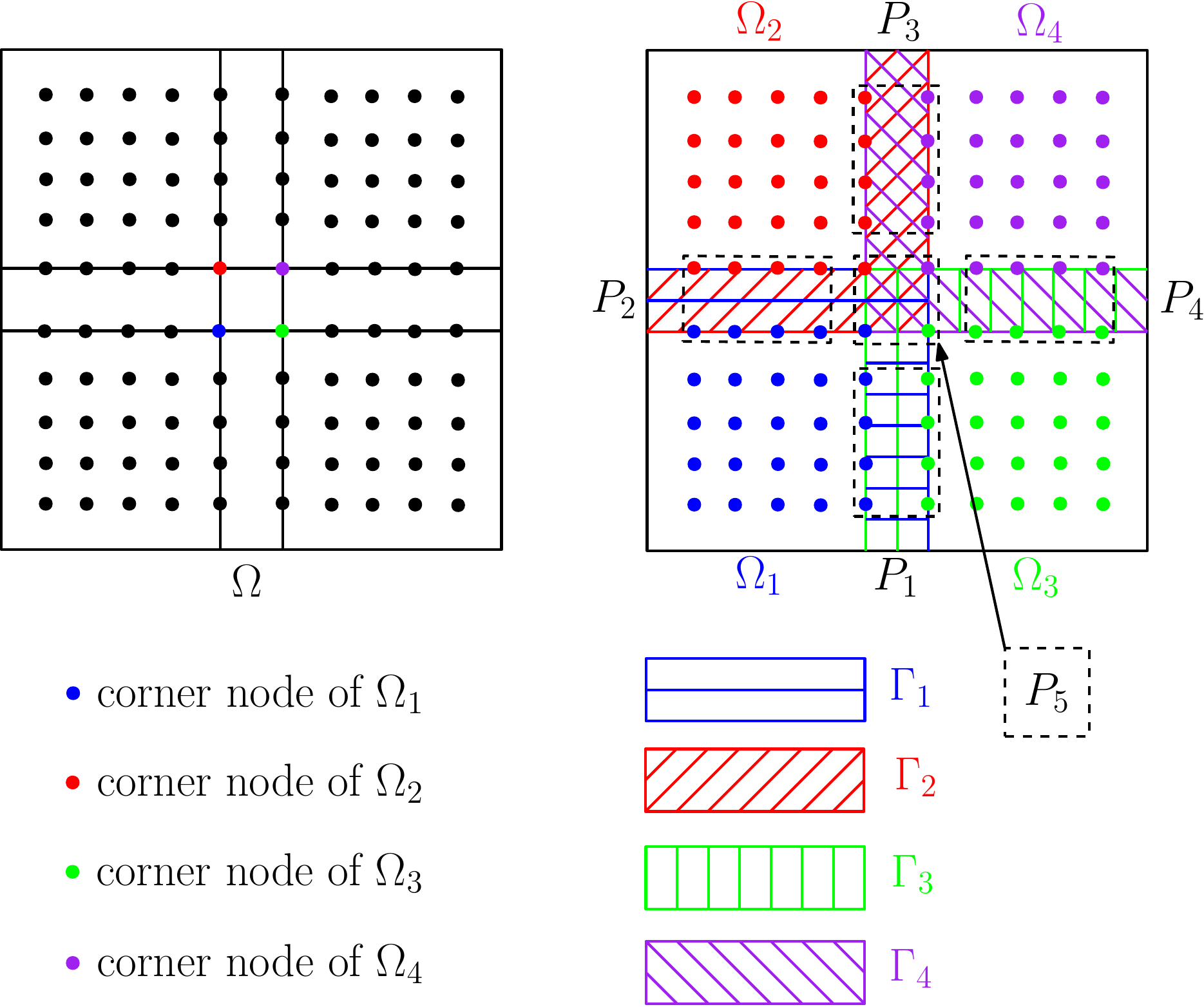} 
\caption{Domain-decomposition example: full-order model 
	derived from discretizing  a PDE in two spatial dimensions using a residual
	operator with a 9-point stencil. (Left) Residual and corner nodes. (Right) Each colored point is associated with a residual for the associated subdomain, $\nsubdomains = 4$ subdomains, $\nports = 5$ ports, $\subdomainPorts{1} = \{1,3\}$, $\subdomainPorts{2} = \{1,2\}$,
$\subdomainPorts{3} = \{2,4\}$, $\subdomainPorts{4} = \{3,4\}$,
$\subdomainPorts{5} = \{1,2,3,4\}$; and $\portsSubdomains{1}=\{1,2,5\}$,
	$\portsSubdomains{2}=\{2,3,5\}$, $\portsSubdomains{3}=\{1,4,5\}$,
	$\portsSubdomains{4}=\{3,4,5\}$. Ports $P_1$, $P_2$, $P_3$, $P_4$ are
  \YC{two-component} ports, and port $P_5$ is \YC{four-component} port. \CH{(See Appendix~\ref{subsubsect_GNAT_greedy_algorithm} for more details about corner nodes.)}
	%Each node in two-components ports need 1 = ${2\choose 2}$ constraint, e.g, $\domain_{1}-\domain_{3}$ on $P_1$, while each node in four-components ports needs 6 = ${4\choose 2}$ constraints (possibly redundant), e.g., $\domain_{1}-\domain_{2}$, $\domain_{1}-\domain_{3}$, $\domain_{1}-\domain_{4}$, $\domain_{2}-\domain_{3}$, $\domain_{2}-\domain_{4}$ and $\domain_{3}-\domain_{4}$ on $P_5$. 
	}  
\label{fig_divided_models} 
\end{figure} 

%%%%%%%%%%%%%% fig 2: spatial projection nodes %%%%%%%%%%%%%
\begin{figure}[h!] 
\centering 
\includegraphics[width=10cm]{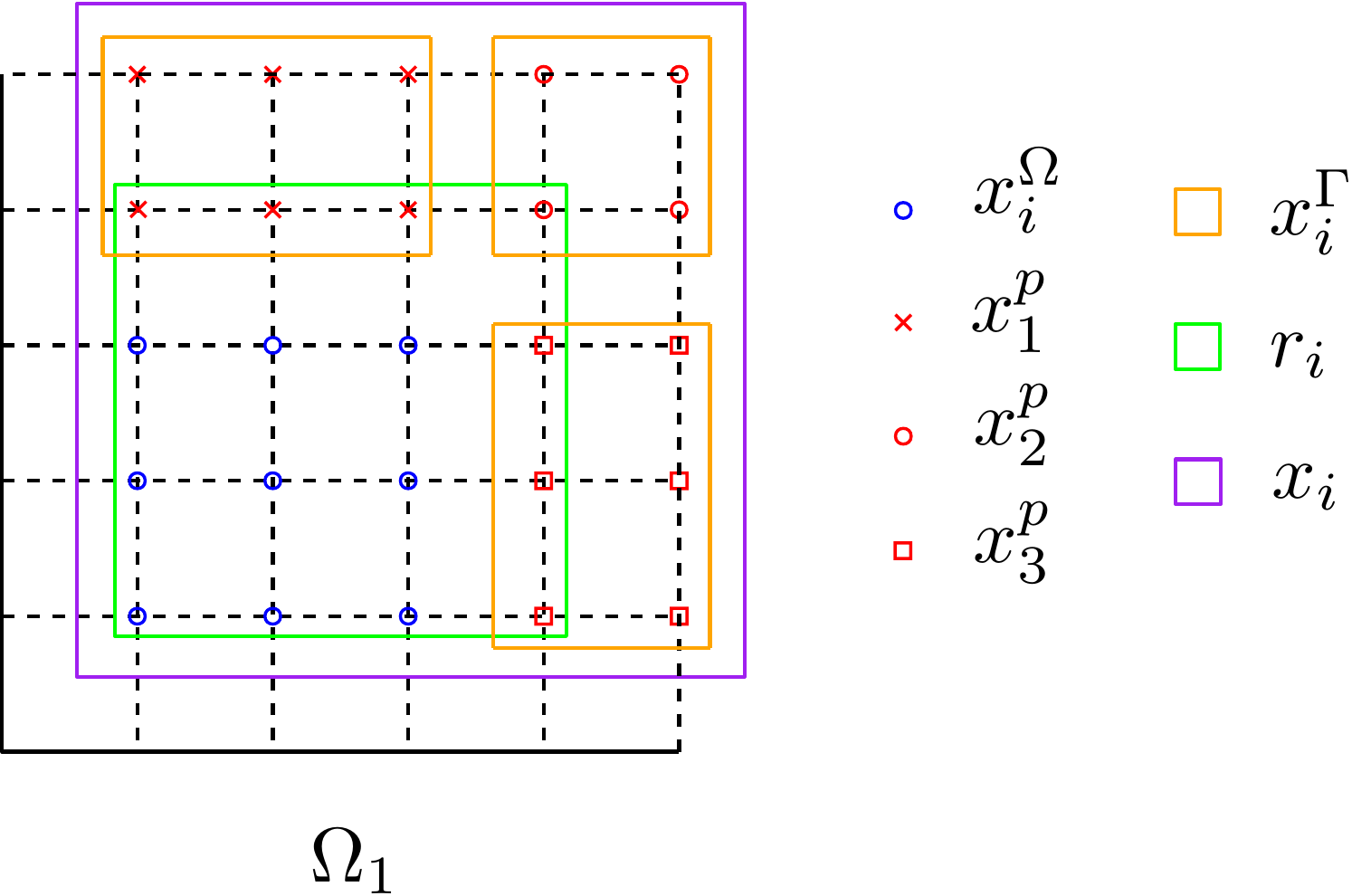} 
\caption{Domain-decomposition example: full-order model derived from discretizing  a PDE in two spatial dimensions using a residual operator with a 9-point stencil. This figure considers the bottom-left subdomain $\domainArg{1}$ of Fig.~\ref{fig_divided_models}: interior DOFs (blue circles), interface DOFs (nodes included in yellow rectangle), residual DOFs (nodes included in green rectangle) and subdomain DOFs (nodes included in purple rectangle). \CH{The remaining boundary nodes correspond to given specified boundary conditions}.}  \label{fig_SP_model_nodes_definition} 
\end{figure} 

%The blue circles are interior nodes ($\stateInteriorArg{i}$), three types of
%red nodes (cross, circle and square) denote 3 separate ports and the union of
%these port nodes are interface nodes (covered by yellow lines,
%$\stateBoundaryArg{i}$). The residual nodes ($\resArg{i}$) that comprise of
%all interior nodes and some of the interface nodes are covered by green lines.
%The subdomain nodes ($\statei$) that are union of all interior and interface
%nodes are covered by purple lines. The remaining boundary nodes reflect actual
%Dirichlet boundary conditions. Finally, the ``skeleton'' nodes (DOF) of domain
%$\domain$ are defined as the union of all interface nodes of all subdomains
%$\domainArg{i}$: $\stateSkeleton = \displaystyle
%\bigcup\limits_{i=1}^\nsubdomains \stateBoundaryArg{i}$ (see
%Fig.~\ref{fig_divided_models} for illustration). 

\section{Domain decomposition least-squares Petrov--Galerkin (DD-LSPG) projection} \label{subsect_DDLSPG_formulation}

We now consider applying least-squares Petrov--Galerkin (LSPG) model reduction
\cite{bui2008model,legresley2006application,carlbergbou-mosleh2011,carlberg2017galerkin}
in the domain-decomposition setting presented in Section \ref{sec_DDFOM_formulation}.   

\subsection{Interior/boundary bases} \label{sec:intBound}
Assume we have constructed interior reduced bases
$\rbInteriori\in\RRstar{\ndofInteriori\times\nrbInteriori}$ with
$\nrbInteriori\leq\ndofInteriori$, $i=1,\ldots,\nsubdomains$ and interface reduced
bases $\rbBoundaryi\in\RRstar{\ndofBoundaryi\times\nrbBoundaryi}$
with $\nrbBoundaryi\leq\ndofBoundaryi$, $i=1,\ldots,\nsubdomains$, where
$\RRstar{m\times n}$ denotes the non-compact Stiefel manifold: the set of
full-column-rank $m\times n$ real-valued matrices;
Section~\ref{sec_basis_construction} described proposed approaches for
constructing these bases.
We then approximate the
solution on the $i$th subdomain in the associated $\nrbi$-dimensional trial
subspace with
$\nrbi=\nrbInteriori+\nrbBoundaryi$
as 
$
	\statei \approx \stateApproxi \equiv (\stateApproxInteriori,
	\stateApproxBoundaryi)=(\rbInteriori\redStateInteriori,
	\rbBoundaryi\redStateBoundaryi) \in \range{\rbInteriori} \times \range{\rbBoundaryi}\subseteq\RR{\ndofInteriori}\times\RR{\ndofBoundaryi}.$

We formulate the domain-decomposition LSPG (DD-LSPG) reduced-order model by minimizing the sum of squared
residual norms over these trial subspaces subject to (possibly weak)
port-compatibility conditions, i.e., we compute $(\redStateInteriori,
\redStateBoundaryi)$, $i\innatseq{\nsubdomains}$ as the solution to the
optimization problem
\begin{align} \label{eq_globalOpt}
\begin{split} 
\underset{(\redStateDummyInteriorArg{i},\redStateDummyBoundaryArg{i}),\,
	i=1,\ldots,\nsubdomains}{\text{minimize}}\quad&\frac{1}{2}\sum_{i=1}^\nsubdomains
\|\hyperMati\residuali(\rbInteriori\redStateDummyInteriori, \rbBoundaryi\redStateDummyBoundaryi)\|_2^2\\
\text{subject to}\quad &\sum_{i=1}^\nsubdomains
\constraintMatROMi \rbBoundaryi \redStateDummyBoundaryi = \zero.
\end{split} 
\end{align} 
Here, $\constraintMatROMi\in\RR{\nconstraintsROM\times\ndofBoundaryi}$,
$i=1,\ldots,\nsubdomains$ denote
constraint matrices 
(see Section \ref{subsect_constraint_types} for how this can be derived
from the constraint matrices $\constraintMati$, $i=1,\ldots,\nsubdomains$)
with
$\nconstraintsROM\leq\nconstraints$ denotes the number of constraints
incurred by port compatibility,  and
$\hyperMatArg{i}\in\RR{\nhyperMatArg{i}\times\nresi}$,
$i=1,\ldots,\nsubdomains$  with
$\nhyperMatArg{i}\leq\nresi$ denotes a matrix that enables the
subdomain residuals to be minimized in any
weighted $\ell^2$-(semi)norm.
%We enforce the bubble basis $\rbInteriori$ to have a value of zero on the
%interface $\boundaryi$, and the interface basis $\rbBoundaryi$ to have at least some support
%on the subdomain interior $\domaini$.

In particular, we focus on three choices of matrix $\hyperMatArg{i}$:
$\hyperMatArg{i} = \identity$ as in ``standard'' LSPG, $\hyperMati=\sampleMati$ in
the case of collocation hyper-reduction \cite{legresley2006application,astrid2008missing,ryckelynck2005priori}, and
$\hyperMati=(\sampleMati\rbResi)^+\sampleMati$ in the case of gappy POD
hyper-reduction \cite{carlbergbou-mosleh2011, carlberg2013gnat,
sirovichOrigGappy} (\CH{see Appendix~\ref{subsect_GNAT_offline} for more details}).
Here,
$\sampleMati\defeq[\unitvecArg{\sampleij{i}{1}}\ \ldots\
\unitvecArg{\sampleij{i}{\nsampleArg{i}}}]^T\in\{0,1\}^{\nsampleArg{i}\times
\nresi}$ comprises selected rows of the $\nresi\times\nresi$ identity matrix,
$\unitvecArg{i}$ denotes the $i$th Kronecker vector, and
$\{\sampleij{i}{1},\ldots,\sampleij{i}{\nsampleArg{i}}\}\subseteq\{1,\ldots,\nresi\}$
denotes the indices of the residual elements sampled by the operator. On the
other hand,  $\rbResi\in\RRstar{\nresi\times\nrbResi}$,
$i=1,\ldots,\nsubdomains$ denote reduced bases for the residual and the
superscript + denotes the Moore--Penrose pseudoinverse. For the
pseudoinverses to correspond to left inverses, the matrices
$\sampleMati\rbResi$, $i=1,\ldots,\nsubdomains$ must have full column rank,
which in turn necessitates $\nsampleArg{i}\geq\nrbResi$,
$i=1,\ldots,\nsubdomains$.
Note that
$\nhyperMati = \nsampleArg{i}$ for collocation hyper-reduction and $\nhyperMati = \nrbResi$ in
the case of gappy POD hyper-reduction. 
We note that hyper-reduction is required to ensure that
computing the ROM solution incurs an $\ndof$-independent operation count.

\subsection{Full-subdomain bases}\label{sec:fullSub}
We also consider a variation on this
formulation corresponding to the 
case of classical full-subdomain reduced bases \cite{maday2002reduced, iapichino2012reduced,
buffoni2009iterative, corigliano2015model}. In this case, each subdomain
is equipped with a single reduced basis $\rbi\in\RRstar{\ndofi\times\nrbi}$
whose columns can have support over both interior and interface degrees
of freedom
such that $\rbInteriori=
\projectionStateInteriori\rbi\in\RR{\ndofInteriori\times\nrbInteriori}$ and
$\rbBoundaryi=
\projectionStateBoundaryi\rbi\in\RR{\ndofBoundaryi\times\nrbBoundaryi}$ with
$\nrbInteriori=\nrbBoundaryi=\nrbi$; note that the reduced bases
$\rbInteriori$ and $\rbBoundaryi$ need not have full column rank individually.
Approximating the solution on the $i$th subdomain as 
$
	\statei\approx\stateApproxi \equiv (\stateApproxInteriori,
	\stateApproxBoundaryi)=(\rbInteriori\redStatei,
	\rbBoundaryi\redStatei)$, the resulting DD-LSPG model computes solutions
	$\redStatei$, $i\innatseq{\nsubdomains}$ as the solution to the optimization
	problem
\begin{align} \label{eq_globalOpt_one_arg}
\begin{split} 
	\underset{\redStateDummyArg{i},\,i\innatseq{\nsubdomains}}{\text{minimize}}\quad&\frac{1}{2}\sum_{i=1}^\nsubdomains
\|\hyperMati\residuali(\rbInteriori \redStateDummyArg{i}, \rbBoundaryi \redStateDummyArg{i})\|_2^2\\
\text{subject to}\quad &\sum_{i=1}^\nsubdomains
\constraintMatROMi \rbBoundaryi \redStateDummyArg{i} = \zero.
\end{split} 
\end{align} 

We again consider the choices of $\hyperMatArg{i} = \identity$,
$\hyperMati=\sampleMati$, and $\hyperMati=(\sampleMati\rbResi)^+\sampleMati$.

For both Problems \eqref{eq_globalOpt} and
\eqref{eq_globalOpt_one_arg}, 
the
effective number of degrees of freedom in the resulting ROM corresponds to
$\nrb= \sum_{i=1}^\nsubdomains\nrbi - \rank{\constraintMatROM}$. Here, we have 
defined the reduced constraint matrix as
$\constraintMatROM\defeq[ \constraintMatROMArg{1} \rbBoundaryArg{1}\ \cdots\
\constraintMatROMArg{\nsubdomains} \rbBoundaryArg{\nsubdomains}] $. This
result holds
because the null
space of the operator $\constraintMatROM$ defines the effective subspace over
which unconstrained minimization takes place.

\subsection{Strong versus weak compatibility constraints}\label{subsect_constraint_types}

Recall that the constraint matrices $\constraintMati$,
$i=1,\ldots,\nsubdomains$ are derived by enforcing the degrees of freedom on
each port to be consistent across shared subdomains according to
Eq.~\eqref{eq_portConstraints}. We can effectively reduce the number of
constraints by weakening this notion of consistency through
enforcing a zero inner product between the difference between port solutions
and a collection of prescribed test functions, i.e.,
\begin{equation} \label{eq_portConstraints}
\testFunctionArg{j}\projectionStatePort{j}{i}\stateBoundaryArg{i} = 
\testFunctionArg{j}\projectionStatePort{j}{\ell}\stateBoundaryArg{\ell},\quad
i,\ell\in\subdomainPorts{j}, \ j\innatseq{\nports},
\end{equation} 
where $\testFunctionArg{j}\in\RR{\nportTestFunction{j}\times
\ndofPortsArg{j}}$ with $\nportTestFunction{j}\leq\ndofPortsArg{j}$ 
and $\rank{\testFunctionArg{j}} = \nportTestFunction{j}$
denotes
the matrix of test functions. 
If we assemble a constraint matrix from (weak) compatibility conditions
\eqref{eq_portConstraints} in the same manner that the original constraint
matrices $\constraintMati$, $i=1,\ldots,\nsubdomains$ were assembled from
(strong) compatibility conditions~\eqref{eq_portConstraints}, we obtain the
constraint matrices
$\constraintMatROMi\in\RR{\nconstraintsROM\times\ndofBoundaryi}$,
$i=1,\ldots,\nsubdomains$ with $\nconstraintsROM =
\sum_{j=1}^\nports\nconstraintsPort{j}\nportTestFunction{j}$ that have the
structure $\constraintMatROMi=\randomMat \constraintMatArg{i}$ for some
$\randomMat\in\RR{\nconstraintsROM\times \nconstraints}$, with
$\randomMat=\identity$ in the case of strong compatibility constraints
\eqref{eq_portConstraints}.
\YC{We generate $\testFunctionArg{j}$ from normal distribution, e.g., using {\it
randn} function in MATLAB. This matrix is widely used to sample from large data
and obtain optimal averaging effect. For example, randomized SVD introduced in
\cite{halko2011finding} uses exactly the same random matrix. Our numerical
examles shows that this choice is effective.}

\begin{remarked}\label{rem:globalSol}
Critically, the case of weak compatibility constraints (i.e.,
	$\nportTestFunction{j}<\ndofPortsArg{j}$) admits discrepancies between the
	restrictions of subdomain solutions to the $j$th port, i.e., Eq.~\eqref{eq_portConstraints}
	will not hold in general. This phenomenon
	precludes the existence of a `global solution', as ports may not have a
	uniquely computed solution.  While this may appear to be deleterious to the
	accuracy of the computed DD-LSPG solution, we show in the numerical
	experiments that this relaxation is critical to obtain accurate solutions
	when neighboring components have incompatible bases on the associated port;
	this occurs in particular for interface and
	full-subdomain bases. For such bases, enforcing strong compatibility
	constraints effectively causes the subdomains to generate the trivial
  solution on the associated ports, yielding poor overall solution accuracy,
  even if the solution on the ports is uniquely defined. \YC{In summary, the
  existence of a solution depends heavily on the compatibility of the port
  bases. If the incompatible port bases are  generated, then a weak
  constraint is necessary for the existence of a solution. On the other hand, if
  the compatible port bases are used, then both strong and weak constraints
  ensure the existence of a solution. }
\end{remarked}

\section{Sequential quadratic programming solver}\label{sec_solver}

Problems \eqref{eq_globalOpt}--\eqref{eq_globalOpt_one_arg} can be classified as a \textit{nonlinear least-squares problems with linear equality
constraints}. As such, they are well-suited to solution with a sequential quadratic programming (SQP) method, which in this case is equivalent to applying Newton's method to the Karush--Kuhn--Tucker (KKT) necessary conditions for optimality. This section describes this
solution approach.

\subsection{Interior/boundary bases} \label{sec_sqp_interior}
We begin by defining the Lagrangian associated with problem \eqref{eq_globalOpt}
\begin{equation} 
\lagrangian:(\redStateDummyInteriorArg{1},\redStateDummyBoundaryArg{1},\ldots,\redStateDummyInteriorArg{\nsubdomains},\redStateDummyBoundaryArg{\nsubdomains},\lagrangeROMDummy)\mapsto
\frac{1}{2}\sum_{i=1}^\nsubdomains
\|\hyperMati\residuali(\rbInteriori\redStateDummyInteriori,\rbBoundaryi\redStateDummyBoundaryi)\|_2^2
+ \sum_{i=1}^\nsubdomains
\lagrangeROMDummy^T\constraintMatROMi\rbBoundaryi\redStateDummyBoundaryi,
\end{equation} 
where $\lagrangeROMDummy\in\RR{\nconstraintsROM}$ denotes Lagrange multipliers.
The KKT conditions arise from
stationarity of the Lagrangian, i.e., the DD-LSPG ROM solution
$(\redStateInteriorArg{1},\redStateBoundaryArg{1},\ldots,\redStateInteriorArg{\nsubdomains},\redStateBoundaryArg{\nsubdomains},\lagrangeROM)$
satisfies
\begin{align}\label{eq_KKT} 
	\begin{split}
	&\frac{\partial \lagrangian}{\partial
	\redStateDummyInteriori}(\redStateInteriorArg{1},\redStateBoundaryArg{1},\ldots,\redStateInteriorArg{\nsubdomains},\redStateBoundaryArg{\nsubdomains},\lagrangeROM)
	= 
\redResInteriori(\redStateInteriori,\redStateBoundaryi)=
	\zero,\quad i=1,\ldots,\nsubdomains\\
	&\frac{\partial \lagrangian}{\partial
	\redStateDummyBoundaryi}(\redStateInteriorArg{1},\redStateBoundaryArg{1},\ldots,\redStateInteriorArg{\nsubdomains},\redStateBoundaryArg{\nsubdomains},\lagrangeROM)
		=\redResBoundaryi(\redStateInteriori,\redStateBoundaryi,\lagrangeROM)
=\zero,\quad i=1,\ldots,\nsubdomains
	\\
	&\frac{\partial \lagrangian}{\partial
	\lagrangeROMDummy}(\redStateInteriorArg{1},\redStateBoundaryArg{1},\ldots,\redStateInteriorArg{\nsubdomains},\redStateBoundaryArg{\nsubdomains},\lagrangeROM)=\sum_{i=1}^\nsubdomains
\constraintMatROMi\rbBoundaryi\redStateBoundaryi =\zero,
	\end{split}
\end{align} 
where we have defined
\begin{align}
	\begin{split}
	&\redResInteriori:(\redStateDummyInteriori,\redStateDummyBoundaryi)\mapsto[\rbInteriori]^T
	\frac{\partial\residuali}{\partial\stateInteriori}(\rbInteriori\redStateDummyInteriori
	, \rbBoundaryi\redStateDummyBoundaryi)^T[\hyperMati]^T\hyperMati
	\residuali(\rbInteriori\redStateDummyInteriori
, \rbBoundaryi\redStateDummyBoundaryi)\\
	&\redResBoundaryi:(\redStateDummyInteriori,\redStateDummyBoundaryi,\lagrangeROMDummy)\mapsto[\rbBoundaryi]^T\frac{\partial\residuali}{\partial \stateBoundaryi}(\rbInteriori\redStateDummyInteriori
, \rbBoundaryi\redStateDummyBoundaryi)^T
[\hyperMati]^T\hyperMati
	\residuali(\rbInteriori\redStateDummyInteriori
, \rbBoundaryi\redStateDummyBoundaryi) +
	[\rbBoundaryi]^T\constraintMatROMi^T\lagrangeROMDummy
	\end{split}
\end{align}
for $i=1,\ldots,\nsubdomains$.
Applying Newton's method with a Gauss--Newton Hessian approximation to solve
the system of nonlinear algebraic equations~\eqref{eq_KKT}
yields the SQP iterations for $k=0,\ldots,K$
\begin{gather} \label{eq_SQP}
	\small
\begin{split}
\left[
	\begin{array}{cccccc}
	\hessianIntIntArg{1}(\redStateInteriorArgk{1},\redStateBoundaryArgk{1}) 
	&
	\hessianIntBoundArg{1}(\redStateInteriorArgk{1},\redStateBoundaryArgk{1}) 
	& \ldots & \zero & \zero & \zero\\
	\hessianBoundIntArg{1}(\redStateInteriorArgk{1},\redStateBoundaryArgk{1}) 
	& \hessianBoundBoundArg{1}(\redStateInteriorArgk{1},\redStateBoundaryArgk{1}) 
	& \ldots & \zero & \zero & [\rbBoundaryArg{1}]^T\constraintMatROMArg{1}^T \\
	\vdots & \vdots & \ddots & \vdots & \vdots & \vdots\\
	\zero & \zero & \ldots &
	\hessianIntIntArg{\nsubdomains}(\redStateInteriorArgk{\nsubdomains},\redStateBoundaryArgk{\nsubdomains})
	& \hessianIntBoundArg{\nsubdomains}(\redStateInteriorArgk{\nsubdomains},\redStateBoundaryArgk{\nsubdomains}) 
	& \zero\\
	\zero & \zero & \ldots &
	\hessianBoundIntArg{\nsubdomains}(\redStateInteriorArgk{\nsubdomains},\redStateBoundaryArgk{\nsubdomains}) & \hessianBoundBoundArg{\nsubdomains}(\redStateInteriorArgk{\nsubdomains},\redStateBoundaryArgk{\nsubdomains})
	& [\rbBoundaryArg{\nsubdomains}]^T\constraintMatROMArg{\nsubdomains}^T\\
	\zero & \constraintMatROMArg{1}\rbBoundaryArg{1} & \ldots &\zero & \constraintMatROMArg{\nsubdomains}\rbBoundaryArg{\nsubdomains} & \zero
	\end{array}
\right]
\left[
	\begin{array}{c}
	\searchDirInteriorArgk{1}\\
	\searchDirBoundaryArgk{1}\\
	\vdots\\
	\searchDirInteriorArgk{\nsubdomains}\\
	\searchDirBoundaryArgk{\nsubdomains}\\
	\searchDirLagrangek
	\end{array}
\right] = \\
 -
\left[
	\begin{array}{c}
	\redResInteriorArg{1}(\redStateInteriorArgk{1},\redStateBoundaryArgk{1})\\
	\redResBoundaryArg{1}(\redStateInteriorArgk{1},\redStateBoundaryArgk{1})\\
	\vdots\\
	\redResInteriorArg{\nsubdomains}(\redStateInteriorArgk{\nsubdomains},\redStateBoundaryArgk{\nsubdomains})\\
	\redResBoundaryArg{\nsubdomains}(\redStateInteriorArgk{\nsubdomains},\redStateBoundaryArgk{\nsubdomains})\\
	\sum_{i=1}^\nsubdomains
	\constraintMatROMi\rbBoundaryi\redStateBoundaryi
	\end{array}
\right] 
\end{split}
\end{gather} 
  
\noindent where 
\begin{align}\label{eq_Hessian}
	\begin{split}
\hessianIntInti:(\redStateDummyInteriori,\redStateDummyBoundaryi)&\mapsto[\rbInteriorArg{i}]^T\frac{\partial\residualArg{i}}{\partial \stateDummyInteriori}(\rbInteriorArg{i}\redStateDummyInteriorArg{i}
,
\rbBoundaryArg{i}\redStateDummyBoundaryArg{i})^T [\hyperMati]^T\hyperMati\frac{\partial\residualArg{i}}{\partial \stateDummyInteriori}(\rbInteriorArg{i}\redStateDummyInteriorArg{i}
,
\rbBoundaryArg{i}\redStateDummyBoundaryArg{i})\rbInteriorArg{i}  \\
\hessianIntBoundi:(\redStateDummyInteriori,\redStateDummyBoundaryi)&\mapsto[\rbInteriorArg{i}]^T\frac{\partial\residualArg{i}}{\partial \stateDummyInteriori}(\rbInteriorArg{i}\redStateDummyInteriori
,
\rbBoundaryArg{i}\redStateDummyBoundaryArg{i})^T[\hyperMati]^T\hyperMati\frac{\partial\residualArg{i}}{\partial \stateDummyBoundaryi}(\rbInteriorArg{i}\redStateDummyInteriorArg{i}
,
\rbBoundaryArg{i}\redStateDummyBoundaryArg{i})\rbBoundaryArg{i}  \\
\hessianBoundInti:(\redStateDummyInteriori,\redStateDummyBoundaryi)&\mapsto[\rbBoundaryArg{i}]^T\frac{\partial\residualArg{i}}{\partial \stateDummyBoundaryi}(\rbInteriorArg{i}\redStateDummyInteriorArg{i}
,
\rbBoundaryArg{i}\redStateDummyBoundaryArg{i})^T[\hyperMati]^T\hyperMati\frac{\partial\residualArg{i}}{\partial\stateDummyInteriori}(\rbInteriorArg{i}\redStateDummyInteriorArg{i}
,
\rbBoundaryArg{i}\redStateDummyBoundaryArg{i})\rbInteriorArg{i} \\
\hessianBoundBoundi:(\redStateDummyInteriori,\redStateDummyBoundaryi)&\mapsto[\rbBoundaryArg{i}]^T\frac{\partial\residualArg{i}}{\partial \stateDummyBoundaryi}(\rbInteriorArg{i}\redStateDummyInteriorArg{i}
,
\rbBoundaryArg{i}\redStateDummyBoundaryArg{i})^T[\hyperMati]^T\hyperMati\frac{\partial\residualArg{i}}{\partial \stateDummyBoundaryi}(\rbInteriorArg{i}\redStateDummyInteriorArg{i}
,
\rbBoundaryArg{i}\redStateDummyBoundaryArg{i})\rbBoundaryArg{i}
	\end{split}
\end{align}
for $i=1,\ldots,\nsubdomains$.
We can then update the solution as 
\begin{align} \label{eq_prdumono_update}
	\begin{split}
	\redStateInteriorikp &= \redStateInteriorik + \alpha^{(k)}
\searchDirInteriorik,\quad i=1,\ldots,\nsubdomains  \\
	\redStateBoundaryikp &= \redStateBoundaryik + \alpha^{(k)}
\searchDirBoundaryik,\quad i=1,\ldots,\nsubdomains  \\
	\lagrangeROMkp &= \lagrangeROMk + \alpha^{(k)}
\searchDirLagrangek,
	\end{split}
\end{align}
where $\alpha^{(k)}$ is a step length that can be computed, e.g., via line search.
%\footnote{Alternatively, we could also add
%$\gamma I$ to eqs.~\eqref{eq_hessianIntInt} and \eqref{eq_hessianBoundBound}
%to make it a trust region approach.}. 
%\KTC{Paul Boggs mentioned we could add $\gamma I$ to Eqs.~\eqref{eq_hessianIntInt} and
%\eqref{eq_hessianBoundBound} to make it a trust region approach.}
%We also note that Schur complements of the matrix blocks can be used to solve the linear system \eqref{eq_SQP} assuming the matrices $\hessianIntInti$ are invertible.
We note that the sparse block structure of SQP iterations~\eqref{eq_SQP} admit
interesting parallel-solution strategies, which is the subject of future work.
\YC{We also note that the Gauss--Newton approximation is widely used for the
solution process of nonlinear problems due to its practicality, i.e., no need
to compute an exact Hessian and often achieve a quadratic convergence rate
although its convergence is not guaranteed \cite{mascarenhas2014divergence}. Our
numerical examples show that the Gauss--Newton method works well for the
problems considered in this paper.}

\subsection{Full-subdomain bases}\label{sqpFull}

Analogously to Section \ref{sec_sqp_interior}, the Lagrangian associated
with problem
\eqref{eq_globalOpt_one_arg} is defined as
\begin{equation} 
\lagrangian:(\redStateDummyArg{1},\ldots,\redStateDummyArg{\nsubdomains},\lagrangeROMDummy)\mapsto
	\frac{1}{2}\sum_{i=1}^\nsubdomains \|\hyperMati\residuali(\rbInteriori
	\redStateDummyArg{i},\rbBoundaryi \redStateDummyArg{i})\|_2^2 +
	\sum_{i=1}^\nsubdomains \lagrangeROMDummy^T \constraintMatROMi \rbBoundaryi \redStateDummyArg{i},
\end{equation} 
where the KKT system can be derived from stationarity of the Lagrangian such
that the DD-LSPG ROM solution
$(\redStateArg{1},\ldots,\redStateArg{\nsubdomains},\lagrangeROM)$ satisfies
\begin{align}\label{eq_KKT_global} 
	\begin{split}
	&\frac{\partial \lagrangian}{\partial
	\redStateDummyi}(\redStateArg{1},\ldots,\redStateArg{\nsubdomains},\lagrangeROM)
	= 
	\redResSubdom{i}(\redStatei,\lagrangeROM)=
	\zero,\quad i=1,\ldots,\nsubdomains\\
	&\frac{\partial \lagrangian}{\partial
	\lagrangeROM}(\redStateArg{1},\ldots,\redStateArg{\nsubdomains},\lagrangeROM)=\sum_{i=1}^\nsubdomains
\constraintMatROMi\rbBoundaryi\redStateBoundaryi =\zero,
	\end{split}
\end{align} 
 where we have defined
\begin{align}\label{eq_KKT_subdom_BFs} 
\begin{split}
	\redResSubdom{i}:(\redStateDummyArg{i},\lagrangeROMDummy)\mapsto&
[\rbInteriori]^T \frac{\partial\residuali}{\partial\stateDummyInteriori}(\rbInteriori\redStateDummyArg{i}, \rbBoundaryi\redStateDummyArg{i})^T
	[\hyperMati]^T\hyperMati\residuali(\rbInteriori \redStateDummyArg{i}, \rbBoundaryi \redStateDummyArg{i}) \\
	&+ [\rbBoundaryi]^T\frac{\partial\residuali}{\partial \stateDummyBoundaryi}(\rbInteriori \redStateArg{i}, \rbBoundaryi \redStateArg{i})^T
	[\hyperMati]^T\hyperMati
	\residuali(\rbInteriori \redStateArg{i}, \rbBoundaryi \redStateArg{i}) +
	(\rbBoundaryi)^T\constraintMatROMi^T\lagrangeROMDummy
\end{split}
\end{align}
for $i=1,\ldots,\nsubdomains$.
Applying Newton's method with a Gauss--Newton Hessian approximation to solve
the system of nonlinear algebraic
equations~\eqref{eq_KKT_global}
yields the SQP iterations for $k=0,\ldots,K$
\begin{gather} \label{eq_SQP_subdom_BF}
\left[ 
\begin{array}{cccc}
\hessianSubdomBF{1}(\redStateArgk{1}) & \ldots & \zero &  (\rbBoundaryArg{1})^T\constraintMatROMArg{1}^T \\
	\vdots & \ddots & \vdots & \vdots\\
\zero & \ldots & \hessianSubdomBF{\nsubdomains}(\redStateArgk{\nsubdomains}) & (\rbBoundaryArg{\nsubdomains})^T\constraintMatROMArg{\nsubdomains}^T  \\
	\constraintMatROMArg{1}\rbBoundaryArg{1} & \ldots & \constraintMatROMArg{\nsubdomains}\rbBoundaryArg{\nsubdomains} & \zero
\end{array} \right]
\left[ \begin{array}{c}
\searchDirSubdomBFk{1}\\
	\vdots\\
\searchDirSubdomBFk{\nsubdomains}\\
	\searchDirLagrange
\end{array} \right] = - 
\left[ \begin{array}{c}
\redResSubdom{1}(\redStateArgk{1}) \\
	\vdots\\
\redResSubdom{\nsubdomains}(\redStateArgk{\nsubdomains})\\
	\sum_{i=1}^\nsubdomains \constraintMatROMi \rbBoundaryi \redStateArgk{i}
\end{array} \right], 
\end{gather} 
where 
\begin{align} \label{eq_hessianSubdomBF}
	\begin{split}
\hessianSubdomBF{i}:\redStateDummyArg{i} &\mapsto [\rbInteriorArg{i}]^T \frac{\partial\residualArg{i}}{\partial \stateDummyInteriori}(\rbInteriorArg{i}\redStateDummyArg{i},
\rbBoundaryArg{i}\redStateDummyArg{i})^T[\hyperMati]^T\hyperMati \frac{\partial\residualArg{i}}{\partial \stateDummyInteriori}(\rbInteriorArg{i}\redStateDummyArg{i},
\rbBoundaryArg{i}\redStateDummyArg{i})\rbInteriorArg{i}  \\
	&+ [\rbInteriorArg{i}]^T\frac{\partial\residualArg{i}}{\partial \stateDummyInteriori}(\rbInteriorArg{i}\redStateDummyArg{i} ,
	\rbBoundaryArg{i}\redStateDummyArg{i})^T [\hyperMati]^T\hyperMati\frac{\partial\residualArg{i}}{\partial \stateDummyBoundaryi}(\rbInteriorArg{i}\redStateDummyArg{i} ,
	\rbBoundaryArg{i}\redStateDummyArg{i})\rbBoundaryArg{i}  \\
&+ [\rbBoundaryArg{i}]^T \frac{\partial\residualArg{i}}{\partial \stateDummyBoundaryi}(\rbInteriorArg{i}\redStateDummyArg{i}, \rbBoundaryArg{i}\redStateDummyArg{i})^T [\hyperMati]^T\hyperMati\frac{\partial\residualArg{i}}{\partial\stateDummyInteriori}(\rbInteriorArg{i}\redStateDummyArg{i} , \rbBoundaryArg{i}\redStateDummyArg{i}) \rbInteriorArg{i} \\
	&+ [\rbBoundaryArg{i}]^T \frac{\partial\residualArg{i}}{\partial \stateDummyBoundaryi}(\rbInteriorArg{i}\redStateDummyArg{i} ,
\rbBoundaryArg{i}\redStateDummyArg{i})^T [\hyperMati]^T\hyperMati\frac{\partial\residualArg{i}}{\partial \stateDummyBoundaryi}(\rbInteriorArg{i}\redStateDummyArg{i} ,
\rbBoundaryArg{i}\redStateDummyArg{i}) \rbBoundaryArg{i}
\end{split}
\end{align}
for $i=1,\ldots,\nsubdomains$.
We can then update the solution as 
\begin{align}\label{eq_prdumono_update_subdomBF}
\begin{split}
	\redStateArgkp{i} &= \redStateArgk{i} + \alpha^{(k)}
\searchDirSubdomBFk{i},\quad i=1,\ldots,\nsubdomains \\
	\lagrangeROMkp &= \lagrangeROMk + \alpha^{(k)} \searchDirLagrangek,
\end{split}
\end{align}
where $\alpha^{(k)}$ is a step length that can be computed, e.g., via line search.

\section{Online algorithm and computational cost}\label{sec_cost}
We now describe the computational cost of executing the online stage.
To make this precise, we first introduce the sampling operators
$\sampleMatResi\in\{0,1\}^{\nsampleResi\times \ndofi}$,
$\sampleMatStateInteriori\in\{0,1\}^{\nsampleInteriori\times \ndofInteriori}$,
and $\sampleMatStateBoundaryi\in\{0,1\}^{\nsampleBoundaryi\times
\ndofBoundaryi}$, $i=1,\ldots,\nsubdomains$, which comprise selected rows of
the $\ndofi\times\ndofi$, $\ndofInteriori\times
\ndofInteriori$, and $\ndofBoundaryi\times \ndofBoundaryi$ identity matrices,
respectively. These matrices are those of the prescribed structure that
satisfy 
\begin{align}
	\hyperMati
[\sampleMatResi]^T\sampleMatResi
	\residuali([\sampleMatStateInteriori]^T\sampleMatStateInteriori
	\stateDummyInteriori,
	[\sampleMatStateBoundaryi]^T\sampleMatStateBoundaryi\stateDummyBoundaryi) =
	\hyperMati\residuali(
	\stateDummyInteriori,
	\stateDummyBoundaryi)
	,\quad
	\forall \stateDummyInteriori\in\RR{\ndofInteriori},\
	\stateDummyBoundaryi\in\RR{\ndofBoundaryi},
\end{align}
with the fewest number of rows.
In particular, $\sampleMatStateInteriori$ and $\sampleMatStateBoundaryi$
sample
the degrees of freedom
associated with nonzero columns of the Jacobians
$\hyperMati\frac{\partial\residuali}{\partial\stateDummyInteriori}$ and
$\hyperMati\frac{\partial\residuali}{\partial\stateDummyBoundaryi}$,
respectively.

Note that for standard LSPG (i.e., $\hyperMatArg{i} = \identity$), we have
simply
$\sampleMatResi=\identity$,
$\sampleMatStateInteriori=\identity$, and $\sampleMatStateBoundaryi=\identity$
with 
$\nsampleResi= \ndofi$,
$\nsampleInteriori= \ndofInteriori$, and $
\nsampleBoundaryi= \ndofBoundaryi$.
For collocation (i.e., $\hyperMati=\sampleMati$) and
gappy POD (i.e., $\hyperMati=(\sampleMati\rbResi)^+\sampleMati$), we have 
$\sampleMatResi = \sampleMati$, $\nsampleResi = \nsampleArg{i}$,
$\nsampleInteriori\ll \ndofInteriori$ and $
\nsampleBoundaryi\ll \ndofBoundaryi$ if $\nsampleArg{i}\ll\ndofi$ and the 
Jacobians
$\frac{\partial\residuali}{\partial\stateDummyInteriori}$ and
$\frac{\partial\residuali}{\partial\stateDummyBoundaryi}$ are sparse.

Algorithms \ref{alg_psi_assembly} and \ref{alg_psi_solving} describe the
assembly and solve steps required within each SQP iteration, respectively,
while Tables \ref{tab_DDROMprduM_assembly_cost} and
\ref{tab_DDGNATprduM_solving_cost} report the associated floating-point
operation counts.

Algorithm \ref{alg_psi_assembly} and Table~\ref{tab_DDROMprduM_assembly_cost}
show that Steps 1--3 of the online assembly can be parallelized across the
subdomains, while Step 4 requires a reduction across subdomains. Further,
these illuminate that
$\nsampleInteriori,\nsampleBoundaryi,\nsampleResi\ll\ndofi$ are necessary in
order to achieve an $\ndof$-independent online operation count; this is
precisely what is provided by hyper-reduction.  Here,
$\flopsComputeOneEntryResiduali$ and $\flopsComputeOneEntryJacobiani$ denote
the average number of floating point operations required to evaluate one entry
of the $i$th residual and one row of the $i$th Jacobian matrix, respectively,
while $\averageNNZperRowInteriori$ and $\averageNNZperRowBoundaryi$ denote the
average number of non-zeros per row of the $i$th Jacobian for the interior and
interface, respectively. Note that the operation counts for assembly are
identical for the interior/boundary bases and full-subdomain bases cases.

Algorithm \ref{alg_psi_solving} and Table \ref{tab_DDGNATprduM_solving_cost}
report the steps and associated computational costs associated with the solve
and update for each SQP iteration. 
Importantly, we see that the online solve and update depend only on the
dimensions of the reduced bases and constraint matrices; as such, they are
independent of the quantities
$\nsampleInteriori$, $\nsampleBoundaryi$, and $\nsampleResi$ and are thus
unaffected by hyper-reduction.
Also, here we observe noticeable differences in
the operation counts associated with the interior/boundary bases and
full-subdomain bases cases. In the case of interior/boundary bases, using a specialized Cholesky-based ${\rm LDL^T}$ factorization \cite{lubin2012parallel}, the solve
cost for system \eqref{eq_SQP} is $\frac{1}{3} (\sizeLinSysPrduMono)^3$ with system dimension $\sizeLinSysPrduMono=\sum_{i=1}^\nsubdomains \nrbInteriori +
\sum_{i=1}^\nsubdomains \nrbBoundaryi + \nconstraintsROM$\footnote{For reference, a better solver based on antitriangular factorization for saddle point matrices that was proposed recently \cite{pestana2014antitriangular, rees2018comparative} has the solving cost of only $8mn^2 - 2m^2(n+m/3)$ flops where $n=\sum_{i=1}^\nsubdomains (\nrbInteriori + \nrbBoundaryi)$ and $m=\nconstraintsROM$.}. In contrast, in the case of full-subdomain bases, the
solve cost for system \eqref{eq_SQP_subdom_BF} is $\frac{1}{3} (\sizeLinSysPrduMonoSubdomBF)^3$ with dimension system $\sizeLinSysPrduMonoSubdomBF=\nsubdomains \nrbArg{i} + \nconstraintsROM$. Thus, we expect the solve cost to be less expensive for the full-subdomain
cases when similar reduced-basis dimensions are employed. 

\begin{center}
\begin{minipage}[t]{\textwidth}
\vspace{0pt}  
\begin{algorithm}[H]\label{alg_psi_assembly}
	\caption{Interior/boundary bases: Online assembly at each SQP iteration}
	1: \textbf{Parallel}: Compute the required elements of the state
	$(\sampleMatStateInteriori\stateApproxInteriorik,
	\sampleMatStateBoundaryi\stateApproxBoundaryik)=(\sampleMatStateInteriori\rbInteriori\redStateInteriorik,
	\sampleMatStateBoundaryi\rbBoundaryi\redStateBoundaryi)$ for $i=1,\ldots,
	\nsubdomains$\;
	2: \textbf{Parallel}: Compute residual
	$\hyperMati\residuali(\sampleMatStateInteriori\stateApproxInteriorik	,
	\sampleMatStateBoundaryi\stateApproxBoundaryik)$, Jacobians
	$\hyperMati\frac{\partial\residuali}{\partial\stateDummyInteriori}(\sampleMatStateInteriori\stateApproxInteriorik,\sampleMatStateBoundaryi\stateApproxBoundaryik)\rbInteriori$,
	$\hyperMati\frac{\partial\residuali}{\partial\stateDummyBoundaryi}(\sampleMatStateInteriori\stateApproxInteriorik,\sampleMatStateBoundaryi\stateApproxBoundaryik)\rbBoundaryi$,
	constraint $\constraintMatROMi\rbBoundaryi\redStateBoundaryi$, and 
	$[\rbBoundaryi]^T[\constraintMatROMi]^T\lagrangeROMk$
	for $i=1,\ldots,
	\nsubdomains$\;
	3: \textbf{Parallel}: Using these quantities, compute terms that appear in
	the SQP system\;
\begin{itemize} 
	\item \textbf{Interior/boundary bases}: compute
	$\redResInteriori(\redStateInteriorik,\redStateBoundaryik)$,
	$\redResBoundaryi(\redStateInteriorik,\redStateBoundaryik,\lagrangeROMk)$, 
$\hessianIntInti(\redStateInteriorik,\redStateBoundaryik)$, 
$\hessianIntBoundi(\redStateInteriorik,\redStateBoundaryik)$,
$\hessianBoundInti(\redStateInteriorik,\redStateBoundaryik)$, and 
$\hessianBoundBoundi(\redStateInteriorik,\redStateBoundaryik)$ for $i=1,\ldots,
	\nsubdomains$.
\item \textbf{Full-subdomain basis}: compute
	$\redResSubdom{i}(\redStateInteriorik,\redStateBoundaryik)$,
	$\hessianSubdomBF{i}(\redStateInteriorik,\redStateBoundaryik)$,
 for $i=1,\ldots,
	\nsubdomains$\;
\end{itemize}
	4: \textbf{Serial}: Reduce constraints over subdomains
$\sum_{i=1}^\nsubdomains
	\constraintMatROMi\rbBoundaryi\redStateBoundaryi$;
\end{algorithm}
\end{minipage}% 

\qquad 

\begin{minipage}[t]{\textwidth}
\vspace{0pt}
\begin{algorithm}[H]\label{alg_psi_solving}
	\caption{Interior/boundary bases: Online solve and update at each SQP iteration}
	5: \textbf{Serial}: Solve the SQP system \eqref{eq_SQP} or
	\eqref{eq_SQP_subdom_BF}\;
	6: \textbf{Serial}: Update the boundary and interface states via Eq.\
	\eqref{eq_prdumono_update} or \eqref{eq_prdumono_update_subdomBF}\;
	7: \textbf{Serial}: Update the Lagrange multipliers via Eq.\
	\eqref{eq_prdumono_update} or \eqref{eq_prdumono_update_subdomBF}\;
\end{algorithm}
\end{minipage}
\end{center}

%%%%%%%%%%%%% table 1: assembly complexity of DDROMprduM %%%%%%%%%%%%%
\begin{table}[h!]
\center
	\caption{Algorithm \ref{alg_psi_assembly}  operation count} \label{tab_DDROMprduM_assembly_cost}
{\begin{tabular}{c c l}
	\hline \rule{0pt}{2.5ex}
\begin{tabular}{@{}c@{}}
	Algorithm \ref{alg_psi_assembly}\\
	step 
\end{tabular}
	& Parallel/serial & Floating point operation count \\ [0.2ex]
	%      &     &   &     \\[0.5ex]
	\hline \rule{0pt}{2.5ex}
	\hspace{-3pt}1 & Parallel & $ 2\nsampleInteriori
	\nrbInteriori + 2\nsampleBoundaryi \nrbBoundaryi$ for the $i$th subdomain \\
	\hline
	2 & Parallel & 
\begin{tabular}{@{}l@{}}
$\nsampleResi \flopsComputeOneEntryResiduali + \nsampleResi \flopsComputeOneEntryJacobiani + 
	2\nsampleResi
	\averageNNZperRowInteriori\nrbInteriori + 
2\nsampleResi
	\averageNNZperRowBoundaryi\nrbBoundaryi+4\nconstraintsROM\nrbBoundaryi +$
	\\ $		\mathtt{is\_dense}(\hyperMati)
	\left(2\nhyperMatArg{i}\nsampleResi(1 + \nrbInteriori+\nrbBoundaryi)\right)
	$ \\for the $i$th subdomain
\end{tabular}
  \\
	\hline
	3 &Parallel  &
\begin{tabular}{@{}l@{}}
	$ 
	2\nrbInteriori\nhyperMatArg{i}+
	2\nrbBoundaryi\nhyperMatArg{i} + \nrbBoundaryi+
		(\nhyperMatArg{i})^2\left(\left(\nrbBoundaryi\right)^2+2\nrbBoundaryi\nrbInteriori
		+ \left(\nrbInteriori\right)^2\right)$ \\
		for the $i$th subdomain
\end{tabular}
		\\
	\hline
	4 & Serial & $2\nsubdomains\nconstraintsROM$  \\
	\hline
\end{tabular}}
%$\dagger$The number of Greedy iterations to evaluate the RB effective compliance tensor is fixed at 45.
%\tabnote{$^{\rm b}$An example table footnote to show the text turning over when a long footnote is inserted.}
\end{table}

%%%%%%%%%%%%% table 2: solving complexity of DDGNATprduM %%%%%%%%%%%%%
\begin{table}[h!]
\center
\caption{Algorithm \ref{alg_psi_solving} operation count} \label{tab_DDGNATprduM_solving_cost}
{\begin{tabular}{c c l}
	\hline \rule{0pt}{2.5ex}
	Algorithm \ref{alg_psi_solving} step & Parallel/serial & Floating point operation count   \\ [0.2ex]
	%      &     &   &     \\[0.5ex]
	\hline \rule{0pt}{2.5ex}
	\hspace{-3pt}5 & Serial & 
\begin{tabular}{@{}l@{}}
	\textbf{Interior/boundary bases}:  $\frac{1}{3} \left(\nsubdomains \nrbInteriori +
	 \nsubdomains \nrbBoundaryi + \nconstraintsROM \right)^3$ \\
	 \textbf{Full-subdomain bases}: $\frac{1}{3} \left(\nsubdomains \nrbArg{i} + \nconstraintsROM \right)^3$  
\end{tabular}
	\\
	\hline
	6& Serial & $ 2\nsubdomains (\nrbInteriori+\nrbBoundaryi)$ for the $i$th subdomain\\
	\hline
	7 & Serial& $2\nconstraintsROM $ \\
	\hline
\end{tabular}}
%$\dagger$The number of Greedy iterations to evaluate the RB effective compliance tensor is fixed at 45.
%\tabnote{$^{\rm b}$An example table footnote to show the text turning over when a long footnote is inserted.}
\end{table}

\section{Basis construction}\label{sec_basis_construction}

This section describes how the different proposed reduced bases can be
constructed assuming that a full-system state-snapshot matrix
$\stateSnapshotMatrix\defeq\left[
	\state(\trainParam{1})\ \cdots\ 
\state(\trainParam{\numTrainSample})\right]\in\RR{\ndof\times\numTrainSample}$ with
$\trainSample\subseteq\paramDomain$ has been
precomputed during an ``offline'' training stage. \CH{Section~\ref{sect_subdom_training} describes a specific approach to construct the snapshots from subdomain/component training snapshots alone, which will be necessary for truly extreme-scale models and decomposable systems.} Algorithm \ref{alg_SVD} lists
the widely-used proper orthogonal decomposition (POD) algorithm that we employ
to construct all proposed reduced bases in this work. Throughout,
$\energyCriterion\in[0,1]$ is the ``energy criterion'' used to determine the
applied truncation.

%%%%%%%%%%%%% algorithm 1: SVD method %%%%%%%%%%%%%
\begin{center}
\begin{minipage}[t]{16cm}
\vspace{0pt}
\begin{algorithm}[H]\label{alg_SVD}
\begin{algorithmic}[1]
	\caption{\texttt{POD}: Proper orthogonal decomposition}
	\REQUIRE Snapshots $\setOfSnapshotsDummy \in \RR{\ndofDummy \times
	\numSnapshotDummy}$, energy criterion $\energyCriterion\in[0,1]$
	\ENSURE Reduced-basis matrix $\rbDummy \in \RR{\ndofDummy \times \nrbDummy}$ 
	\STATE Compute (thin) singular value decomposition: $\setOfSnapshotsDummy = \UDummy \SigmaDummy \VDummy^T$, 
	\STATE Set $\rbDummy = \left[ \rbDummyArg{1}\ \cdots\ \rbDummyArg{\nrbDummy}
	\right]$, where 
	$\nrbDummy=\min_{i\in\Gamma(\energyCriterion)}$,
	$\Gamma(\energyCriterion)\defeq\{i\,|\,\sum_{j=1}^i\sigma_j/\sum_{k=1}^{\numSnapshotDummy}\sigma_k\geq
	1-\energyCriterion\}$.
	Here, $\UDummy \equiv \left[ \rbDummyArg{1}\ \cdots,
	\rbDummyArg{\numSnapshotDummy} \right]$ and
	$\SigmaDummy\equiv\mathrm{diag}(\sigma_1,\ldots,\sigma_{\numSnapshotDummy})$. 
\end{algorithmic}
\end{algorithm}
\end{minipage}% 
\end{center}

\subsection{Interior/boundary bases}\label{sec:constrIntBound}
We first describe various approaches to constructing interior/boundary bases.
\begin{itemize} 
	\item \textbf{Interior bases}.
To compute interior bases
$\rbInteriori\in\RRstar{\ndofInteriori\times\nrbInteriori}$,
$i=1,\ldots,\nsubdomains$, we simply execute Algorithm \ref{alg_SVD} with
snapshots isolated to subdomain interiors such that
$\rbInteriori=\texttt{POD}(\projectionStateInteriori\stateSnapshotMatrix,\energyCriterion)$,
		$i=1,\ldots,\nsubdomains$.
	\item \textbf{(Boundary) Full-interface bases}.
Analogously, we compute full-interface bases by 
executing Algorithm \ref{alg_SVD} with
snapshots isolated to subdomain interfaces such that
$\rbBoundaryi=\texttt{POD}(\projectionStateBoundaryi\stateSnapshotMatrix,\energyCriterion)$, $i=1,\ldots,\nsubdomains$.
\item \textbf{(Boundary) Port bases}.
To compute port bases, we first compute reduced bases for each port by executing
Algorithm \ref{alg_SVD} with snapshots
		$\projectionStatePort{j}{\ell}\projectionStateBoundaryArg{\ell}\stateSnapshotMatrix$,
$j=1,\ldots,\nports$ with any $\ell\in\subdomainPorts{j}$, and compute the
		resulting interface bases by assembling the appropriate port bases as
$\rbBoundaryi=[\texttt{POD}(\projectionStatePort{q_i^1}{i}\projectionStateBoundaryi\stateSnapshotMatrix,\energyCriterion)\
\cdots\
\texttt{POD}(\projectionStatePort{q_i^{|\portsSubdomains{i}|}}{i}\projectionStateBoundaryi\stateSnapshotMatrix,\energyCriterion)]$,
$i=1,\ldots,\nsubdomains$,
		where $\portsSubdomains{i}\equiv\{q_i^j\}_j$.
	\item \textbf{(Boundary) Skeleton bases}. This approach first computes a reduced basis
		for the ``skeleton'', which is the union of subdomain interfaces, and
		subsequently isolates that basis to each subdomain's interface while
		ensuring full column rank on that interface. More precisely, we 
		compute $\rbBoundaryiTmp =
		\texttt{POD}((\identity-\sum_{i=1}^{\nsubdomains}[\projectionStateInteriori]^T\projectionStateInteriori)\stateSnapshotMatrix,\energyCriterion)$
		followed by a rank-revealing QR factorization with column pivoting
		$\rbBoundaryiTmp\columnPivot = \Qmati\Rmati$, and finally set
		$\rbBoundaryi = \left[\bds{q}_i^1\ \cdots\
		\bds{q}_i^{\nrbBoundaryi}\right]$, where 
$\Qmati\equiv[\bds{q}_i^1\ \cdots\
		\bds{q}_i^{\numTrainSample}]$ and
		$\mathrm{rank}(\rbBoundaryiTmp) =
		\nrbBoundaryi(\leq\numTrainSample)$.
		We note that skeleton bases require full-system snapshots and thus are not
		generally practical for either extreme-scale systems nor for decomposable
		systems, as both of these scenarios in practice preclude the ability to
		collect full-system
		snapshots; nevertheless, because this work does not directly consider
		subsystem/component-based training, we include this approach in the
		present work.
\end{itemize}
\subsection{Full-subdomain basis}\label{sec:constFull}
The full-subdomain-basis approach computes reduced bases
		that have support over all degrees of freedom for their respective
		subdomains, and subsequently isolates this basis to the subdomain interior
		and interface. That is, we compute $\rbi =
		\texttt{POD}\left(\begin{bmatrix}\projectionStateInteriori\\
		\projectionStateBoundaryi\end{bmatrix}\stateSnapshotMatrix,\energyCriterion\right)$
		and set $\rbInteriori =
		\begin{bmatrix}\identity_{\ndofInteriori}&
		\bds{0}_{\ndofBoundaryi}\end{bmatrix} \rbi$ and 
$\rbBoundaryi =
			\begin{bmatrix}\bds{0}_{\ndofInteriori\times\ndofInteriori}&
			\identity_{\ndofBoundaryi\times\ndofBoundaryi}\end{bmatrix} \rbi$, where
			$\identity_n$ and $\bds{0}_n$ denote the $n\times n$ identity and zero matrices,
			respectively.

\section{\textit{A posteriori} and \textit{a priori} error bounds}\label{sec_error_analysis_strong}
For notational simplicity, this section omits explicit parameter dependence;
results can be interpreted as holding for any arbtrary parameter instance
$\params\in\paramDomain$.
We begin by stating assumptions that will be employed in subsequent analysis.
%This derivation follows somewhat the error bounds derived in
%Ref.~\cite{choi2019space}.
\begin{itemize}
	\item[A1]\label{ass:strong} The DD-LSPG ROM employs strong constraints, i.e.,
		$\constraintMatROMi= \constraintMatArg{i}$,
$i=1,\ldots,\nsubdomains$.
\end{itemize}
Under Assumption A1,
the DD-LSPG ROMs can be converted to 
unconstrained minimization problems. First, we introduce the null-space matrix
$\nullspacemat\in\RR{{\sum_{i=1}^\nsubdomains\nrbBoundaryi\times \nrbNull}}$ 
with $\nrbNull\defeq\sum_{i=1}^\nsubdomains
\nrbBoundaryi-
\rank{\constraintMat}$,
which satisfies
$\constraintMat\nullspacemat=\zero$ with $\constraintMat\defeq[ \constraintMatArg{1} \rbBoundaryArg{1}\ \cdots\
\constraintMatArg{\nsubdomains} \rbBoundaryArg{\nsubdomains}]$. 
Because strong constraints enforce a global solution such that
	Eq.~\eqref{eq_portConstraints} holds (e.g., see Remark \ref{rem:globalSol}),
	DD-LSPG yields a `global' solution $\stateApprox\in\RR{\ndof}$ satisfying 
	\begin{equation}\label{eq:globalSolution}
\stateApproxInteriori=\projectionStateInteriori\stateApprox,\
	\stateApproxBoundaryi=\projectionStateBoundaryi\stateApprox,\quad
	i=1,\ldots,\nsubdomains.
	\end{equation}

Now, the interior/boundary-basis problem \eqref{eq_globalOpt} is equivalent to
the unconstrained minimization problem
wherein
$\redStateInteriori $, $i\innatseq{\nsubdomains}$ and 
$\nullspaceCoords$ comprise the solution to the problem
\begin{align} \label{eq_globalOptUncon}
\begin{split} 
\underset{(\redStateDummyInteriorArg{i}),\,
	i=1,\ldots,\nsubdomains,\, \nullspaceCoordsDummy}{\text{minimize}}\quad&\frac{1}{2}\sum_{i=1}^\nsubdomains
\|\hyperMati\residuali(\rbInteriori\redStateDummyInteriori,
	\rbBoundaryi\nullspacemati\nullspaceCoordsDummy)\|_2^2,
\end{split} 
\end{align} 
with 
	\begin{equation}\label{eq:gloalStateInteriorBoundary}
	\statei\approx\stateApproxi \equiv (\stateApproxInteriori,
	\stateApproxBoundaryi)=(\rbInteriori\redStateInteriori,
	\rbBoundaryi\nullspacemati\nullspaceCoords),
	\end{equation}
where
		$\nullspacemati\in\RR{\nrbBoundaryi\times\nrbNull}$ denotes the $i$th row
		block of $\nullspacemat$.

Note that Problem \eqref{eq_globalOptUncon} can be expressed equivalently as
computing $\stateApprox$ that satisfies
\begin{align} \label{eq_globalOptUnconSpace}
\begin{split} 
\underset{\stateDummy\in\trialSpaceInterior}{\text{minimize}}\quad&\frac{1}{2}\sum_{i=1}^\nsubdomains
\|\hyperMati\residuali(\projectionStateInteriori\stateDummy,
	\projectionStateBoundaryi\stateDummy)\|_2^2,
\end{split} 
\end{align} 
where the trial subspace $\trialSpaceInterior\subseteq\RR{\ndof}$ is defined
as
	\begin{align}
	\begin{split}
		\trialSpaceInterior \defeq &
		\{\stateDummy\in\RR{\ndof}\,|\, \exists
		\redStateDummyInteriori\in\RR{\nrbInteriori},\,
		i=1,\ldots,\nsubdomains,\ \text{and}\
		\nullspaceCoordsDummy\in\RR{\nrbNull}\ \text{s.t.}\\
		&
		\projectionStateInteriori\stateDummy =
		\rbInteriori\redStateDummyInteriori,\ \projectionStateBoundaryi\stateDummy
		=\rbBoundaryi\nullspacemati\nullspaceCoordsDummy,\,i=1,\ldots,\nsubdomains\}\subseteq\RR{\ndof}.
	\end{split}
	\end{align}

	Similarly, A1 admits
	conversion
of the full-subdomain-basis problem \eqref{eq_globalOpt_one_arg} to an unconstrained minimization problem
wherein
$\nullspaceCoords$ comprises the solution to the problem
\begin{align} \label{eq_globalOpt_uncon}
\begin{split} 
	\underset{\nullspaceCoordsDummy}{\text{minimize}}\quad&\frac{1}{2}\sum_{i=1}^\nsubdomains
\|\hyperMati\residuali(\rbInteriori \nullspacemati\nullspaceCoordsDummy,
	\rbBoundaryi \nullspacemati\nullspaceCoordsDummy)\|_2^2.
\end{split} 
\end{align} 
with 
	\begin{equation}\label{eq:gloalStateFullSubdomain}
	\statei\approx\stateApproxi \equiv (\stateApproxInteriori,
	\stateApproxBoundaryi)=(\rbInteriori\nullspacemati\nullspaceCoords,
	\rbBoundaryi\nullspacemati\nullspaceCoords),
	\end{equation}
	which can be expressed equivalently as
computing $\stateApprox$ that satisfies
\begin{align} \label{eq:gloalStateFullSubdomainSpace}
\begin{split} 
\underset{\stateDummy\in\trialSpaceFull}{\text{minimize}}\quad&\frac{1}{2}\sum_{i=1}^\nsubdomains
\|\hyperMati\residuali(\projectionStateInteriori\stateDummy,
	\projectionStateBoundaryi\stateDummy)\|_2^2,
\end{split} 
\end{align} 
where the trial subspace $\trialSpaceFull\subseteq\RR{\ndof}$ is defined
as
		\begin{align}
	\begin{split}
		\trialSpaceFull \defeq &
		\{\stateDummy\in\RR{\ndof}\,|\, \exists
			\nullspaceCoordsDummy\in\RR{\nrbNull}\ \text{s.t.}\
		\projectionStateInteriori\stateDummy =
		\rbInteriori\nullspacemati\nullspaceCoordsDummy,\
		\projectionStateBoundaryi\stateDummy
		=\rbBoundaryi\nullspacemati\nullspaceCoordsDummy,\,i=1,\ldots,\nsubdomains\}\subseteq\RR{\ndof},
	\end{split}
	\end{align}
	
We now introduce two more assumptions needed for the error bounds.
\begin{itemize}
	\item[A2]\label{ass:invLip} The residual is inverse Lipschitz continuous in
		the $\ell^2$-norm, i.e., there exists $\inverseLipschitz>0$ such that
		\begin{gather}
			\left(\sum_{i=1}^\nsubdomains
\|\residuali(\stateDummyInteriori,
	\stateDummyBoundaryi) - 
\residuali(\stateDummyInteriorTwoi,
	\stateDummyBoundaryTwoi)
			\|_2^2\right)^{1/2} \geq 
			\inverseLipschitz
\|\stateDummy - \stateDummyTwo\|_2,\quad\forall
\stateDummy,\,\stateDummyTwo\in\RR{\ndof}
%\sum_{i=1}^\nsubdomains
%\left(
%\|\stateDummyInteriori-\stateDummyInteriorTwoi\|_2^2 + 
%\|\stateDummyBoundaryi-\stateDummyBoundaryTwoi\|_2^2
%\right)
			\end{gather}
		 with $
\stateDummyInteriori\defeq\sampleMatStateInteriori\stateDummy$, 
$
\stateDummyBoundaryi\defeq\sampleMatStateBoundaryi\stateDummy$,
$
\stateDummyInteriorTwoi\defeq\sampleMatStateInteriori\stateDummyTwo$, and
$
		\stateDummyBoundaryTwoi\defeq\sampleMatStateBoundaryi\stateDummyTwo$,
		$i=1,\ldots,\nsubdomains$.
	\item[A3]\label{ass:inv} The $\hyperMat^T\hyperMat$-norm and the
		$\ell^2$-norm of the residual are equivalent over all elements of the
		trial subspace such that there exists $\inverseTmp>0$ such that
		\begin{gather}
			\left(\sum_{i=1}^\nsubdomains
\|\hyperMati\residuali(\projectionStateInteriori\stateDummy,
			\projectionStateBoundaryi\stateDummy)\|_2^2\right)^{1/2} \geq \inverseTmp
			\left(\sum_{i=1}^\nsubdomains
\|\residuali(\projectionStateInteriori\stateDummy,
			\projectionStateBoundaryi\stateDummy)\|_2^2\right)^{1/2},\quad\forall
			\stateDummy\in\trialSpace\subseteq\RR{\ndof},
		\end{gather}
		where $\trialSpace = \trialSpaceInterior$ in the case of interior/boundary
		bases and $\trialSpace = \trialSpaceFull$ in the case of full-domain
		bases.
%\begin{gather}
%\left(\sum_{i=1}^\nsubdomains
%\|\hyperMati\residuali(\rbInteriori\redStateDummyInteriori,
%\rbBoundaryi\nullspacemati\nullspaceCoordsDummy)\|_2^2\right)^{1/2} \geq \inverseTmp
%\left(\sum_{i=1}^\nsubdomains
%\|\residuali(\rbInteriori\redStateDummyInteriori,
%\rbBoundaryi\nullspacemati\nullspaceCoordsDummy)\|_2^2\right)^{1/2}
%\end{gather}
%for all $\redStateDummyInteriori\in\RR{\nrbInteriori},\,i=1,\ldots,\nsubdomains,\
%\nullspaceCoordsDummy\in\RR{\nrbNull}$
%in the case of interior/boundary bases, and 
%\begin{equation} 
%\left(\sum_{i=1}^\nsubdomains
%\|\hyperMati\residuali(\rbInteriori \nullspacemati\nullspaceCoordsDummy,
%\rbBoundaryi \nullspacemati\nullspaceCoordsDummy)\|_2^2\right)^{1/2}
%\geq\inverseTmp
%\left(\sum_{i=1}^\nsubdomains\|\residuali(\rbInteriori \nullspacemati\nullspaceCoordsDummy,
%\rbBoundaryi \nullspacemati\nullspaceCoordsDummy)\|_2^2\right)^{1/2} 	  \end{equation} 
%for all $\nullspaceCoordsDummy\in\RR{\nrbNull}
%$
%in the case of full-subdomain bases,
		\end{itemize}
\begin{proposition}[\textit{A posteriori} error bound]\label{prop:aposter}
	Under Assumptions A1--A3, the error in the DD-LSPG ROM approximate solution for both
	interior/boundary bases and full-subdomain bases can be bounded as
	\begin{equation} \label{eq:PropOneIn}
\max(\max_{i\in\{1,\ldots,\nsubdomains\}}\|\stateInteriori-\stateApproxInteriori\|_2,
\max_{j\in\{1,\ldots,\nports\}}
	\|\projectionStatePortAll{j}\state -
	\projectionStatePortAll{j}\stateApprox\|_2
	)
	%\max_{i\in\{1,\ldots,\nsubdomains\}}\|\stateInteriori-\stateApproxInteriori\|_2
	\leq
	\|\state-\stateApprox\|_2
	\leq	\frac{1}{\inverseTmp\inverseLipschitz}
	\left(\sum_{i=1}^\nsubdomains
\|\hyperMati\residuali(\stateApproxInteriori,
	\stateApproxBoundaryi)\|_2^2\right)^{1/2},
\end{equation} 
	where $\stateApprox\in\RR{\ndof}$ is the `global' solution satisfying
	Eq.~\eqref{eq:globalSolution} and 
where $\projectionStatePortAll{j}\in\{0,1\}^{\ndofPortsj\times\ndof }$,
	$j=1,\ldots,\nports$
comprises selected rows of the identity matrix that extract the global degrees of
	freedom associated with the $j$th port.
\end{proposition}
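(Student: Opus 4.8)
The plan is to prove the two inequalities separately, treating the interior/boundary-basis and full-subdomain-basis cases uniformly, since the only difference between them is whether the relevant trial subspace is $\trialSpaceInterior$ or $\trialSpaceFull$ and Assumption A3 is stated for both via the generic $\trialSpace$. Throughout I use that $\state$ is the exact full-order solution (so $\residuali(\stateInteriori,\stateBoundaryi)=\zero$ for every $i$, by the algebraic-decomposition property of the residual), and that under Assumption A1 the DD-LSPG solution is a genuine global vector $\stateApprox\in\RR{\ndof}$ satisfying $\stateApproxInteriori=\projectionStateInteriori\stateApprox$ and $\stateApproxBoundaryi=\projectionStateBoundaryi\stateApprox$, as recorded in Eq.~\eqref{eq:globalSolution}. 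The essential observation enabling the upper bound is that $\stateApprox$ lies in $\trialSpace$ (it is by construction the minimizer over that subspace in Eq.~\eqref{eq_globalOptUnconSpace} or Eq.~\eqref{eq:gloalStateFullSubdomainSpace}), so A3 may be invoked at $\stateApprox$.

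For the upper bound I would apply Assumption A2 with the substitution $\stateDummy=\state$ and $\stateDummyTwo=\stateApprox$. Because the residual vanishes at the exact solution, the first group of terms on the left-hand side of A2 drops out, leaving $\left(\sum_{i=1}^\nsubdomains\|\residuali(\stateApproxInteriori,\stateApproxBoundaryi)\|_2^2\right)^{1/2}\geq\inverseLipschitz\|\state-\stateApprox\|_2$, i.e. $\|\state-\stateApprox\|_2\leq\inverseLipschitz^{-1}\left(\sum_{i=1}^\nsubdomains\|\residuali(\stateApproxInteriori,\stateApproxBoundaryi)\|_2^2\right)^{1/2}$. I would then apply A3 at $\stateDummy=\stateApprox\in\trialSpace$ to replace the unweighted residual norm by the $\hyperMati$-weighted one, namely $\left(\sum_{i=1}^\nsubdomains\|\residuali(\stateApproxInteriori,\stateApproxBoundaryi)\|_2^2\right)^{1/2}\leq\inverseTmp^{-1}\left(\sum_{i=1}^\nsubdomains\|\hyperMati\residuali(\stateApproxInteriori,\stateApproxBoundaryi)\|_2^2\right)^{1/2}$. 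Chaining these two estimates yields exactly the claimed $\frac{1}{\inverseTmp\inverseLipschitz}$-factor upper bound.

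For the lower bound I would use that $\projectionStateInteriori$ and $\projectionStatePortAll{j}$ each consist of selected rows of the identity matrix, hence are non-expansive in the $\ell^2$-norm: for any $\bds{v}\in\RR{\ndof}$, $\|\projectionStateInteriori\bds{v}\|_2\leq\|\bds{v}\|_2$ and $\|\projectionStatePortAll{j}\bds{v}\|_2\leq\|\bds{v}\|_2$, since selecting a subset of entries cannot increase the sum of squares. Applying this with $\bds{v}=\state-\stateApprox$ and using the global-solution identities gives $\|\stateInteriori-\stateApproxInteriori\|_2=\|\projectionStateInteriori(\state-\stateApprox)\|_2\leq\|\state-\stateApprox\|_2$ for each $i$, and likewise $\|\projectionStatePortAll{j}\state-\projectionStatePortAll{j}\stateApprox\|_2\leq\|\state-\stateApprox\|_2$ for each port $j$; taking the maximum over $i$ and $j$ establishes the left inequality.

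I do not expect a serious obstacle here, as both halves reduce to direct applications of the assumptions; the only points requiring care are structural rather than computational. The first is verifying that $\stateApprox$ indeed belongs to the trial subspace $\trialSpace$ for which A3 is posited—this follows from the equivalence of the constrained problems with the unconstrained subspace formulations under A1, but it must be stated explicitly so that A3 is legitimately applicable. The second is confirming that the residual genuinely vanishes at the sampled exact state, which relies on the sample-mesh consistency (the sampling operators retain all degrees of freedom on which $\residuali$ depends) so that evaluating $\residuali$ at the restricted exact state coincides with its value $\zero$ at the full exact state; this is what lets the first term of A2 disappear even in the hyper-reduced setting.
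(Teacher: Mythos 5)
Your proposal is correct and follows essentially the same route as the paper: the upper bound is obtained by applying A2 at $(\state,\stateApprox)$ with $\residuali(\stateInteriori,\stateBoundaryi)=\zero$ and then A3 at $\stateApprox\in\trialSpace$, exactly as in the paper's proof. The only cosmetic difference is in the left inequality, where you argue via non-expansiveness of the row-selection operators while the paper phrases the same fact as the norm equivalence $\|\cdot\|_\infty\leq\|\cdot\|_2$ applied to the block-decomposed error vector; your two ``points requiring care'' ($\stateApprox\in\trialSpace$ and sample-mesh consistency of the residual at the exact state) are legitimate and are glossed over in the paper's one-line proof.
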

\begin{proof}
Leveraging the norm-equivalence relation $\|\cdot\|_\infty\leq\|\cdot\|_2$,
	the `global' solution relations \eqref{eq:globalSolution}, sequentially invoking A2 and A3, and noting that 
$\residuali(\stateInteriori,
	\stateBoundaryi)=0$, $i=1,\ldots,\nsubdomains$
	yields
	\begin{equation}
		\max_{i\in\{1,\ldots,\nsubdomains\}}\|\stateInteriori-\stateApproxInteriori\|_2 \leq \|\state - \stateApprox\|_2\leq
\frac{1}{\inverseLipschitz}
		\left(\sum_{i=1}^\nsubdomains
\|\residuali(\stateApproxInteriori,
	\stateApproxBoundaryi)\|_2^2\right)^{1/2}
		\leq\frac{1}{\inverseTmp\inverseLipschitz}
		\left(\sum_{i=1}^\nsubdomains
\|\hyperMati\residuali(\stateApproxInteriori,
	\stateApproxBoundaryi)\|_2^2\right)^{1/2},
	\end{equation}
	which is valid for both interior/boundary bases and full-subdomain bases
	according to A3.
	On the invocation of the $\ell^\infty$-norm, we have decomposed the state
	vector into segments: one for each group of interior degrees of freedom, and
	one for each port.
\end{proof}

We now introduce another assumption that will be employed to derive \textit{a
priori} error bounds.
\begin{itemize}
	\item[A4]\label{ass:Lip} The residual is Lipschitz continuous in the
		$\hyperMat^T\hyperMat$-norm, i.e., there exists $\lipschitz>0$ such that
		\begin{gather}
			\left(\sum_{i=1}^\nsubdomains
\|\hyperMati\residuali(\stateDummyInteriori,
	\stateDummyBoundaryi) - 
\hyperMati\residuali(\stateDummyInteriorTwoi,
	\stateDummyBoundaryTwoi)
			\|_2^2\right)^{1/2} \leq \lipschitz
			\|\stateDummy-\stateDummyTwo\|_2,\quad
			\forall\stateDummy,\,\stateDummyTwo\in\RR{\ndof}.
%\sum_{i=1}^\nsubdomains
%			\left(
%\|\stateDummyInteriori-\stateDummyInteriorTwoi\|_2^2 + 
%\|\stateDummyBoundaryi-\stateDummyBoundaryTwoi\|_2^2
%			\right).
			\end{gather}
\end{itemize}
\begin{proposition}[\textit{A priori} error bound with respect to the
	$\ell^2$-optimal approximation error]\label{prop:aprioriOne}
	Under Assumptions A1--A4, the error in the DD-LSPG ROM approximate solution
	for both interior/boundary bases and full-subdomain bases can be bounded in
	terms of the $\ell^2$-optimal approximation error as
	\begin{equation} 
	\|\state-\stateApprox\|_2
	%\sum_{i=1}^{\nsubdomains}\left(\|\stateInteriori-\stateApproxInteriori\|_2^2
	%+ \|\stateBoundaryi-\stateApproxBoundaryi\|_2^2
	%\right)
	\leq\frac{\lipschitz}{\inverseTmp\inverseLipschitz}
	\min_{\stateDummy\in\trialSpace}
\|\state-\stateDummy\|_2
%\sum_{i=1}^\nsubdomains
%\left(\|\stateInteriori-\stateOptimalInteriori\|_2^2
	%+ \|\stateBoundaryi-\stateOptimalBoundaryi\|_2^2
	%\right)
\end{equation} 
	where  $\trialSpace=\trialSpaceInterior$ in the case of interior/boundary
	bases and $\trialSpace=\trialSpaceFull$ in the case of full-subdomain bases.
\end{proposition}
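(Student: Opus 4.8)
The plan is to obtain this bound as a direct corollary of the \textit{a posteriori} estimate in Proposition~\ref{prop:aposter}, via the standard C\'ea-type argument for minimum-residual projections. The starting point is the right inequality of \eqref{eq:PropOneIn}, which under A1--A3 reads
\[
\|\state-\stateApprox\|_2 \leq \frac{1}{\inverseTmp\inverseLipschitz}\left(\sum_{i=1}^\nsubdomains \|\hyperMati\residuali(\stateApproxInteriori,\stateApproxBoundaryi)\|_2^2\right)^{1/2}.
\]
The only new ingredient needed is an upper bound on this hyper-reduced residual norm at the computed solution in terms of the optimal approximation error, which A4 will supply.

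First I would invoke optimality. Because $\stateApprox$ solves the unconstrained space-form problem \eqref{eq_globalOptUnconSpace} (resp.\ \eqref{eq:gloalStateFullSubdomainSpace}) over the trial subspace $\trialSpace$, and because $\stateApprox$ itself lies in $\trialSpace$, the objective value at $\stateApprox$ cannot exceed that at any competitor $\stateDummy\in\trialSpace$, i.e.,
\[
\sum_{i=1}^\nsubdomains \|\hyperMati\residuali(\stateApproxInteriori,\stateApproxBoundaryi)\|_2^2 \leq \sum_{i=1}^\nsubdomains \|\hyperMati\residuali(\projectionStateInteriori\stateDummy,\projectionStateBoundaryi\stateDummy)\|_2^2, \quad \forall\, \stateDummy\in\trialSpace.
\]

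Next I would convert the right-hand side into a distance. Since $\state$ is the exact solution, $\residuali(\stateInteriori,\stateBoundaryi)=\zero$ for every $i$, so each competitor residual equals the difference $\hyperMati\residuali(\projectionStateInteriori\stateDummy,\projectionStateBoundaryi\stateDummy) - \hyperMati\residuali(\stateInteriori,\stateBoundaryi)$; applying Lipschitz continuity A4 with the pair $(\stateDummy,\state)$ then gives
\[
\left(\sum_{i=1}^\nsubdomains \|\hyperMati\residuali(\projectionStateInteriori\stateDummy,\projectionStateBoundaryi\stateDummy)\|_2^2\right)^{1/2} \leq \lipschitz\,\|\stateDummy-\state\|_2, \quad \forall\, \stateDummy\in\trialSpace.
\]
Chaining the three inequalities yields $\|\state-\stateApprox\|_2 \leq \frac{\lipschitz}{\inverseTmp\inverseLipschitz}\|\stateDummy-\state\|_2$ for every $\stateDummy\in\trialSpace$, and taking the infimum over $\stateDummy\in\trialSpace$ produces the claimed bound. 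The same chain applies verbatim to both basis types, since both the \textit{a posteriori} bound and A3 already hold with $\trialSpace=\trialSpaceInterior$ and with $\trialSpace=\trialSpaceFull$.

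The step I expect to require the most care is the application of A4 in the third paragraph: one must confirm that the sampled-state conventions underlying A4 (arguments $\sampleMatStateInteriori\stateDummy$ and $\sampleMatStateBoundaryi\stateDummy$) are consistent with the residual evaluations $\hyperMati\residuali(\projectionStateInteriori\stateDummy,\projectionStateBoundaryi\stateDummy)$ appearing in the objective, so that the exact-solution identity $\residuali(\stateInteriori,\stateBoundaryi)=\zero$ may legitimately be subtracted inside the hyper-reduced residual. This is guaranteed by the gappy/collocation sampling identity that relates $\hyperMati\residuali$ to its sampled evaluation, but it is the one place where hyper-reduction enters the argument nontrivially; the remainder is the routine optimality-plus-Lipschitz telescoping.
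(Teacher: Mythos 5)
Your proposal is correct and follows essentially the same route as the paper's proof: both use the optimality of $\stateApprox$ over $\trialSpace$ to replace the residual at the computed solution by the residual at the $\ell^2$-optimal competitor, apply A4 together with $\residuali(\stateInteriori,\stateBoundaryi)=\zero$ to convert that residual into a distance, and combine with Proposition~\ref{prop:aposter}. The only cosmetic difference is that you bound against an arbitrary $\stateDummy\in\trialSpace$ and take the infimum at the end, whereas the paper fixes $\stateOptimal\defeq\arg\min_{\stateDummy\in\trialSpace}\|\state-\stateDummy\|_2$ up front.
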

\begin{proof}
	We have from Eq.~\eqref{eq:gloalStateInteriorBoundary},  and
	Problems \eqref{eq_globalOptUnconSpace} and
	\eqref{eq:gloalStateFullSubdomainSpace}, and
Inequality
	\eqref{eq:PropOneIn} that
\begin{align} 
\begin{split} 
	&\frac{1}{\inverseTmp\inverseLipschitz}
	\left(\sum_{i=1}^\nsubdomains
\|\hyperMati\residuali(\stateApproxInteriori,
	\stateApproxBoundaryi)\|_2^2\right)^{1/2}
	 = 
\frac{1}{\inverseTmp\inverseLipschitz}
	 \min_{\stateDummy\in\trialSpace}
\left(\sum_{i=1}^\nsubdomains
\|\hyperMati\residuali(\projectionStateInteriori\stateDummy,
	\projectionStateBoundaryi\stateDummy)\|_2^2\right)^{1/2}\\
	&\leq\frac{1}{\inverseTmp\inverseLipschitz}
\left(\sum_{i=1}^\nsubdomains
\|\hyperMati
\residuali(\projectionStateInteriori\stateOptimal,
	\projectionStateBoundaryi\stateOptimal)
\|_2^2\right)^{1/2}
	\leq\frac{\lipschitz}{\inverseTmp\inverseLipschitz}
\|\state-\stateOptimal\|_2.
\end{split}
\end{align} 
Defining
	$\stateOptimal\defeq\arg\min_{\stateDummy\in\trialSpace}\|\state-\stateDummy\|_2$
	and
	combining this with Proposition \ref{prop:aposter} yields the desired result.
\end{proof}

\begin{proposition}[\textit{A priori} error bound with respect to the
	$\ell^\infty$-optimal approximation error over subdomains]
	Under Assumptions A1--A4,
\begin{align} 
\begin{split} 
	\max(&\max_{i\in\{1,\ldots,\nsubdomains\}}\|\stateInteriori-\stateApproxInteriori\|_2,
\max_{j\in\{1,\ldots,\nports\}}
	\|\projectionStatePortAll{j}\state -
	\projectionStatePortAll{j}\stateApprox\|_2
	)\\
	&\leq\frac{\lipschitz\sqrt{\nsubdomains+\nports}}{\inverseTmp\inverseLipschitz}
	\min_{\stateDummy\in\trialSpace}\max(\max_{i\in\{1,\ldots,\nsubdomains\}}\|\stateInteriori-\stateDummyInteriori\|_2,\max_{j\in\{1,\ldots,\nports\}}
	\|\projectionStatePortAll{j}\state -
	\projectionStatePortAll{j}\stateDummy\|_2
	),
\end{split} 
\end{align} 
	where  $\trialSpace=\trialSpaceInterior$ in the case of interior/boundary
	bases and $\trialSpace=\trialSpaceFull$ in the case of full-subdomain bases.
\end{proposition}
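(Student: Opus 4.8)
The plan is to chain together the two preceding propositions, adding a single new ingredient: an orthogonal block decomposition of the global $\ell^2$ norm into the $\nsubdomains$ interior blocks and $\nports$ port blocks. First I would invoke Proposition~\ref{prop:aposter}, whose left-hand side is exactly the quantity we wish to bound, to obtain
\[
\max(\max_{i}\|\stateInteriori-\stateApproxInteriori\|_2,\max_{j}\|\projectionStatePortAll{j}\state-\projectionStatePortAll{j}\stateApprox\|_2)\leq\frac{1}{\inverseTmp\inverseLipschitz}\left(\sum_{i=1}^\nsubdomains\|\hyperMati\residuali(\stateApproxInteriori,\stateApproxBoundaryi)\|_2^2\right)^{1/2}.
\]
Next, exactly as in the proof of Proposition~\ref{prop:aprioriOne}, I would use the equivalence of the unconstrained problems~\eqref{eq_globalOptUnconSpace}/\eqref{eq:gloalStateFullSubdomainSpace} to recognize the right-hand residual as the minimum over $\trialSpace$, so that it is bounded above by the residual evaluated at an arbitrary $\stateDummy\in\trialSpace$; invoking A4 together with $\residuali(\stateInteriori,\stateBoundaryi)=\zero$ then gives the bound $\lipschitz\|\state-\stateDummy\|_2$ for every $\stateDummy\in\trialSpace$.

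The new step, and the heart of the argument, is to pass from $\|\state-\stateDummy\|_2$ to the $\ell^\infty$-over-subdomains-and-ports measure. The key observation is that the coordinates of $\RR{\ndof}$ partition into the $\nsubdomains$ pairwise-disjoint interior index sets (since $\projectionStateInteriorArg{i}[\projectionStateInteriorArg{\ell}]^T=\zero$ for $i\neq\ell$) together with the $\nports$ global port index sets that tile the remaining skeleton degrees of freedom; under A1 both $\state$ and any $\stateDummy\in\trialSpace$ are genuine global vectors in $\RR{\ndof}$, so this is an orthogonal decomposition and
\[
\|\state-\stateDummy\|_2^2=\sum_{i=1}^\nsubdomains\|\stateInteriori-\stateDummyInteriori\|_2^2+\sum_{j=1}^\nports\|\projectionStatePortAll{j}\state-\projectionStatePortAll{j}\stateDummy\|_2^2.
\]
Bounding each of the $\nsubdomains+\nports$ summands by the square of the $\ell^\infty$ measure yields $\|\state-\stateDummy\|_2\leq\sqrt{\nsubdomains+\nports}\,\max(\max_i\|\stateInteriori-\stateDummyInteriori\|_2,\max_j\|\projectionStatePortAll{j}\state-\projectionStatePortAll{j}\stateDummy\|_2)$, which is precisely where the factor $\sqrt{\nsubdomains+\nports}$ originates.

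Finally I would combine these estimates and, since the resulting chain holds for every $\stateDummy\in\trialSpace$, take the infimum over $\stateDummy\in\trialSpace$ on the right-hand side, producing the claimed bound. I expect the main obstacle to be the careful justification of the orthogonal block decomposition: specifically, verifying that the interior sampling matrices and the global port-extraction operators $\projectionStatePortAll{j}$ really do induce a disjoint cover of all $\ndof$ coordinates (so that there is neither double counting nor a leftover degree of freedom), and confirming that membership in $\trialSpace$ guarantees a well-defined global restriction to each port so that $\projectionStatePortAll{j}\stateDummy$ is unambiguous. Both facts rely essentially on Assumption A1 and on the non-overlapping port structure introduced in Section~\ref{sec_DDFOM_formulation}.
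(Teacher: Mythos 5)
Your proposal is correct and follows essentially the same route as the paper: chain the \textit{a posteriori} bound with the minimum-residual characterization over $\trialSpace$, apply A4 with $\residuali(\stateInteriori,\stateBoundaryi)=\zero$, and then convert $\|\state-\stateDummy\|_2$ to the $\ell^\infty$-over-blocks measure via the disjoint decomposition of the $\ndof$ coordinates into subdomain interiors and ports, which is exactly where the paper also obtains the factor $\sqrt{\nsubdomains+\nports}$. The only cosmetic difference is that the paper fixes the minimizer $\stateOptimalInf$ up front rather than working with an arbitrary $\stateDummy$ and taking the infimum at the end.
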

\begin{proof}
	Analogously to the proof of Proposition \ref{prop:aprioriOne}
we have from Eq.~\eqref{eq:gloalStateInteriorBoundary},  and
	Problems \eqref{eq_globalOptUnconSpace} and
	\eqref{eq:gloalStateFullSubdomainSpace}, and
Inequality
	\eqref{eq:PropOneIn} that
\begin{align} 
\begin{split} 
	&\frac{1}{\inverseTmp\inverseLipschitz}
	\left(\sum_{i=1}^\nsubdomains
\|\hyperMati\residuali(\stateApproxInteriori,
	\stateApproxBoundaryi)\|_2^2\right)^{1/2}
	 = 
\frac{1}{\inverseTmp\inverseLipschitz}
	 \min_{\stateDummy\in\trialSpace}
\left(\sum_{i=1}^\nsubdomains
\|\hyperMati\residuali(\projectionStateInteriori\stateDummy,
	\projectionStateBoundaryi\stateDummy)\|_2^2\right)^{1/2}\\
	&\leq\frac{1}{\inverseTmp\inverseLipschitz}
\left(\sum_{i=1}^\nsubdomains
\|\hyperMati
\residuali(\projectionStateInteriori\stateOptimalInf,
	\projectionStateBoundaryi\stateOptimalInf)
\|_2^2\right)^{1/2}
	\leq\frac{\lipschitz}{\inverseTmp\inverseLipschitz}
\|\state-\stateOptimalInf\|_2\\
	&\leq
	\frac{\lipschitz\sqrt{\nsubdomains+\nports}}{\inverseTmp\inverseLipschitz}
\max(\max_{i\in\{1,\ldots,\nsubdomains\}}\|\stateInteriori-\stateOptimalInfInteriori\|_2,\max_{j\in\{1,\ldots,\nports\}}
	\|\projectionStatePortAll{j}\state -
	\projectionStatePortAll{j}\stateOptimalInf\|_2
	)
\end{split}
\end{align} 
	where, on the invocation of the $\ell^\infty$-norm, we have decomposed the state
	vector into segments: one for each group of interior degrees of freedom, and
	one for each port. Defining
	$\stateOptimalInf$ as satisfying the minimization problem
\begin{equation} 
	\stateOptimalInf\in\underset{\stateDummy\in\trialSpace}{\mathrm{argmin}}\
	\max(\max_{i\in\{1,\ldots,\nsubdomains\}}\|\stateInteriori-\stateDummyInteriori\|_2,\max_{j\in\{1,\ldots,\nports\}}
	\|\projectionStatePortAll{j}\state -
	\projectionStatePortAll{j}\stateDummy\|_2
	)
\end{equation} 
	and combining with Proposition \ref{prop:aposter} yields the desired result.
\end{proof}

%\section{Numerical experiments}

\section{Numerical experiments}\label{sec_numerical_results}

This section reports numerical experiments that assess the performance of the
proposed DD-LSPG method on two benchmark problems. 
\YC{We do not attempt to demonstrate our methods in either extreme-scale or
decomposable problems. Instead, we focus on the performance of our algorithm on
small problems in order to investigate the effect of some key model parameters,
such as constraint type, basis type, hyper-reduction, truncation levels, on
accuracy and speed in both weak and strong scaling.}

We compare the following methods: \\
\begin{itemize} 
\item 
	\textbf{FOM}. This model corresponds to the full-order model, i.e., the
solution satisfying Eq.~\eqref{eq_originalAlgebraic} (equiv.\ Eq.~\eqref{eq_globaldecomposed}). \\
\item \textbf{DD-LSPG}. This model corresponds to the unweighted LSPG ROM,
	i.e., the solution satisfies
		\eqref{eq_globalOpt}--\eqref{eq_globalOpt_one_arg} with $\hyperMatArg{i} =
		\identity$, $i=1,\ldots,\nsubdomains$. \\
\item \textbf{DD-GNAT}. This model corresponds to the GNAT ROM, i.e., the
	solution satisfying \eqref{eq_globalOpt}--\eqref{eq_globalOpt_one_arg} with
		$\hyperMati=(\sampleMati\rbResi)^+\sampleMati$, $i=1,\ldots,\nsubdomains$.
		Algorithm \ref{alg_Greedy_GNAT} is used to construct the sampling matrices
		$\sampleMati$, $i=1,\ldots,\nsubdomains$. 
\end{itemize}

We assess the accuracy of any ROM solution $\stateApprox(\param)$ as follows
\begin{equation}\label{eq_rel_err_definition}
{\rm relative \; error} = \sqrt{ \frac{1}{\nsubdomains} \sum_{i=1}^{\nsubdomains} \frac{\|\stateApproxi(\param)-\statei(\param)\|^2_2}{\|\statei(\param)\|^2_2} },
\end{equation}
and we measure its computational cost in terms of the wall time incurred by
the ROM simulation relative to that incurred by the FOM simulation; the
speedup is the reciprocal of the relative wall time. All timings are obtained
by performing calculations in Matlab R2018b on an 2x6-Core Intel Xeon 2.93GHz with 64 GB RAM of
memory. Reported timings comprise the average over five simulations.

\subsection{Parameterized heat equation}\label{subsect_heat_equation}

\subsubsection{Global finite-element discretization}

\begin{figure}[h!]
\centering
\begin{subfigure}[b]{0.45\textwidth}
\includegraphics[width=8.1cm]{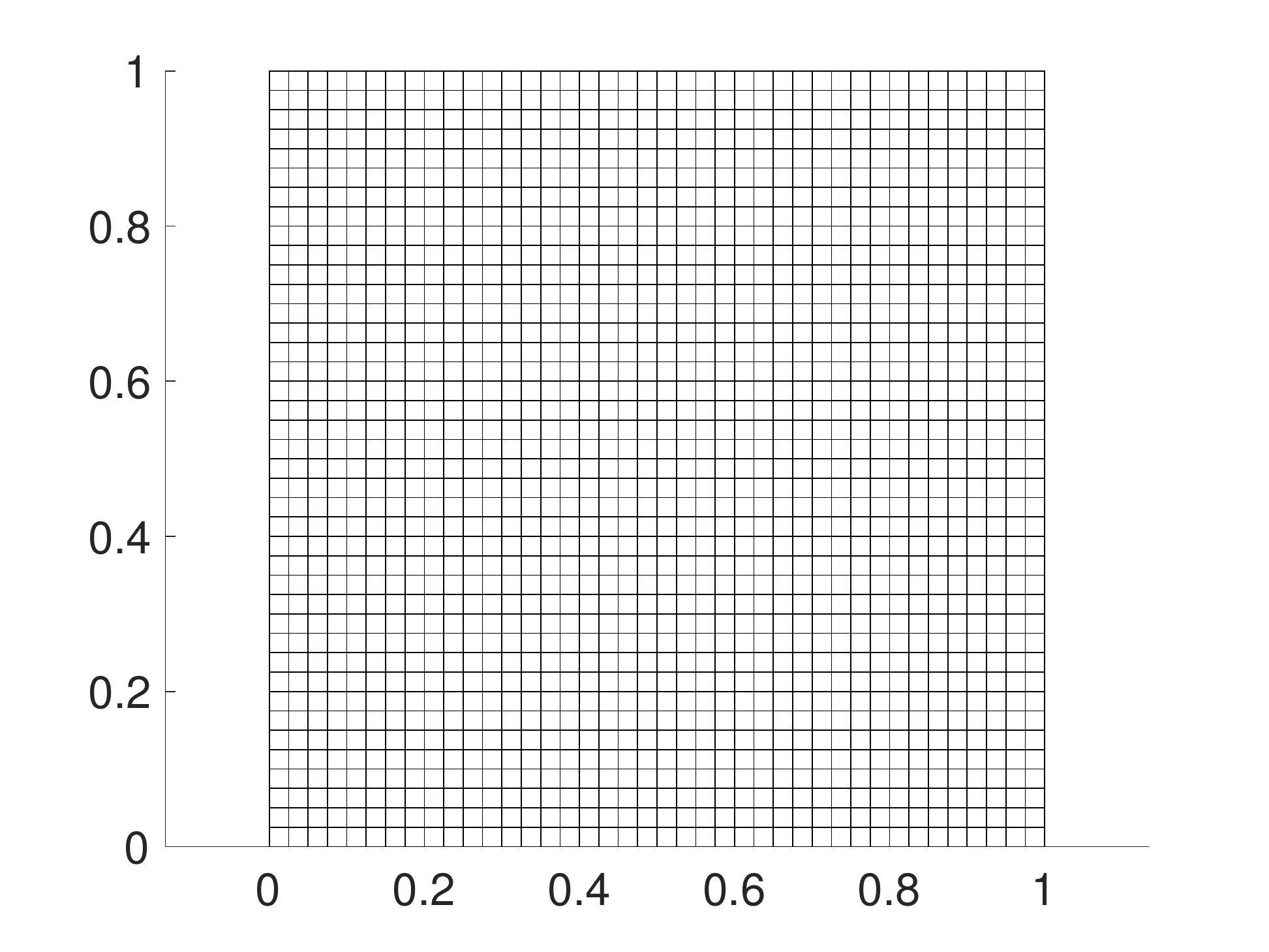}
\caption{``Coarse'' mesh: 40x40 elements}
\label{fig_ex1_globMesh_40x40}
\end{subfigure}
~
\begin{subfigure}[b]{0.45\textwidth}
\includegraphics[width=8.1cm]{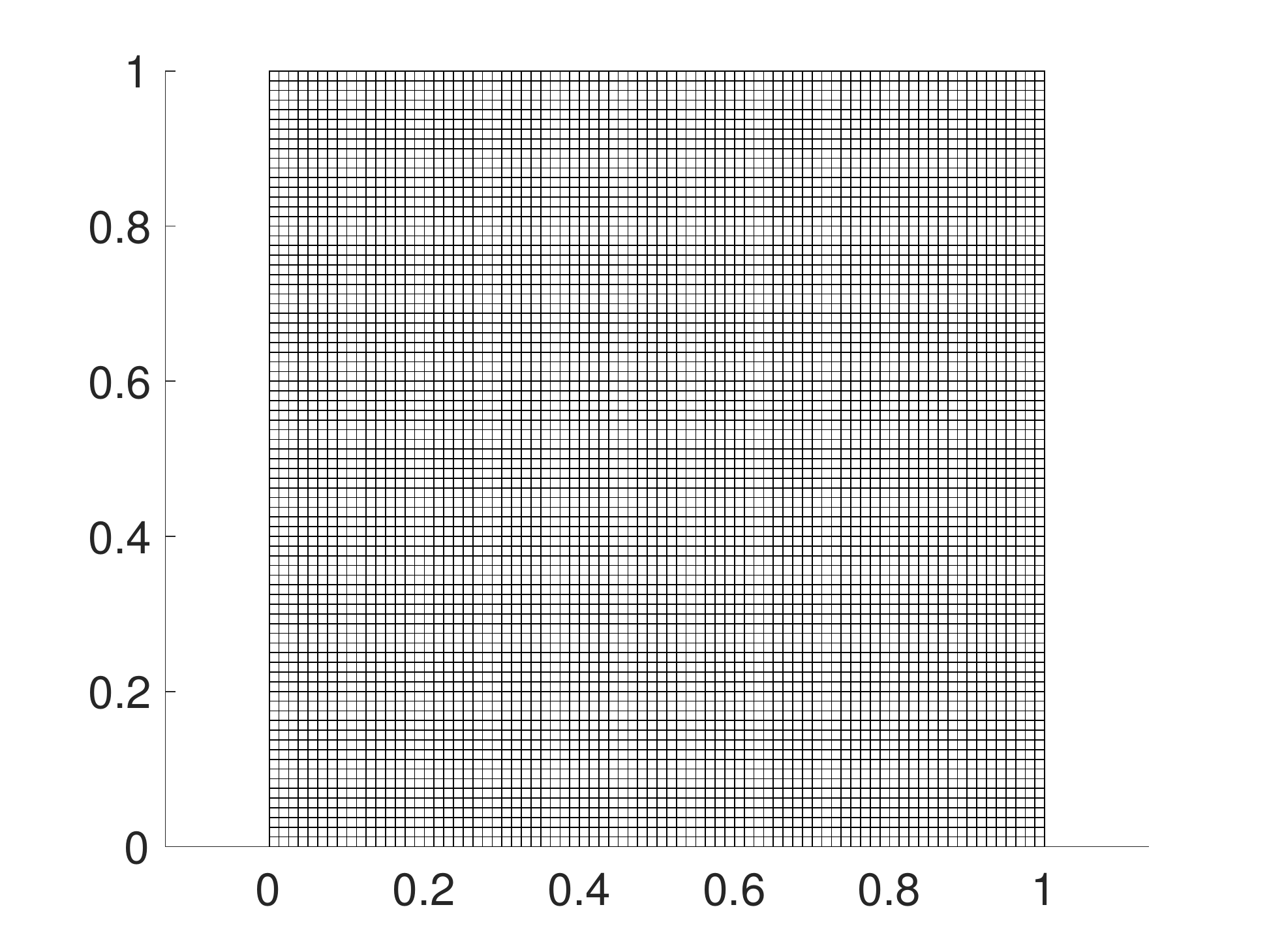}
\caption{``Fine'' mesh: 80x80 elements}
\label{fig_ex1_globMesh_80x80}
\end{subfigure}
\caption{Heat equation, two global meshes used for discretization.}\label{fig_ex1_globMesh}
\end{figure}

\begin{figure}[h!]
\centering
\begin{subfigure}[b]{0.45\textwidth}
\includegraphics[width=8.1cm]{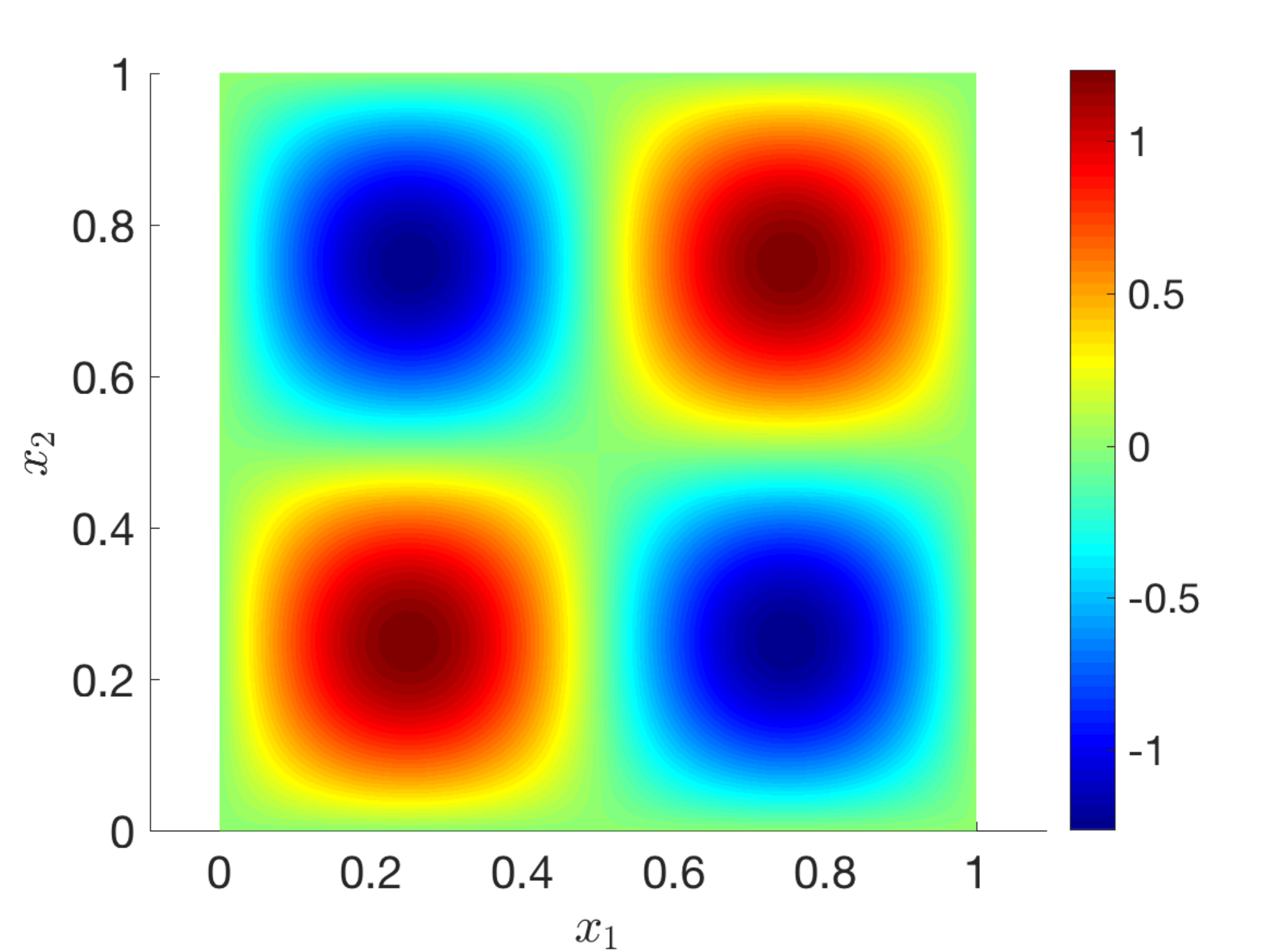}
\caption{$\paramComp=(1,1)$}
\label{fig_ex1_FEMsol_1_1}
\end{subfigure}
~
\begin{subfigure}[b]{0.45\textwidth}
\includegraphics[width=8.1cm]{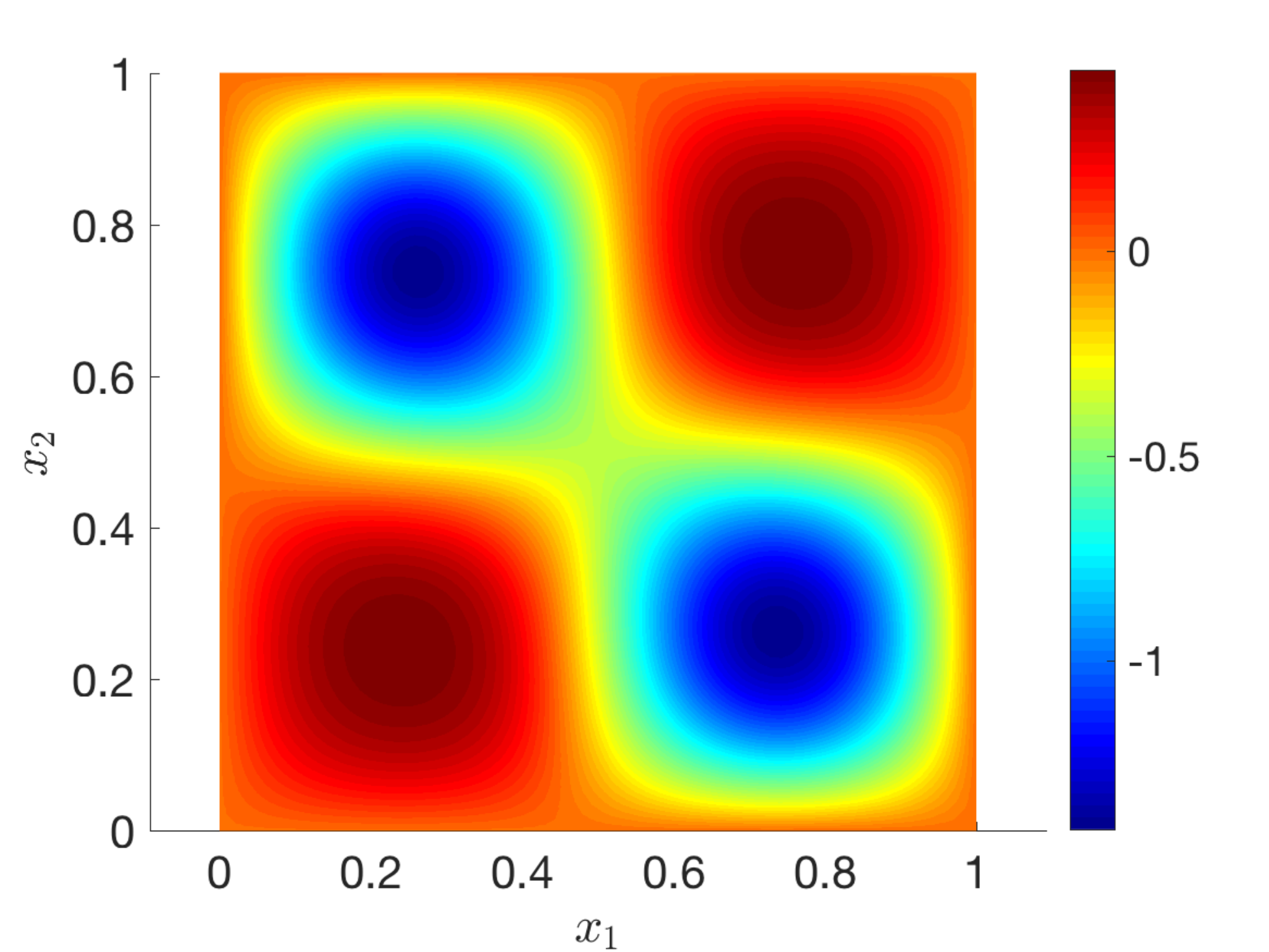}
\caption{$\paramComp=(10,10)$}
\label{fig_ex1_FEMsol_10_10}
\end{subfigure}
\caption{Heat equation, FOM solutions for different $\paramComp$ using the fine mesh. }\label{fig_ex1_FEMsols}
\end{figure}

%%%%%%%%%%%%% table 2: heat eqn: parameters for globFEM %%%%%%%%%%%%%
\begin{table}[h!]
\center
\caption{Heat equation, parameters of global FOM discretization} \label{tab_ex1_globFEM_params}
{\begin{tabular}{|c||c|c|}
	\hline \rule{0pt}{2.5ex}
	&  ``Coarse'' mesh  &  ``Fine'' mesh  \\ [0.2ex]
	\hline \rule{0pt}{2.5ex} % \hspace{-2.5mm}
	Number of elements & 1600 & 6400  \\
	Number of nodes    & 1681 & 6561  \\
	Number of degrees of freedom $\ndof$			   & 1521 & 6241  \\
	\hline
\end{tabular}}
\end{table}

We first consider the model example introduced in Refs.
\cite{grepl2007efficient, chaturantabut2010nonlinear}. This is a parametric
nonlinear 2D heat problem which consists of computing
$\tempComp(\stateComp,\paramComp)$ with $\stateComp \equiv
(\stateCompArg{1},\stateCompArg{2}) \in \domain=[0,1]^2$ and
$\param\equiv(\paramCompArg{1},\paramCompArg{2}) \in
\paramDomain=[0.01,10]^2$ and homogeneous Dirichlet boundary condition on
$\boundary\equiv \partial \domain$ satisfying
\begin{equation}\label{eq_2DheatPDE}
-\nabla^2 \tempComp + \frac{\paramCompArg{1}}{\paramCompArg{2}}(e^{\paramCompArg{2} \tempComp}-1) = 100 \sin(2\pi \stateCompArg{1}) \sin(2\pi \stateCompArg{2}).
\end{equation}
This model can be interpreted as a 2D stationary diffusion problem with a
nonlinear interior heat source. The resulting solution exhibits a strongly nonlinear dependence 
on the parameters $\paramComp$.

%%% globFEM description: provide all necessary parameter values %%%

For spatial discretization, we apply the finite-element method using two
meshes (which will be used to assess strong and weak scaling): a ``coase''
mesh and a ``fine'' mesh, characterized by 1600 (40$\times$40) and 6400
(80$\times$80) bilinear quadrilateral (Q1) elements, respectively.
Figure~\ref{fig_ex1_globMesh} depicts these meshes, while
Table~\ref{tab_ex1_globFEM_params} reports the corresponding parameters.
Figure~\ref{fig_ex1_FEMsols} plots the FOM reference solutions on the ``fine''
mesh with two different parameter values $\paramComp=(1,1)$ and
$\paramComp=(10,10)$. 
Applying these finite-element discretizations to Eq.~\eqref{eq_2DheatPDE} leads
to a parameterized system of nonlinear algebraic equations of the form \eqref{eq_originalAlgebraic}.

\subsubsection{Full-order model}

%%%%%%%%%%%%% table 3: heat eqn: parameters for FOM configs %%%%%%%%%%%%%
\begin{table}[h!]
\center
\caption{Heat equation, parameters for three FOM configurations} \label{tab_ex1_DDFEMconfig_params}
{\begin{tabular}{|c||c|c|c|}
	\hline \rule{0pt}{2.5ex}
	&  $2\times 2$ ``coarse''  &  $4\times 4$ ``fine''  &  $2\times 2$ ``fine''  \\ [0.2ex]
	%      &     &   &     \\[0.5ex]
	\hline \rule{0pt}{2.5ex}
Number of subdomains	$\nsubdomains$  	 & 4   	& 16   & 4    \\
	Number of constraints $\nconstraints$ 	 & 172  & 1092 & 332  \\
	Number of ports $\nports$			 & 5    & 33   & 5    \\
	Number of interior DOFs $\ndofInteriorArg{1}$ & 441  & 441  & 1681
	%\# nodes on $\domain_{1}$ & 441  & 441  & 1681 
\\ \hline
	 & \multicolumn{2}{c|}{weak scaling} & \\ \hline
     & & \multicolumn{2}{c|}{strong scaling} \\ \hline
\end{tabular}}
\end{table}

%%%%%%%%%%%%% table 4: heat eqn: parameters for FOM \Omega_i %%%%%%%%%%%%%
\begin{table}[h!]
\center
\caption{Heat equation, parameters on each $\domaini$ of the $2\times 2$
	``fine'' configuration. In this case, there are $\nports=5$ total ports with
	$\ndofPortsArg{1} = 76$, $\ndofPortsArg{2} = 76$, 
$\ndofPortsArg{3} = 78$, $\ndofPortsArg{4} = 78$, $\ndofPortsArg{5} = 4$.
	} \label{tab_ex1_2x2config_params}
{\begin{tabular}{|c||c|c|c|c|}
	\hline \rule{0pt}{2.5ex}
	& $\domainArg{1}$ & $\domainArg{2}$ & $\domainArg{3}$  & $\domainArg{4}$  \\ [0.2ex]
	%      &     &   &     \\[0.5ex]
	\hline \rule{0pt}{2.5ex} 
	%   5  & 5  & 2.4124e+00   & 2.4226e+02  \\
	%   10 & 10 & 2.4124e+00   &   \\
	$\nresi$  		 & 1521 & 1560 & 1560 & 1600  \\
	$\ndofInteriori$ & 1444 & 1482 & 1482 & 1521  \\
	$\ndofBoundaryi$ &  156 &  158 &  158 &  160  \\
	$\ndofi(=\ndofInteriori+\ndofBoundaryi)$ & 1600 & 1640 & 1640 & 1681 \\
	%\hline \rule{0pt}{2.5ex}
	Number of subdomain ports $\card{\portsSubdomains{i}}$ &  3 &  3 &  3 &  3   \\ 
	%\hline \rule{0pt}{2.5ex}
	%$\ndofPortsArg{1}$ & 76 & 76 & 78 & 78   \\
	%$\ndofPortsArg{2}$ &  4 &  4 &  4 &  4   \\
	%$\ndofPortsArg{3}$ & 76 & 78 & 76 & 78   \\
	\hline
\end{tabular}}
\end{table}

\begin{figure}[h!]
\centering
\begin{subfigure}[b]{0.45\textwidth}
\includegraphics[width=8.1cm]{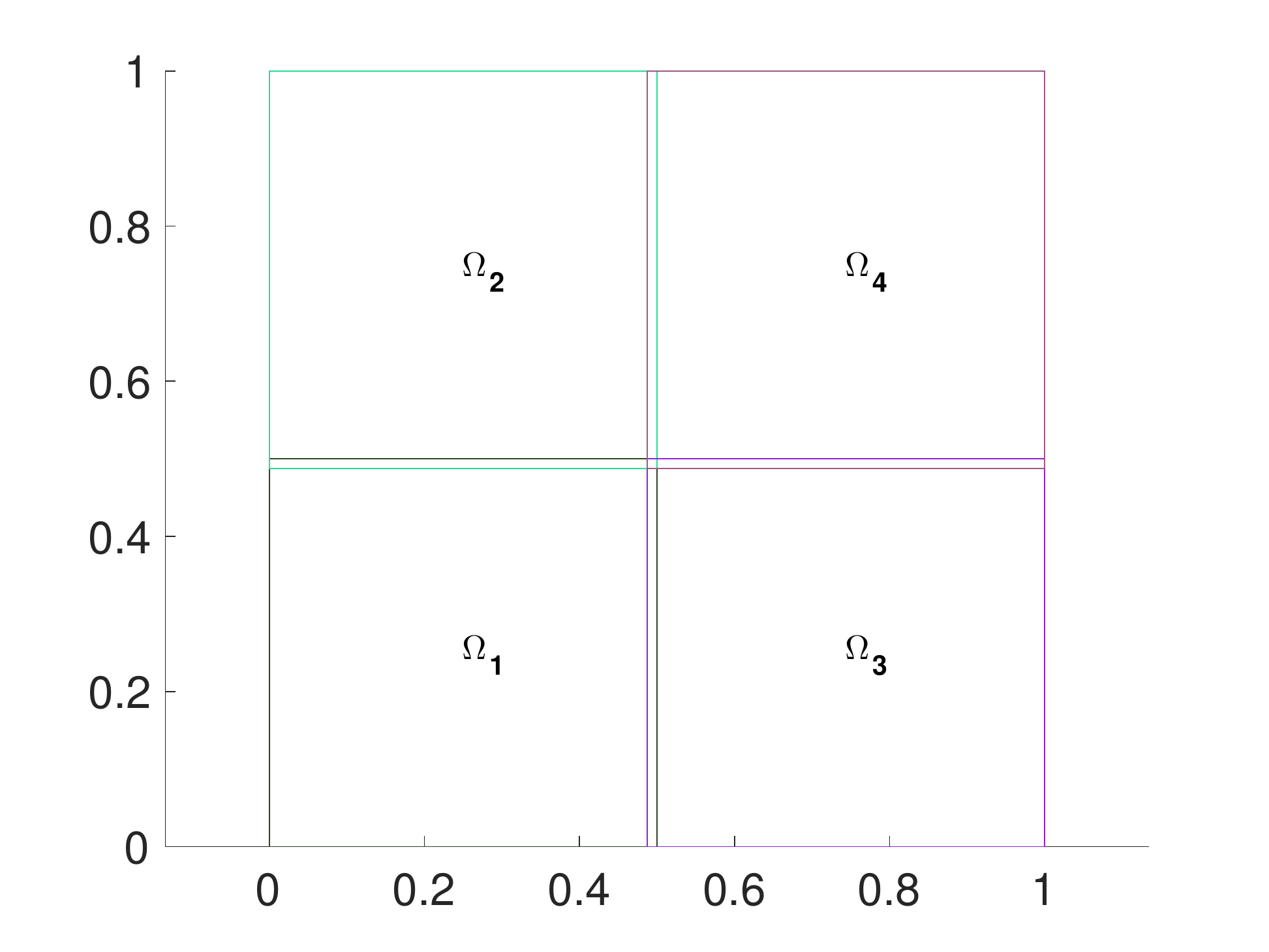}
\caption{$2\times 2$ configuration}
\label{fig_ex1_subdoms_2x2}
\end{subfigure}
~
\begin{subfigure}[b]{0.45\textwidth}
\includegraphics[width=8.1cm]{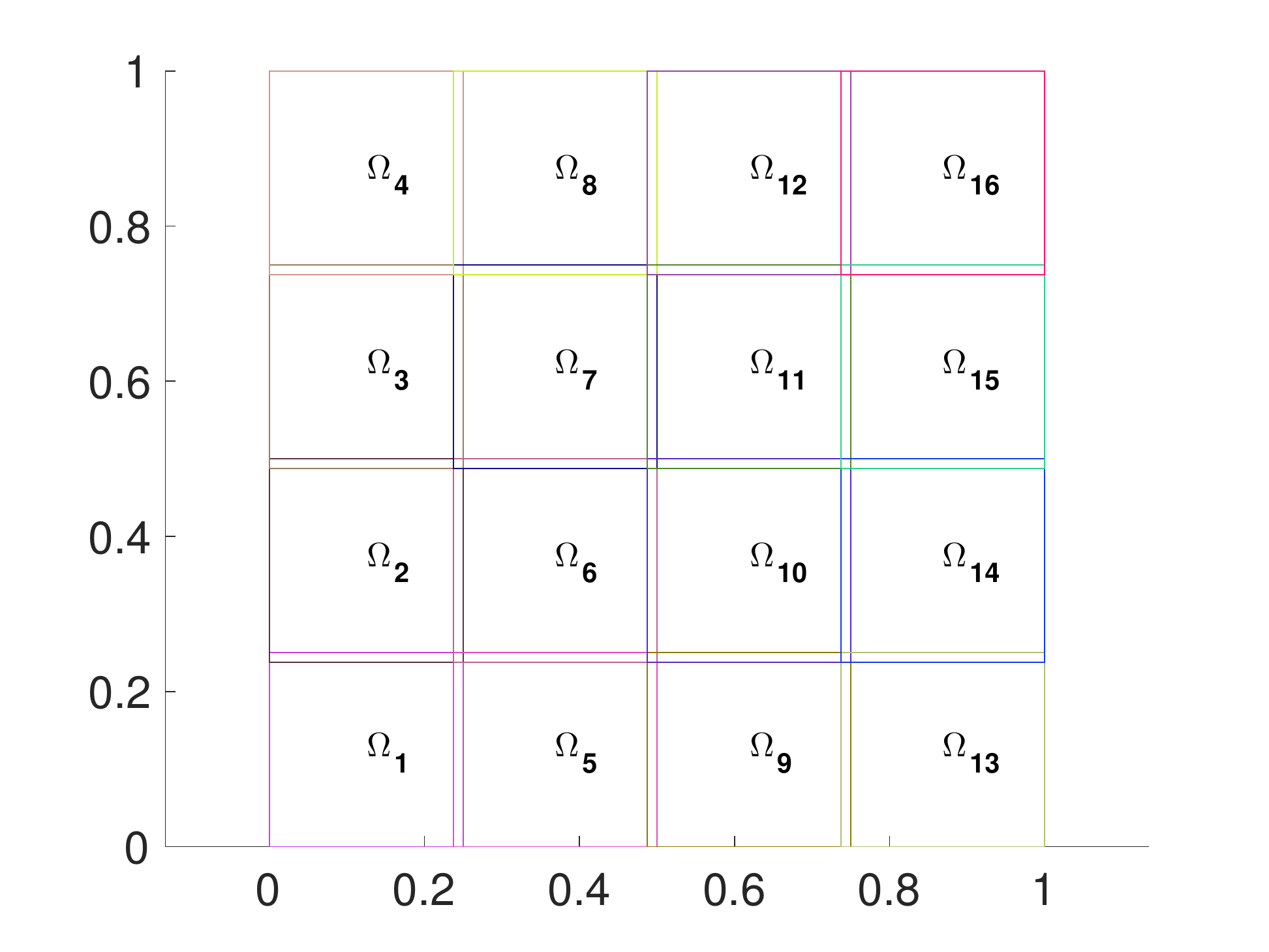}
\caption{$4\times 4$ configuration}
\label{fig_ex1_subdoms_4x4}
\end{subfigure}
\caption{Heat equation, two domain decomposition configurations based on the
	finite-element mesh. }\label{fig_ex1_2x2config_4x4config}
\end{figure}

%%% FOM description: provide all necessary parameter values %%%

After applying the finite-element discretization, we introduce the
algebraically non-overlapping decomposition of the problem described in
Section \ref{sec_DDFOM_formulation}. For this problem, the chosen algebraic
decomposition corresponds to a spatial domain decomposition in space. In particular, we employ decompositions into both $2\times 2$ (such that
$\nsubdomains=4$) and $4\times 4$ (such that $\nsubdomains=16$) configurations
as depicted in Figure \ref{fig_ex1_2x2config_4x4config}; note that these local
subdomains have one layer of elements overlapping (as explained in Figure
\ref{fig_divided_models}).  We apply the $2\times 2$ decomposition to the
``coarse'' mesh only, but apply both $4\times 4$ and $2\times 2$ decompositions to the ``fine'' mesh. Table \ref{tab_ex1_DDFEMconfig_params} lists
the parameters used for each of these configurations. The pairwise comparison of the
$2\times 2$ ``coarse'' and $4\times 4$ ``fine'' configurations is interpreted
as
weak scaling, while the pairwise comparision of the $2\times 2$ ``fine'' and $4\times 4$
``fine'' configuations interpreted as strong scaling, respectively. For reference, Table
\ref{tab_ex1_2x2config_params} reports the parameters characterizing each subdomain
$\domainArg{i}$, $i=1,\ldots, \nsubdomains$ of the $2\times 2$ ``fine''
configuration.

\subsubsection{DD-LSPG and DD-GNAT approximations: one online
computation}\label{sec:heatOneOnline}

To generate the reduced bases required for the reduced-order models, we solve
the FOM \eqref{eq_originalAlgebraic} for
$\param\in\trainSample\subset\paramDomain$. In our case, we define the
training-parameter set $\trainSample$ via a $20\times 20$ equispaced sampling of
the parameter domain $\paramDomain$, yielding $\numTrainSample=400$ samples. We
apply the methods described in Section \ref{sec_basis_construction} to create
port, skeleton, full-interface, and full-subdomain bases from these training
data. However, we recall that skeleton bases require full-system snapshots and
thus are not generally practical for decomposable systems that demand
``bottom-up'' training; we still include this approach for comparative purposes.
At each iteration of the Newton--Raphson algorithm used to solve the FOM
equations \eqref{eq_originalAlgebraic}, the residual vector is saved; the
resulting residual snapshots are employed to generate the residual bases
$\rbResi$, $i=1,\ldots,\nsubdomains$ employed by DD-GNAT via POD. Lastly, the
GNAT offline \YC{Algorithm}~\ref{alg_Greedy_GNAT} is performed to create sample meshes $\sampleSetOfSampleMesh{i}$ for all subdomains $\domainArg{i}$.

%%%%%%%%%%%%% table 6: Heat eqn: one online computation %%%%%%%%%%%%%
\begin{table}[h!]
	\small
\center
\caption{Heat equation, top-down training, $2\times 2$ ``fine'' configuration, ROM methods
	performance at point $\paramComp_{{\rm test}}=(5.005,5.005) \notin \trainSample$ for one
	online computation. Recall from Section
	\ref{sec_basis_construction} that $\energyCriterion\in[0,1]$ denotes the energy
	criterion employed by POD.} 
\label{tab_ex1_oneOnlineComp_inputParams}
{\begin{tabular}{|c||c|c||c|c||c|c||c|c|}
	\hline \rule{0pt}{1.5ex}
	constraint	& \multicolumn{8}{c|}{strong} \\	
	\hline \rule{0pt}{2.5ex}	
	basis	& \multicolumn{2}{c||}{port} & \multicolumn{2}{c||}{skeleton} &
	\multicolumn{2}{c||}{full-interface} & \multicolumn{2}{c|}{subdomain} \\ [0.3ex]
	\hline \rule{0pt}{2.5ex}
	method  & DD-LSPG  &  DD-GNAT & DD-LSPG  &  DD-GNAT & DD-LSPG  &  DD-GNAT &
	DD-LSPG  &  DD-GNAT  \\ [0.3ex]
	\hline \rule{0pt}{2.5ex} %\hspace{-2.5mm}
	$\energyCriterion$ for state & $1-10^{-5}$ & $1-10^{-5}$ & $1-10^{-5}$ & $1-10^{-5}$ & $1-10^{-5}$ & $1-10^{-5}$  & $1-10^{-5}$ & $1-10^{-5}$ \\
	%$\energyCriterion$ on $\boundaryi$ & $1-10^{-5}$ & $1-10^{-5}$ & $1-10^{-5}$ & $1-10^{-5}$ & $1-10^{-5}$ & $1-10^{-5}$  & $1-10^{-5}$ & $1-10^{-5}$ \\
	$\energyCriterion$ for residual &   		  & $1-10^{-12}$ &   		  & $1-10^{-12}$ &   		  & $1-10^{-12}$  &  & $1-10^{-12}$ \\
	$\nsampleArg{i}/\nrbResi$		  &  & 2 &  & 2 &  & 2 &  &  2 \\
	\hline \rule{0pt}{2.5ex} \hspace{-2.5mm}	
	rel. error & 	0.0026 	  & 0.0012 & 0.0025 & 0.0019 & 0.6959 & 0.6667 & 1.0000 & 1.0000 \\
	speedup  	   &	3.87  	  & 8.86   & 3.88   & 8.82   & 3.91   & 8.85 & 13.61 & 30.98 \\
	\hline
\end{tabular}}
\end{table}

%%%%%%%%%%%%% table 7: heat eqn: ROM parameters \Omega_i %%%%%%%%%%%%%
\begin{table}[h!]
\center
\caption{Heat equation, top-down training, $2\times 2$ ``fine'' configuration, ROM parameters on each $\domaini$ resulting from Table~\ref{tab_ex1_oneOnlineComp_inputParams}.}
\label{tab_ex1_oneOnlineComp_ROMparams}
{\begin{tabular}{|c||c|c|c|c||c|c|c|c||c|c|c|c||c|c|c|c|}
	\hline \rule{0pt}{2.5ex}
	basis	& \multicolumn{4}{c||}{port} & \multicolumn{4}{c||}{skeleton} &
	\multicolumn{4}{c||}{full-interface} & \multicolumn{4}{c|}{subdomain}  \\
	\hline \rule{0pt}{2.5ex}
	 & $\domainArg{1}$ & $\domainArg{2}$ & $\domainArg{3}$  & $\domainArg{4}$ & $\domainArg{1}$ & $\domainArg{2}$ & $\domainArg{3}$  & $\domainArg{4}$ & $\domainArg{1}$ & $\domainArg{2}$ & $\domainArg{3}$  & $\domainArg{4}$  & $\domainArg{1}$ & $\domainArg{2}$ & $\domainArg{3}$  & $\domainArg{4}$  \\ [0.2ex]
	\hline \rule{0pt}{2.5ex} \hspace{-2.5mm}
	$\nconstraintsROM$ 	& \multicolumn{4}{c||}{18} & \multicolumn{4}{c||}{9} & 
\multicolumn{4}{c||}{12}& \multicolumn{4}{c|}{12} \\	
\hline
	$\nrbInteriori$ 	&  4 &  2 &  2 &  4 & 4 & 2 & 2 & 4 & 4  &  2  &  2 & 4  & 4 & 2 & 2 & 4 \\
	$\nrbBoundaryi$ 	&  8 &  8 &  8 &  8 & 3 & 3 & 3 & 3 & 3  &  3  &  3 & 3  & 4 & 2 & 2 & 4 \\
	\hline \rule{0pt}{2.5ex}
	$\nrbPortArg{1}$ 	& 3 & 3 & 3 & 3  &    &    &    &    &    &    &    &  &  &  &  &  \\
	$\nrbPortArg{2}$ 	& 2 & 2 & 2 & 2  &    &    &    &    &    &    &    &  &  &  &  &   \\
	$\nrbPortArg{3}$ 	& 3 & 3 & 3 & 3  &    &    &    &    &    &    &    &  &  &  &  &   \\
	\hline \rule{0pt}{2.5ex}
	$\nsampleArg{i}$ 		& 102 & 310 & 310 & 104 & 102 & 310 & 310 & 104 & 102 & 310 & 310 & 104  & 102 & 310 & 310 & 104 \\
	$\nrbResi$ 			&  51 & 155 & 155 &  52 &  51 & 155 & 155&  52  &  51 & 155 & 155 &  52  & 51 & 155 & 155 & 52 \\ 
	\hline
\end{tabular}}
\end{table}

\begin{figure}[h!]
\centering
\begin{subfigure}[b]{0.3\textwidth}
\includegraphics[width=5.5cm]{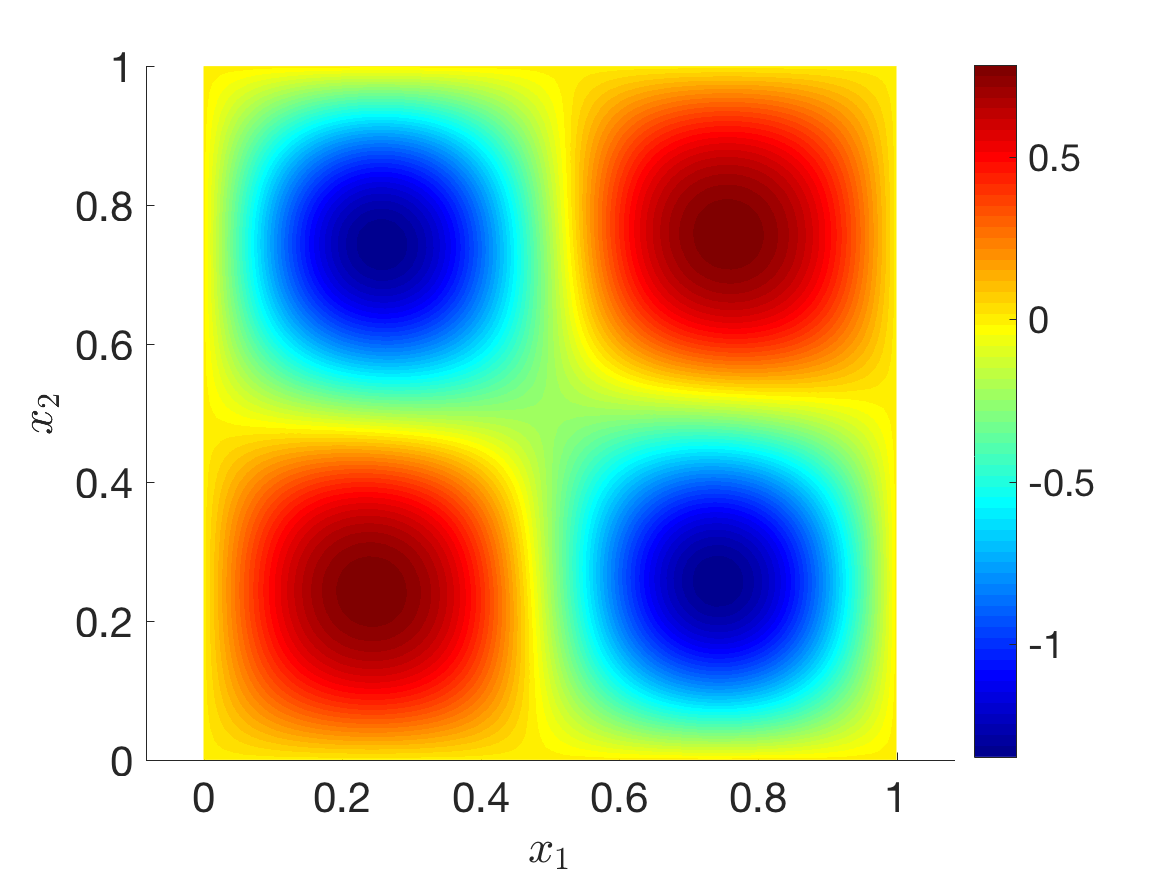}
\caption{global FEM solution}
\label{fig_ex1_heat22fn_sol_DDFEM_Omega}
\end{subfigure}
~
\begin{subfigure}[b]{0.3\textwidth}
\includegraphics[width=5.5cm]{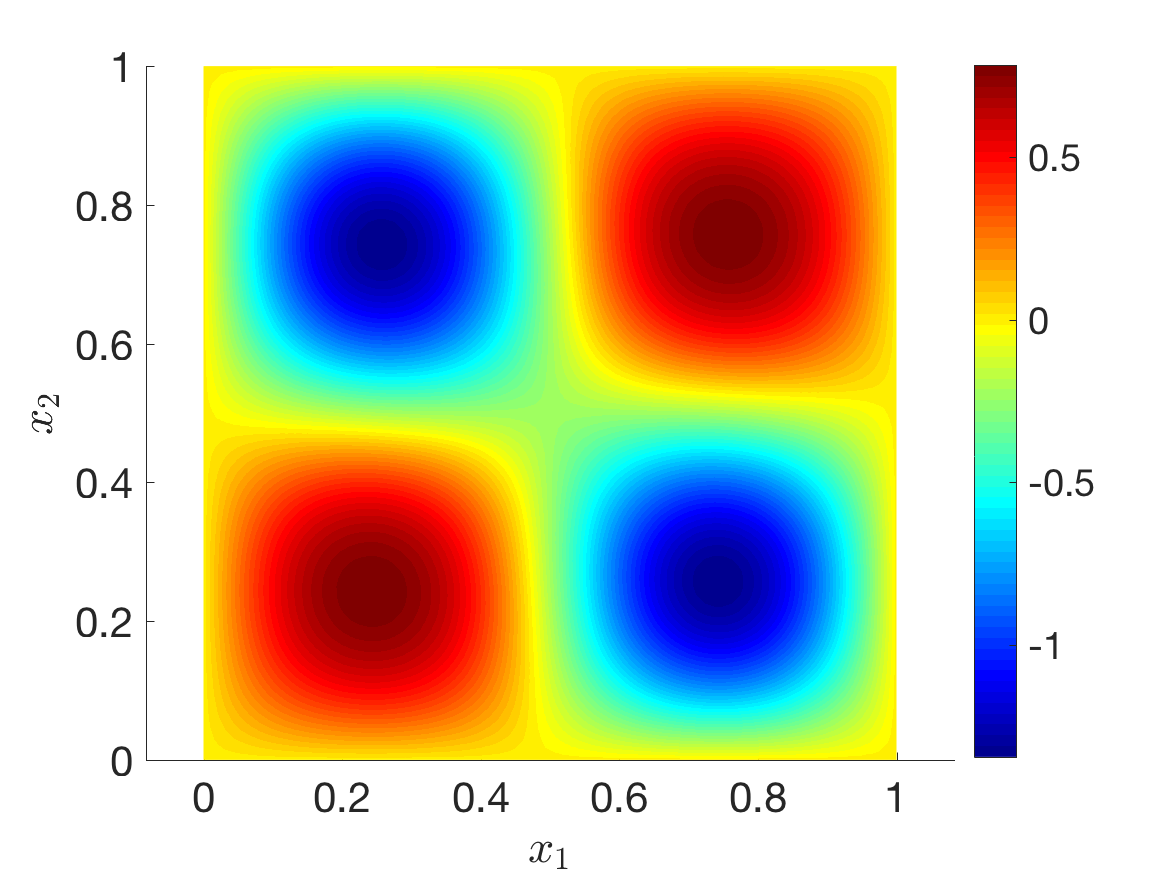}
\caption{DD-LSPG solution}
\label{fig_ex1_heat22fn_sol_DDROM_Omega}
\end{subfigure}
~
\begin{subfigure}[b]{0.3\textwidth}
\includegraphics[width=5.5cm]{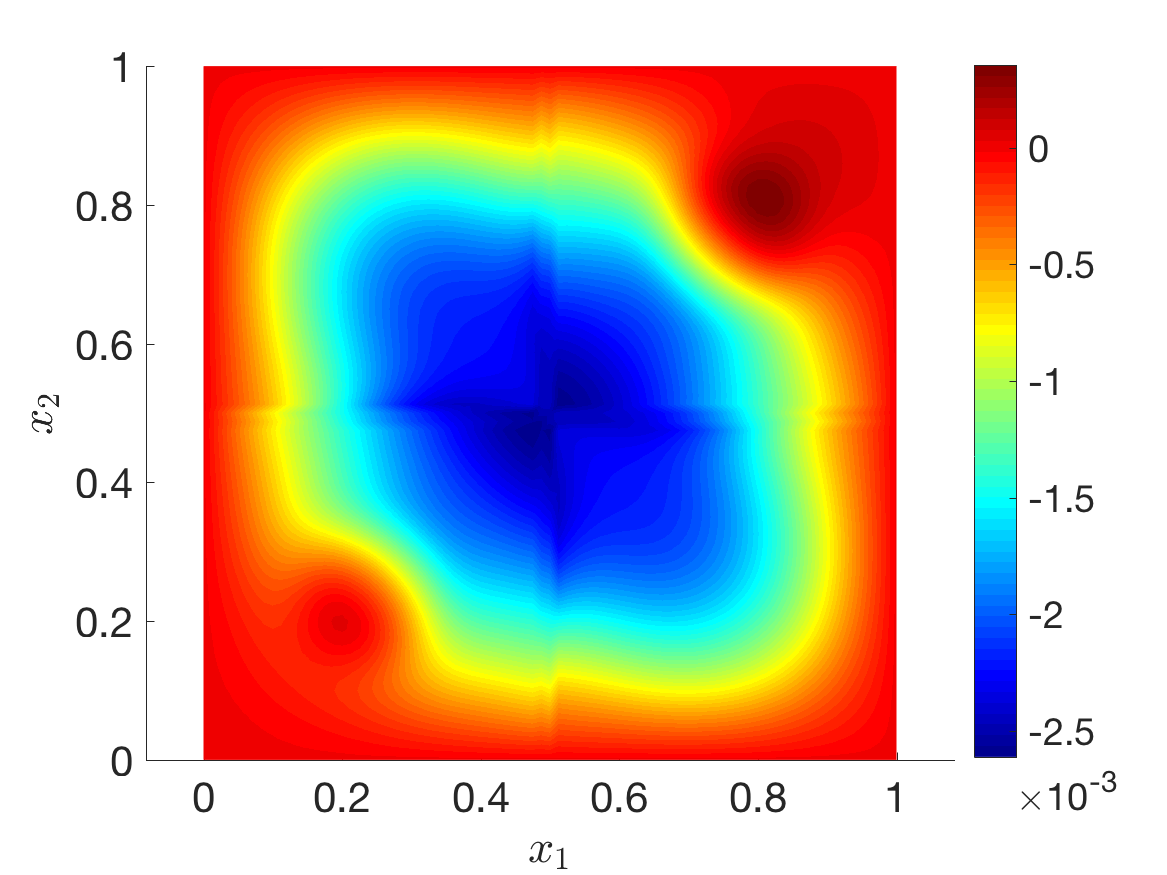}
\caption{DD-LSPG error}
\label{fig_ex1_heat22fn_sol_DDROMerr_Omega}
\end{subfigure}
~
\begin{subfigure}[b]{0.3\textwidth}
\includegraphics[width=5.5cm]{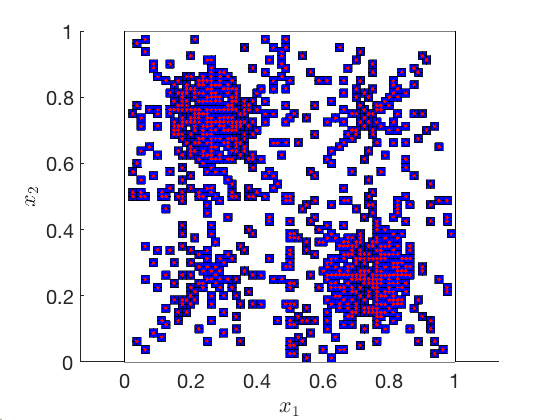}
\caption{Sample mesh}
\label{fig_ex1_heat22fn_sol_sampleMesh_Omega}
\end{subfigure}
~
\begin{subfigure}[b]{0.3\textwidth}
\includegraphics[width=5.5cm]{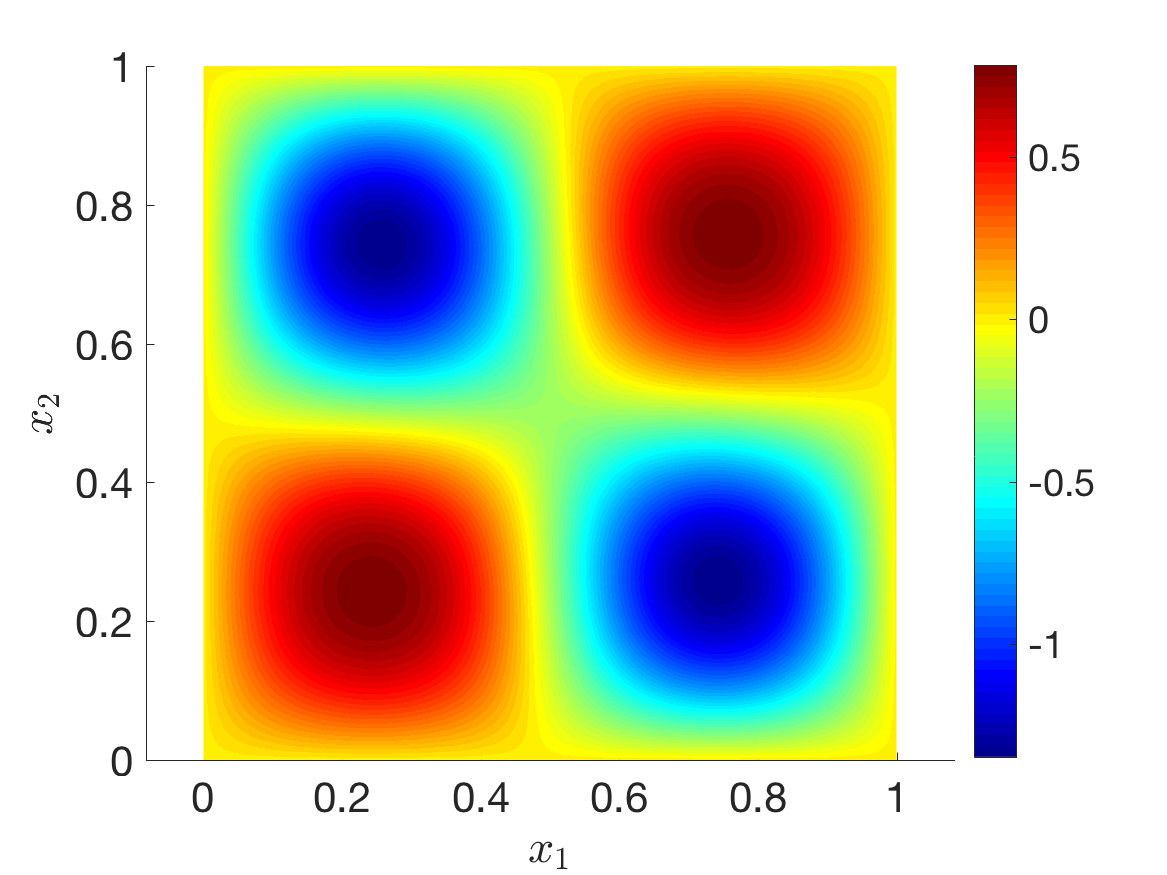}
\caption{DD-GNAT solution}
\label{fig_ex1_heat22fn_sol_DDGNAT_Omega}
\end{subfigure}
~
\begin{subfigure}[b]{0.3\textwidth}
\includegraphics[width=5.5cm]{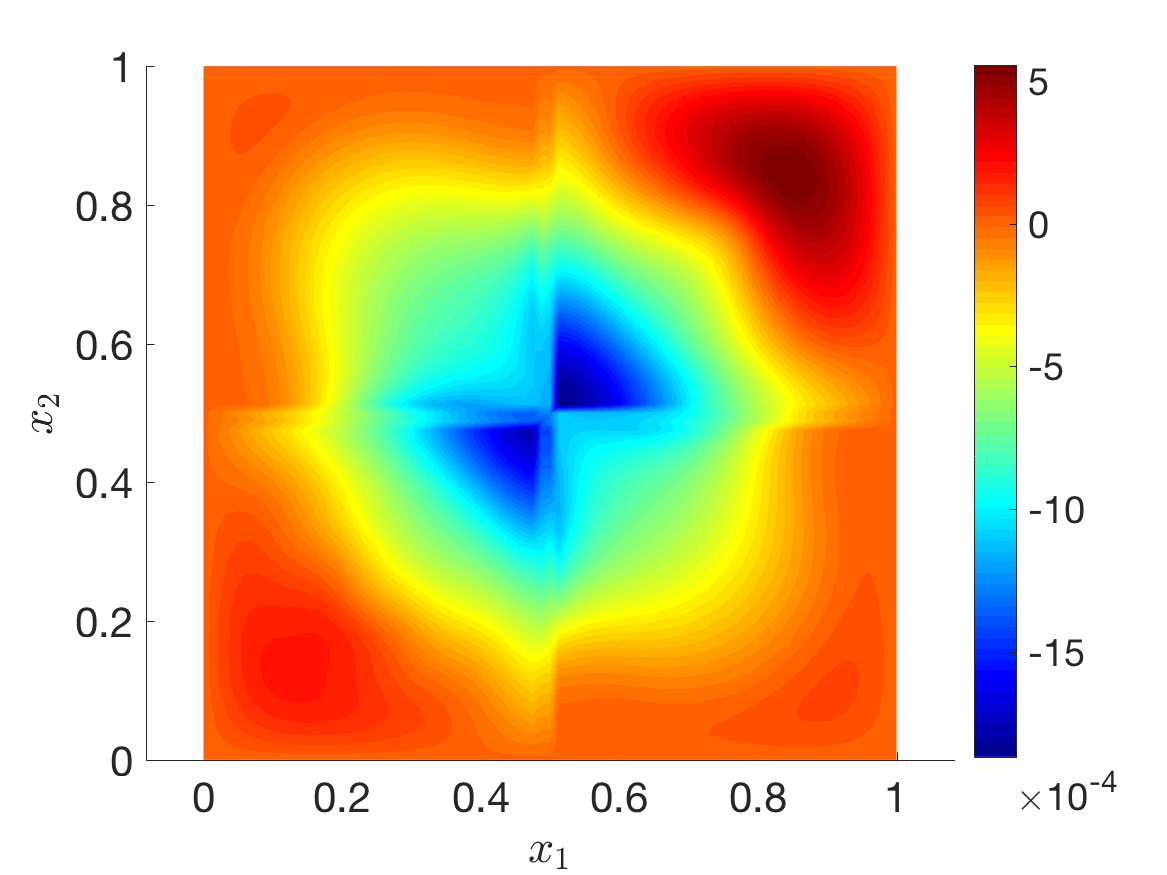}
\caption{DD-GNAT error}
\label{fig_ex1_heat22fn_sol_DDGNATerr_Omega}
\end{subfigure}	
\caption{Heat equation, top-down training, $2\times 2$ ``fine'' configuration, port bases in
	Table \ref{tab_ex1_oneOnlineComp_inputParams}, solutions visualized on $\domain$. }\label{fig_ex1_sol_vis_Omega}
\end{figure}

We now compare the DD-LSPG and DD-GNAT methods for fixed values of their parameters,
and for a single randomly selected online point $\paramComp_{{\rm test}}=(5.005,5.005) \notin
\trainSample$; results at other online points are qualitatively similar. Table~\ref{tab_ex1_oneOnlineComp_inputParams} reports the
chosen input parameters and associated performance of the methods, while the
resulting ROM parameters over each subdomain $\domaini$, $i=1,\ldots,
\nsubdomains$ are listed on Table~\ref{tab_ex1_oneOnlineComp_ROMparams}.
The results in
Table~\ref{tab_ex1_oneOnlineComp_inputParams} confirm the comments in Remark
\ref{rem:globalSol}, which suggested
that 
enforcing strong compatibility can yield poor results for 
full-interface and full-subdomain bases, and that only port and (generally
impractical) skeleton bases are
well-suited for strong compatibility constraints. Figure~\ref{fig_ex1_sol_vis_Omega} visualizes DD-LSPG and DD-GNAT solutions for the port-bases case: it shows that DD-LSPG and DD-GNAT yield accurate results for port bases with strong constraints as anticipated. \CH{Table~\ref{tab_ex1_oneOnlineComp_ROMparams} also shows specifically that $\nrbResi$ is about one order of magnitude larger than $\nrbi$, note that this is normal as $\nsampleArg{i} \ge \nrbResi \ge \nrbi$ is a necessary consistency condition to ensure the GNAT method works (see \cite{carlbergbou-mosleh2011,carlberg2013gnat}). Figure~\ref{fig_ex1_heat22fn_sol_sampleMesh_Omega} shows that the DD-GNAT method picks many sampling points near the center region of $\domainArg{i}$ to capture well the solution nonlinearity and hence ensures the desired solution accuracy. As a result, the DD-GNAT method provides almost similar solution to that of DD-LSPG, hence almost similar good accuracy (see Figures~\ref{fig_ex1_heat22fn_sol_DDROMerr_Omega} and \ref{fig_ex1_heat22fn_sol_DDGNATerr_Omega}). }

\subsubsection{DD-LSPG and DD-GNAT approximations: parameter study}\label{subsubsect_ex1_many_online_computations}

%%%%%%%%%%%%% table 10: Heat eqn: many online computations %%%%%%%%%%%%%
\begin{table}[h!]
\center
\caption{Heat equation, top-down training, ROM-method parameters limits for parameter study (skel.=skeleton, intf.=full-interface, subdom.=subdomain).
Recall from Section
	\ref{sec_basis_construction} that $\energyCriterion\in[0,1]$ denotes the energy
	criterion employed by POD.
	} \label{tab_ex1_manyOnlineComputation}
{\begin{tabular}{|c||c|c|}
	\hline \rule{0pt}{2.5ex}
	method  & DD-LSPG  &  DD-GNAT  \\ [0.2ex]
	%      &     &   &     \\[0.5ex]
	\hline \rule{0pt}{2.5ex} %\hspace{-2.5mm}
	$\energyCriterion$ on $\domaini$ for interior/boundary bases  & $\{1-10^{-5}, 1-10^{-8}\}$ & $\{1-10^{-5}, 1-10^{-8}\}$  \\
$\energyCriterion$ on $\boundaryi$ for interior/boundary bases& $\{1-10^{-5}, 1-10^{-8}\}$ & $\{1-10^{-5}, 1-10^{-8}\}$  \\
 & & \\
	$\energyCriterion$ for full-subdomain bases & $\{1-10^{-3}, 1-10^{-5}, $ & $\{1-10^{-3}, 1-10^{-5}, $ \\
	 & $1-10^{-7}, 1-10^{-9}\}$ & $1-10^{-7}, 1-10^{-9}\}$ \\
 & & \\	 
	$\energyCriterion$ for $\residuali$ &  & $\{1-10^{-6}, 1-10^{-8}, $ \\
 &  & $1-10^{-10}, 1-10^{-12}\}$ \\
 & & \\ 
	$\nsampleArg{i}/\nrbResi$ 		    &  &   \{1, 1.5, 2, 4\}  \\
constraint type	   		    & \{1, 2, 3, 4, 5, strong\}	 &  \{1, 2, 3, 4, 5, strong\}  \\
	basis types	 & \{port, skel., intf., subdom.\} & \{port, skel., intf., subdom.\}  \\
	\hline
\end{tabular}}
\end{table}

\begin{figure}[h!]
\centering
\begin{subfigure}[b]{0.3\textwidth}
\includegraphics[width=5.5cm]{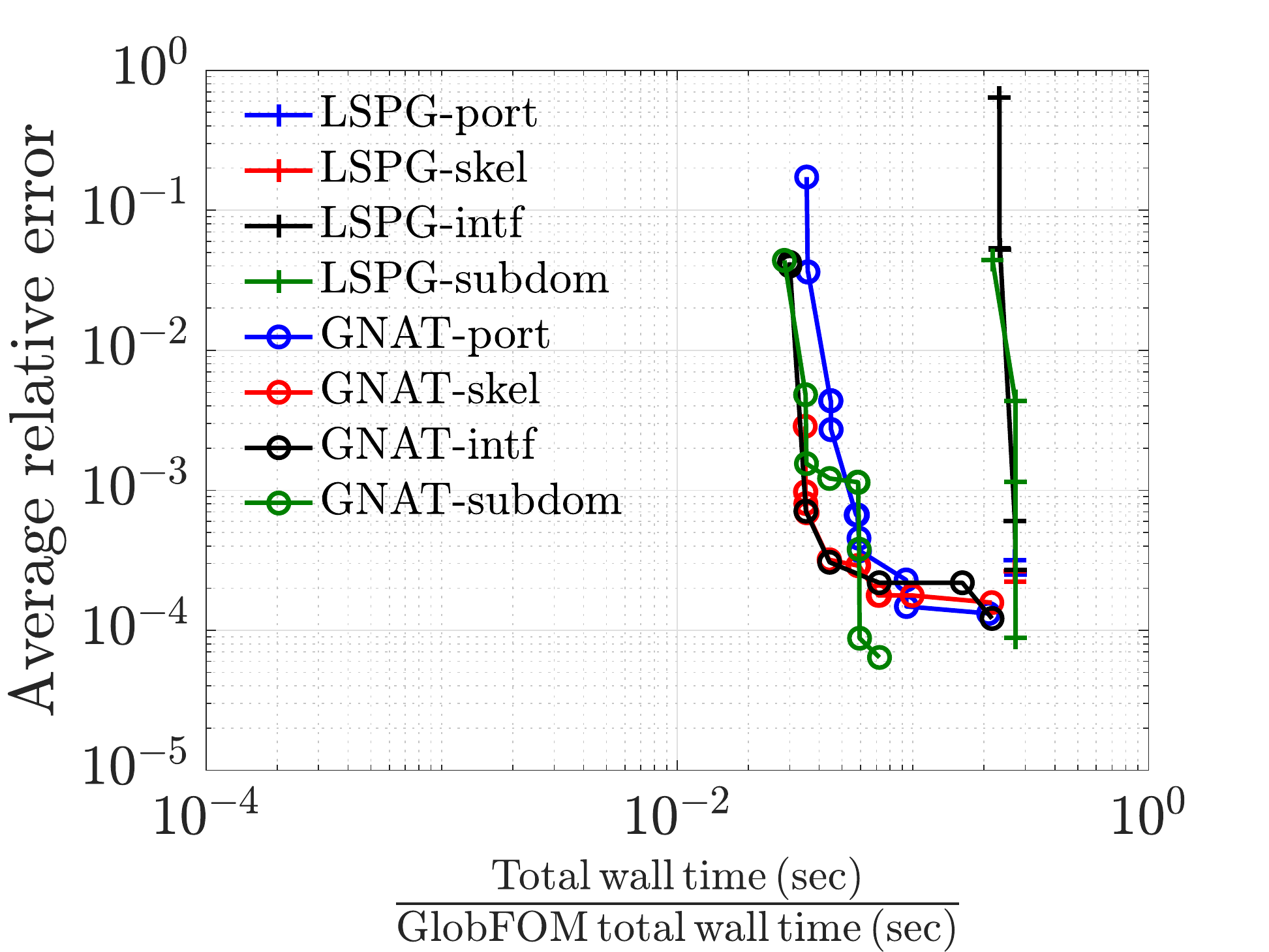}
\caption{Heat $2\times 2$ ``coarse''}
\label{fig_ex1_heat22ce_pareto_wallAll}
\end{subfigure}
~
\begin{subfigure}[b]{0.3\textwidth}
\includegraphics[width=5.5cm]{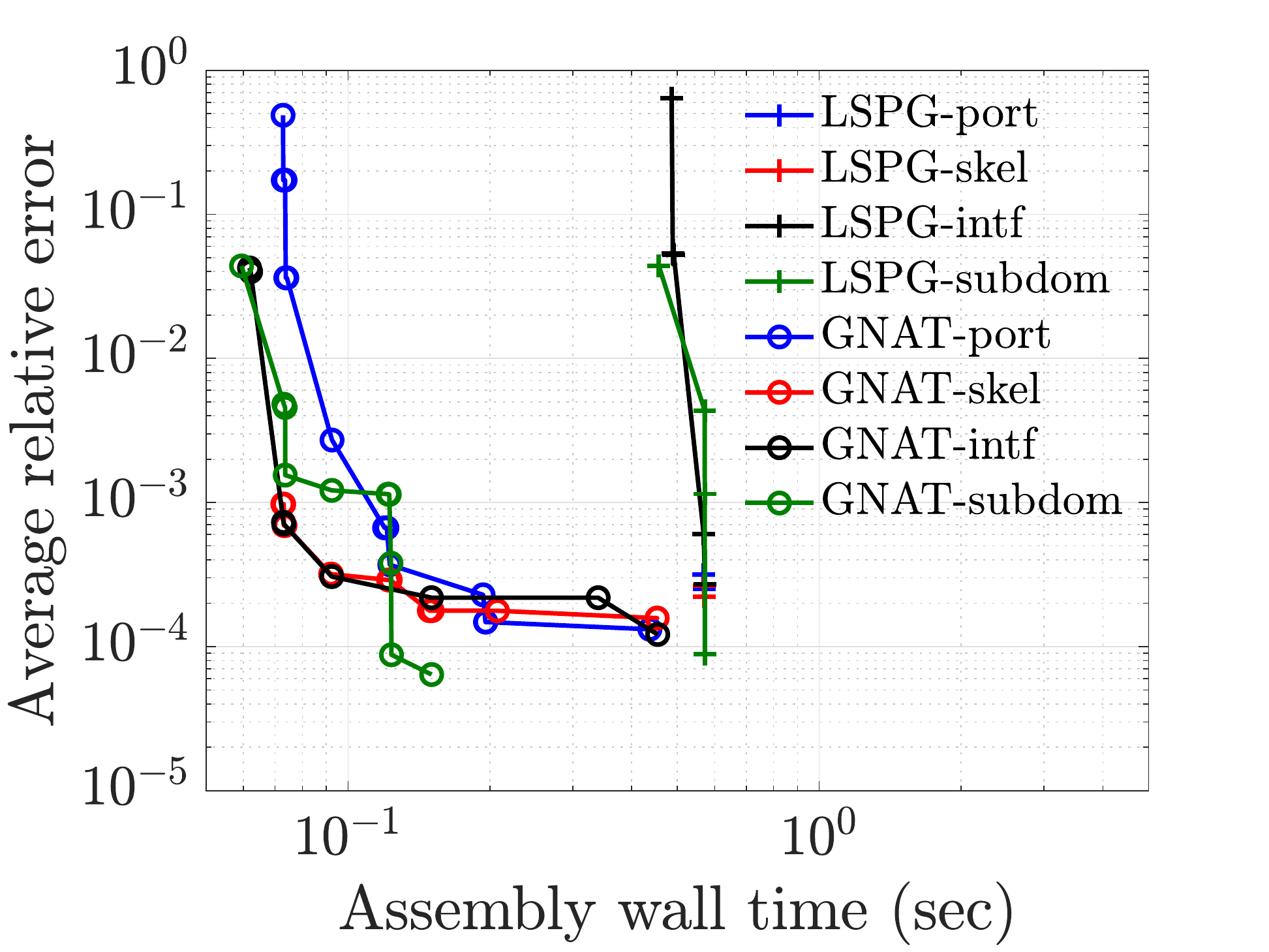}
\caption{Heat $2\times 2$ ``coarse''}
\label{fig_ex1_heat22ce_pareto_wallAsmb}
\end{subfigure}
~
\begin{subfigure}[b]{0.3\textwidth}
\includegraphics[width=5.5cm]{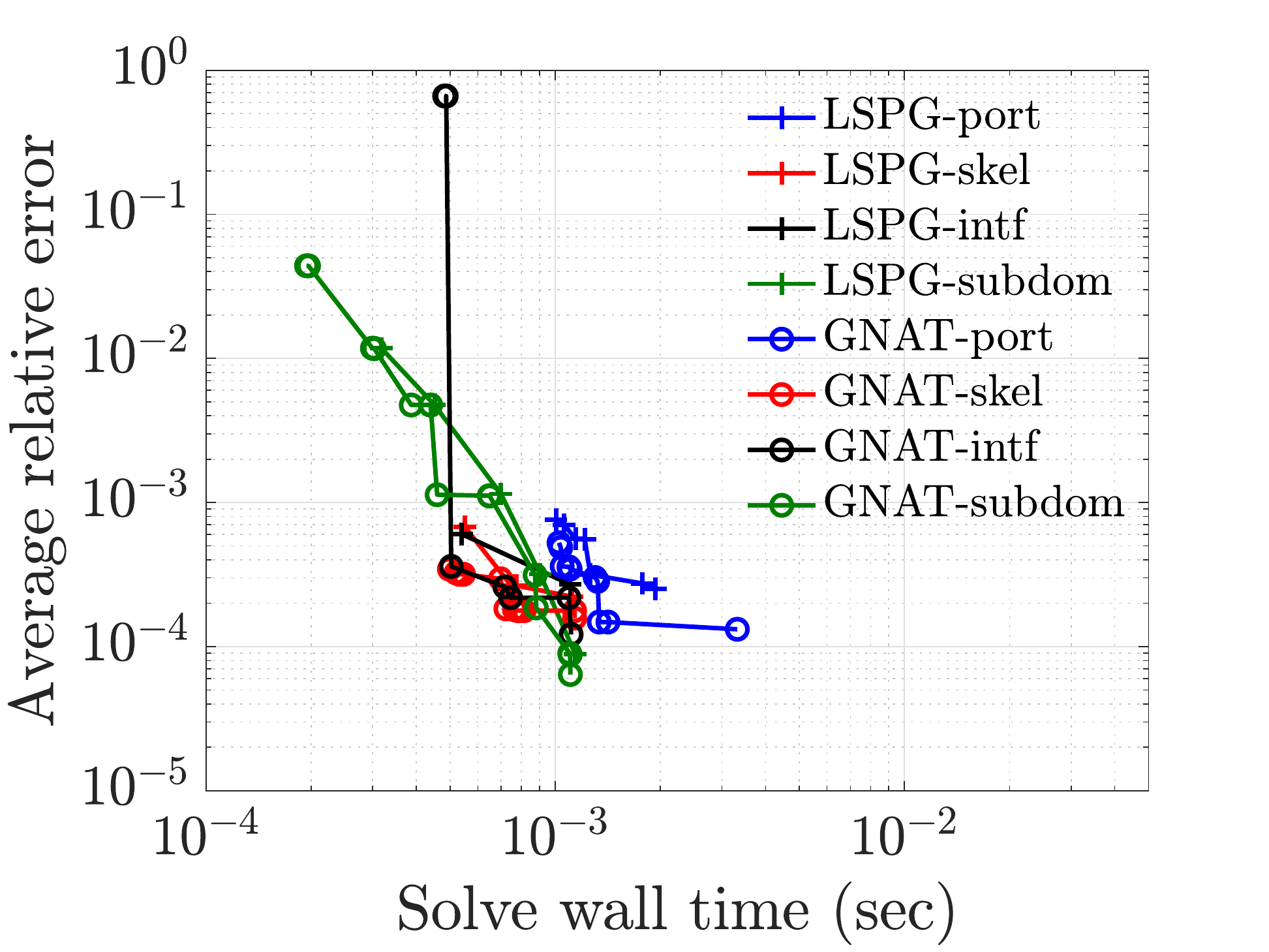}
\caption{Heat $2\times 2$ ``coarse''}
\label{fig_ex1_heat22ce_pareto_wallSolv}
\end{subfigure}
~
\begin{subfigure}[b]{0.3\textwidth}
\includegraphics[width=5.5cm]{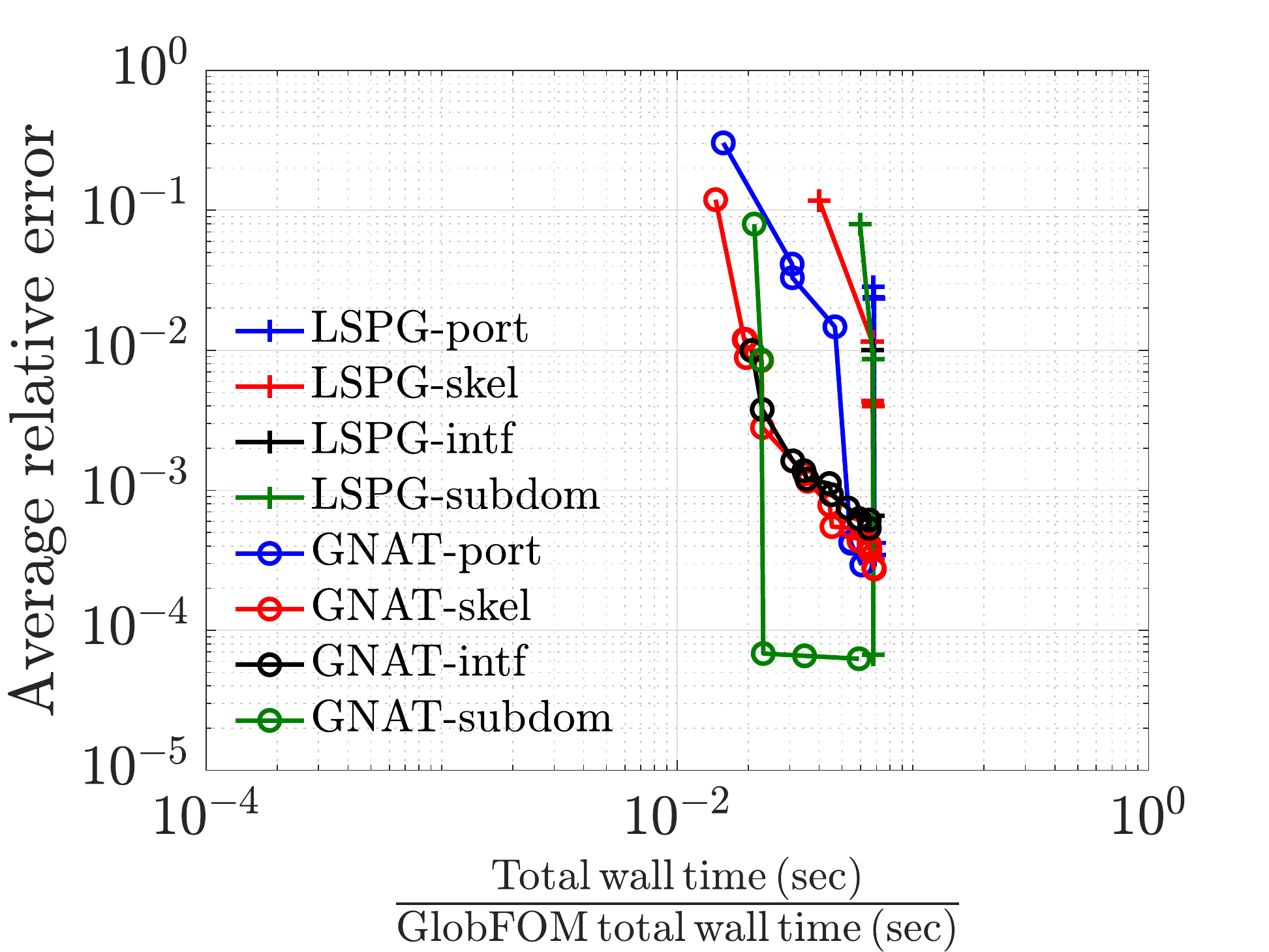}
\caption{Heat $4\times 4$ ``fine''}
\label{fig_ex1_heat44fn_pareto_wallAll}
\end{subfigure}
~
\begin{subfigure}[b]{0.3\textwidth}
\includegraphics[width=5.5cm]{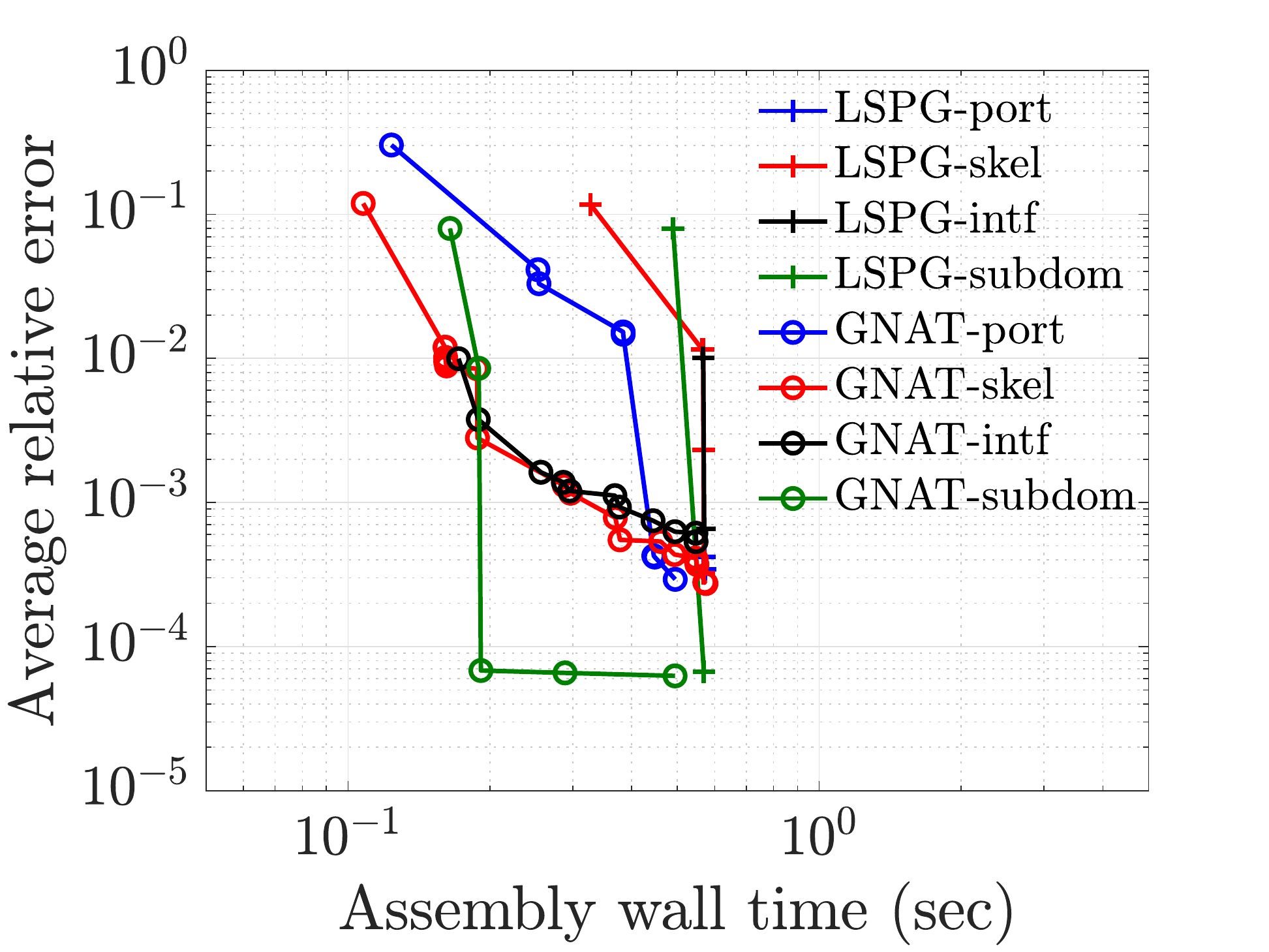}
\caption{Heat $4\times 4$ ``fine''}
\label{fig_ex1_heat44fn_pareto_wallAsmb}
\end{subfigure}
~
\begin{subfigure}[b]{0.3\textwidth}
\includegraphics[width=5.5cm]{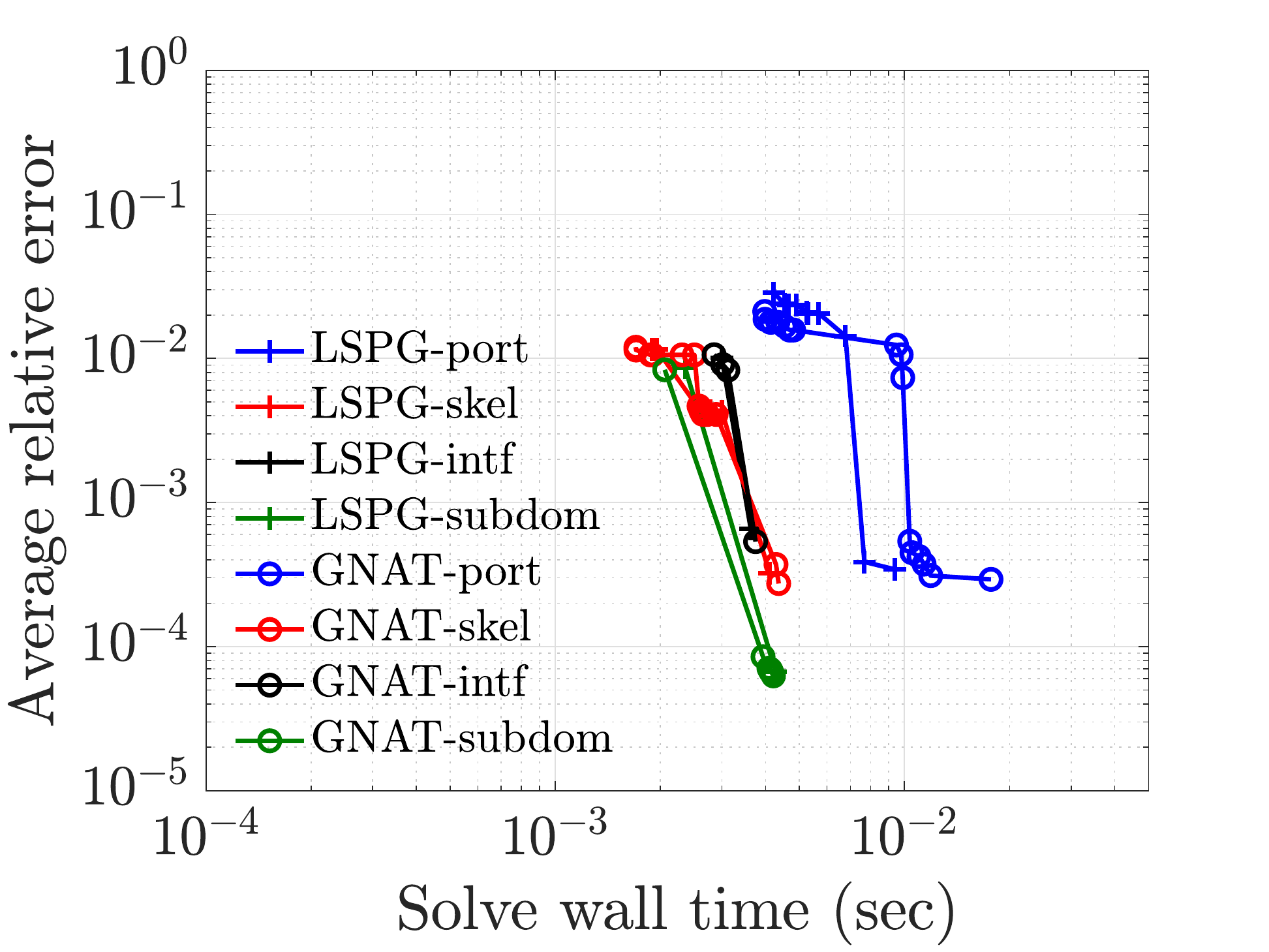}
\caption{Heat $4\times 4$ ``fine''}
\label{fig_ex1_heat44fn_pareto_wallSolv}
\end{subfigure}	
~
\begin{subfigure}[b]{0.3\textwidth}
\includegraphics[width=5.5cm]{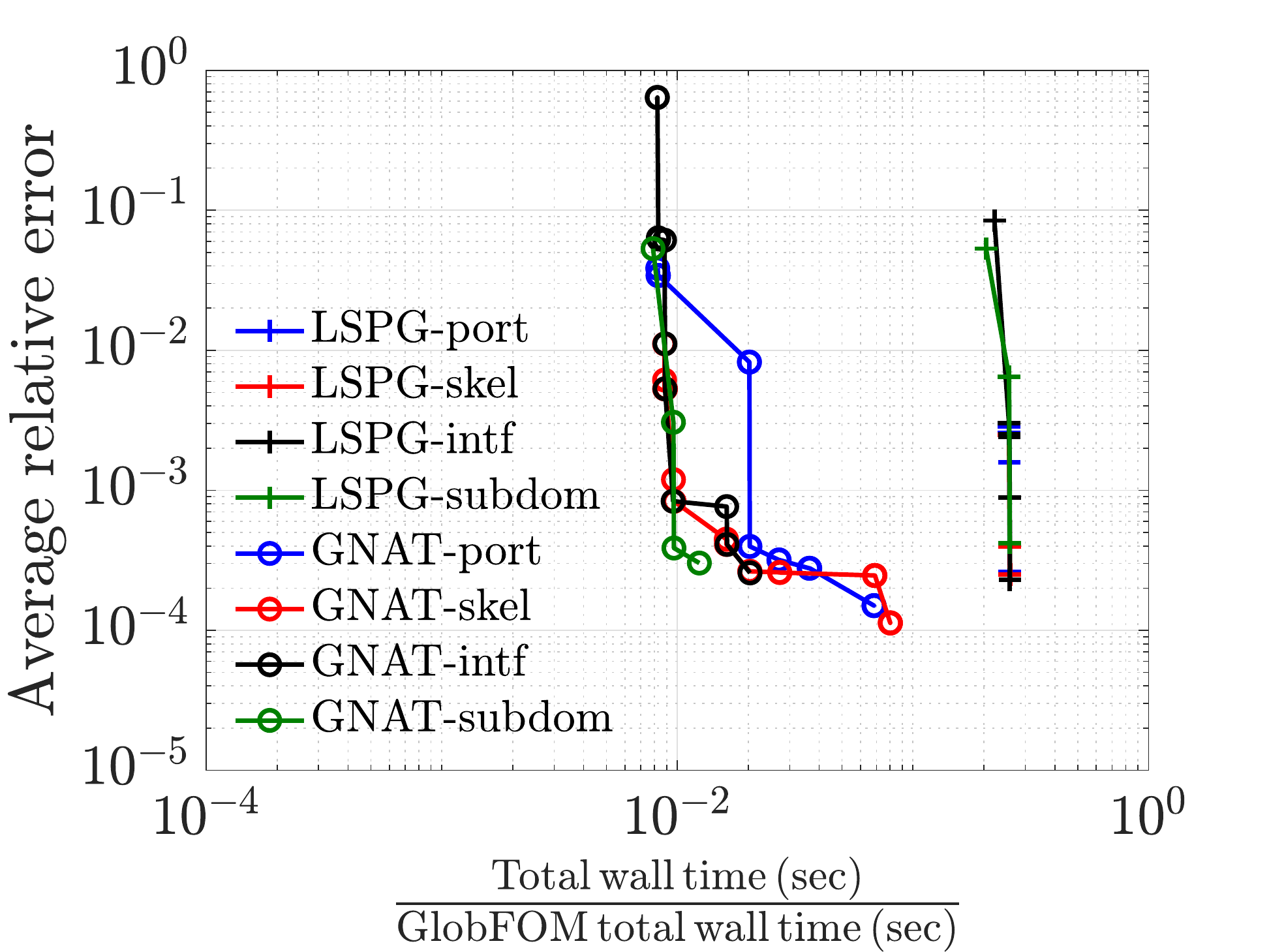}
\caption{Heat $2\times 2$ ``fine''}
\label{fig_ex1_heat22fn_pareto_wallAll}
\end{subfigure}
~
\begin{subfigure}[b]{0.3\textwidth}
\includegraphics[width=5.5cm]{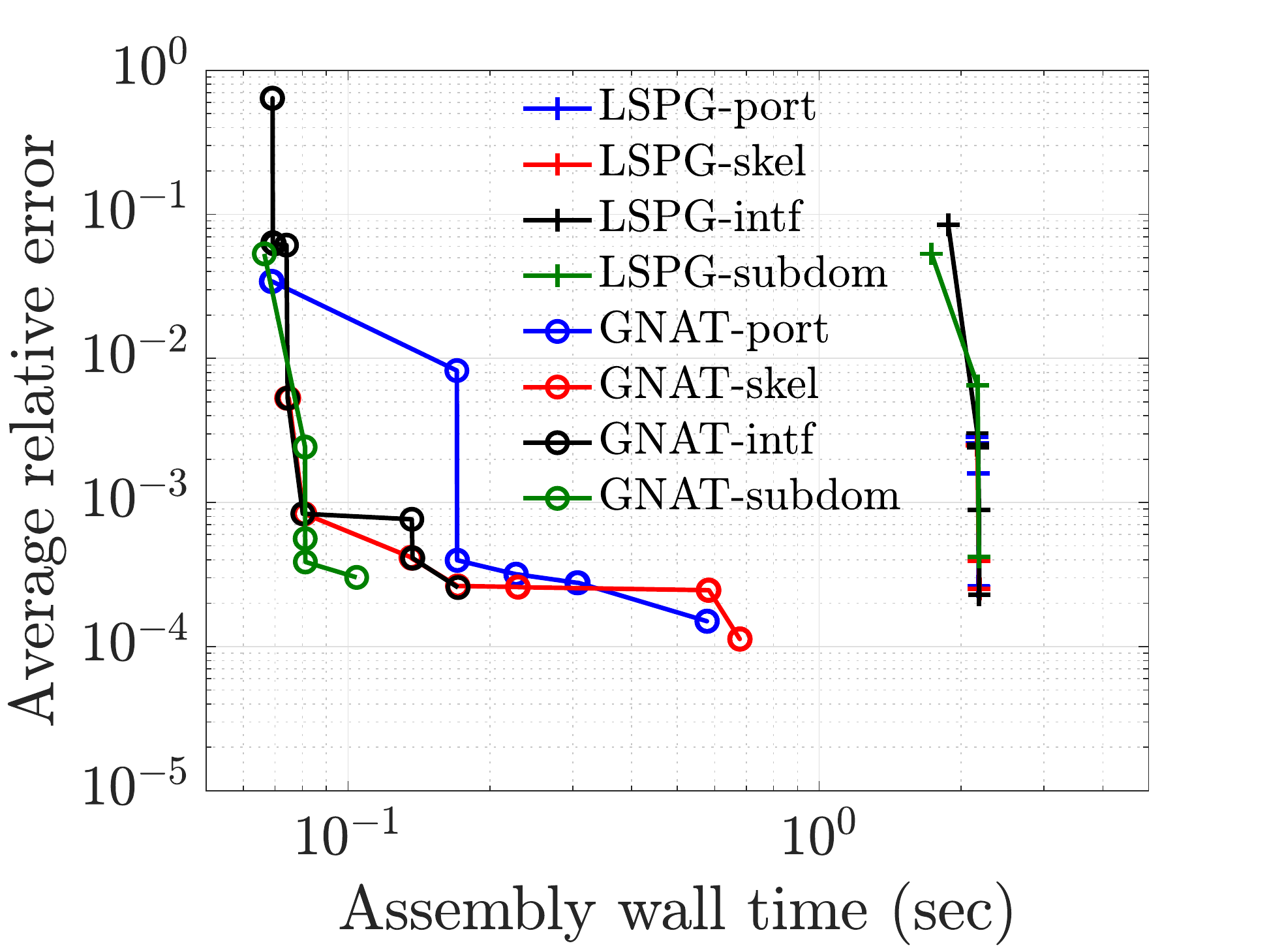}
\caption{Heat $2\times 2$ ``fine''}
\label{fig_ex1_heat22fn_pareto_wallAsmb}
\end{subfigure}
~
\begin{subfigure}[b]{0.3\textwidth}
\includegraphics[width=5.5cm]{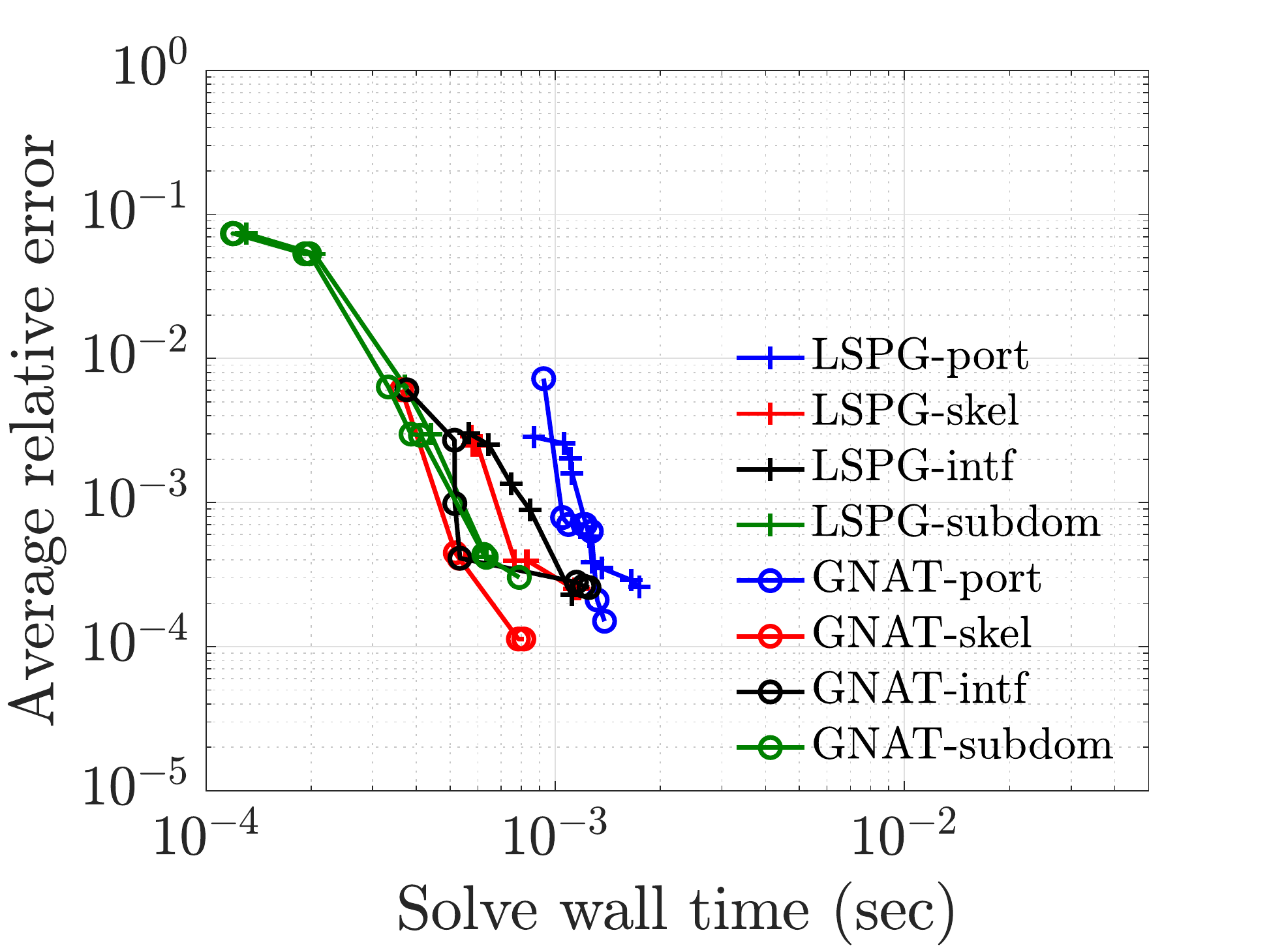}
\caption{Heat $2\times 2$ ``fine''}
\label{fig_ex1_heat22fn_pareto_wallSolv}
\end{subfigure}	
\caption{Heat equation, top-down training, Pareto front plots for wall-all (normalized with respect global FEM wall-all timing), wall-assemble and wall-solve timing of three different configurations for varying model parameters reported in Table~\ref{tab_ex1_manyOnlineComputation}. } \label{fig_ex1_all_pareto}
\end{figure}

\begin{figure}[h!]
\centering
\begin{subfigure}[b]{0.3\textwidth}
\includegraphics[width=5.1cm]{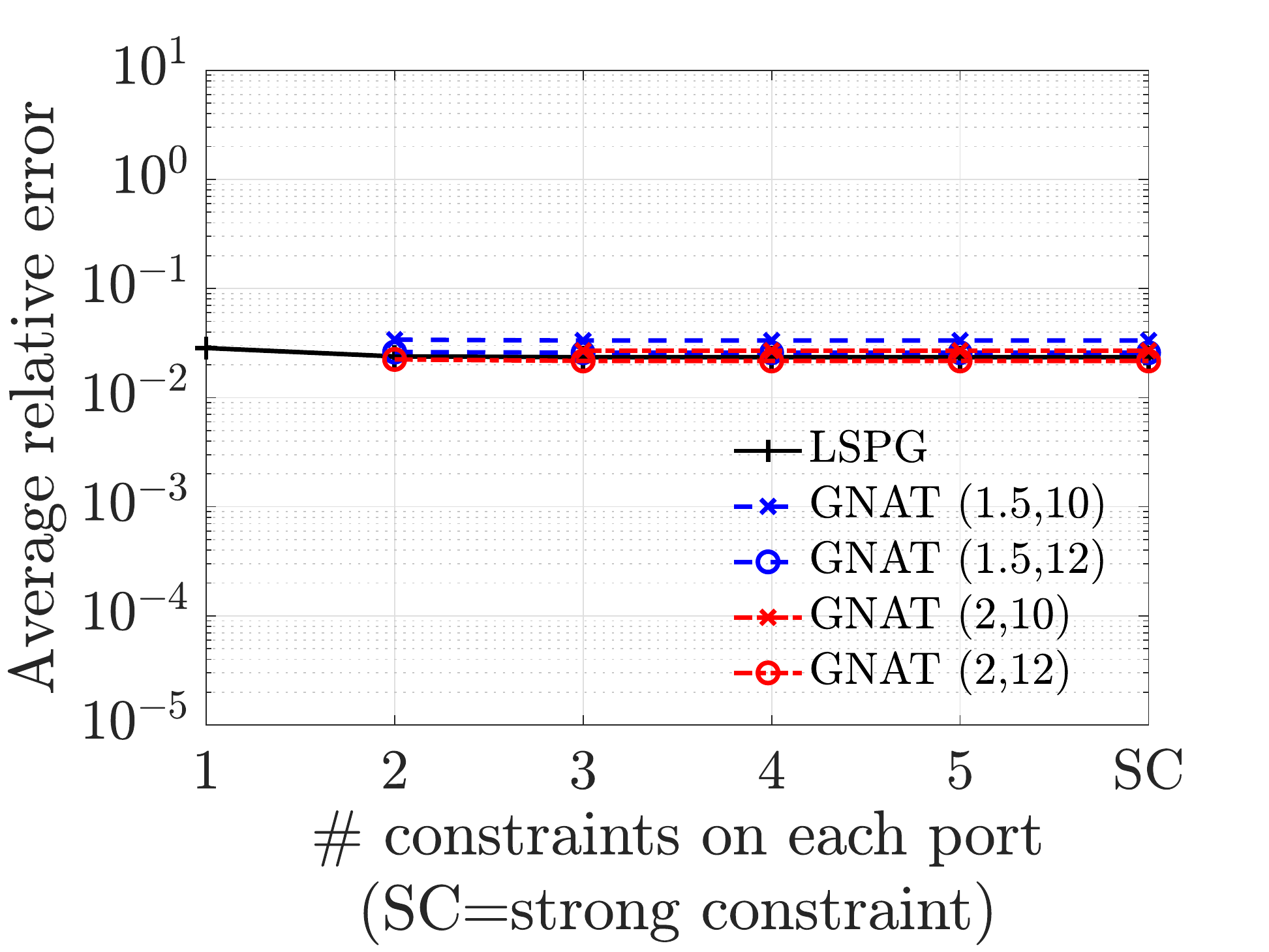}
\caption{port, 
	$\energyCriterion=10^{-5}$ on $\domaini$,
	$\energyCriterion=10^{-5}$ on $\boundaryi$}
\label{fig_ex1_4x4_rmsErr_portBF_55}
\end{subfigure}
~
\begin{subfigure}[b]{0.3\textwidth}
\includegraphics[width=5.1cm]{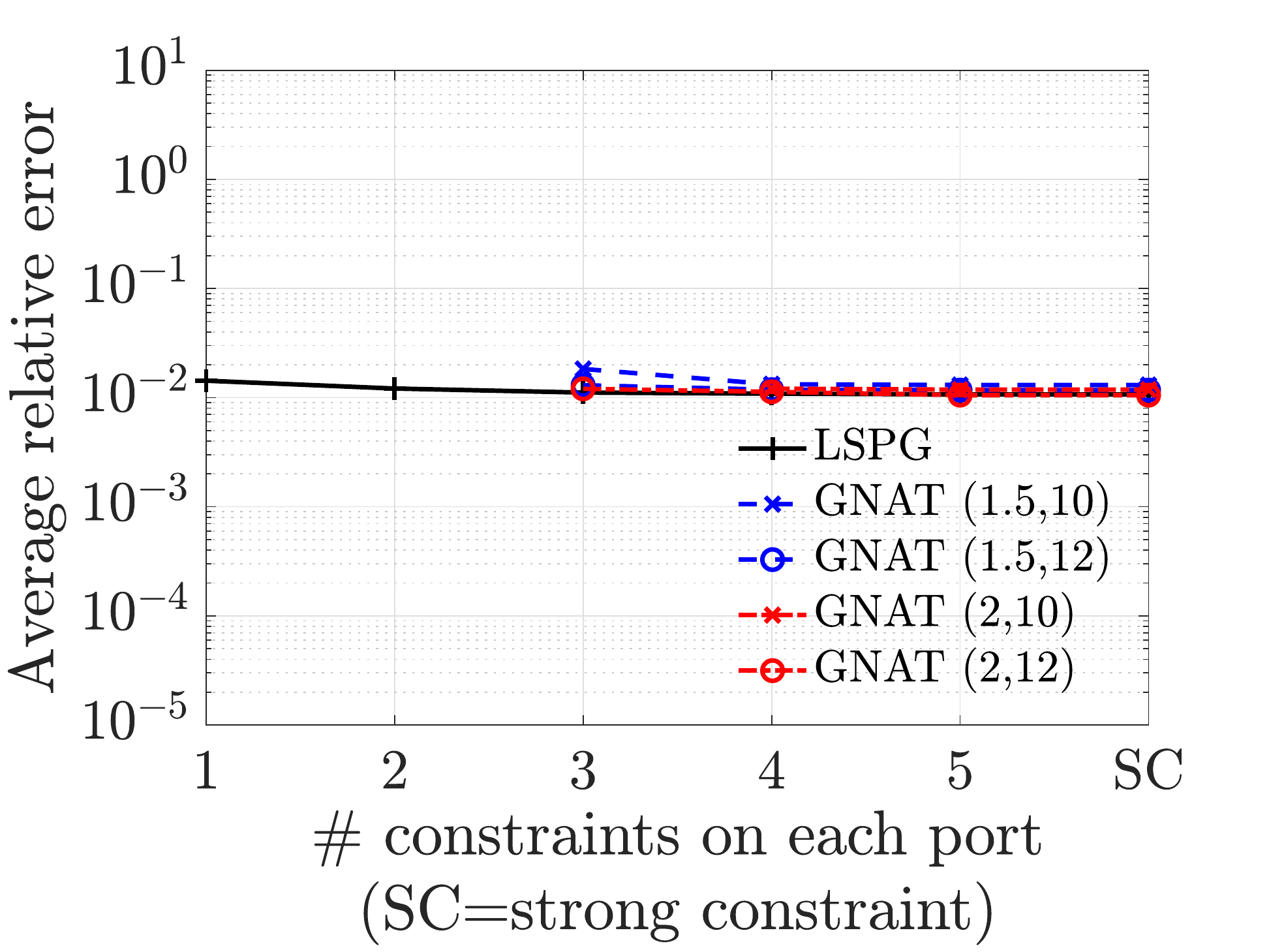}
\caption{port, 
$\energyCriterion=10^{-5}$ on $\domaini$,
	$\energyCriterion=10^{-8}$ on $\boundaryi$}
\label{fig_ex1_4x4_rmsErr_portBF_58}
\end{subfigure}
~
\begin{subfigure}[b]{0.3\textwidth}
\includegraphics[width=5.1cm]{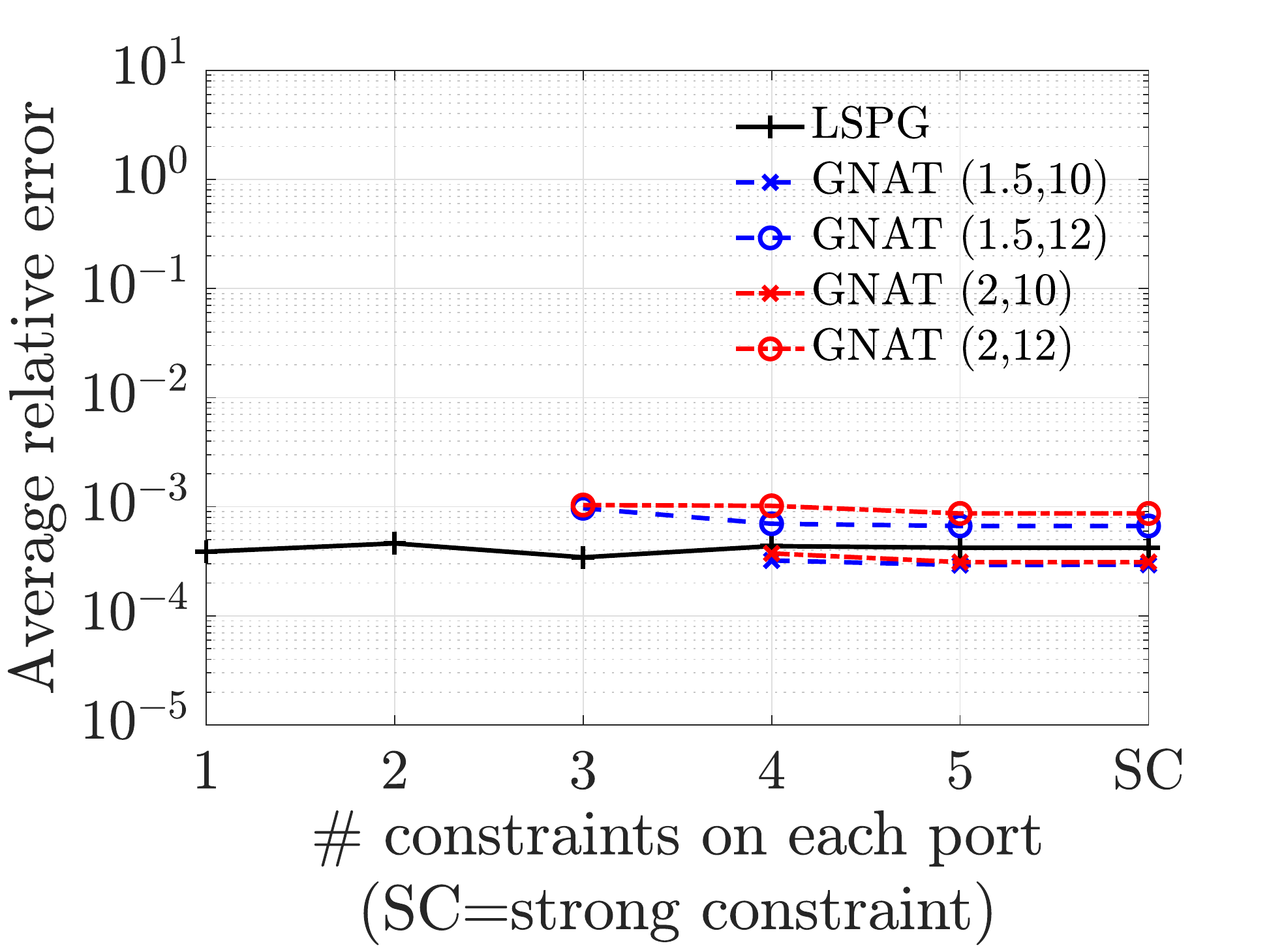}
\caption{port, $\energyCriterion=10^{-8}$ on $\domaini$,
	$\energyCriterion=10^{-8}$ on $\boundaryi$}
\label{fig_ex1_4x4_rmsErr_portBF_88}
\end{subfigure}
~
\begin{subfigure}[b]{0.3\textwidth}
\includegraphics[width=5.1cm]{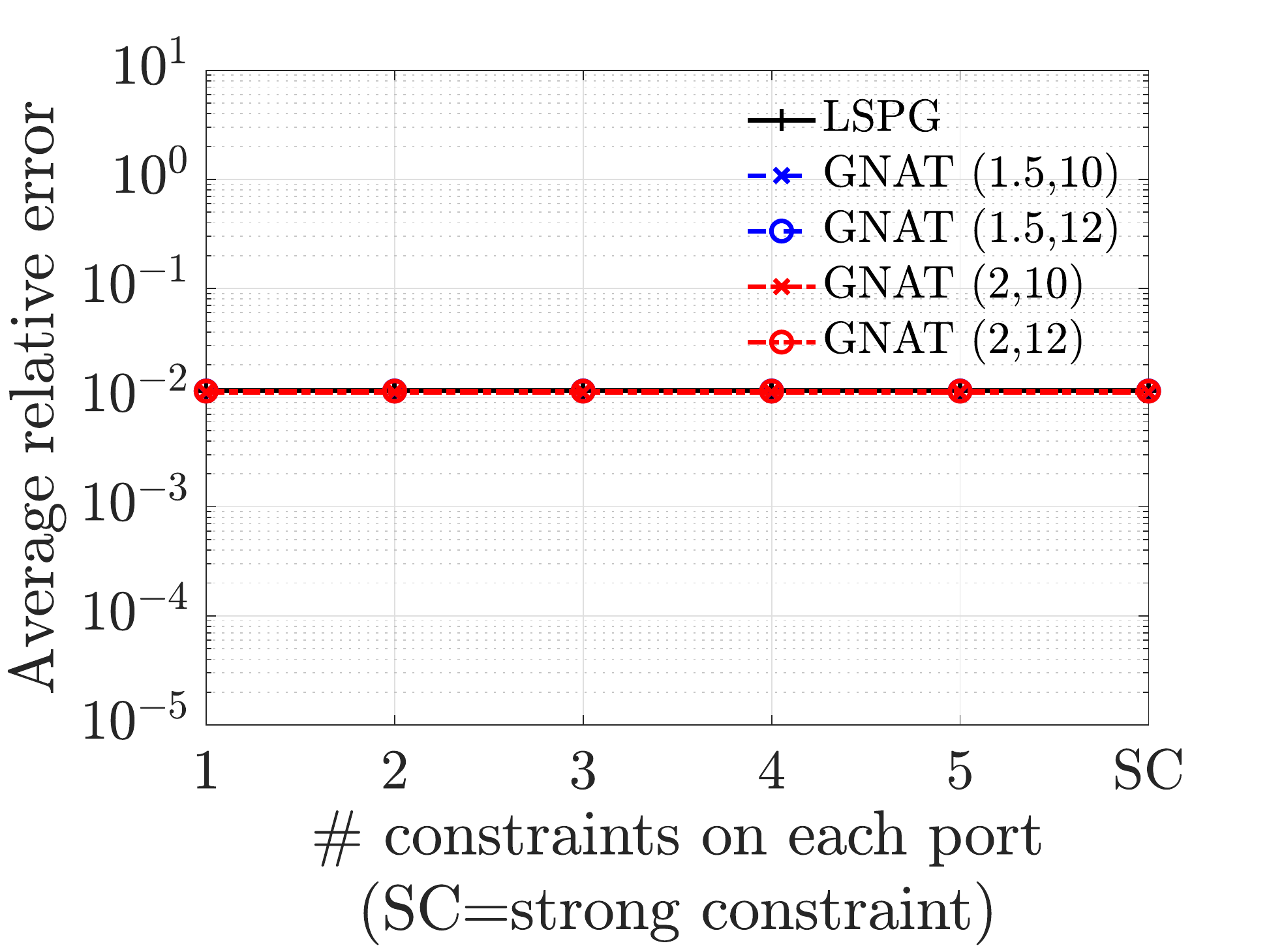}
\caption{skeleton, $\energyCriterion=10^{-5}$ on $\domaini$,
	$\energyCriterion=10^{-5}$ on $\boundaryi$}
\label{fig_ex1_4x4_rmsErr_skelBF_55}
\end{subfigure}
~
\begin{subfigure}[b]{0.3\textwidth}
\includegraphics[width=5.1cm]{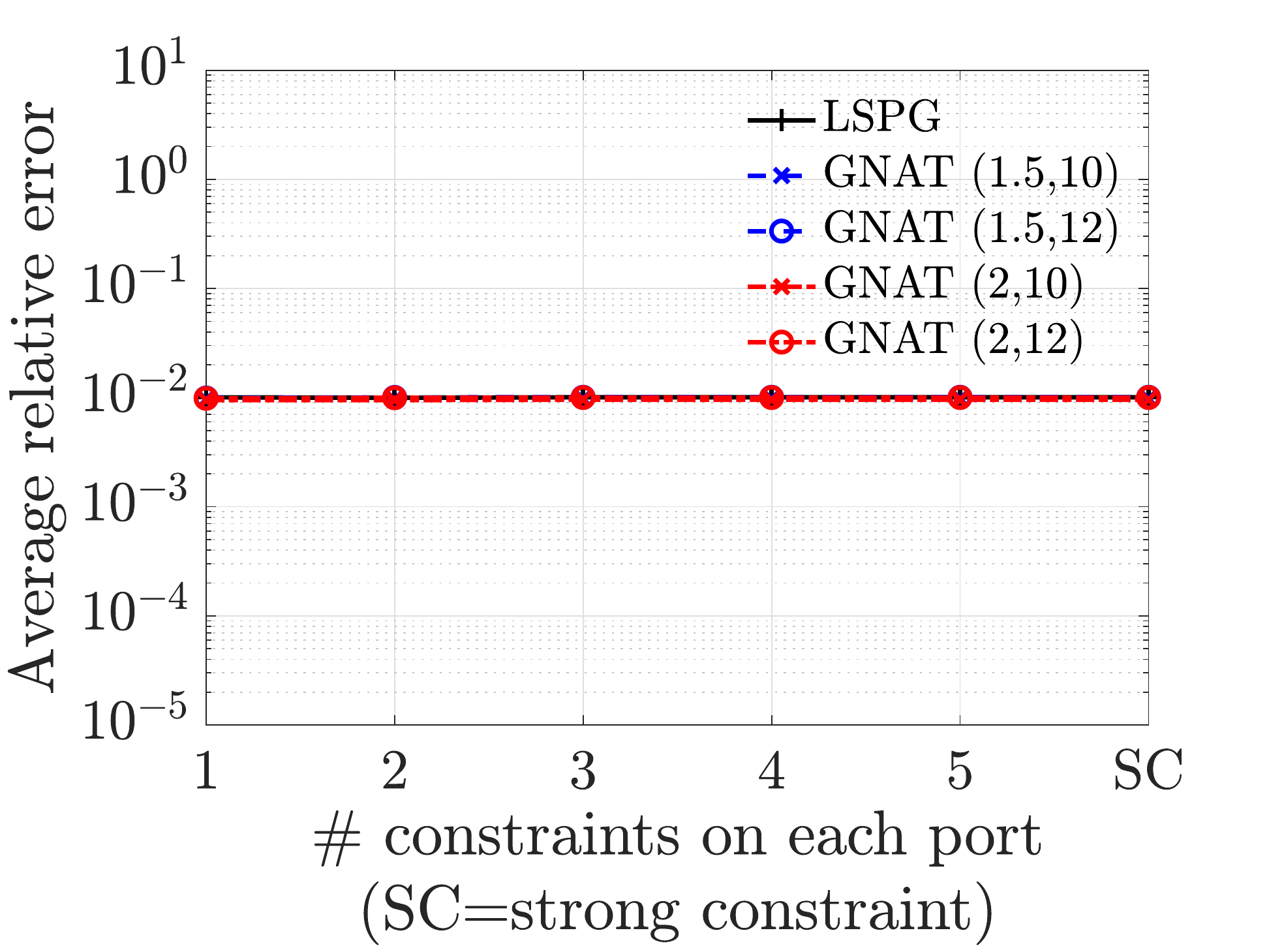}
\caption{skeleton, $\energyCriterion=10^{-5}$ on $\domaini$,
	$\energyCriterion=10^{-8}$ on $\boundaryi$}
\label{fig_ex1_4x4_rmsErr_skelBF_58}
\end{subfigure}
~
\begin{subfigure}[b]{0.3\textwidth}
\includegraphics[width=5.1cm]{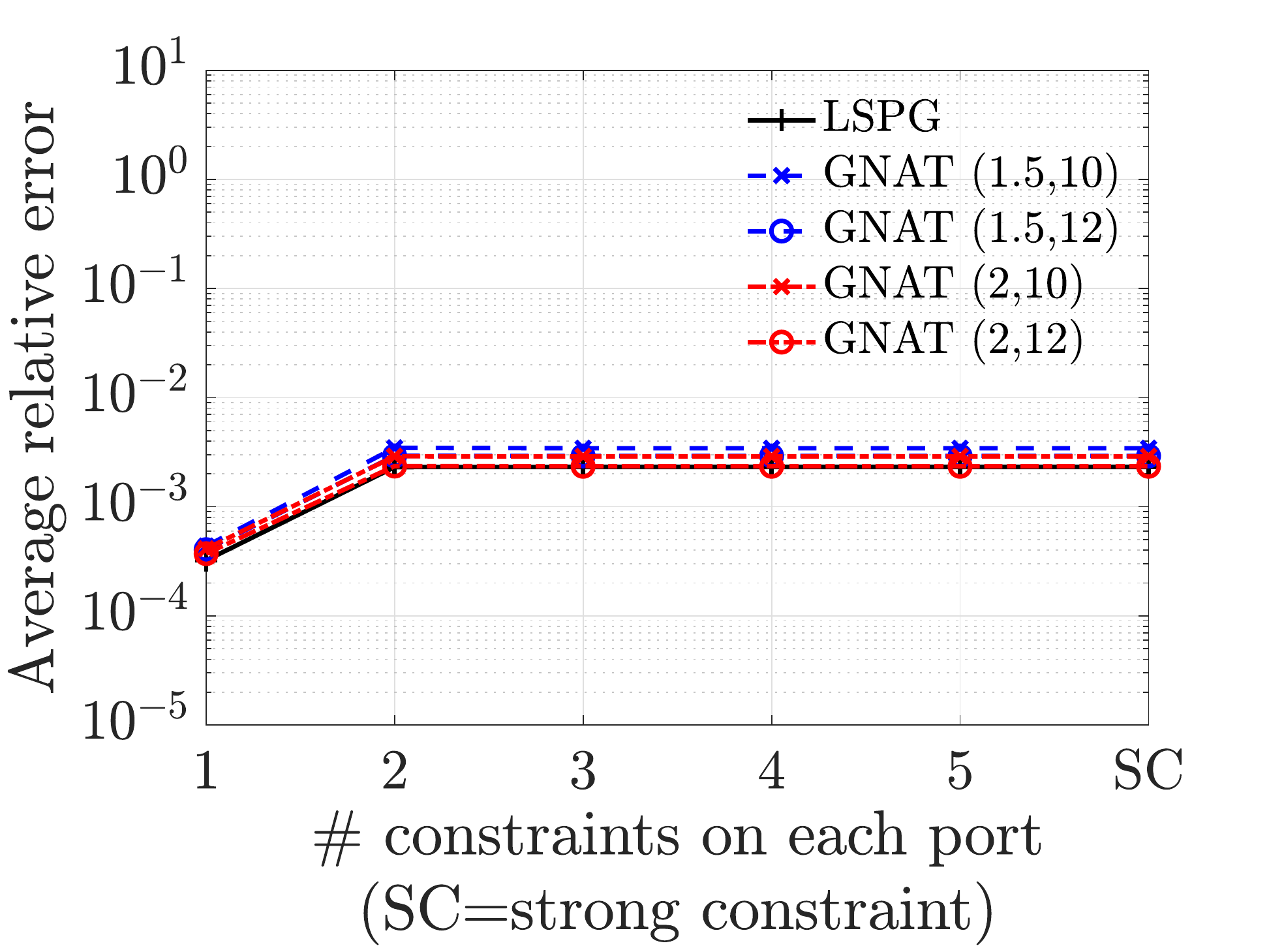}
\caption{skeleton, $\energyCriterion=10^{-8}$ on $\domaini$,
	$\energyCriterion=10^{-8}$ on $\boundaryi$}
\label{fig_ex1_4x4_rmsErr_skelBF_88}
\end{subfigure}
~
\begin{subfigure}[b]{0.3\textwidth}
\includegraphics[width=5.1cm]{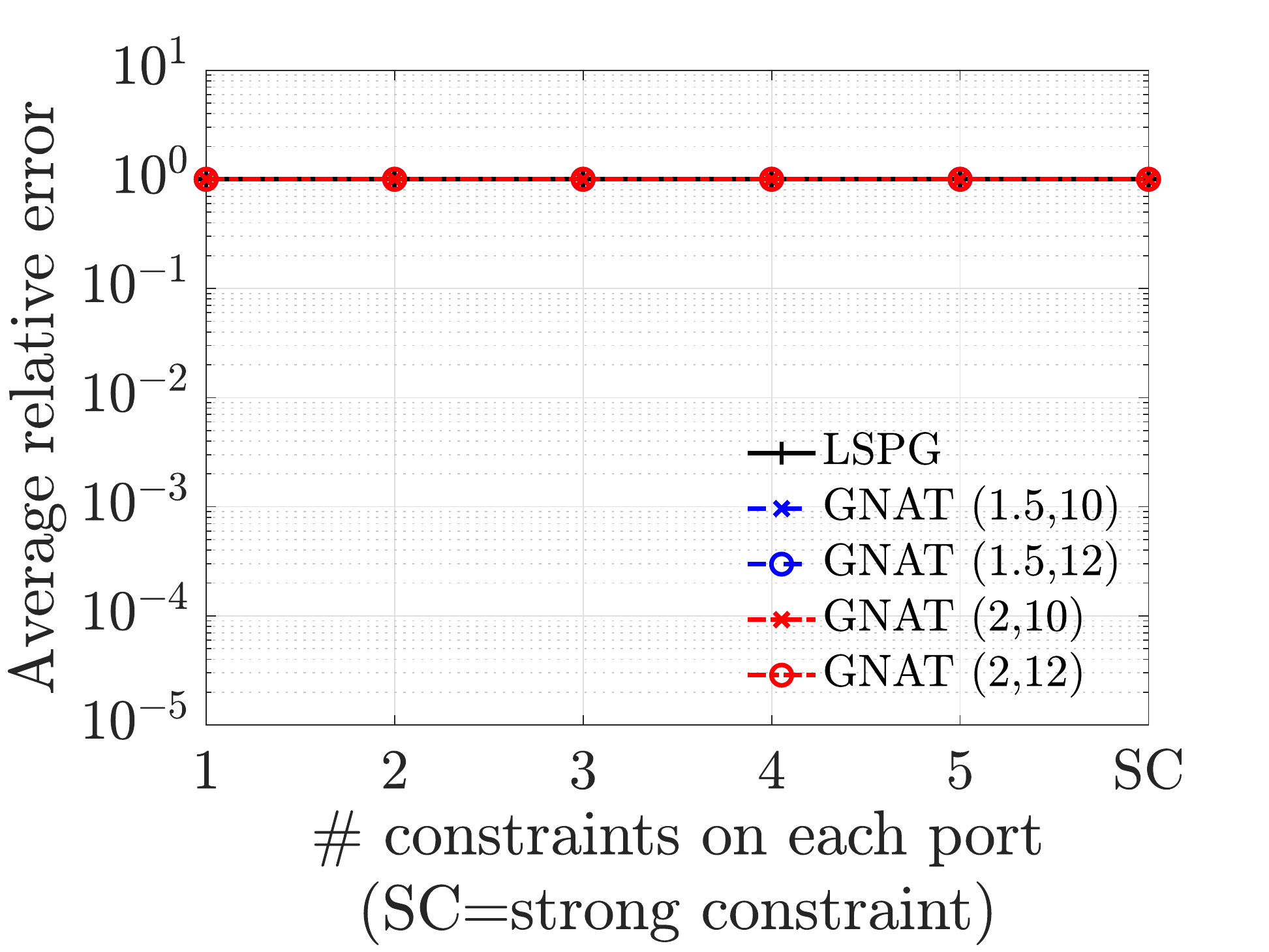}
\caption{full-interface, $\energyCriterion=10^{-5}$ on $\domaini$,
	$\energyCriterion=10^{-5}$ on $\boundaryi$}
\label{fig_ex1_4x4_rmsErr_intfBF_55}
\end{subfigure}
~
\begin{subfigure}[b]{0.3\textwidth}
\includegraphics[width=5.1cm]{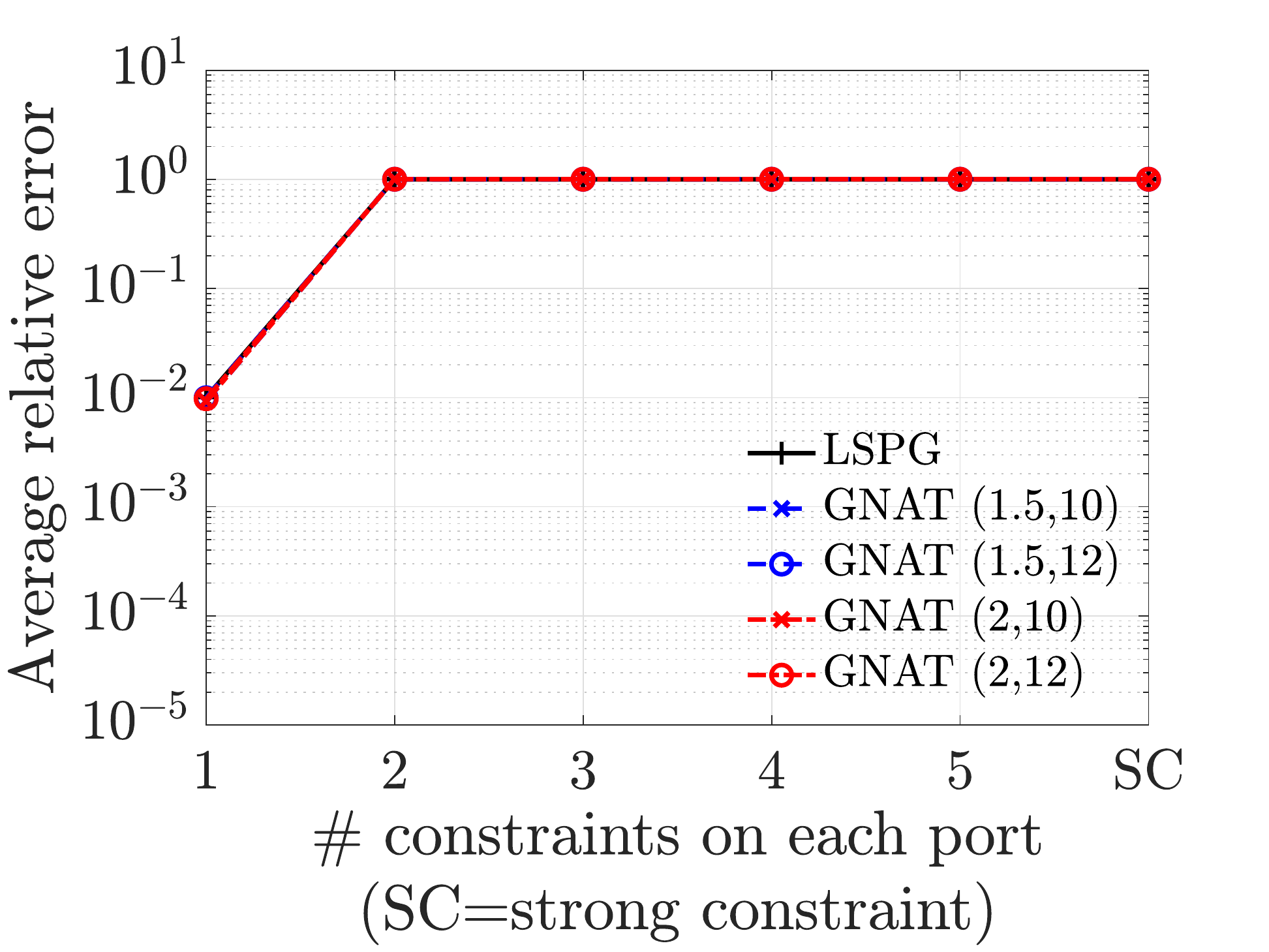}
\caption{full-interface, $\energyCriterion=10^{-5}$ on $\domaini$,
	$\energyCriterion=10^{-8}$ on $\boundaryi$}
\label{fig_ex1_4x4_rmsErr_intfBF_58}
\end{subfigure}	
~
\begin{subfigure}[b]{0.3\textwidth}
\includegraphics[width=5.1cm]{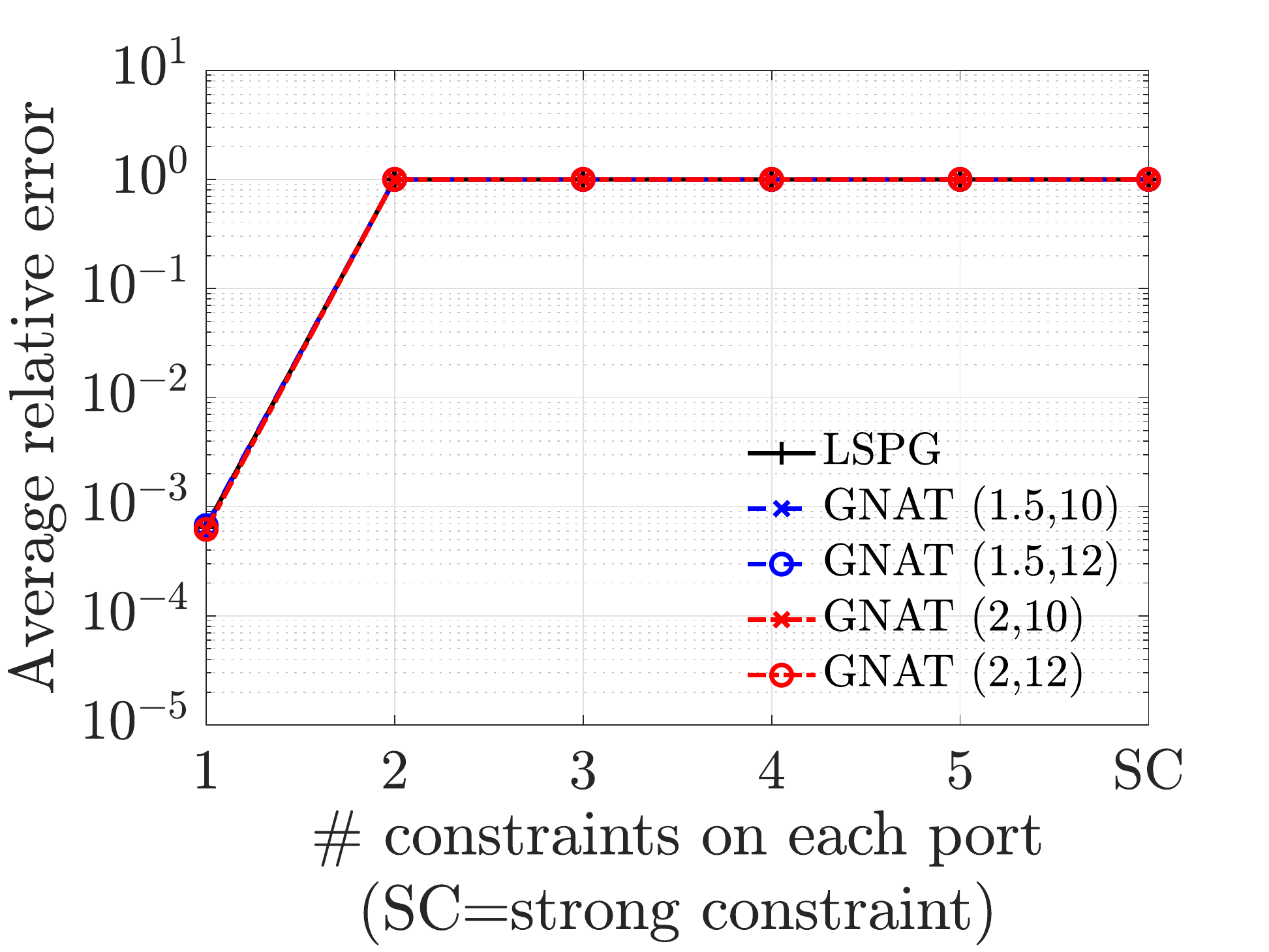}
\caption{full-interface, $\energyCriterion=10^{-8}$ on $\domaini$,
	$\energyCriterion=10^{-8}$ on $\boundaryi$}
\label{fig_ex1_4x4_rmsErr_intfBF_88}
\end{subfigure}	
~
\begin{subfigure}[b]{0.3\textwidth}
\includegraphics[width=5.1cm]{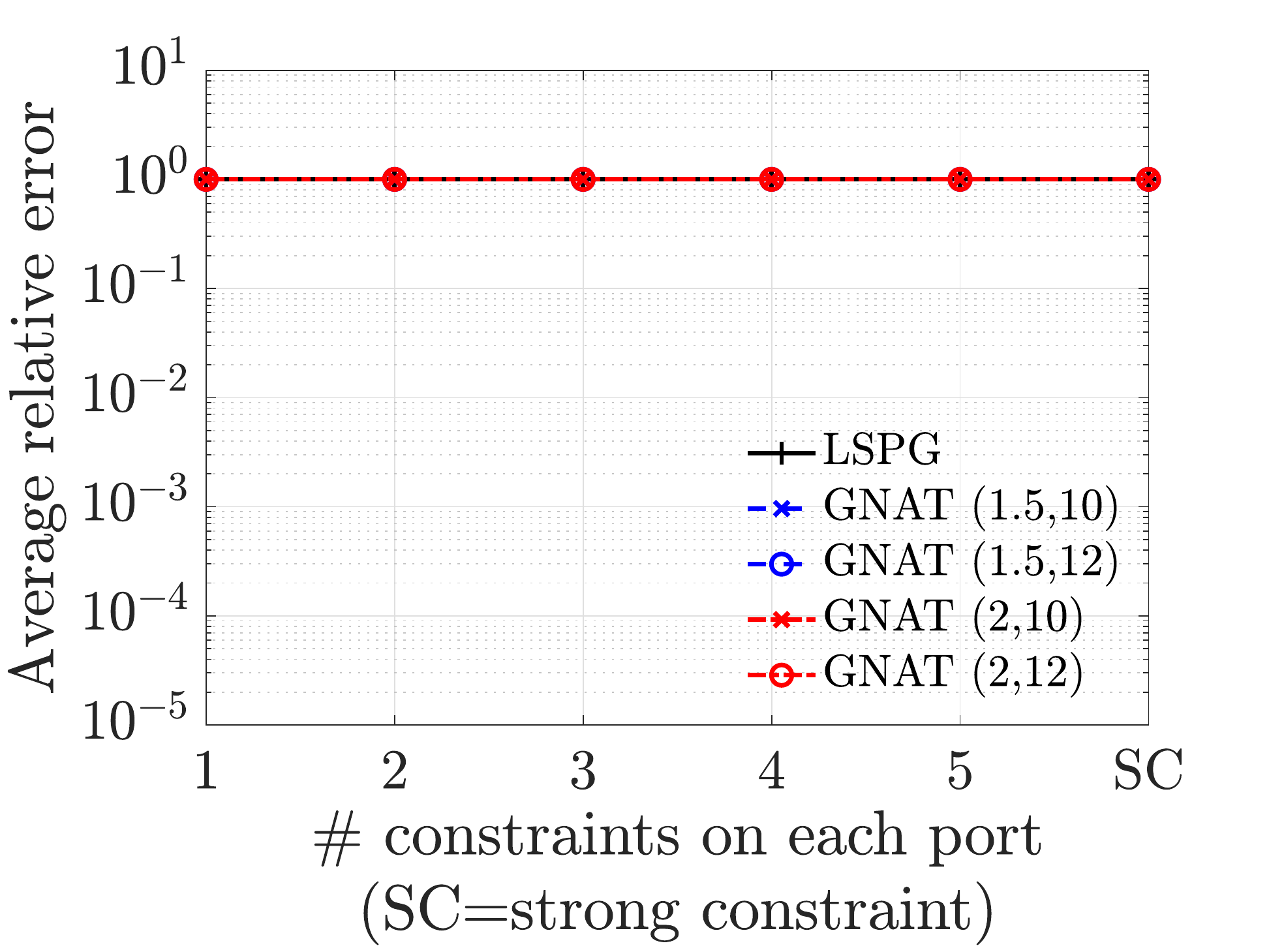}
	\caption{subdomain, $\energyCriterion=10^{-5}$}
\label{fig_ex1_4x4_rmsErr_subdomBF_5}
\end{subfigure}
~
\begin{subfigure}[b]{0.3\textwidth}
\includegraphics[width=5.1cm]{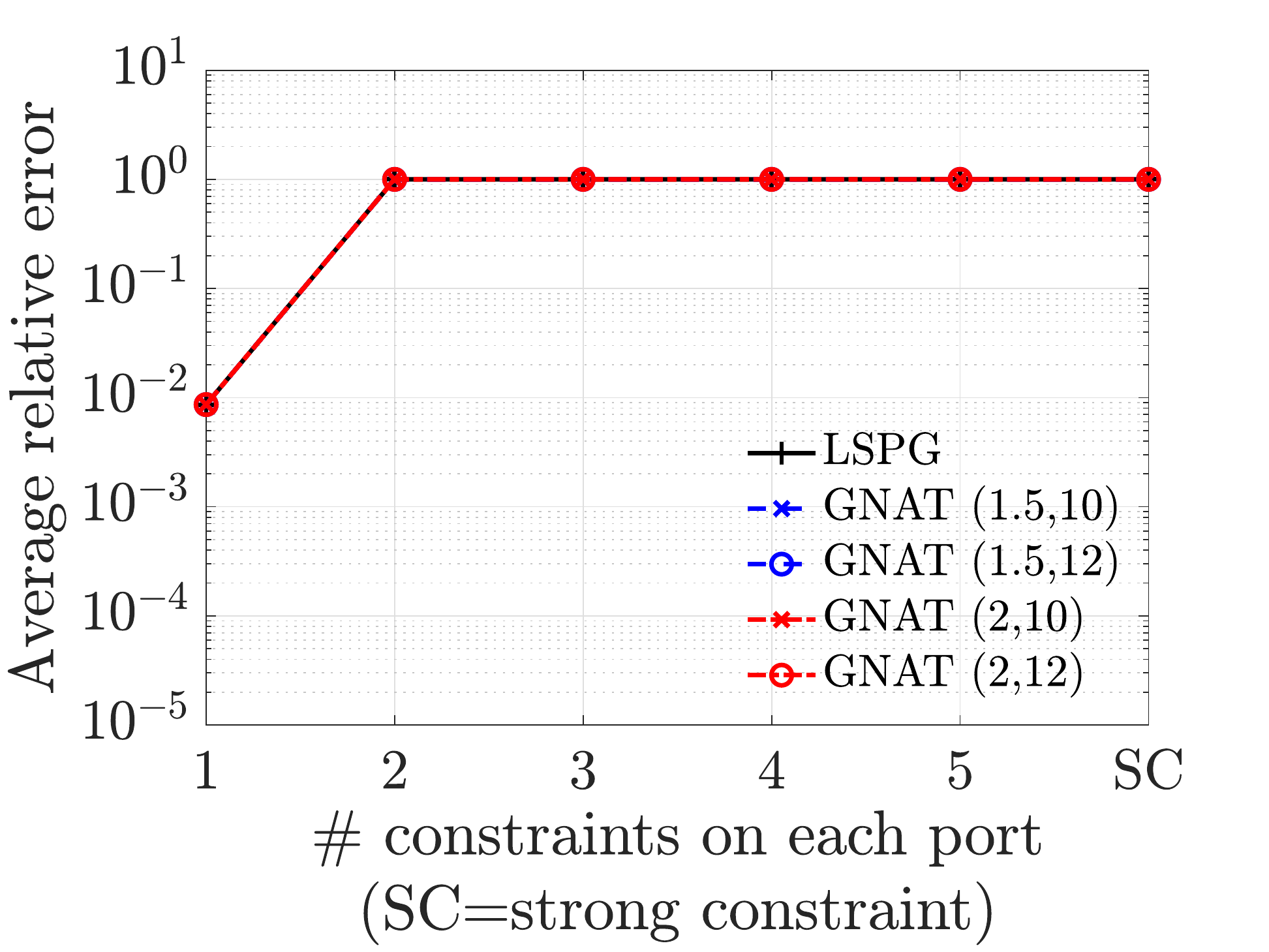}
\caption{subdomain, $\energyCriterion=10^{-7}$}
\label{fig_ex1_4x4_rmsErr_subdomBF_7}
\end{subfigure}
~
\begin{subfigure}[b]{0.3\textwidth}
\includegraphics[width=5.1cm]{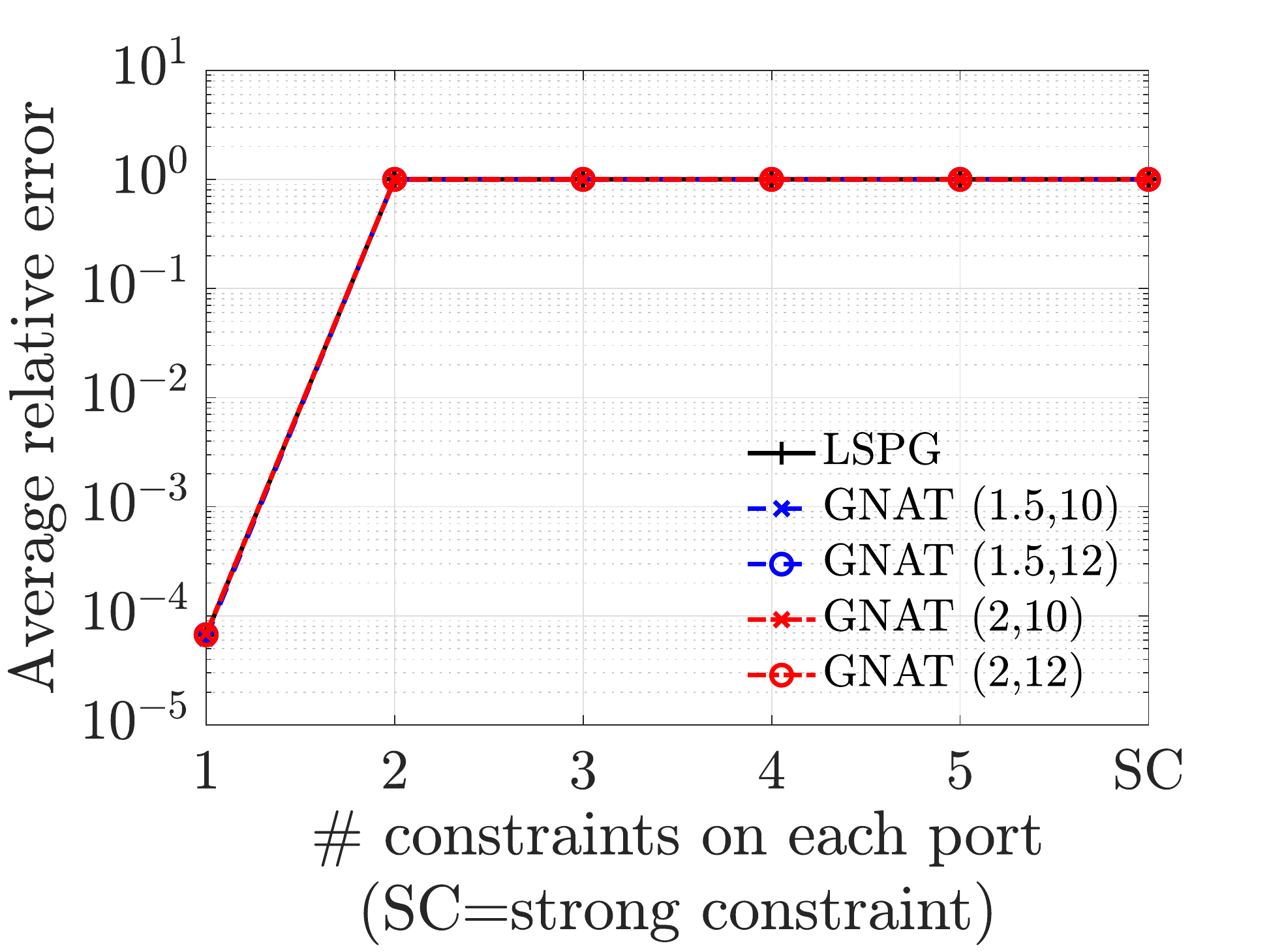}
\caption{subdomain, $\energyCriterion=10^{-9}$}
\label{fig_ex1_4x4_rmsErr_subdomBF_9}
\end{subfigure}
\caption{Heat equation, top-down training, $4\times 4$ ``fine'' configuration, average relative error (DD-LSPG and DD-GNAT) versus number of constraint per port for varying model parameters reported in Table~\ref{tab_ex1_manyOnlineComputation} (SC=strong constraints). In the legend $\text{GNAT}(x,y)$ implies the DD-GNAT model with $\nsampleArg{i}/\nrbResi=x$ and $\energyCriterion=10^{-y}$ for $\residuali$. }
\label{fig_ex1_heat44fn_rmsErr}
\end{figure}

Because assessing a given method's performance for a single instance of its
parameters does not lend insight into the model's complete error--cost
performance tradeoff, this section subjects each of the proposed ROMs to a
parameter study wherein each model parameter is varied between limits
specified in Table~\ref{tab_ex1_manyOnlineComputation}. \CH{Here, we choose several \textit{high} energy criteria for both the interior and boundary bases to obtain sufficient number of bases over all subdomains, thus ensure good solution accuracy. All other parameters are chosen straightforwardly.} 

For each weak compatibility constraint case, 
 we generate five sets of random matrices
$\testFunctionArg{j}$, $j=1,\ldots,\nports$, which in turn yields five different sets of
constraint matrices
$\constraintMatROMi$, $i=1\ldots, \nsubdomains$ as described in Section
\ref{subsect_constraint_types}. We use each
set to perform one ROM simulation, and record the associated timing and relative
error of that simulation. 
The reported timing and relative error comprises the 
average obtained over these five simulations. For the strong-constraint case, we perform only one simulation as $\constraintMatROMi=\constraintMatArg{i}$ defined uniquely. The recorded wall time for any parallel step is set to the largest wall time
incurred by any subdomain, while that for any serial step is simply set to
overall wall time incurred by the step (see Algorithms \ref{alg_psi_assembly}--\ref{alg_psi_solving}).

From these results, we then construct a Pareto front to characterize the 
error--cost for each method. This Pareto front is
characterized by the collection of method-parameter instances that yield simulation
results that are not dominated in both relative error or wall time by any other
method-parameter instance.
Figure~\ref{fig_ex1_all_pareto}
reports these Pareto fronts for the three configurations considered.
Figure~\ref{fig_ex1_heat44fn_rmsErr} plots the average relative error versus
number of constraints per port for the $4\times 4$ ``fine'' configuration.

We first analyze the solve wall time, and thus consider the subfigures
in the rightmost column of Figure~\ref{fig_ex1_all_pareto}. These plots yield
the following observations:
\begin{enumerate}[label=(\roman*)]
	\item For a given basis type, the solve costs for DD-LSPG and DD-GNAT are nearly
		the same, which is sensible because they yield SQP systems of the same
		dimension and structure.

	\item The $4\times 4$ ``fine'' configuration yields a costlier 
		solve among the three
		configurations, which is sensible because 
		the solve operation count is cubically proportional to $\nsubdomains$
		assuming fixed basis dimensions (see 
		Table~%~\ref{tab_DDROMprduM_assembly_cost} and
		\ref{tab_DDGNATprduM_solving_cost}).

	\item For both DD-LSPG and DD-GNAT, the solve times for both the $2\times 2$
		``coarse'' and $2\times 2$ ``fine'' configurations was roughly the same,
		which is sensible given that these configurations are characterized by the
		similar basis dimensions and the same number of subdomains (see Table
		\ref{tab_DDGNATprduM_solving_cost}). 

  \item For a fixed error, the port basis type incurred the largest solve wall
    time compared with the other three \YC{basis types}; this is sensible
    because---on average---it has a larger basis dimension compared with the
    other three types. The skeleton, full-interface and subdomain types yield
    roughly the same solve cost for a fixed error.
\end{enumerate}

We now analyze assembly wall time\footnote{\CH{Note for all numerical experiments in this paper, the assembly stage is performed in a serial manner (not parallel) for simple implementation. However, the timing on each subdomain is recorded for properly post-assessment.}} and thus consider the middle column of Figure~\ref{fig_ex1_all_pareto}.
These figures illustrate the following trends:
\begin{enumerate}[label=(\roman*)]
	
	\item \CH{DD-LSPG assembly wall time of four basis types are almost similar, this is due to dominated time/cost of computing the residuals, Jacobians and Hessians as listed on Algorithm~\ref{alg_psi_assembly}.}
	
	\item For DD-LSPG, the $2\times 2$ ``fine'' configuration yields the largest
		assembly wall time because the number of DOFs per subdomain interior
		$\ndofInteriori$ and subdomain boundary $\ndofBoundaryi$ is the largest in this
		case, and $\nsampleResi=\nsampleInteriori=\ndofi$ and $\nsampleBoundaryi=
		\ndofBoundaryi$ for DD-LSPG (see Table~\ref{tab_ex1_DDFEMconfig_params}).
		On the other hand, the assembly wall time for DD-LSPG is similar in the
		$2\times 2$ ``coarse'' and $4\times 4$ ``fine'' configurations due to
		these numbers being roughly the same.

	\item Among 3 configurations, the relative performance improvement of
		DD-GNAT over DD-LSPG (with regard to the assembly wall time) is the smallest in the 
		$4\times 4$ ``fine'' configuration, while it is the largest for the 
		$2\times 2$ ``fine'' configuration. This occurs because when each subdomain becomes
		smaller in size with a modest number of DOFs per subdomain, the subdomain
		exhibiting the strongest solution nonlinearity requires a relatively large
		number of sample points
		(see
		Figure~\ref{fig_ex1_heat22fn_sol_sampleMesh_Omega}). 

	\item Conversely, the relative performance improvement of DD-GNAT over
		DD-LSPG (with regard to the assembly wall time) is the largest for the $2\times 2$ ``fine''
		configuration because this case corresponds to the largest number of
		degrees of freedom per subdomain interior and boundary.

	\item For a fixed accuracy, the port bases (blue curves) almost always
		yielded a higher assembly wall time than the other basis types; as before,
		this can be attributed to the larger basis dimensions that typically
		accompany this basis type.
\end{enumerate}

Next, we consider the overall wall time, and turn attention to the \YC{three}
subfigures in the leftmost column of Figure~\ref{fig_ex1_all_pareto}. From the
center and rightmost columns, we see that the assembly wall time dominates the
solve wall time in this example; thus, the assembly wall-time behavior is most
closely reflected in the overall wall-time performance of the different
methods. We see that the DD-GNAT methods can realize $>100\times$ wall-time speedups
relative to the FOM in this case, with DD-LSPG yielding only modest wall-time
speedups, which are $<10\times$ in both the $2\times 2$ ``coarse'' and ``fine''
configurations.

Figure~\ref{fig_ex1_heat44fn_rmsErr} shows the average relative error as a
function of the number of constraints per port with parameters reported in
Table~\ref{tab_ex1_manyOnlineComputation} for the $4\times 4$ ``fine''
configuration.  The subfigures in the first two rows of
Figure~\ref{fig_ex1_heat44fn_rmsErr} imply that strong compatibility
constraints yield better accuracy than weak compatibility for both port and
(generally impractical) skeleton basis types.  This is sensible, as these basis types ensure that
neighboring components have \textit{compatible} bases on shared ports, so weak
compatibility constraints lead to no benefit; see Remark \ref{rem:globalSol}.
On the other hand, the subfigures in the two last rows of
Figure~\ref{fig_ex1_heat44fn_rmsErr} show that weak constraint case with only
one constraint per port yield the best accuracy for full-interface and
full-subdomain basis types. As discussed in Remark \ref{rem:globalSol}, this result is expected because  neighboring components generally have
\textit{incompatible} bases on shared ports for these basis types. The next
section will lend additional insight into the behavior of the full-interface and
full-subdomain basis types.

\subsubsection{Full-interface basis: effect of weak and strong constraints}

\begin{figure}[h!]
\centering
\begin{subfigure}[b]{0.3\textwidth}
\includegraphics[width=5.5cm]{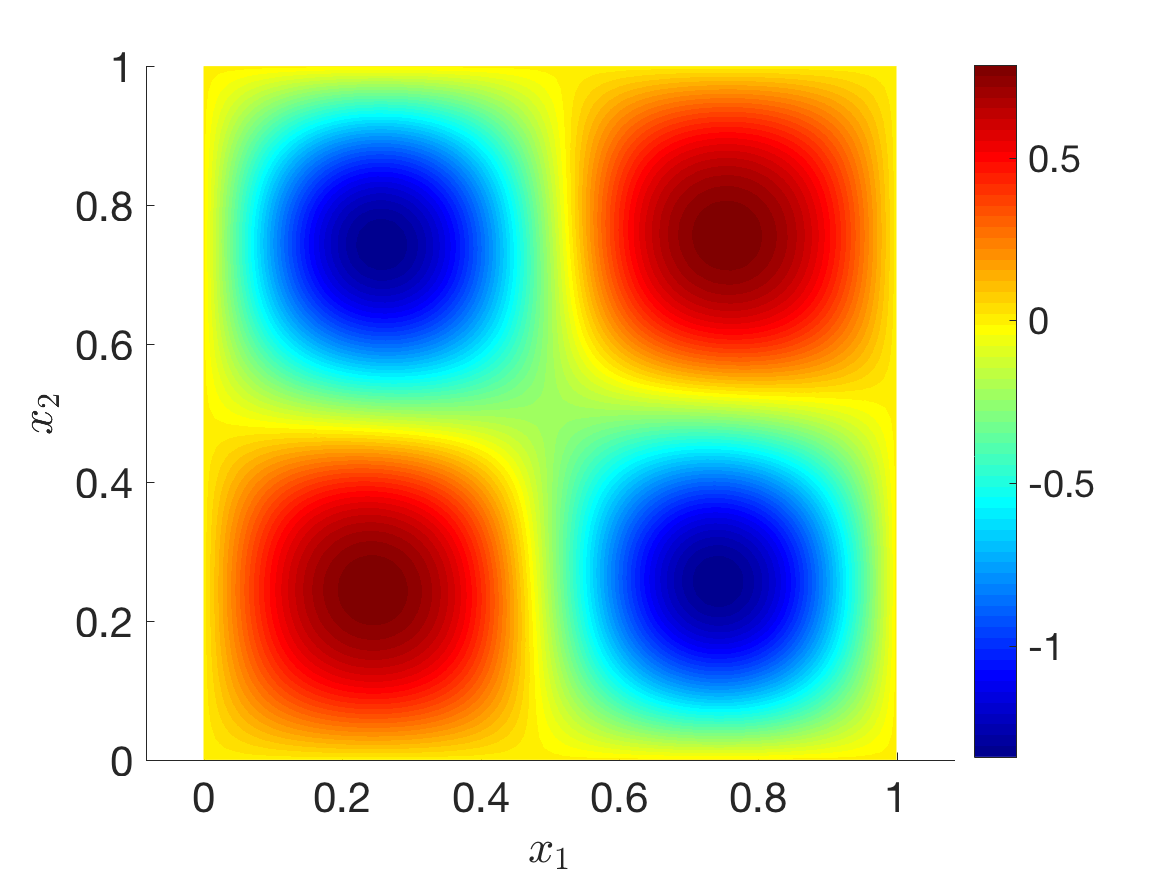}
\caption{DD-LSPG sol. on $\Omega$}
\label{fig_ex1_heat44fn_sol_DDROM_Omega_1constr}
\end{subfigure}
~
\begin{subfigure}[b]{0.3\textwidth}
\includegraphics[width=5.5cm]{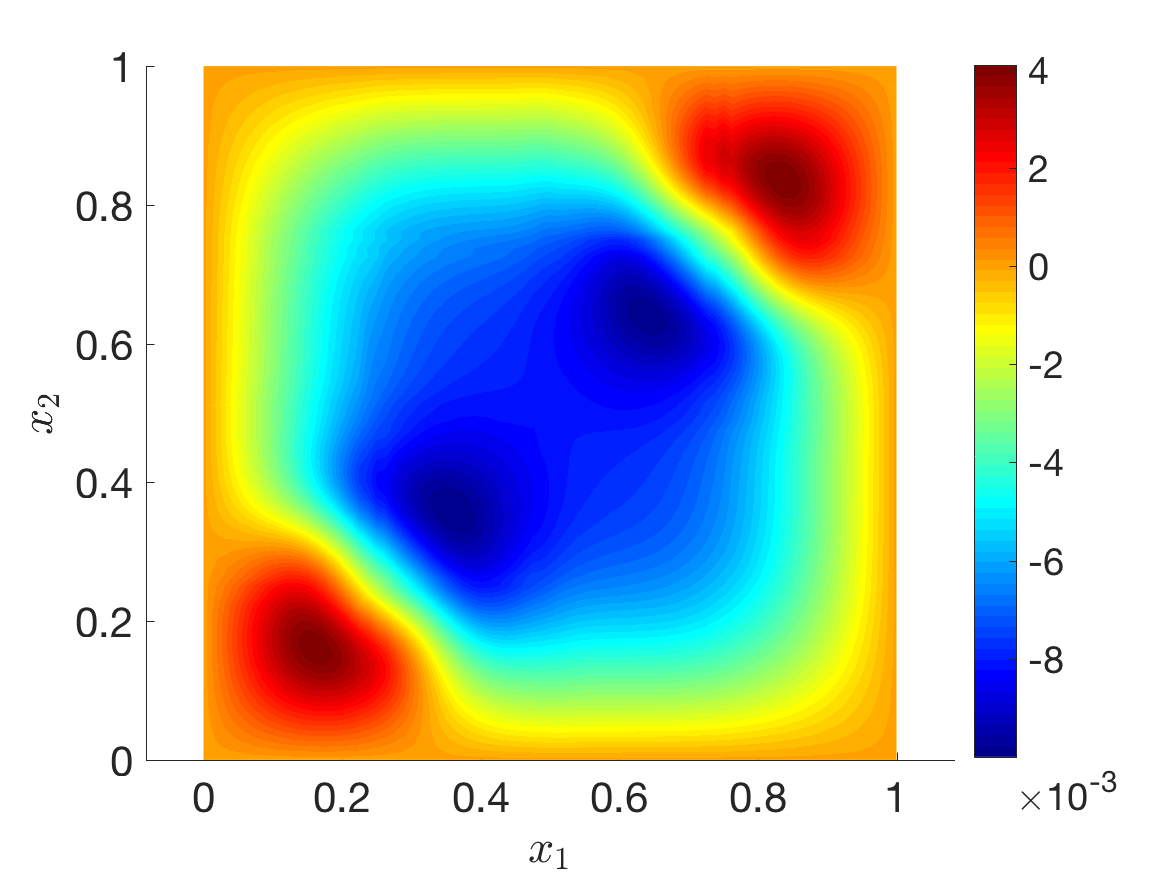}
\caption{DD-LSPG error on $\Omega$}
\label{fig_ex1_heat44fn_sol_DDROMerr_Omega_1constr}
\end{subfigure}
~
\begin{subfigure}[b]{0.3\textwidth}
\includegraphics[width=5.5cm]{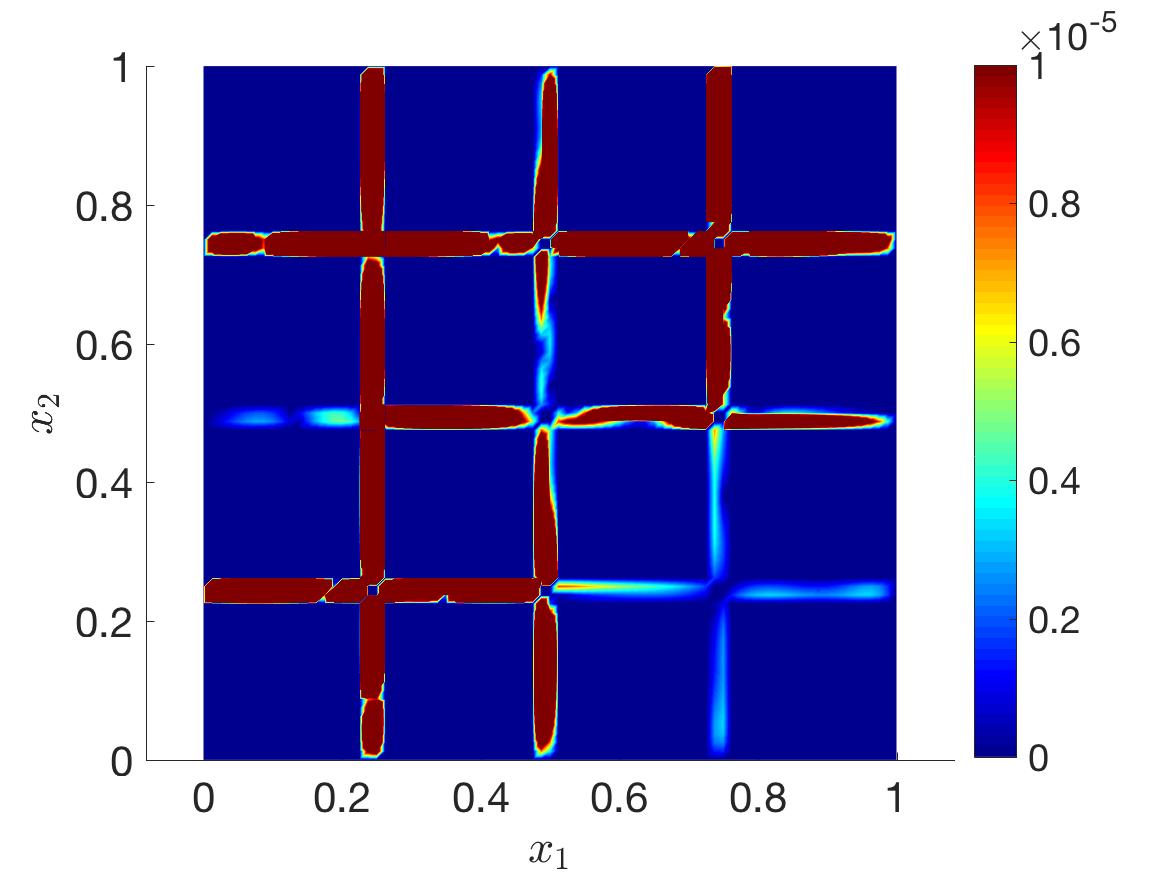}
\caption{Absolute discrepancy on $\Omega$}
\label{fig_ex1_heat44fn_sol_DDROM_discrepancy_1constr}
\end{subfigure}
~
\begin{subfigure}[b]{0.3\textwidth}
\includegraphics[width=5.5cm]{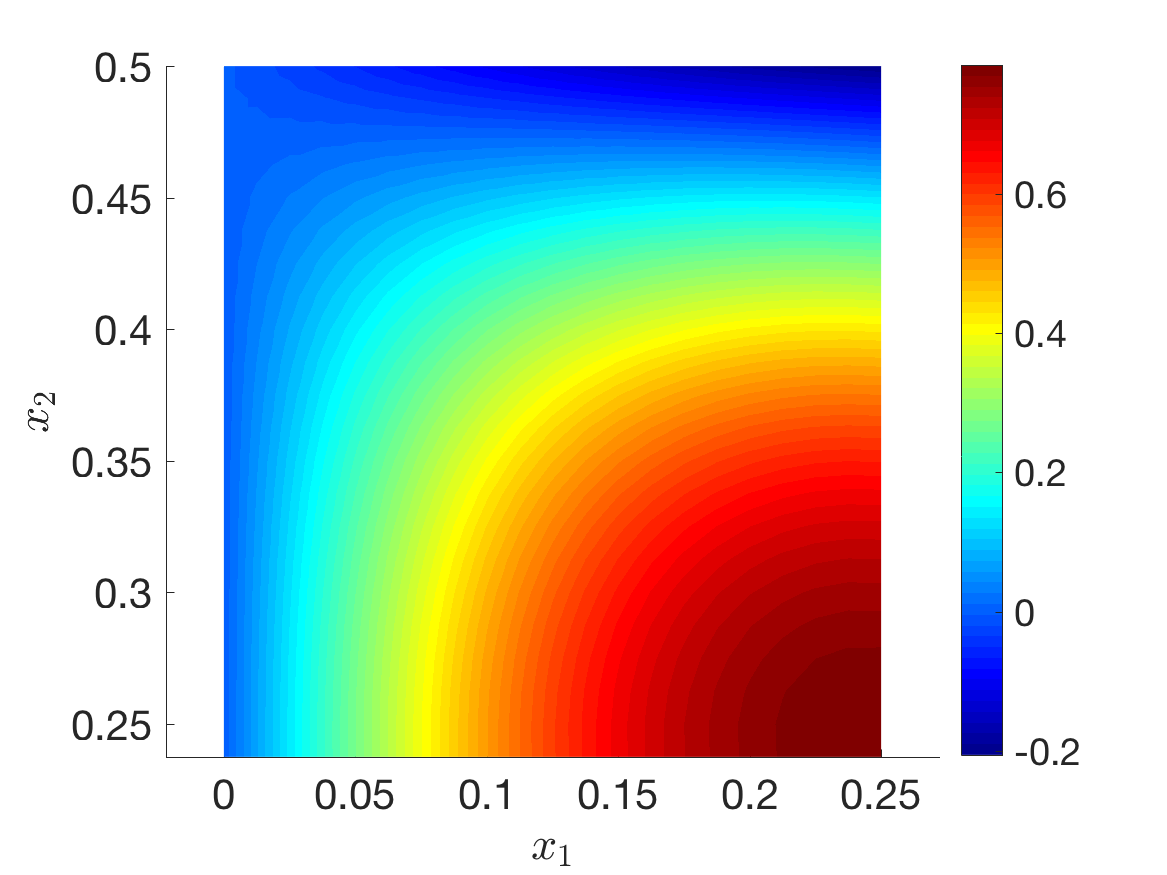}
\caption{DD-LSPG sol. on $\Omega_2$}
\label{fig_ex1_heat44fn_sol_DDROM_O2_1constr}
\end{subfigure}
~
\begin{subfigure}[b]{0.3\textwidth}
\includegraphics[width=5.5cm]{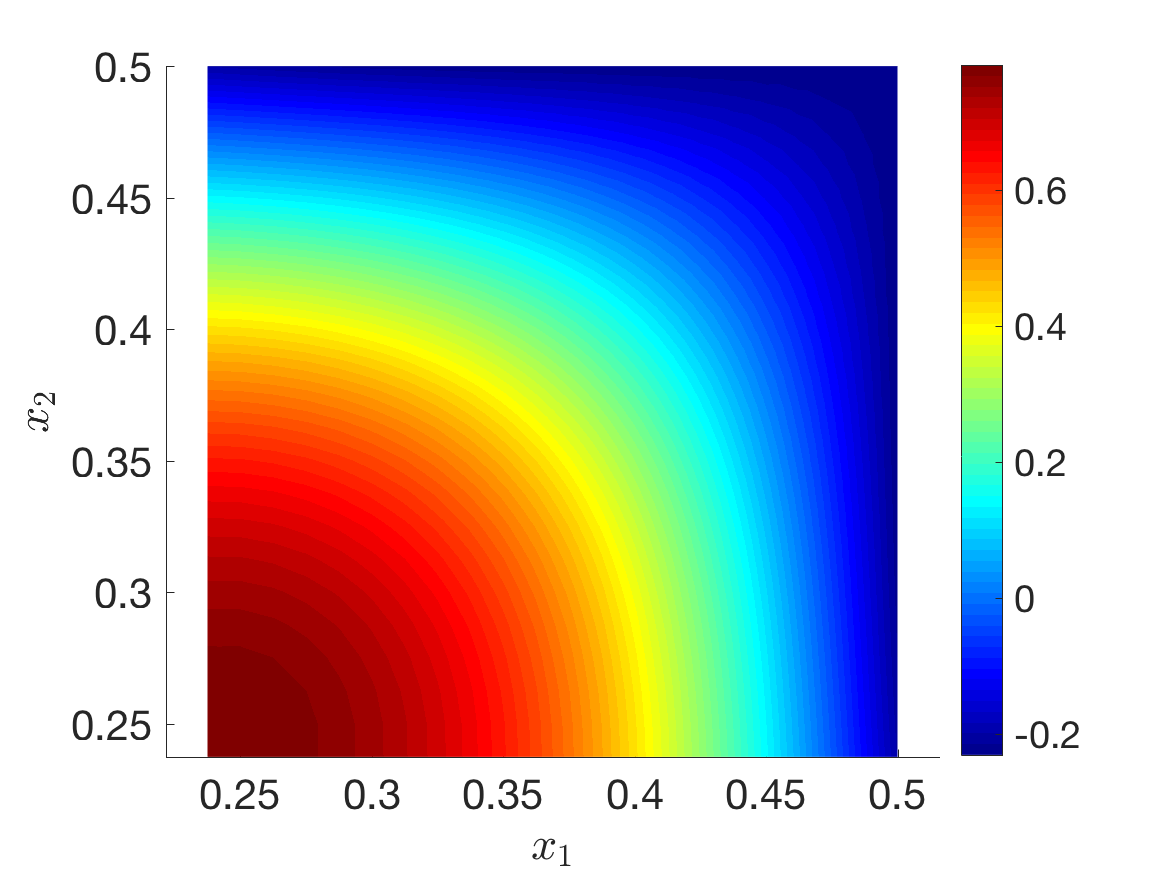}
\caption{DD-LSPG sol. on $\Omega_{6}$}
\label{fig_ex1_heat44fn_sol_DDROM_O6_1constr}
\end{subfigure}
~
\begin{subfigure}[b]{0.3\textwidth}
\includegraphics[width=5.5cm]{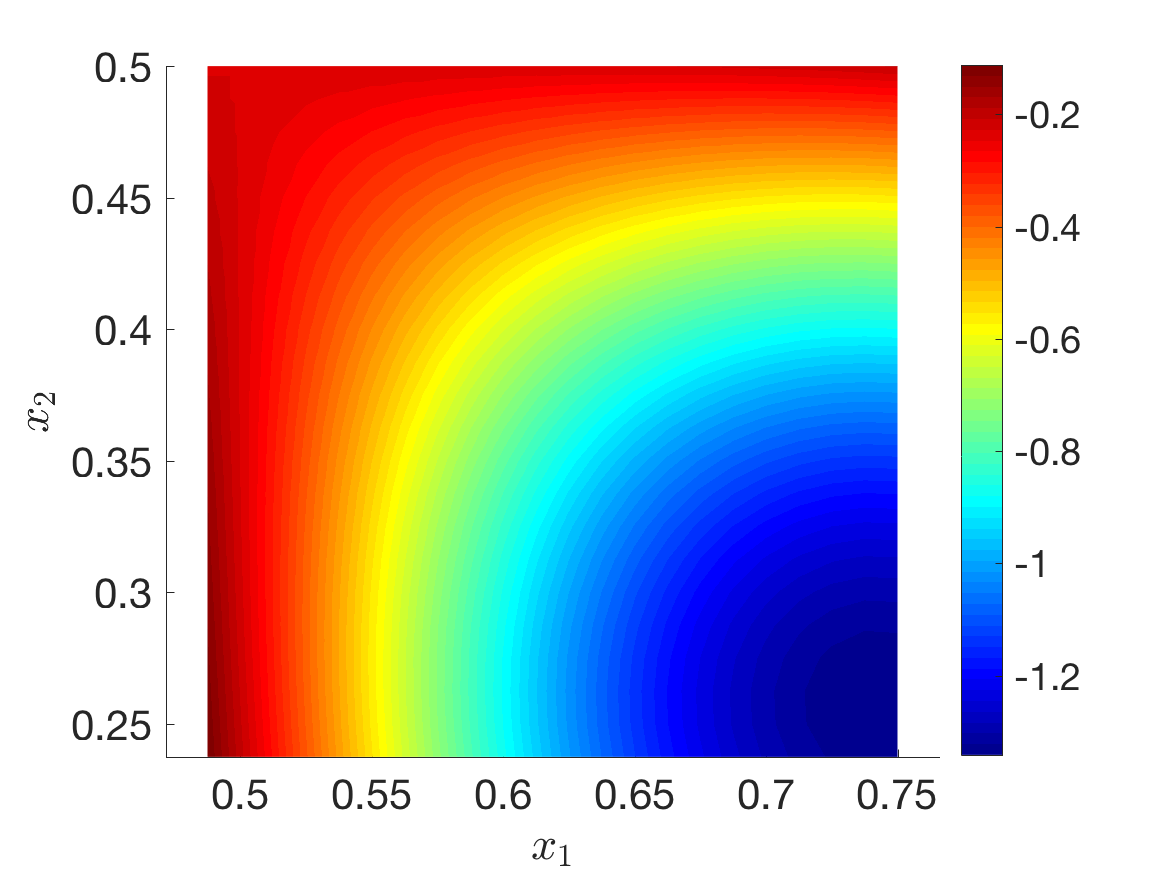}
\caption{DD-LSPG sol. on $\Omega_{10}$}
\label{fig_ex1_heat44fn_sol_DDROM_O10_1constr}
\end{subfigure}	
~\begin{subfigure}[b]{0.3\textwidth}
	\includegraphics[width=5.5cm]{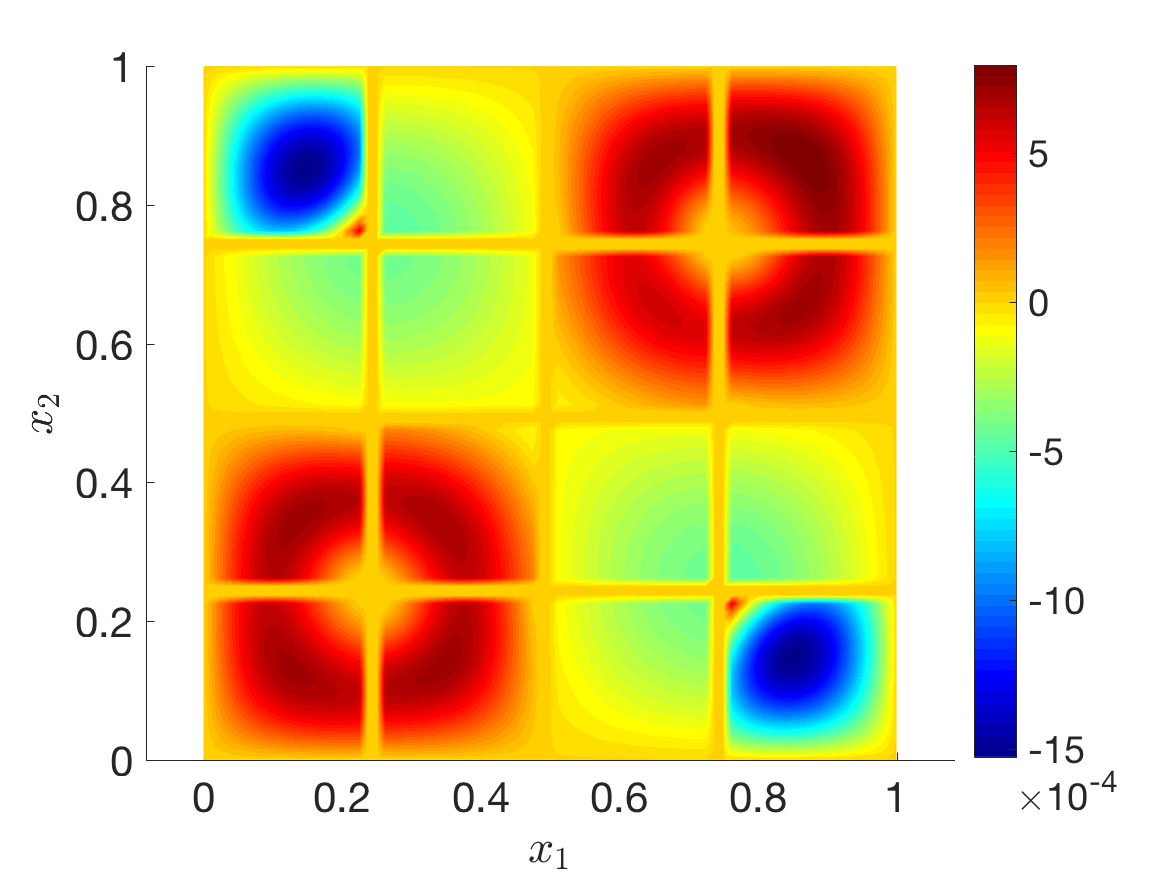}
	\caption{DD-LSPG sol. on $\Omega$}
	\label{fig_ex1_heat44fn_sol_DDROM_Omega_4constr}
\end{subfigure}
~
\begin{subfigure}[b]{0.3\textwidth}
	\includegraphics[width=5.5cm]{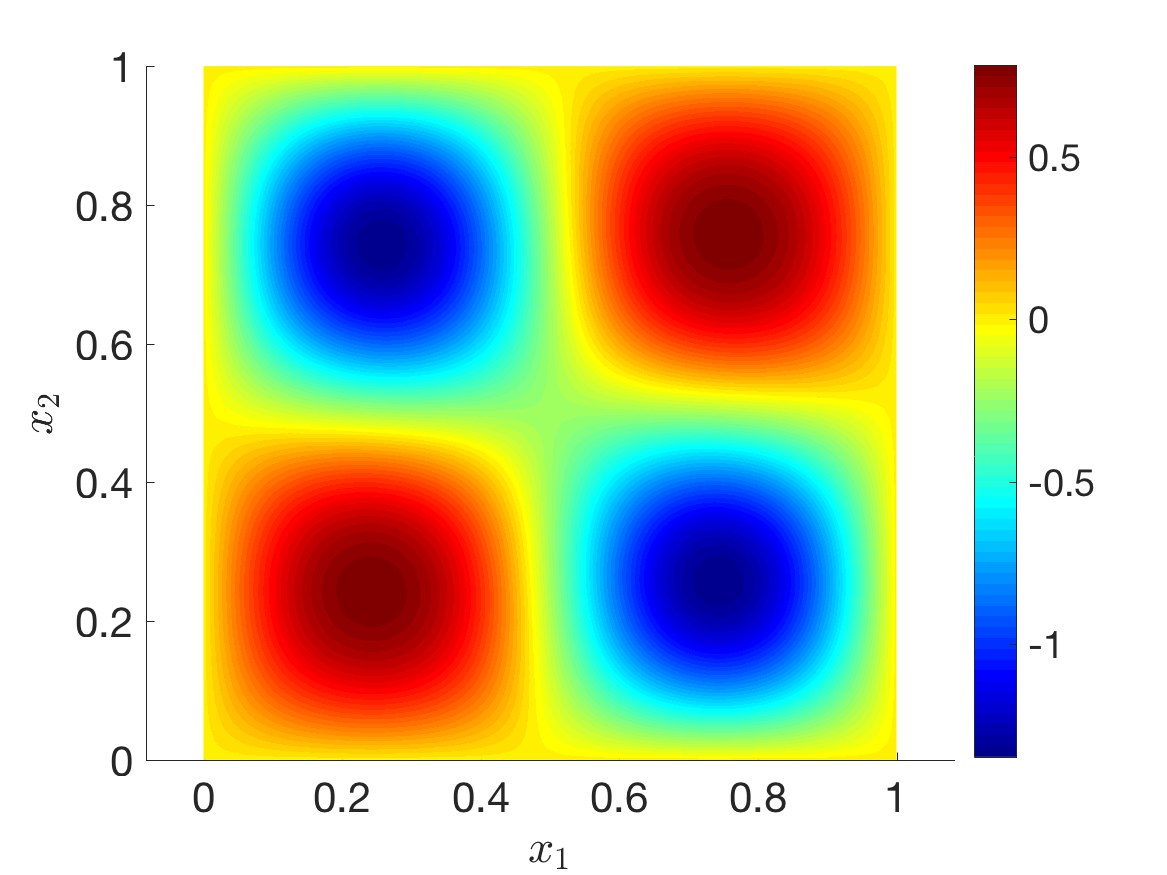}
	\caption{DD-LSPG error on $\Omega$}
	\label{fig_ex1_heat44fn_sol_DDROMerr_Omega_4constr}
\end{subfigure}
~
\begin{subfigure}[b]{0.3\textwidth}
	\includegraphics[width=5.5cm]{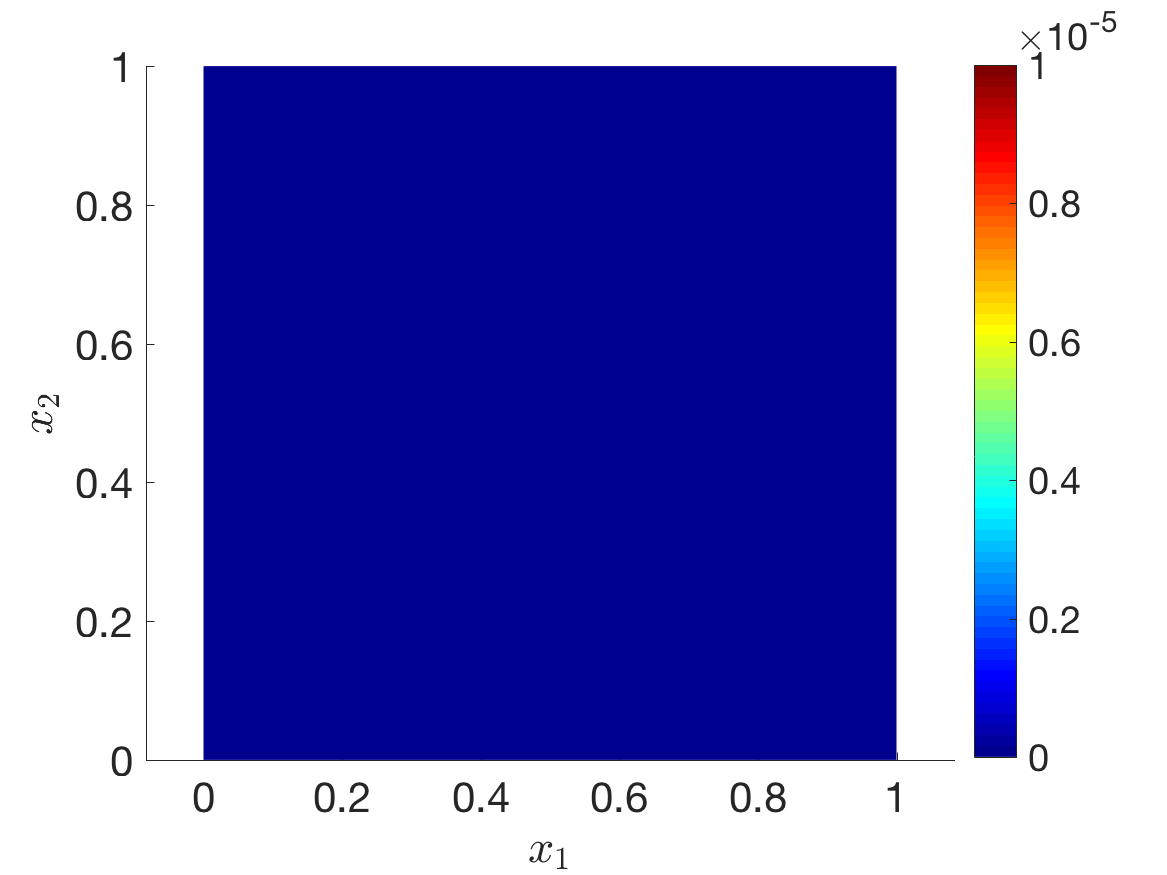}
	\caption{Absolute discrepancy on $\Omega$}
	\label{fig_ex1_heat44fn_sol_DDROM_discrepancy_4constr}
\end{subfigure}
~
\begin{subfigure}[b]{0.3\textwidth}
	\includegraphics[width=5.5cm]{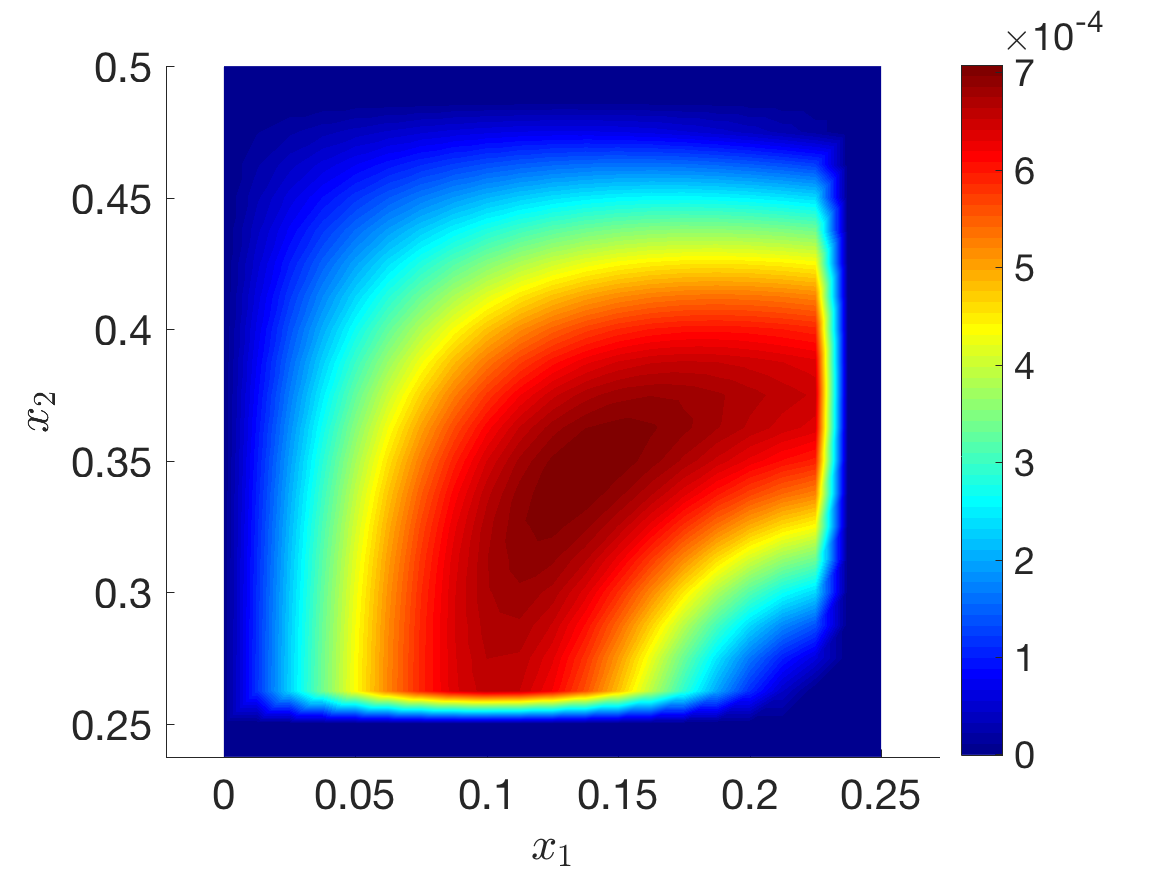}
	\caption{DD-LSPG sol. on $\Omega_2$}
	\label{fig_ex1_heat44fn_sol_DDROM_O2_4constr}
\end{subfigure}
~
\begin{subfigure}[b]{0.3\textwidth}
	\includegraphics[width=5.5cm]{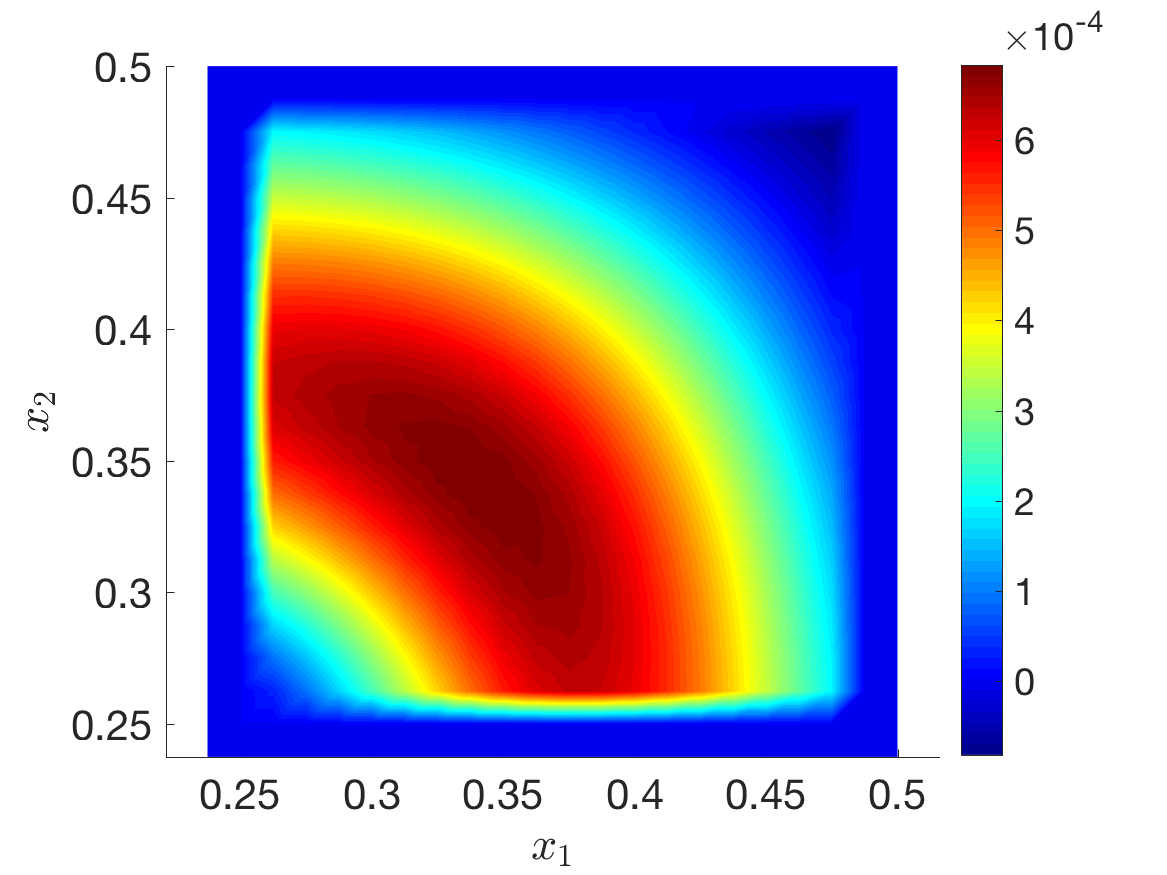}
	\caption{DD-LSPG sol. on $\Omega_{6}$}
	\label{fig_ex1_heat44fn_sol_DDROM_O6_4constr}
\end{subfigure}
~
\begin{subfigure}[b]{0.3\textwidth}
	\includegraphics[width=5.5cm]{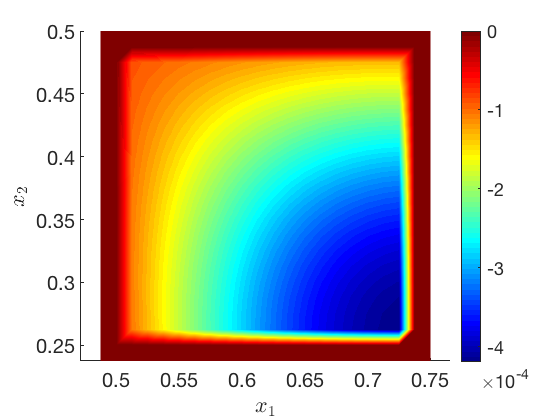}
	\caption{DD-LSPG sol. on $\Omega_{10}$}
	\label{fig_ex1_heat44fn_sol_DDROM_O10_4constr}
\end{subfigure}	
\caption{Heat equation, top-down training, $4\times 4$ ``fine'' configuration, solutions visualization on $\domain$ with full-interface bases and weak constraints: one constraint per port (top 2 rows) versus four constraints per port (last 2 rows). } \label{fig_ex1_sol_vis_1constr}
\end{figure}

For illustrative purposes, we investigate further the effects of weak versus
strong compatibility
constraints for the full-interface basis type \CH{(i.e., \textit{incompatible} bases)}. Due to its similarity,
only full-interface bases is discussed here. We consider the parameters for
DD-LSPG
simulation as follows: $4\times 4$ ``fine'' configuration,
$\paramComp=(5.005,5.005)$, full-interface bases, $\energyCriterion= 1-10^{-5}$ on $\domaini$, 
$\energyCriterion = 1-10^{-8}$ on $\boundaryi$, and weak constraint cases with one and four constraints per port. Figures~\ref{fig_ex1_sol_vis_1constr} visualizes the corresponding solutions of these
two cases. These figures verify visually our observations in the previous section and the comments in Remark \ref{rem:globalSol}: enforcing weak compatibility with only one constraint per port yields better global solutions despite a larger discrepancy in the solution computed by neighboring sudomains on the interface
(Figure~\ref{fig_ex1_heat44fn_sol_DDROM_Omega_1constr}--\ref{fig_ex1_heat44fn_sol_DDROM_O10_1constr}),
while enforcing
additional interface constraints (i.e, four constraints per port in this case)
imposes such a strict compatibility condition that the resulting interface
solution is simply the trivial solution, yielding significant overall errors
despite negligible discrepancies in the solutions computed on neighboring
subdomains (Figure~\ref{fig_ex1_heat44fn_sol_DDROM_Omega_4constr}--\ref{fig_ex1_heat44fn_sol_DDROM_O10_4constr}).

\subsubsection{Subdomain (or bottom-up) training for 4x4 ``fine'' configuration} \label{sect_subdom_training}

\textbf{Reproductive test}

%%%%%%%%%%%%%% fig 10: 4x4fn config: bottom-up training, intfBF %%%%%%%%%%%%%
\begin{figure}[h!]
\centering
\begin{subfigure}[b]{0.3\textwidth}
	\includegraphics[width=5.5cm]{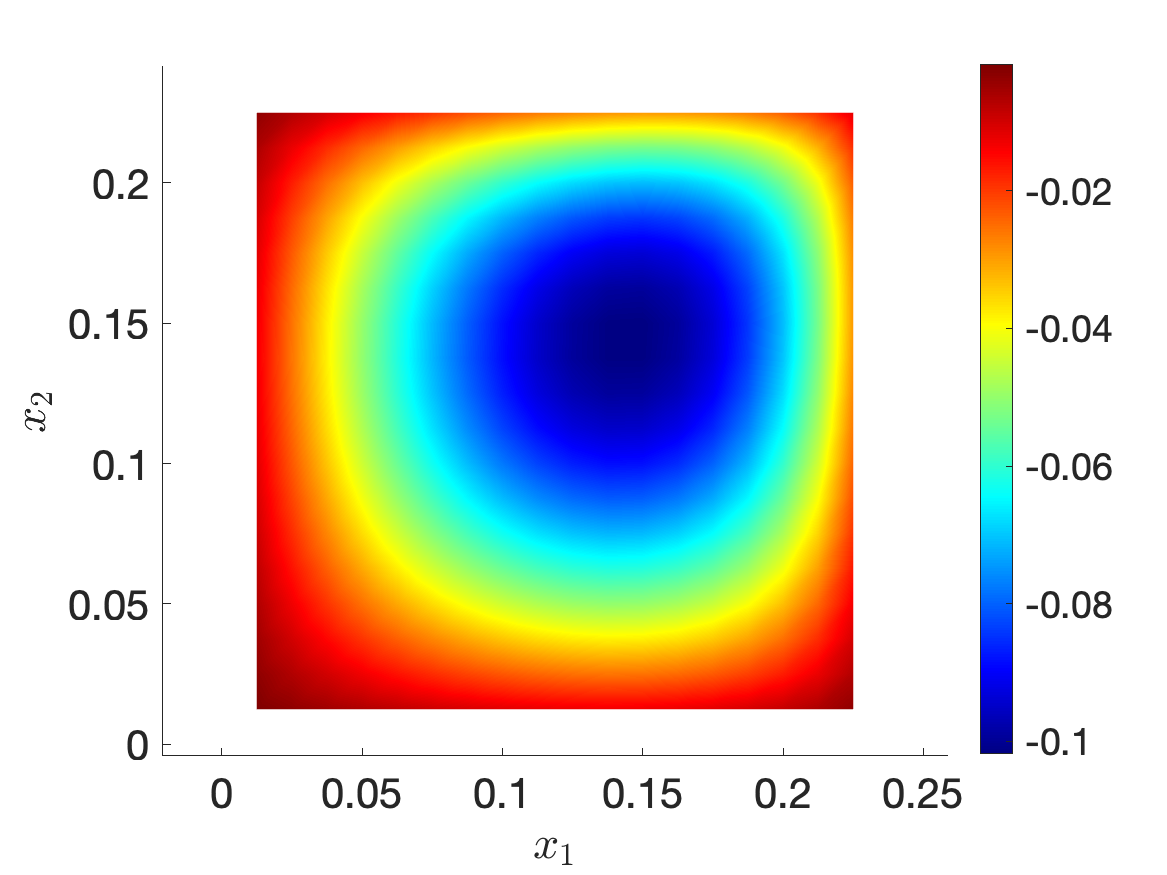}
	\caption{$\rbInteriorArg{1}$ on $\domainArg{1}$}
	\label{fig_ex1_heat44fn_btup_O1_PhiO1}
\end{subfigure}
~
\begin{subfigure}[b]{0.3\textwidth}
	\includegraphics[width=5.5cm]{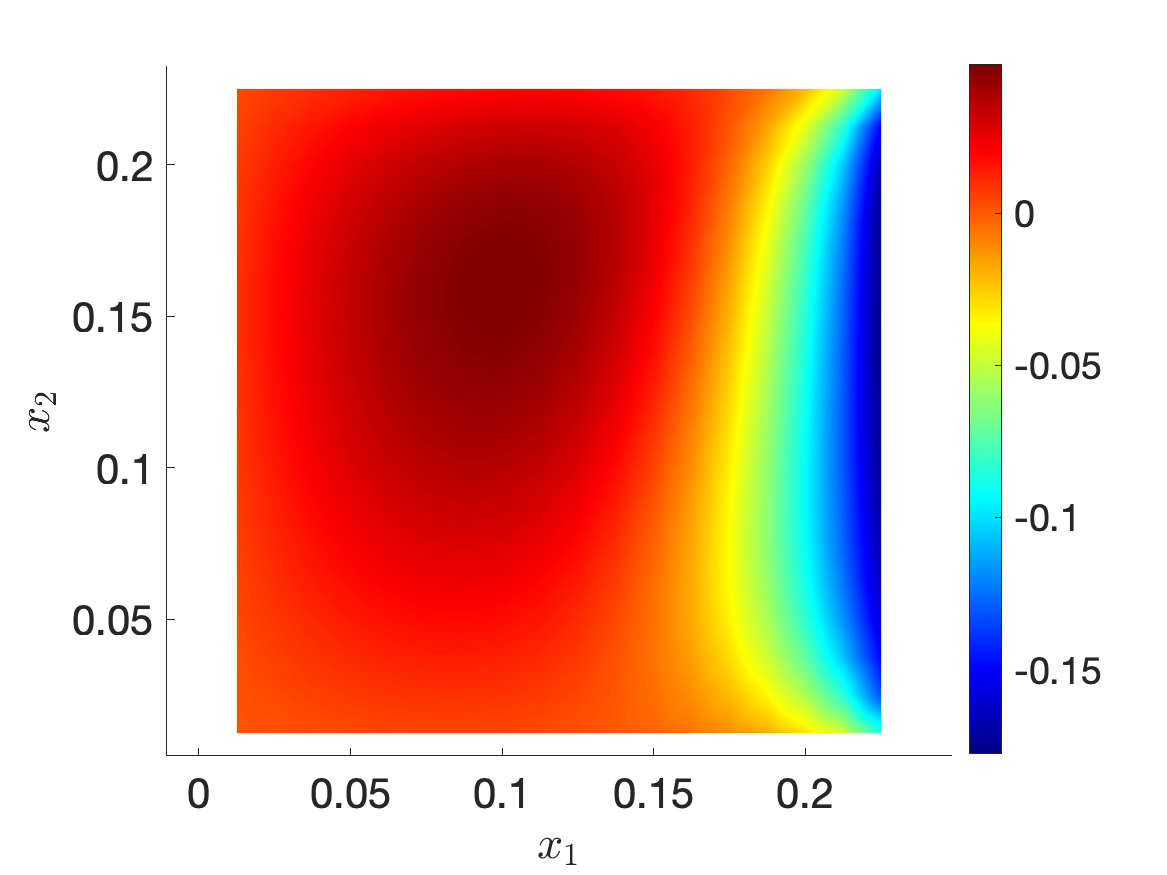}
	\caption{$\rbInteriorArg{2}$ on $\domainArg{1}$}
	\label{fig_ex1_heat44fn_btup_O1_PhiO2}
\end{subfigure}
~
\begin{subfigure}[b]{0.3\textwidth}
	\includegraphics[width=5.5cm]{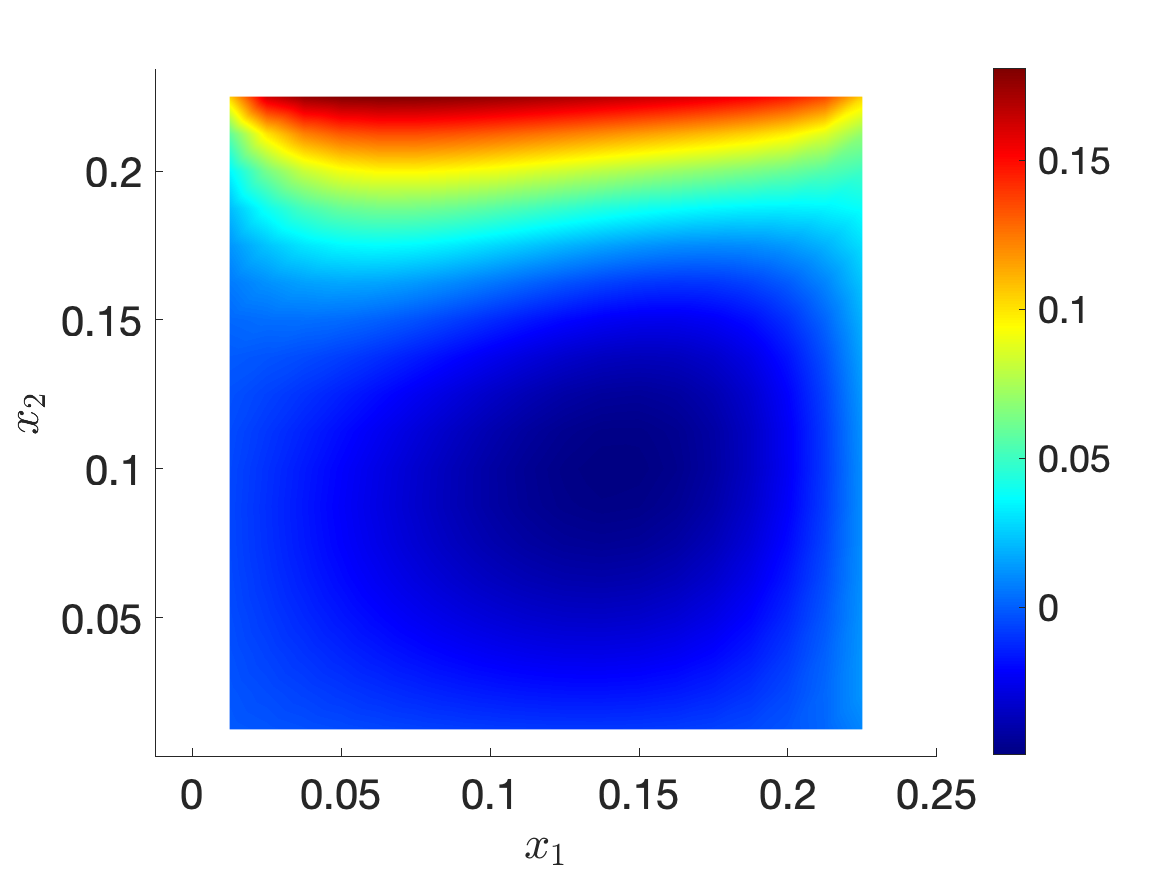}
	\caption{$\rbInteriorArg{3}$ on $\domainArg{1}$}
	\label{fig_ex1_heat44fn_btup_O1_PhiO3}
\end{subfigure}
~
\begin{subfigure}[b]{0.3\textwidth}
	\includegraphics[width=5.5cm]{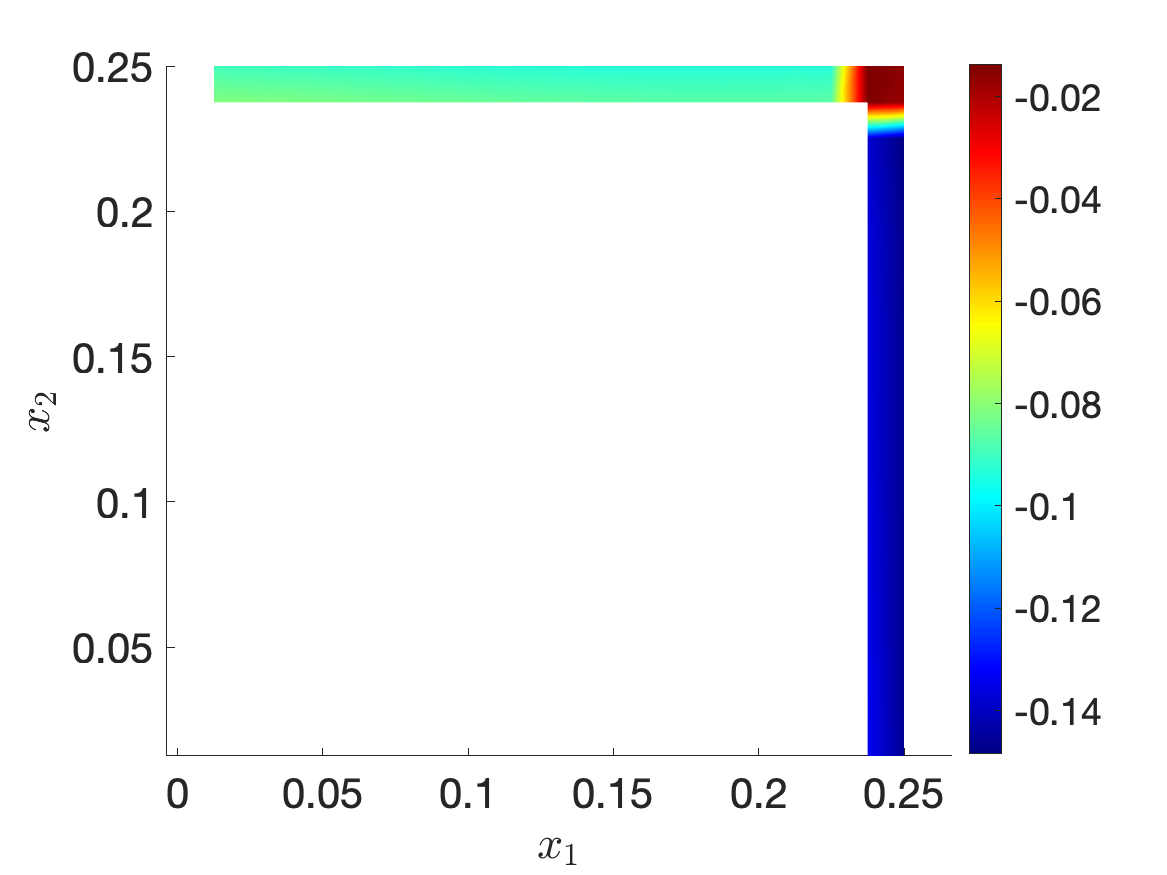}
	\caption{$\rbBoundaryArg{1}$ on $\domainArg{1}$}
	\label{fig_ex1_heat44fn_btup_O1_PhiG1}
\end{subfigure}
~
\begin{subfigure}[b]{0.3\textwidth}
	\includegraphics[width=5.5cm]{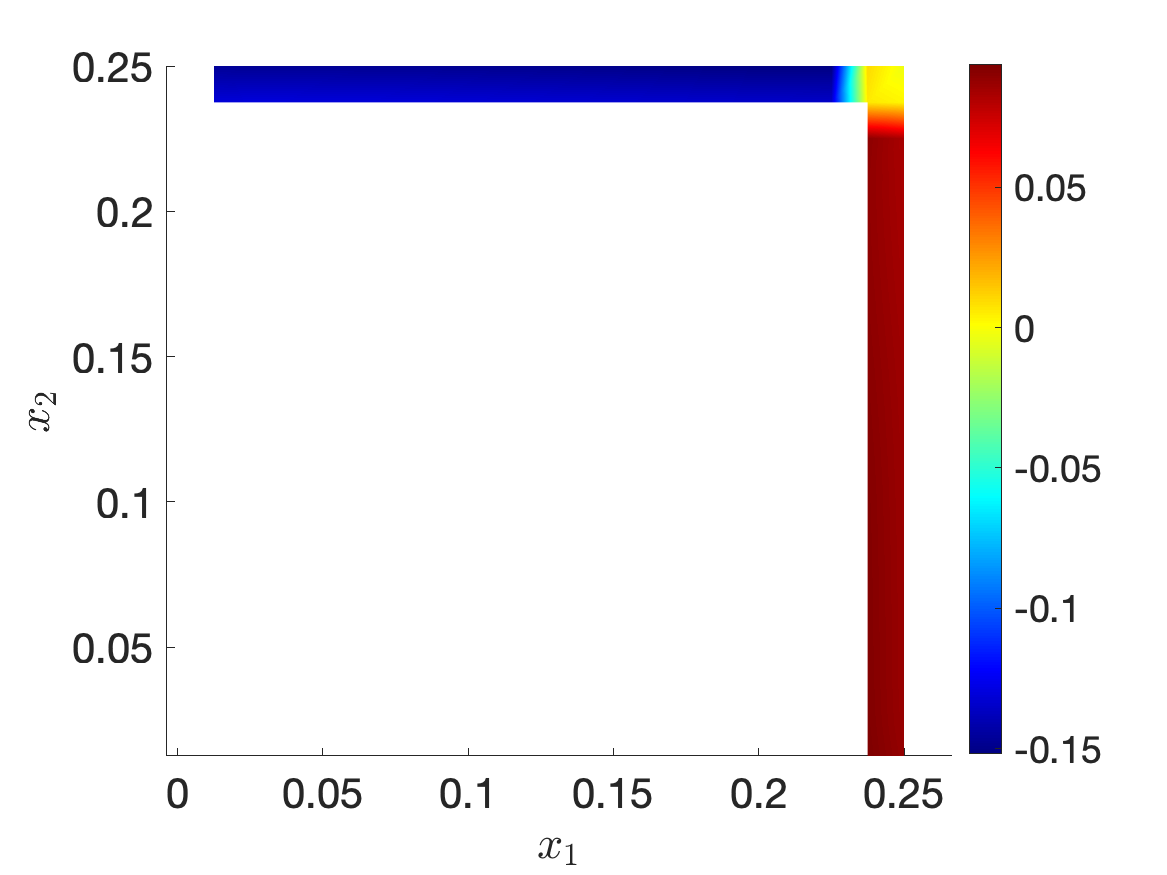}
	\caption{$\rbBoundaryArg{2}$ on $\domainArg{1}$}
	\label{fig_ex1_heat44fn_btup_O1_PhiG2}
\end{subfigure}
~
\begin{subfigure}[b]{0.3\textwidth}
	\includegraphics[width=5.5cm]{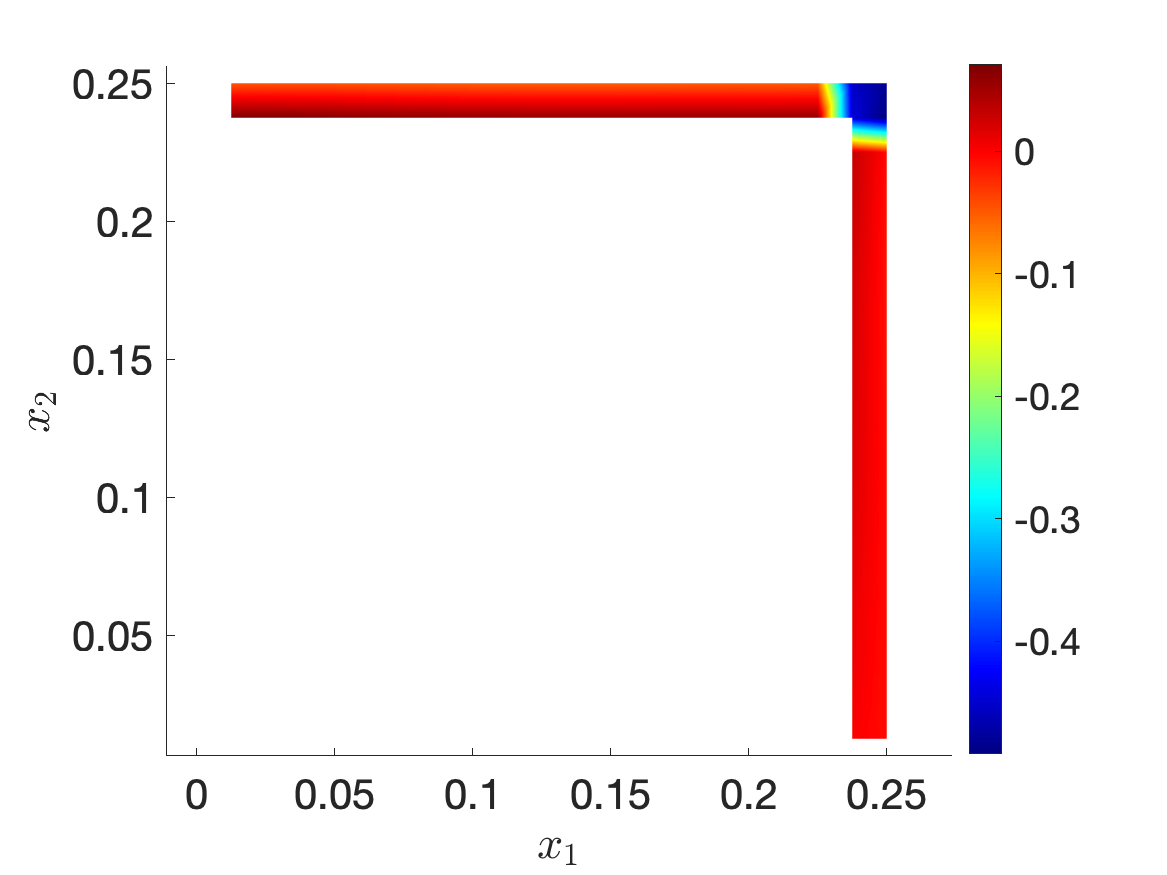}
	\caption{$\rbBoundaryArg{3}$ on $\domainArg{1}$}
	\label{fig_ex1_heat44fn_btup_O1_PhiG3}
\end{subfigure}	
~\begin{subfigure}[b]{0.3\textwidth}
	\includegraphics[width=5.5cm]{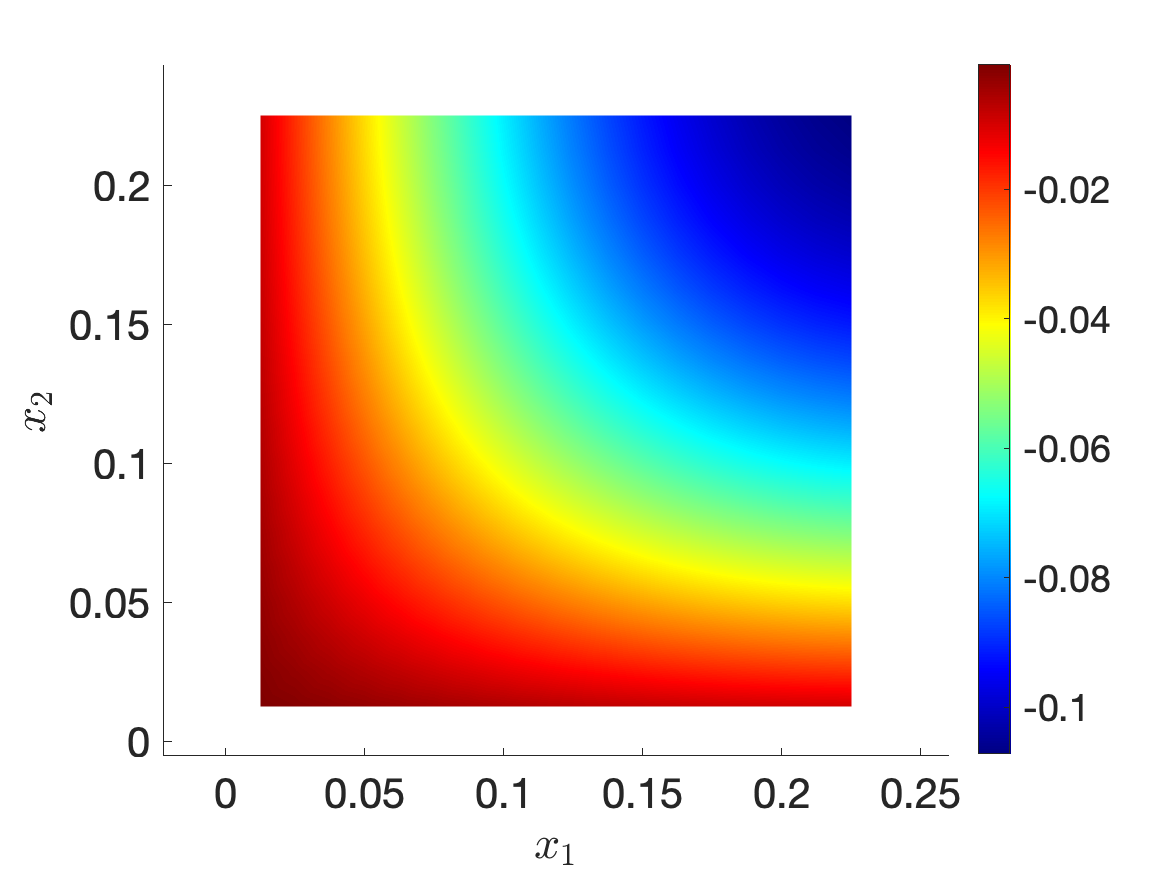}
	\caption{$\rbInteriorArg{1}$ on $\domainArg{1}$}
	\label{fig_ex1_heat44fn_topdown_O1_PhiO1}
\end{subfigure}
~
\begin{subfigure}[b]{0.3\textwidth}
	\includegraphics[width=5.5cm]{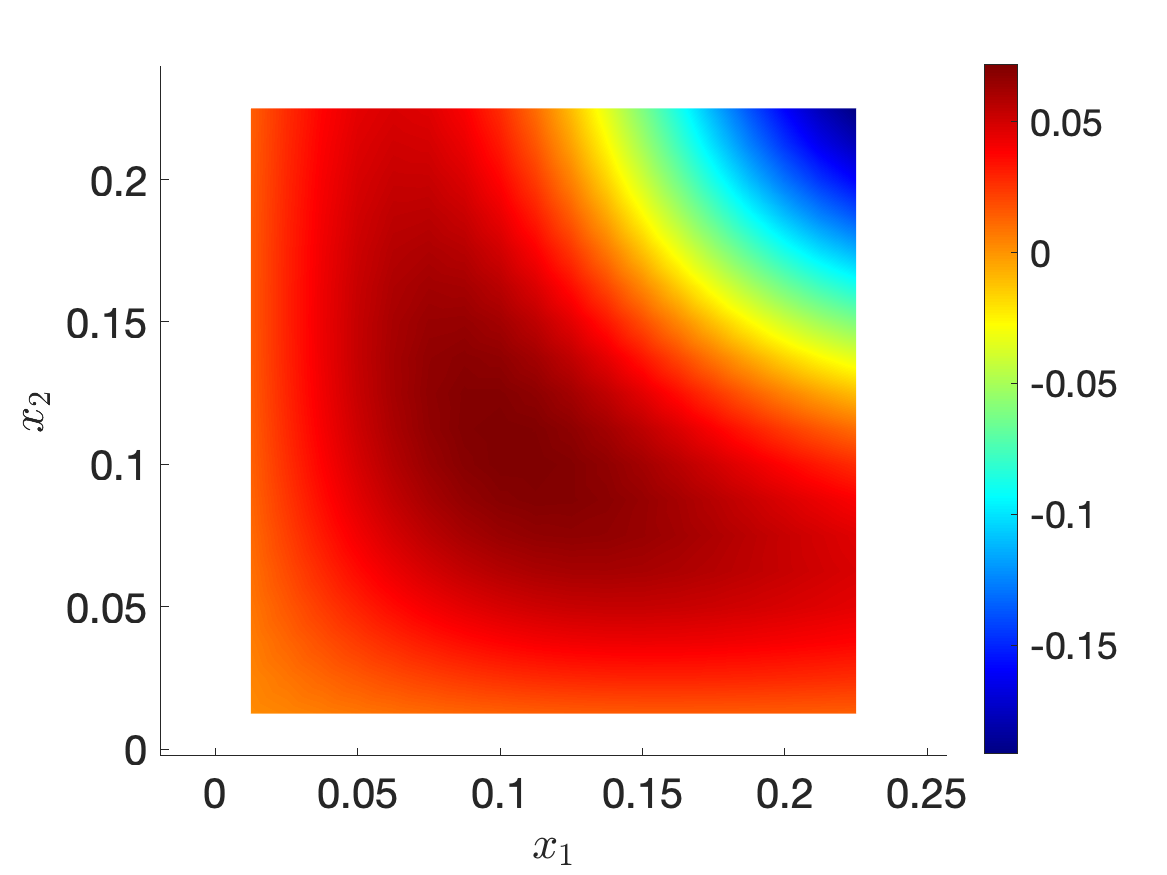}
	\caption{$\rbInteriorArg{2}$ on $\domainArg{1}$}
	\label{fig_ex1_heat44fn_topdown_O1_PhiO2}
\end{subfigure}
~
\begin{subfigure}[b]{0.3\textwidth}
	\includegraphics[width=5.5cm]{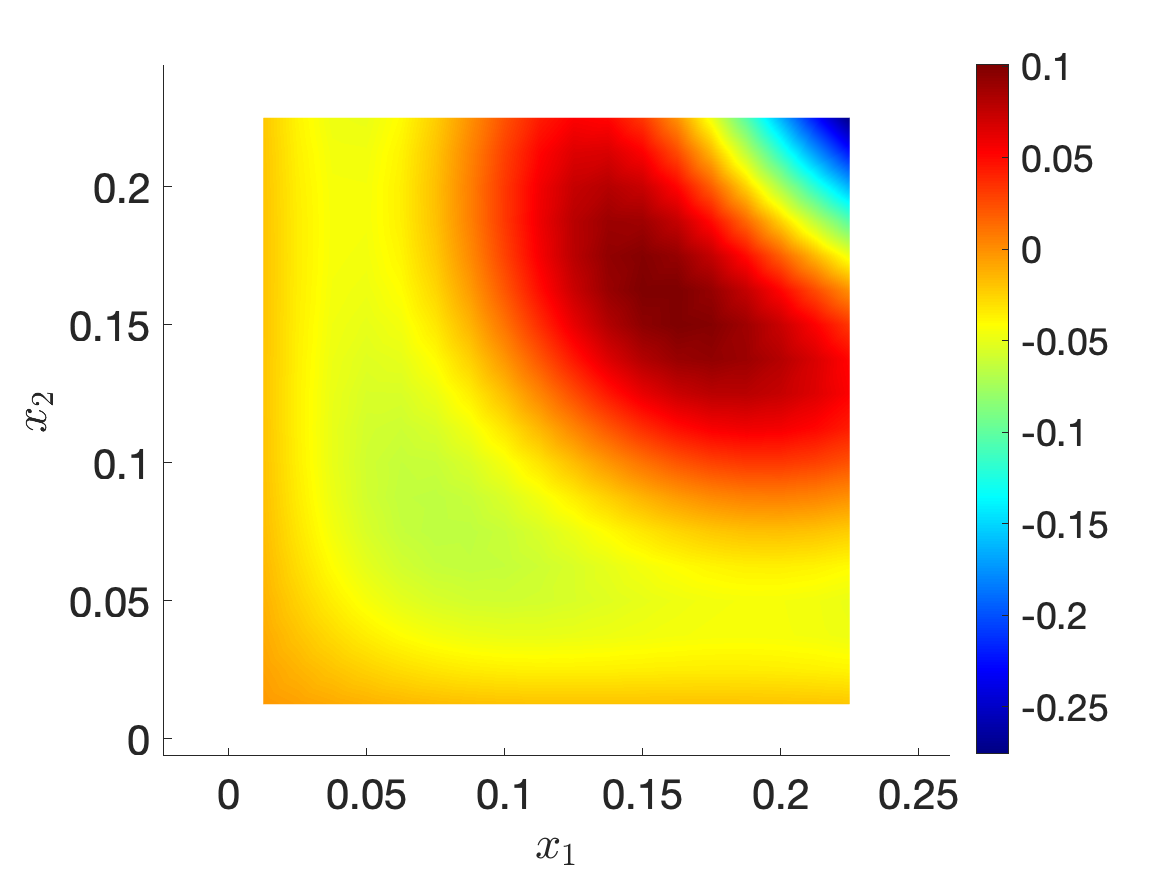}
	\caption{$\rbInteriorArg{3}$ on $\domainArg{1}$}
	\label{fig_ex1_heat44fn_topdown_O1_PhiO3}
\end{subfigure}
~
\begin{subfigure}[b]{0.3\textwidth}
	\includegraphics[width=5.5cm]{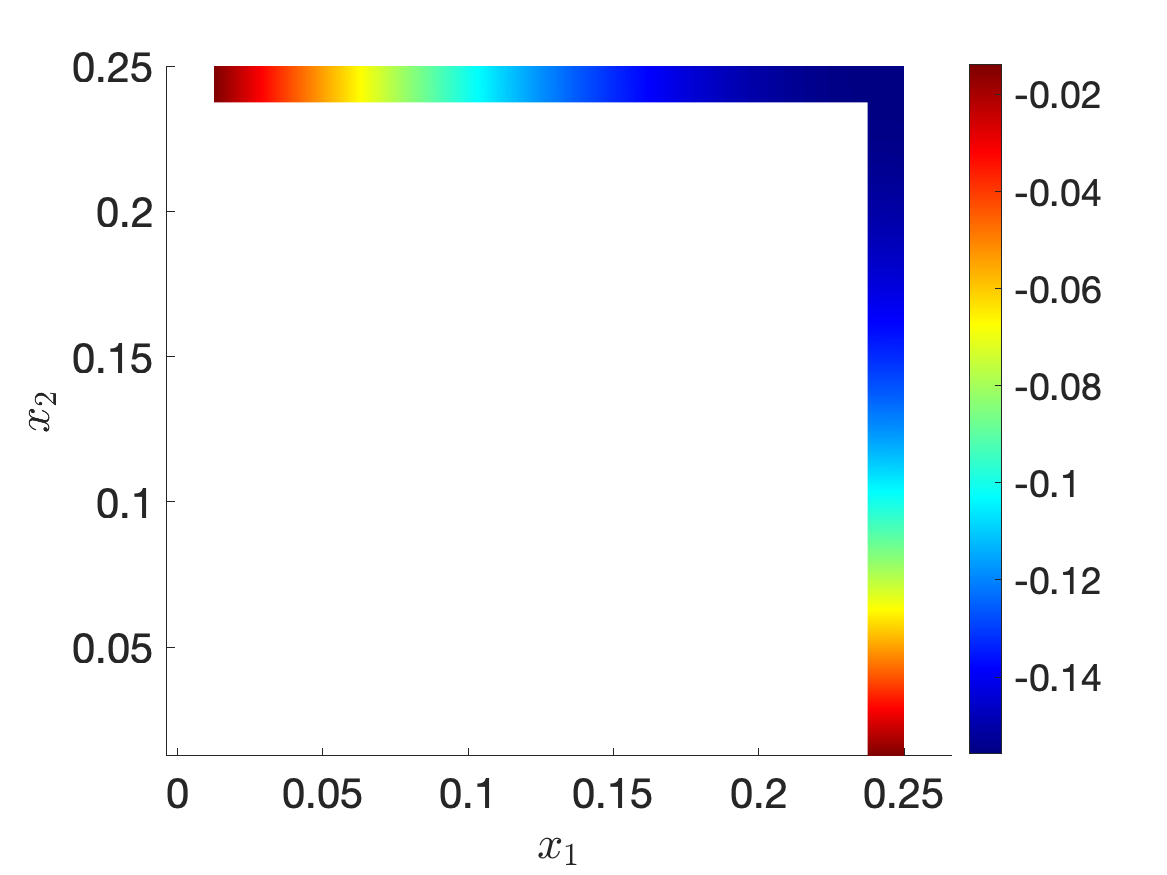}
	\caption{$\rbBoundaryArg{1}$ on $\domainArg{1}$}
	\label{fig_ex1_heat44fn_topdown_O1_PhiG1}
\end{subfigure}
~
\begin{subfigure}[b]{0.3\textwidth}
	\includegraphics[width=5.5cm]{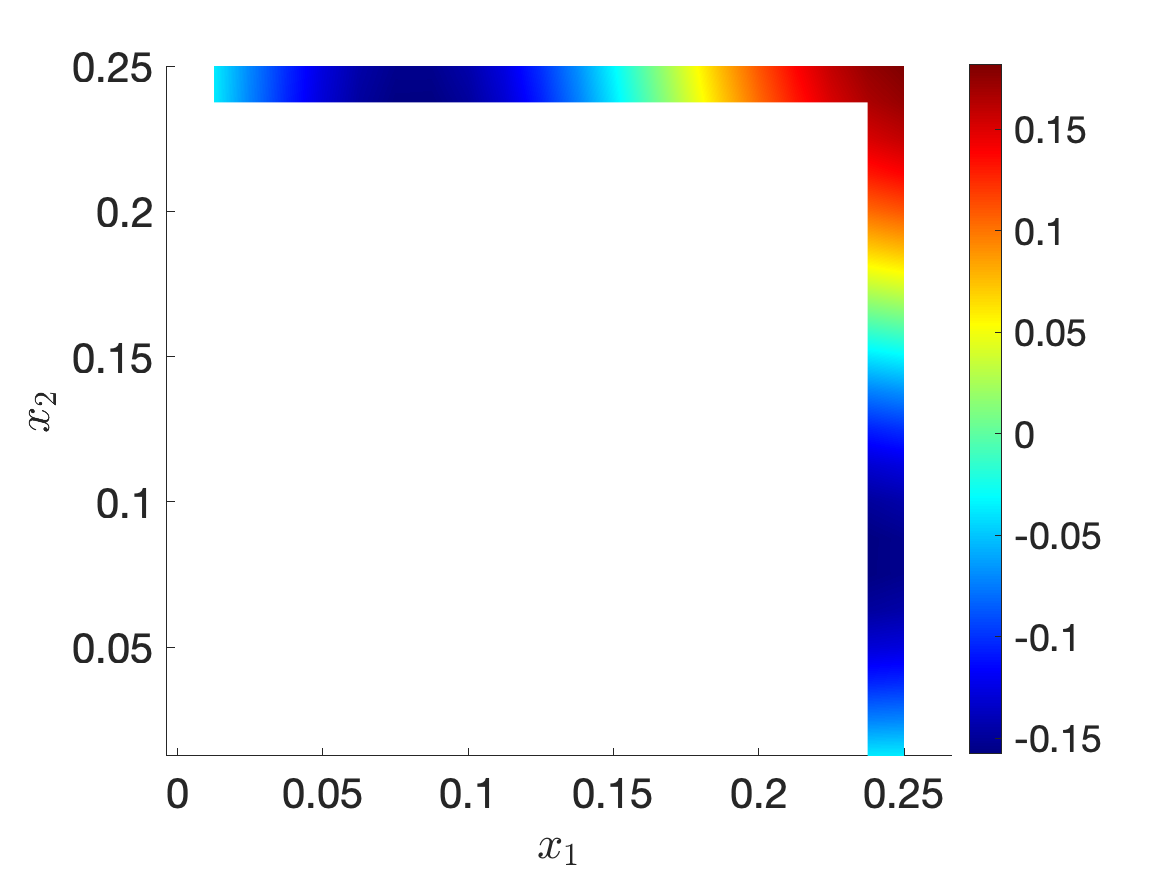}
	\caption{$\rbBoundaryArg{2}$ on $\domainArg{1}$}
	\label{fig_ex1_heat44fn_topdown_O1_PhiG2}
\end{subfigure}
~
\begin{subfigure}[b]{0.3\textwidth}
	\includegraphics[width=5.5cm]{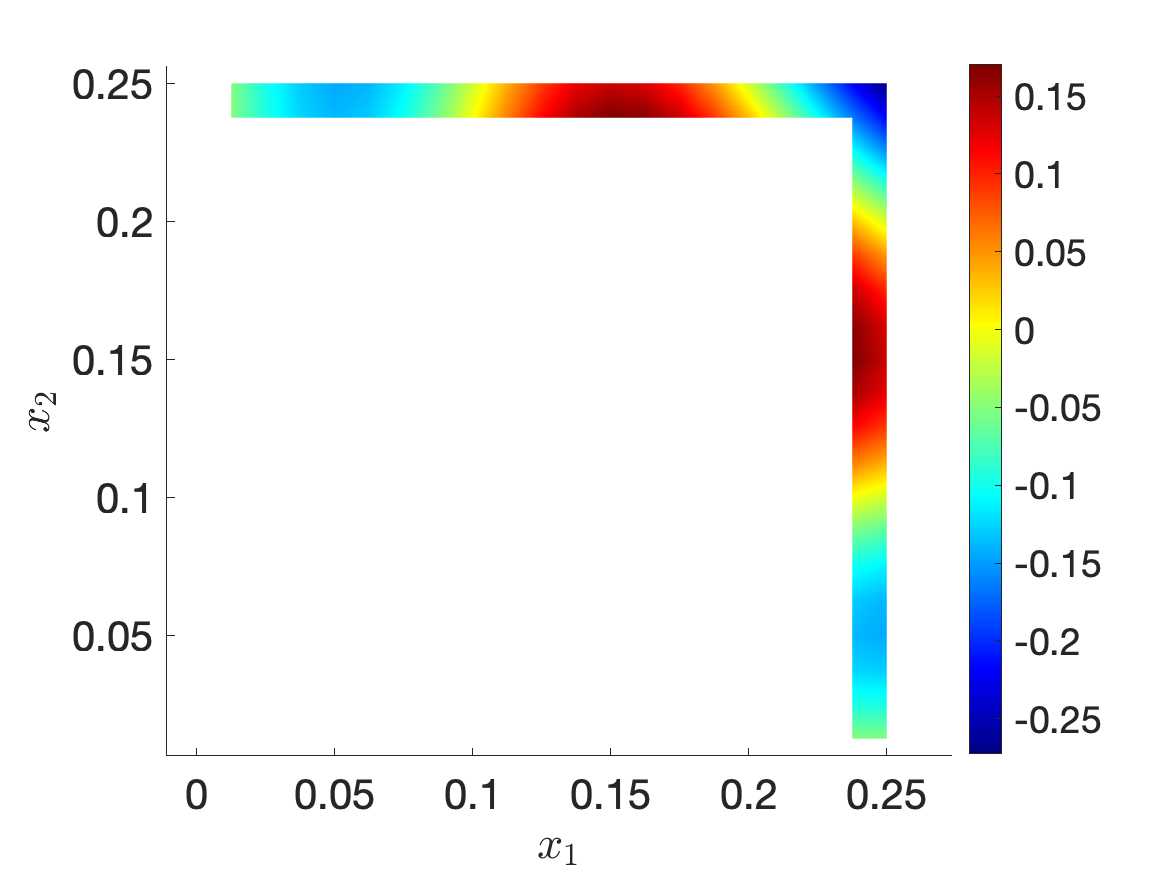}
	\caption{$\rbBoundaryArg{3}$ on $\domainArg{1}$}
	\label{fig_ex1_heat44fn_topdown_O1_PhiG3}
\end{subfigure}	
\caption{Heat equation, $4\times 4$ ``fine'' configuration, fixed train parameter, full-interface basis visualization on $\domainArg{1}$: bottom-up training (top 2 rows) versus top-down training (last 2 rows). } \label{fig_ex1_btup_topdown_intfBF}
\end{figure}

\begin{figure}[h!]
\centering
\begin{subfigure}[b]{0.3\textwidth}
	\includegraphics[width=5.5cm]{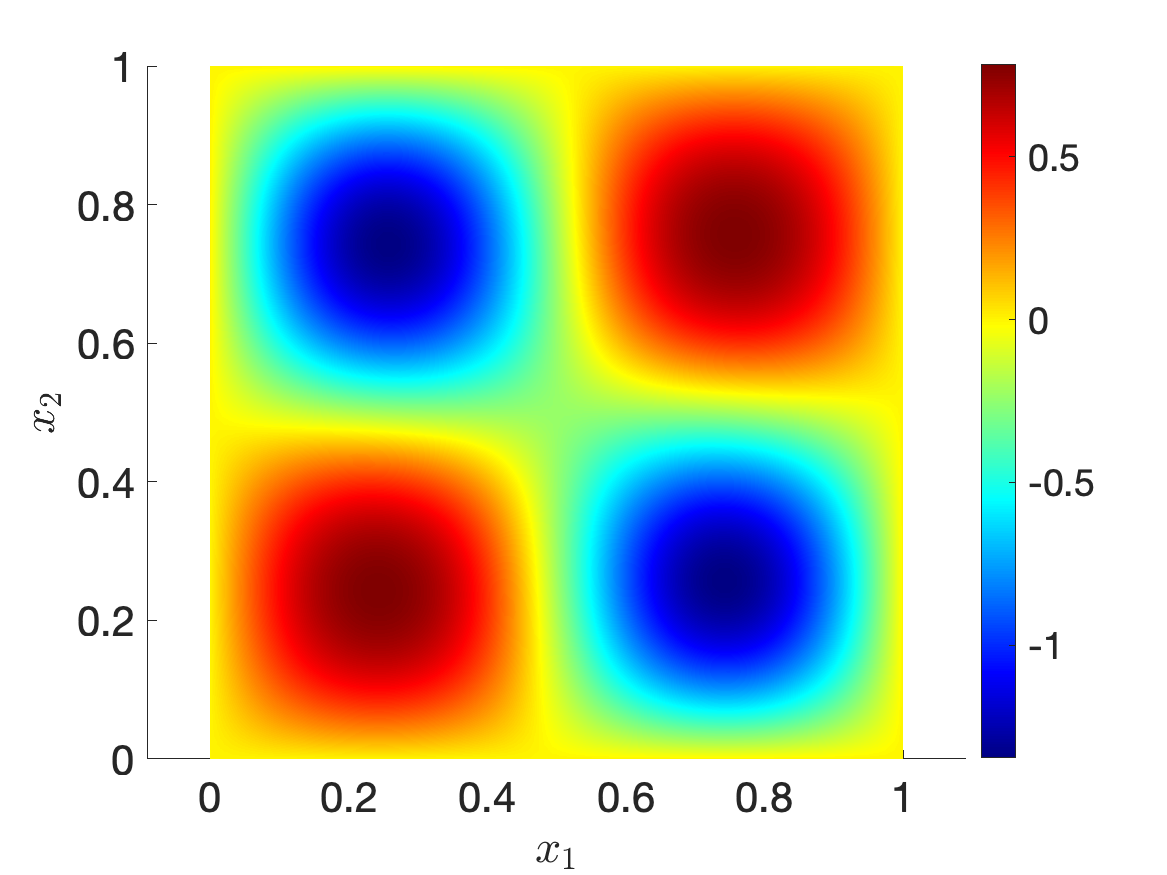}
	\caption{global FEM solution on $\domain$}
	\label{fig_ex1_heat44fn_sol_FEM_Omega}
\end{subfigure}
~
\begin{subfigure}[b]{0.3\textwidth}
	\includegraphics[width=5.5cm]{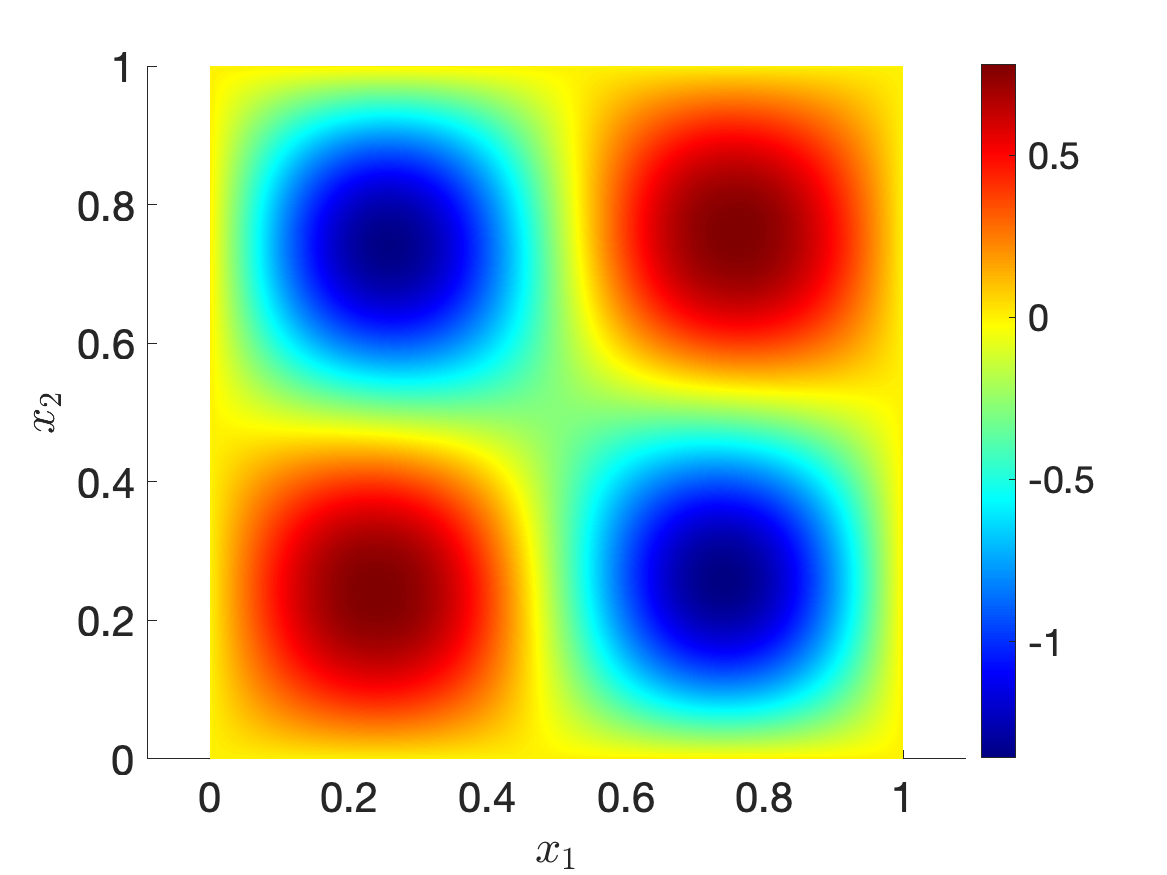}
	\caption{DD-LSPG solution on $\domain$}
	\label{fig_ex1_heat44fn_sol_DDROM_Omega}
\end{subfigure}
~
\begin{subfigure}[b]{0.3\textwidth}
	\includegraphics[width=5.5cm]{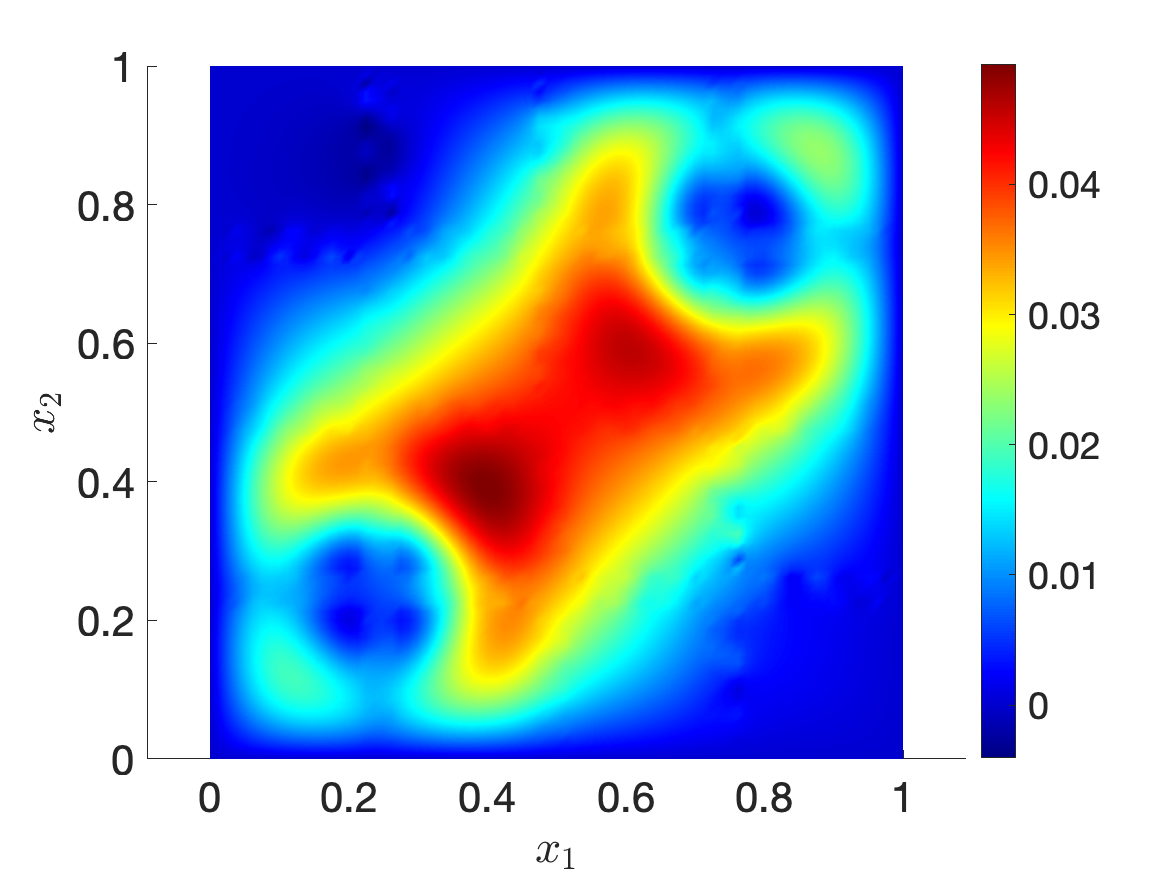}
	\caption{DD-LSPG error on $\domain$}
	\label{fig_ex1_heat44fn_sol_DDROMerr_Omega}
\end{subfigure}
~
\begin{subfigure}[b]{0.3\textwidth}
	\includegraphics[width=5.5cm]{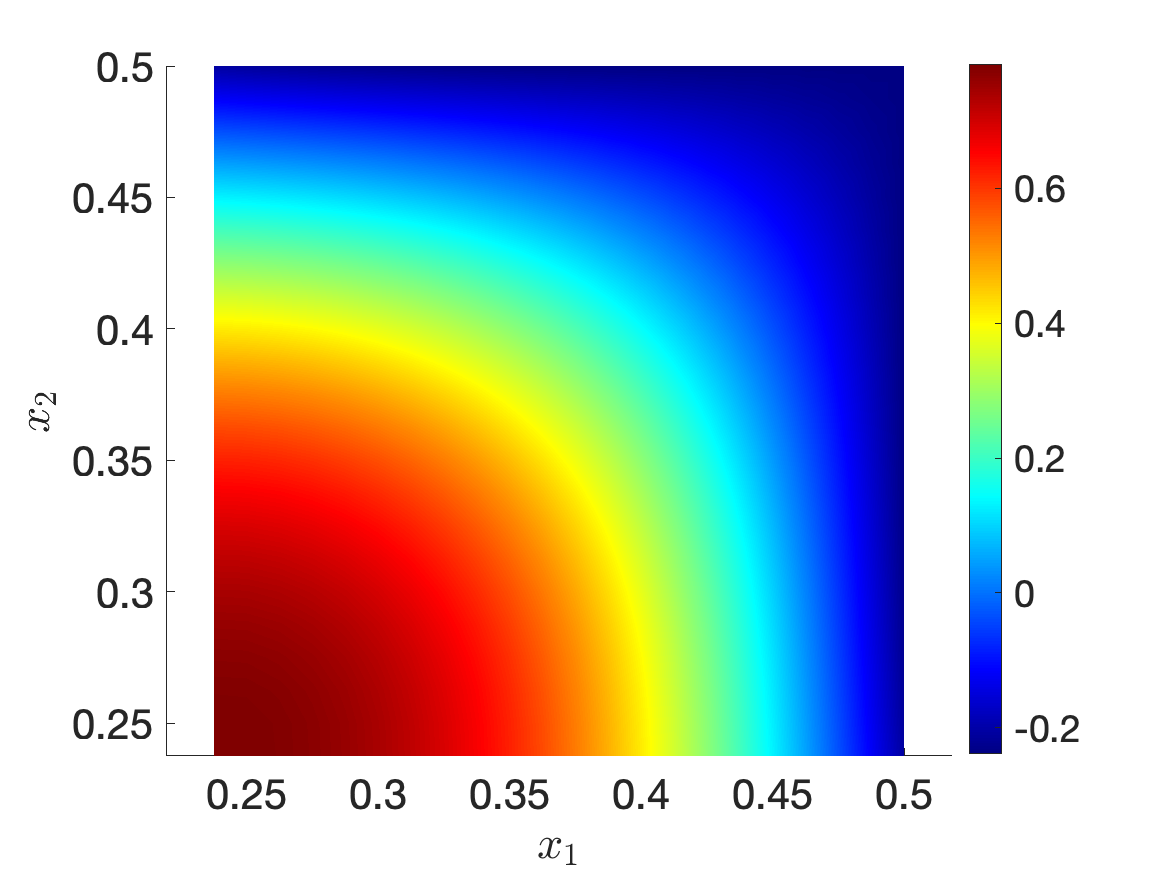}
	\caption{global FEM solution on $\domainArg{6}$}
	\label{fig_ex1_heat44fn_sol_FEM_O6}
\end{subfigure}
~
\begin{subfigure}[b]{0.3\textwidth}
	\includegraphics[width=5.5cm]{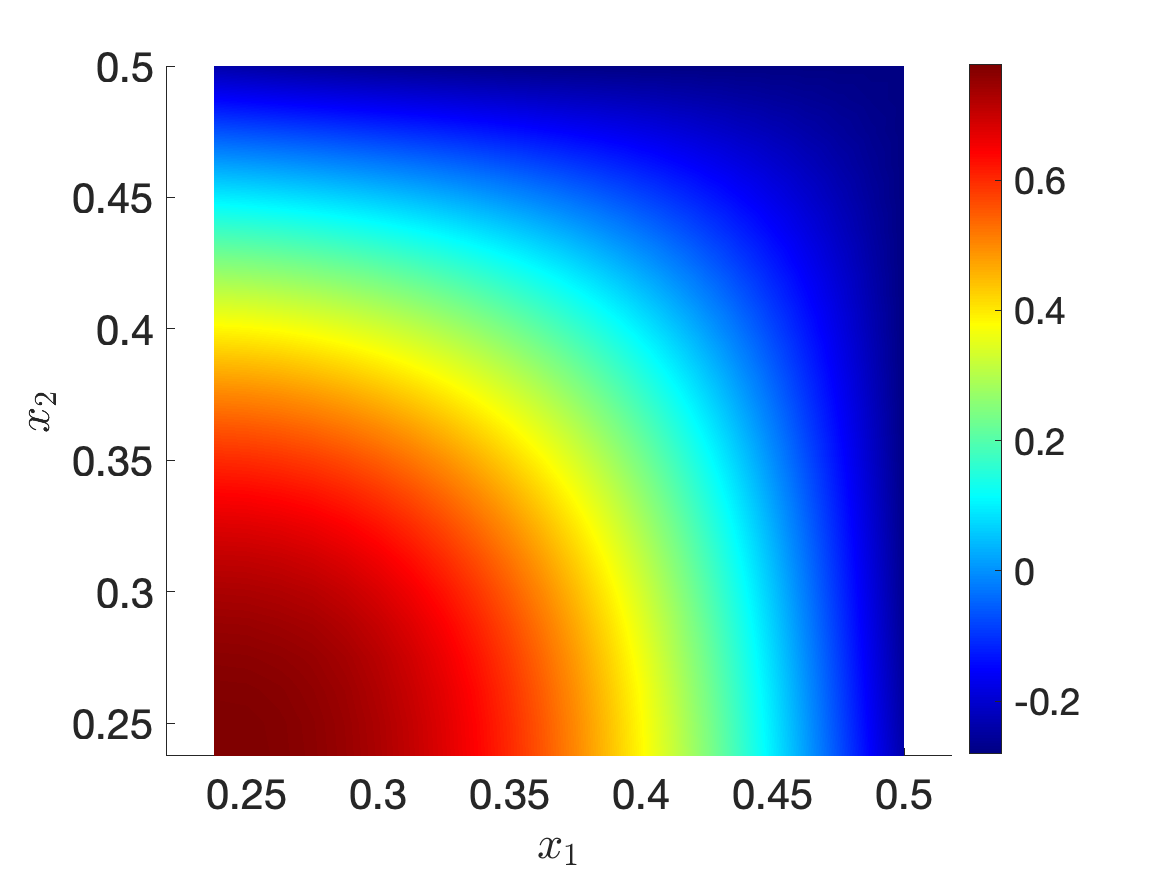}
	\caption{DD-LSPG solution on $\domainArg{6}$}
	\label{fig_ex1_heat44fn_sol_DDROM_O6}
\end{subfigure}
~
\begin{subfigure}[b]{0.3\textwidth}
	\includegraphics[width=5.5cm]{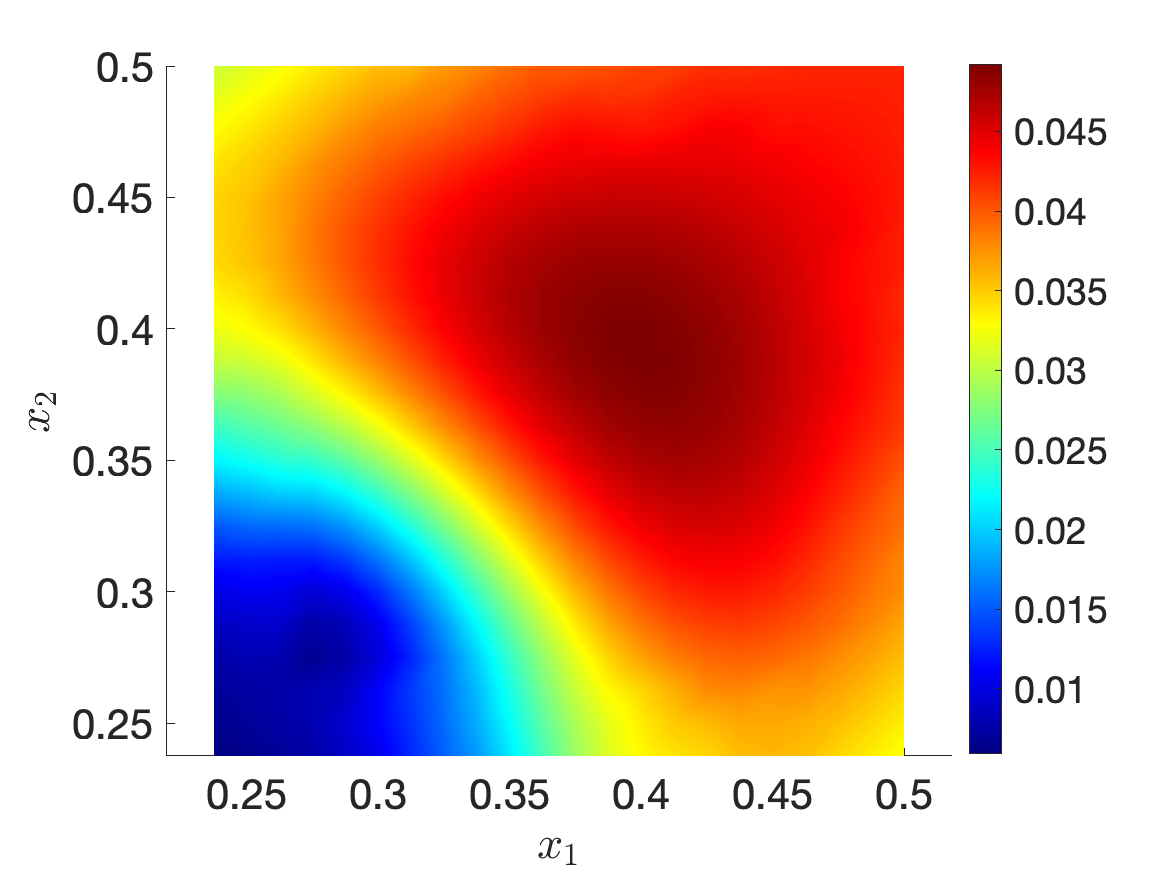}
	\caption{DD-LSPG error on $\domainArg{6}$}
	\label{fig_ex1_heat44fn_sol_DDROMerr_O6}
\end{subfigure}	
\caption{Heat equation, bottom-up training, $4\times 4$ ``fine'' configuration, reproductive test, full-interface bases ($\energyCriterion=1-10^{-12}$) in Table \ref{tab_ex1_oneOnlineComp_inputParams_btup}, solutions visualized on $\domain$ and $\domainArg{6}$. }\label{fig_ex1_btup_sol_vis}
\end{figure}

\begin{table}[h!]
\small
\center
\caption{Heat equation, bottom-up training, $4\times 4$ ``fine'' configuration, reproductive test, ROM methods
performance at point $\param_{{\rm test}}=(5.005,5.005) \notin \trainSample$ for one
online computation. Recall from Section
\ref{sec_basis_construction} that $\energyCriterion\in[0,1]$ denotes the energy
criterion employed by POD.} 
\label{tab_ex1_oneOnlineComp_inputParams_btup}
{\begin{tabular}{|c||c|c|c||c|c|c|}
	\hline \rule{0pt}{1.5ex}
	train parameter & \multicolumn{6}{c|}{$\trainParam{i} = (5,5) \; {\rm on} \; \domain_i, \quad 1 \le i \le \nsubdomains $ } 
	\\	[0.2ex]
	\hline \rule{0pt}{1.5ex}
	constraint	& \multicolumn{6}{c|}{strong} \\	
	\hline \rule{0pt}{2.5ex}	
	basis	& \multicolumn{3}{c||}{port} & 
	\multicolumn{3}{c|}{full-interface} 
	\\ [0.3ex]
	\hline \rule{0pt}{2.5ex}
	method  & DD-LSPG  & DD-LSPG  & DD-LSPG  &  
	DD-LSPG  &  DD-LSPG & DD-LSPG  
	\\ [0.3ex]
	\hline \rule{0pt}{2.5ex} %\hspace{-2.5mm}
	$\energyCriterion$ for state & $1-10^{-12}$ & $1-10^{-14}$ & $1-10^{-17}$ & $1-10^{-10}$ & $1-10^{-12}$ & $1-10^{-17}$  
	\\
	\hline \rule{0pt}{2.5ex} \hspace{-2.5mm}	number Newton iter.  & 9 & 7 & 7 & 8 & 7 & 7  
	\\	
	rel. error & 	$8.1960 \times 10^{-1}$ 	  & $5.9268 \times 10^{-2}$ & $1.9610 \times 10^{-7}$ & $3.8504 \times 10^{-1}$ & $4.7951 \times 10^{-2}$ & $1.9462 \times 10^{-7}$ 
	\\
	\hline
\end{tabular}}
\end{table}

\begin{table}[h!]
\center
\caption{Heat equation, bottom-up training, $4\times 4$ ``fine'' configuration, reproductive test, ROM parameters on first four $\domaini$ ($1 \le i \le 4$) resulting from
Table~\ref{tab_ex1_oneOnlineComp_inputParams_btup}.}
\label{tab_ex1_oneOnlineComp_ROMparams_btup}
{\begin{tabular}{|c||c|c|c|c||c|c|c|c|}
	\hline \rule{0pt}{2.5ex}
	basis	& \multicolumn{4}{c||}{port} & \multicolumn{4}{c|}{full-interface}   
	\\
	\hline \rule{0pt}{2.5ex}
	$\energyCriterion$ for state & \multicolumn{4}{c||}{$1-10^{-14}$} & \multicolumn{4}{c|}{$1-10^{-12}$}   
	\\ \hline \rule{0pt}{2.5ex}	
	subdomains & $\domainArg{1}$ & $\domainArg{2}$ & $\domainArg{3}$  & $\domainArg{4}$ & $\domainArg{1}$ & $\domainArg{2}$ & $\domainArg{3}$  & $\domainArg{4}$ 
	\\ [0.2ex]
	\hline \rule{0pt}{2.5ex} \hspace{-2.5mm}
	$\nconstraintsROM$ 	& \multicolumn{4}{c||}{984} & \multicolumn{4}{c|}{984}  \\	
	\hline \rule{0pt}{2.5ex} \hspace{-2.5mm}
	$\nrbInteriori$ ($\ndofInteriori$) 	& 25 (324) & 38 (324) & 36 (324) & 25 (342) & 17 (324) & 26 (324) & 25 (324) & 17 (342)
	\\
	$\nrbBoundaryi$ ($\ndofBoundaryi$)	&  76 (76) & 116 (116) & 116 (116) & 78 (78) & 72 (76) & 110 (116) & 110 (116) & 74 (78)
	\\ [0.2ex]
	\hline \rule{0pt}{2.5ex}
	$\nrbPortArg{1}$ ($\ndofPortsArg{1}$) & 36 (36) & 36 (36) & 36 (36) & 36 (36) &    &    &    &   
	\\
	$\nrbPortArg{2}$ ($\ndofPortsArg{2}$) & 4 (4) & 4 (4) & 4 (4) & 4 (4) &    &    &    & 
	\\
	$\nrbPortArg{3}$ ($\ndofPortsArg{3}$) & 36 (36) & 36 (36) & 36 (36) & 38 (38) &    &    &    &  
	\\
	$\nrbPortArg{4}$ ($\ndofPortsArg{4}$) &  & 4 (4) & 4 (4) &   &    &    &    &  
	\\	
	$\nrbPortArg{5}$ ($\ndofPortsArg{5}$) &  & 36 (36) & 36 (36) &   &    &    &    &  
	\\	
	\hline
\end{tabular}}
%$\dagger$The number of Greedy iterations to evaluate the RB effective compliance tensor is fixed at 45.
%\tabnote{$^{\rm b}$An example table footnote to show the text turning over when a long footnote is inserted.}
\end{table}

In this section, we investigate the possibility of subdomain (or bottom-up) training that is opposite to the top-down training in previous sections. The main goal is to create and use snapshots \textit{completely} at component/subdomain level (rather than system/global level like top-down training) to build corresponding reduced bases. In other words, the approach is \textit{completely} bottom-up similar to the SCRBE method \cite{huynh2013static_a, huynh2013static_b} except that our proposed approach solves nonlinear PDE, while SCRBE solves linear PDE only. We shall describe a specific approach that borrows ideas of SCRBE (algorithm 2, page 279 of \cite{eftang2013port}) to build such bottom-up reduced bases. 

We choose the 4x4 ``fine'' configuration and loop over all ports on all subdomains. On each port, we do the following: 
\begin{enumerate}
	\item Build a set of 1D Legendre polynomials that correspond with each edge of the port. For our particular FOM model, a port is a rectangle with $x$-line and $y$-line, thus we build 1D Legendre polynomials associated with these 2 lines. 
	
	\item Perform tensor product of these 1D Legendre polynomials to create 2D Legendre polynomials of that port. We denote these 2D Legendre polynomials $L^k_{m,j}$ where $k$ is the sequence index, $m$ denotes subdomain index, and $j$ is port index. (Note that we use same notation as algorithm 2, page 279 of \cite{eftang2013port}.) 
	
	\item Perform random linear combinations of these 2D Legendre polynomials to create associated boundary conditions on that port: 	
	\begin{equation} \label{eq_rand_lin_comb_Legendre}
	\state|_{P_j} = \sum_{k=1}^{\ndofPortjSubdomi{i}{j}} r \frac{1}{k^{\eta}} L^k_{i,j}, \qquad 1 \le i \le \nsubdomains, 
	\end{equation}
	where $r$ is a random variable with univariate uniform density over (-1,1), $\eta$ is a tuning parameter related to anticipated regularity. For our problem, we also choose $\eta=2$ following \cite{eftang2013port}. 
	
	\item Assemble all the above ``port boundary conditions'' to form associated boundary conditions on the interface of that subdomain: 
	\begin{equation} \label{eq_assemble_interface}
	\state|_{\boundaryi} = \bigcup_{j \in \portsSubdomains{i}} \state|_{P_j}, \qquad 1 \le i \le \nsubdomains. 
	\end{equation}			
	
	\item Solve the FOM problem of that subdomain $\domain_i$ with ``interface boundary conditions'' specified above to obtain a \textit{subdomain} FOM solution (or snapshot). Repeat this step with many random linear combinations to create many different interface boundary conditions to collect \textit{subdomain} snapshots, and store them to a \textit{subdomain} snapshots set. For our problem, we perform 200 random ``interface boundary conditions'' on each subdomain $\domain_i$, solve the subdomain FOM problems to collect 200 snapshots over each $\domain_i$, $1 \le i \le \nsubdomains$, respectively. Note also that we fix the input parameter $\paramComp_{{\rm train}}=(5,5)$ for all FOM solves over all subdomains $\domain_i, 1 \le i \le \nsubdomains$. 
	
	\item With available \textit{subdomain} snapshots set, implement sections~\ref{sec:constrIntBound} and/or \ref{sec:constFull} to form corresponding reduced bases. 
	
\end{enumerate}

With bottom-up reduced bases created above, we now compare the DD-LSPG method for fixed values of their parameters, and for the selected online point $\paramComp_{{\rm test}}=(5.005,5.005)$. Table~\ref{tab_ex1_oneOnlineComp_inputParams_btup} reports the chosen input parameters and associated performance of the DD-LSPG method, while the resulting ROM parameters over first four subdomains $\domainArg{i}, 1 \le i \le 4$ are listed on Table~\ref{tab_ex1_oneOnlineComp_ROMparams_btup}. Table~\ref{tab_ex1_oneOnlineComp_inputParams_btup} shows that the produced ROM solutions are converged and get more accurate with increasing number of bases.

Figure~\ref{fig_ex1_btup_topdown_intfBF} compares first three full-interface bases on $\domainArg{1}$ using bottom-up training (top 2 rows) and top-down training (last 2 rows). As observed from this figure, we see that the quality of bottom-up training bases are not as good as that of top-down training. (This is sensible because the subdomain \textit{completely} not know anything about the global solution.) As a result, bottom-up training uses much more interior and interfaces bases than top-down training does with same accuracy level (comparing Table~\ref{tab_ex1_oneOnlineComp_inputParams} with Table~\ref{tab_ex1_oneOnlineComp_inputParams_btup}, and Table~\ref{tab_ex1_oneOnlineComp_ROMparams} with Table~\ref{tab_ex1_oneOnlineComp_ROMparams_btup}; especially on Table~\ref{tab_ex1_oneOnlineComp_ROMparams_btup}, bottom-up training uses \textit{all} available interface bases). This leads to two important consequences: i) we obtain very little dimension reduction on the interface (although still big dimension reduction on the interior) for bottom-up training; ii) bottom-up training must use strong constraint to obtain good converged solutions. This is completely opposite to top-down training where number of interior and interface bases are small (compared with bottom-up training), hence weak constraints are necessary to obtain converged solutions (see Figure~\ref{fig_ex1_4x4_rmsErr_intfBF_88}). Figure~\ref{fig_ex1_btup_sol_vis} visualizes DD-LSPG solutions and error for the full-interface bases case: it shows that DD-LSPG yield accurate results for full-interface bases with strong constraints using bottom-up training approach. 

%Another important point is that we obtain very little dimension reduction on the interface and still big dimension reduction on the interior for subdomain training, while big dimension reduction for both interior and interface for top-down training. 

\textbf{Predictive test}

%%%%%%%%%%%%% table 11: Heat eqn: one online computation %%%%%%%%%%%%%
\begin{table}[h!]
\small
\center
\caption{Heat equation, bottom-up training, $4\times 4$ ``fine'' configuration, predictive test, ROM methods
performance at point $\param_{{\rm test}}=(5.005,5.005) \notin \trainSample$ for one
online computation. Recall from Section
\ref{sec_basis_construction} that $\energyCriterion\in[0,1]$ denotes the energy
criterion employed by POD.} 
\label{tab_ex1_oneOnlineComp_inputParams_btup_varyTrain}
{\begin{tabular}{|c||c|c||c|c|}
	\hline \rule{0pt}{1.5ex}
	train parameter & \multicolumn{4}{c|}{$\trainParam{i} = (\paramCompArg{t}^i,\paramCompArg{t}^i) \; {\rm on} \; \domain_i, {\rm where} \; \paramCompArg{t}^i=\displaystyle \frac{\paramCompArg{\rm max}-\paramCompArg{\rm min}}{\nsubdomains-1}(i-1) + \paramCompArg{\rm min}, \; 1 \le i \le \nsubdomains $ } 
	\\	[1.5ex]
	\hline \rule{0pt}{1.5ex}
	constraint	& \multicolumn{4}{c|}{strong} \\	
	\hline \rule{0pt}{2.5ex}	
	basis	& \multicolumn{2}{c||}{port} & 
	\multicolumn{2}{c|}{full-interface} 
	\\ [0.3ex]
	\hline \rule{0pt}{2.5ex}
	method  & DD-LSPG  & DD-LSPG  & DD-LSPG  &  
	DD-LSPG   
	\\ [0.3ex]
	\hline \rule{0pt}{2.5ex} %\hspace{-2.5mm}
	$\energyCriterion$ for state & $1-10^{-15}$ & $1-10^{-17}$ & $1-10^{-15}$ & $1-10^{-17}$   
	\\
	%$\energyCriterion$ on $\boundaryi$ & $1-10^{-5}$ & $1-10^{-5}$ & $1-10^{-5}$ & $1-10^{-5}$ & $1-10^{-5}$ & $1-10^{-5}$  & $1-10^{-5}$ & $1-10^{-5}$ \\
	%	$\energyCriterion$ for residual &   		  & $1-10^{-12}$ &   		  & $1-10^{-12}$ &   		  & $1-10^{-12}$  &  & $1-10^{-12}$ \\
	%	$\nsampleArg{i}/\nrbResi$		  &  & 2 &  & 2 &  & 2 &  &  2 \\
	\hline \rule{0pt}{2.5ex} \hspace{-2.5mm}	number Newton iter.  & 7 & 7 & 7 & 7   
	\\	
	rel. error & $9.9537 \times 10^{-2}$ & $2.6104 \times 10^{-3}$ & $9.9537 \times 10^{-2}$ & $2.6104 \times 10^{-3}$ 
	\\
	\hline
\end{tabular}}
%$\dagger$The number of Greedy iterations to evaluate the RB effective compliance tensor is fixed at 45.
%\tabnote{$^{\rm b}$An example table footnote to show the text turning over when a long footnote is inserted.}
\end{table}

%%%%%%%%%%%%% table 12: heat eqn: ROM parameters \Omega_i %%%%%%%%%%%%%
\begin{table}[h!]
\center
\caption{Heat equation, bottom-up training, $4\times 4$ ``fine'' configuration, predictive test, ROM parameters on first four $\domaini$ ($1 \le i \le 4$) resulting from
Table~\ref{tab_ex1_oneOnlineComp_inputParams_btup_varyTrain}.}
\label{tab_ex1_oneOnlineComp_ROMparams_btup_varyTrain}
{\begin{tabular}{|c||c|c|c|c||c|c|c|c|}
	\hline \rule{0pt}{2.5ex}
	basis	& \multicolumn{4}{c||}{port} & \multicolumn{4}{c|}{full-interface}   
	\\
	\hline \rule{0pt}{2.5ex}
	$\energyCriterion$ for state & \multicolumn{4}{c||}{$1-10^{-17}$} & \multicolumn{4}{c|}{$1-10^{-17}$}   
	\\ \hline \rule{0pt}{2.5ex}	
	subdomains & $\domainArg{1}$ & $\domainArg{2}$ & $\domainArg{3}$  & $\domainArg{4}$ & $\domainArg{1}$ & $\domainArg{2}$ & $\domainArg{3}$  & $\domainArg{4}$ 
	\\ [0.2ex]
	\hline \rule{0pt}{2.5ex} \hspace{-2.5mm}
	$\nconstraintsROM$ 	& \multicolumn{4}{c||}{984} & \multicolumn{4}{c|}{984}  \\	
	\hline \rule{0pt}{2.5ex} \hspace{-2.5mm}
	$\nrbInteriori$ ($\ndofInteriori$) 	& 200 (324) & 200 (324) & 200 (324) & 200 (342) & 200 (324) & 200 (324) & 200 (324) & 200 (342)
	\\
	$\nrbBoundaryi$ ($\ndofBoundaryi$)	&  76 (76) & 116 (116) & 116 (116) & 78 (78) & 72 (76) & 110 (116) & 110 (116) & 74 (78)
	\\ [0.2ex]
	\hline \rule{0pt}{2.5ex}
	$\nrbPortArg{1}$ ($\ndofPortsArg{1}$) & 36 (36) & 36 (36) & 36 (36) & 36 (36) &    &    &    &   
	\\
	$\nrbPortArg{2}$ ($\ndofPortsArg{2}$) & 4 (4) & 4 (4) & 4 (4) & 4 (4) &    &    &    & 
	\\
	$\nrbPortArg{3}$ ($\ndofPortsArg{3}$) & 36 (36) & 36 (36) & 36 (36) & 38 (38) &    &    &    &  
	\\
	$\nrbPortArg{4}$ ($\ndofPortsArg{4}$) &  & 4 (4) & 4 (4) &   &    &    &    &  
	\\	
	$\nrbPortArg{5}$ ($\ndofPortsArg{5}$) &  & 36 (36) & 36 (36) &   &    &    &    &  
	\\	
	\hline
\end{tabular}}
%$\dagger$The number of Greedy iterations to evaluate the RB effective compliance tensor is fixed at 45.
%\tabnote{$^{\rm b}$An example table footnote to show the text turning over when a long footnote is inserted.}
\end{table}

Finally, we perform a truly predictive test for our proposed framework using bottom-up training. We vary the training parameters on each subdomain so that they are all different from each other, and they also differ from the online testing parameter. In particular, we repeat the workflow 1--6 above to build the bottom-up bases except at step 5 we set $\trainParam{i}$ equidistant over subdomains $\domain_i$, i.e., $\trainParam{i} = (\paramCompArg{t}^i,\paramCompArg{t}^i) \; {\rm on} \; \domain_i, {\rm where} \; \paramCompArg{t}^i= \frac{\paramCompArg{\rm max}-\paramCompArg{\rm min}}{\nsubdomains-1}(i-1) + \paramCompArg{\rm min}, \; 1 \le i \le \nsubdomains$. (Namely, $\trainParam{1} \neq \trainParam{2} \neq \ldots \neq \trainParam{\nsubdomains} \neq \paramComp_{{\rm test}}$.)

We again compare the DD-LSPG method for fixed values of their parameters, and for the selected online point $\paramComp_{{\rm test}}=(5.005,5.005)$. Table~\ref{tab_ex1_oneOnlineComp_inputParams_btup_varyTrain} reports the chosen input parameters and associated performance of the DD-LSPG method, while the resulting ROM parameters over first four subdomains $\domainArg{i}, 1 \le i \le 4$ are listed on Table~\ref{tab_ex1_oneOnlineComp_ROMparams_btup_varyTrain}. Table~\ref{tab_ex1_oneOnlineComp_inputParams_btup_varyTrain} shows that the produced ROM solutions are converged and accurate. However, comparing Table~\ref{tab_ex1_oneOnlineComp_inputParams_btup} with \ref{tab_ex1_oneOnlineComp_inputParams_btup_varyTrain}, and Table~\ref{tab_ex1_oneOnlineComp_ROMparams_btup} with \ref{tab_ex1_oneOnlineComp_ROMparams_btup_varyTrain} show that predictive testing takes many more interior bases than reproductive testing with same accuracy level (and both testing cases use same number of interface bases). This is also sensible because the predictive testing case is usually more general and more challenging than the reproductive testing case.

%\subsubsection{No hyper-reduction}
%3x3:
%\begin{enumerate} 
%\item Solution visualization
%\item Error distribution
%\item Pareto fronts
% \begin{enumerate} 
%  \item Total
%  \item Assembly
%  \item Solve
%	 \end{enumerate}
%	 \end{enumerate}
%
%
%6x6:
%\begin{enumerate} 
%\item Solution visualization
%\item Error distribution
%\item Pareto fronts
% \begin{enumerate} 
%  \item Total
%  \item Assembly
%  \item Solve
% \end{enumerate}
%\end{enumerate}
%
%
%9x9 (same overall problem size):
%\begin{enumerate} 
%\item Solution visualization
%\item Error distribution
%\item Pareto fronts
% \begin{enumerate} 
%  \item Total
%  \item Assembly
%  \item Solve
% \end{enumerate}
%\end{enumerate}
%
%
%
%\subsubsection{Hyper-reduction}
%3x3:
%\begin{enumerate} 
%\item Pareto fronts
% \begin{enumerate} 
%  \item Total
%  \item Assembly
%  \item Solve
%	 \end{enumerate}
%\end{enumerate}
%
%6x6:
%\begin{enumerate} 
%\item Pareto fronts
%\begin{enumerate} 
%\item Total
%\item Assembly
%\item Solve
%\end{enumerate}
%\end{enumerate}
%
%
%9x9 (same overall problem size):
%\begin{enumerate} 
%\item Pareto fronts
% \begin{enumerate} 
%  \item Total
%  \item Assembly
%  \item Solve
%	 \end{enumerate}
%\end{enumerate}
%
%
%$\bullet$ Put Pareto figures for 3 cases: 3x3, 6x6 and 9x9

\subsection{Parameterized Burgers' equation}\label{sec:burg}

\subsubsection{Exact solution and global FD discretization}

%%%%%%%%%%%%%%% fig 1: global FD mesh: coarse and fine %%%%%%%%%%%%%
%\begin{figure}[h!]
%\begin{center}
%\subfigure[]{ \resizebox*{8cm}{!}{\includegraphics{figures_1/bg42fn_mesh_globdom.eps}} \label{fig_ex2_globMesh_coarse} }%
%\subfigure[]{ \resizebox*{8cm}{!}{\includegraphics{}} \label{fig_ex2_globMesh_fine} }%
%\caption{Global FD mesh with (a) ``fine'' 240x12 elements and (b) ``fine'' 480x12 elements. }%
%\label{fig_ex2_globMesh}
%\end{center}
%\end{figure}

\begin{figure}[h!]
\centering
\begin{subfigure}[b]{0.45\textwidth}
	\includegraphics[width=8.1cm]{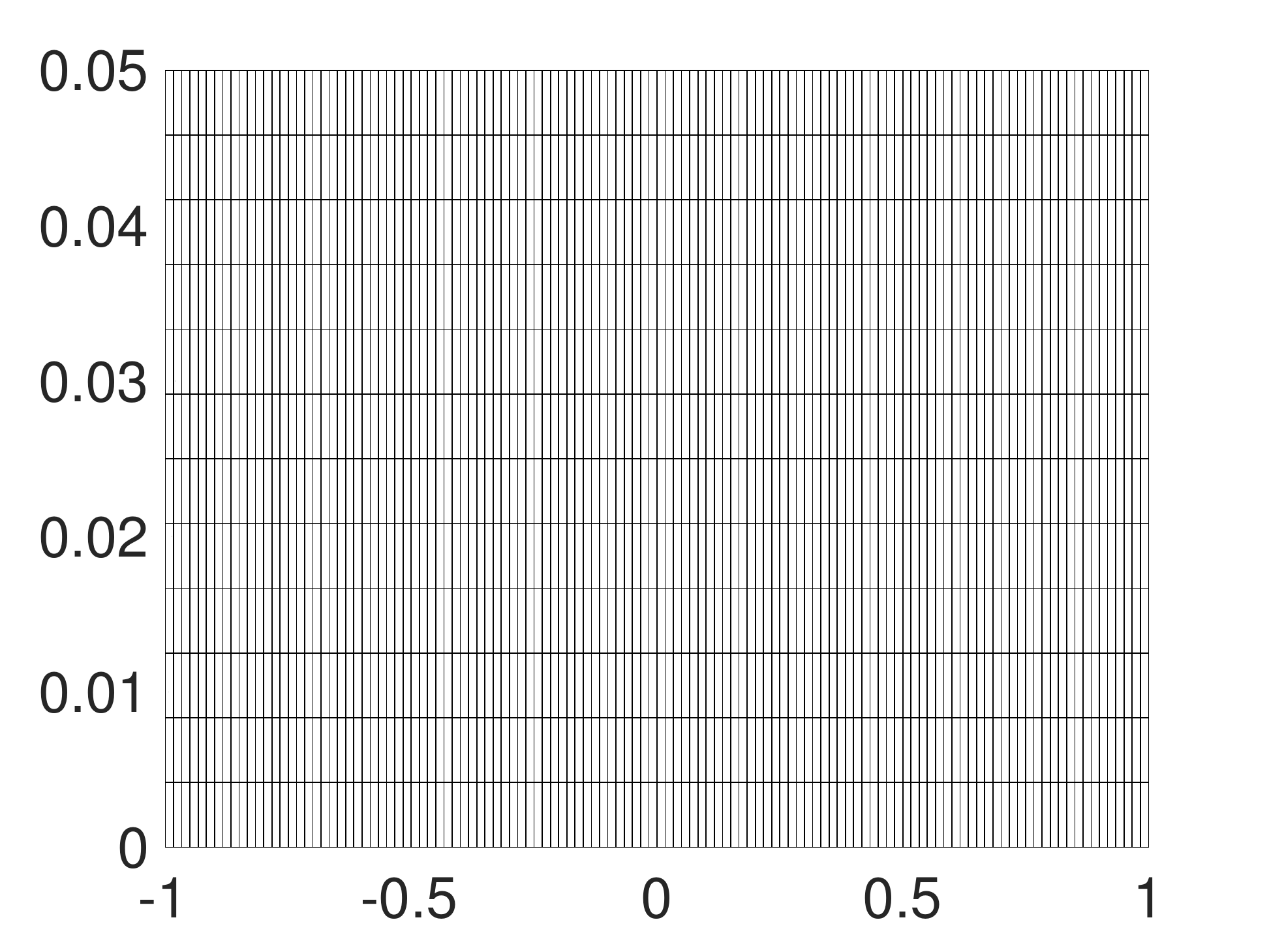}
	\caption{``Coarse'' 120x12 elements}
	\label{fig_ex2_globMesh_coarse}
\end{subfigure}
~
\begin{subfigure}[b]{0.45\textwidth}
	\includegraphics[width=8.1cm]{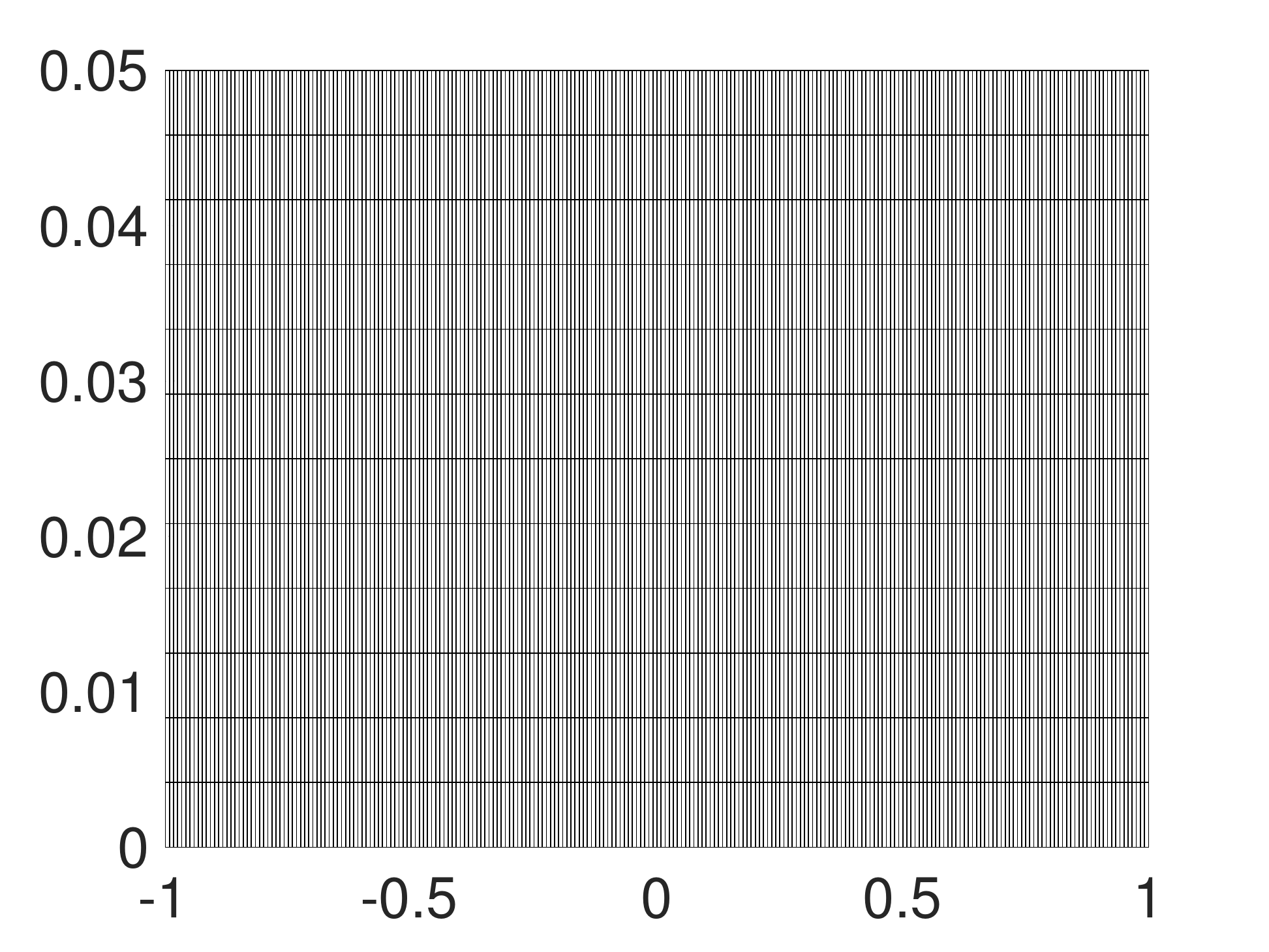}
	\caption{``Fine'' 240x12 elements}
	\label{fig_ex2_globMesh_fine}
\end{subfigure}
\caption{Burgers' equation. Two global FD mesh used for discretization.}\label{fig_ex2_globMesh}
\end{figure}

%%%%%%%%%%%%%% fig 2: global FD sols %%%%%%%%%%%%%
%\begin{figure}[h!]
%\begin{center}
%\subfigure[]{ \resizebox*{8cm}{!}{\includegraphics{figures_1/bg42fn_globFOMsol_mu152_u.eps}} \label{fig_ex2_FDsol_mu152_u} }%
%\subfigure[]{ \resizebox*{8cm}{!}{\includegraphics{figures_1/bg42fn_globFOMsol_mu152_v.eps}} \label{fig_ex2_FDsol_mu152_v} }%
%\caption{Global FD solution for $\mu=(7692.5384, 21.9230)$: (a) u-component and (b) v-component. }%
%\label{fig_ex2_FDsols}
%\end{center}
%\end{figure}

\begin{figure}[h!]
\centering
\begin{subfigure}[b]{0.45\textwidth}
	\includegraphics[width=8.1cm]{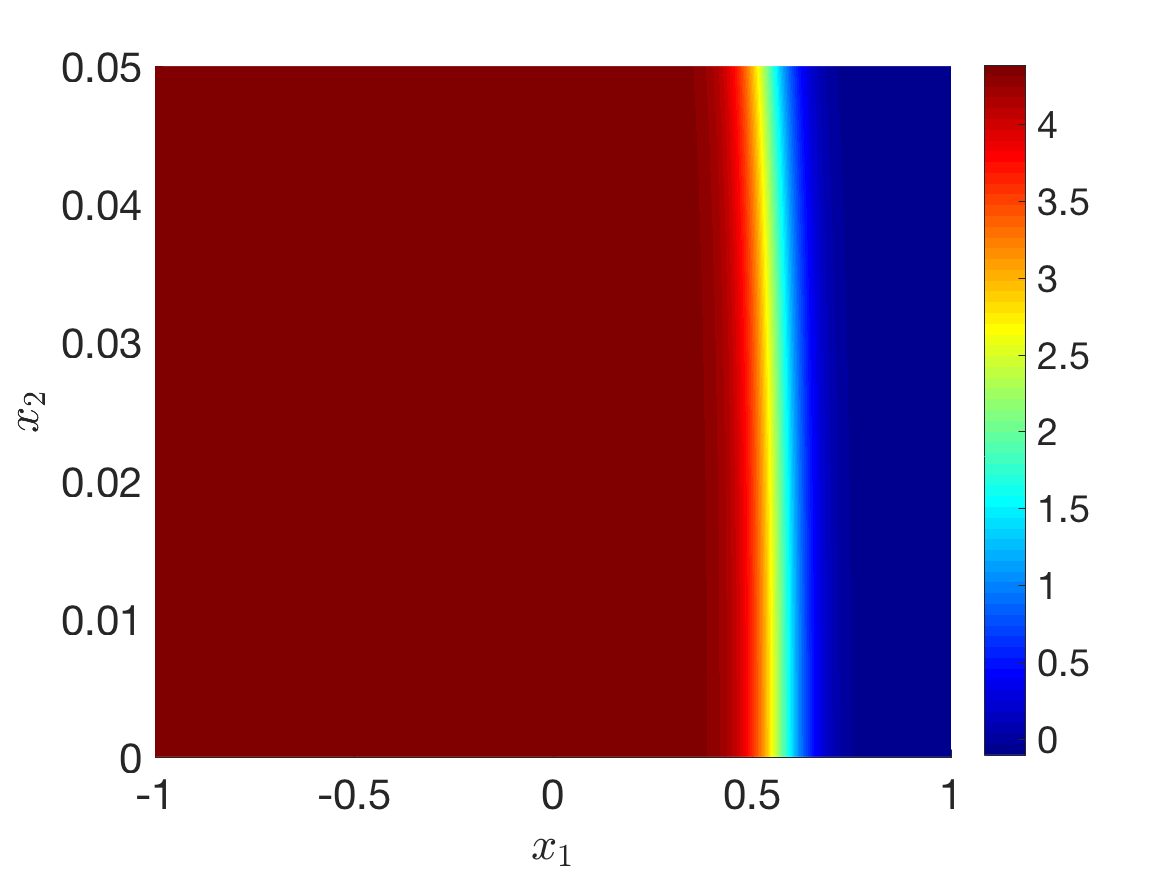}
	\caption{$u_1$-component}
	\label{fig_ex2_FDsol_mu152_u}
\end{subfigure}
~
\begin{subfigure}[b]{0.45\textwidth}
	\includegraphics[width=8.1cm]{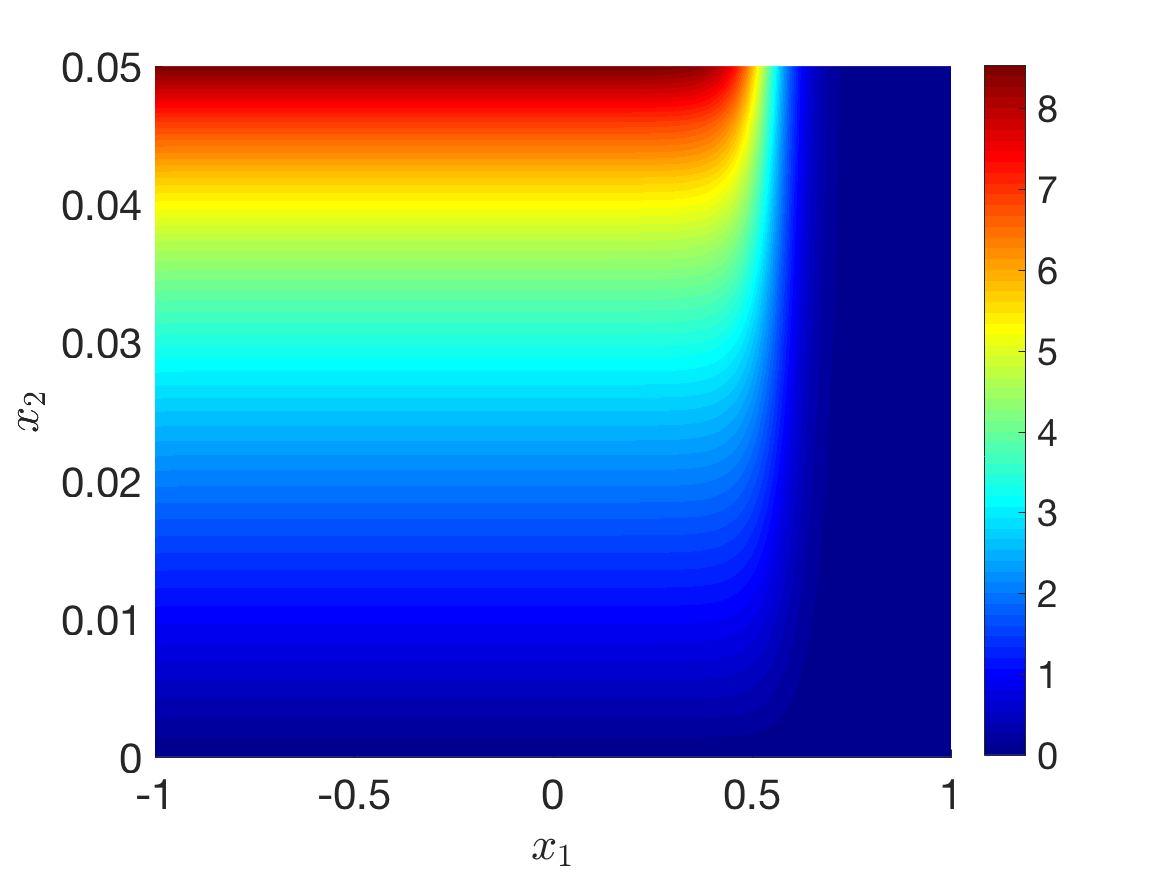}
	\caption{$u_2$-component}
	\label{fig_ex2_FDsol_mu152_v}
\end{subfigure}
\caption{Burgers' equation, global FD solution for $\mu=(7692.5384, 21.9230)$ using ``fine'' FD mesh 240x12 elements.}\label{fig_ex2_FDsols}
\end{figure}

%%%%%%%%%%%%% table 5: Burgers eqn: parameters for exact solution %%%%%%%%%%%%%
\begin{table}[h!]
\center
\caption{Burgers equation, parameters for the exact solution} \label{tab_ex2_exactSol_params}
{\begin{tabular}{|c||c|c|c|c|c|c|c|c|}
	\hline \rule{0pt}{2.5ex}
	Parameter & $\viscosity$ &  $\stateCompArg{1}^0$  &  $a_2$  &  $a_3$  &  $a_4$  & $a_5$  & $a_1$  & $\lambda$  \\ [0.2ex]
	%      &     &   &     \\[0.5ex]
	\hline \rule{0pt}{2.5ex}
	Value & 0.1 & 1  & $a_1$ & 0 & 0 & 1 & $\in[1,10000]$ & $\in[5,25]$ \\
	\hline
\end{tabular}}
%$\dagger$The number of Greedy iterations to evaluate the RB effective compliance tensor is fixed at 45.
%\tabnote{$^{\rm b}$An example table footnote to show the text turning over when a long footnote is inserted.}
\end{table}

%%%%%%%%%%%%% table 6: Burgers eqn: parameters for globFD %%%%%%%%%%%%%
\begin{table}[h!]
\center
\caption{Burgers equation, parameters for global FD discretization} 
\label{tab_ex2_globFD_params}
{\begin{tabular}{|c||c|c|}
	\hline \rule{0pt}{2.5ex}
	&  ``Coarse'' mesh  &  ``Fine'' mesh  \\ [0.2ex]
	%      &     &   &     \\[0.5ex]
	\hline \rule{0pt}{2.5ex} \hspace{-2.5mm}
	Number of elements & 1440 & 2880  \\
	Number of nodes    & 1573 & 3133  \\
	$\ndof$			   & 2618 & 5258  \\
	\hline
\end{tabular}}
%$\dagger$The number of Greedy iterations to evaluate the RB effective compliance tensor is fixed at 45.
%\tabnote{$^{\rm b}$An example table footnote to show the text turning over when a long footnote is inserted.}
\end{table}

%%% Burgers equation & exact solution %%%

%\KTC{Need boundary conditions! Please specify!}

We now consider a parameterized 2D steady state Burgers' equation described in
Ref.~\cite{cfdblogvienna}. The problem consists of computing the velocity
field $\vel\equiv(\velCompArg{1},\velCompArg{2})$ that satisfies 
\begin{equation}\label{eq_BurgersPDE_original}
\vel \cdot \nabla \vel = \viscosity \nabla^2 \vel,
\end{equation}
where $\viscosity$ is the viscosity coefficient, $\stateComp =
(\stateCompArg{1},\stateCompArg{2}) \in \domain=[-1,1]\times[0,0.05]$. Nonhomogenous Dirichlet boundary conditions (on $\Gamma\equiv\partial\Omega$) for the numerical solutions are taken directly from the exact solution that is defined as follows 
\begin{equation}\label{eq_BurgersPDE_exactSol}
\begin{aligned}
\velComp_1 &= -2\viscosity \left[ a_2 + a_4 \stateCompArg{2} + \lambda a_5 \left( e^{\lambda(\stateCompArg{1}-\stateCompArg{1}^0)} + e^{-\lambda(\stateCompArg{1}-\stateCompArg{1}^0)} \right) \cos(\lambda \stateCompArg{2}) \right] / \Phi, 
\\
\velComp_2 &= -2\viscosity \left[ a_3 + a_4 \stateCompArg{2} - \lambda a_5 \left( e^{\lambda(\stateCompArg{1}-\stateCompArg{1}^0)} + e^{-\lambda(\stateCompArg{1}-\stateCompArg{1}^0)} \right) \sin(\lambda \stateCompArg{2}) \right] / \Phi,
\end{aligned}
\end{equation}
where $\Phi=a_1 + a_2 \stateCompArg{1} + a_3 \stateCompArg{2} + a_4
\stateCompArg{1} \stateCompArg{2} + a_5 \left(
e^{\lambda(\stateCompArg{1}-\stateCompArg{1}^0)} +
e^{-\lambda(\stateCompArg{1}-\stateCompArg{1}^0)} \right) \cos(\lambda
\stateCompArg{2})$, and $a_i, i=1,\ldots,5$, $\lambda$ and
$\stateCompArg{1}^0$ are given scalars. To parameterize the problem, these
parameters are given on Table~\ref{tab_ex2_exactSol_params}, and the input
parameter $\paramComp$ is defined as $\paramComp\equiv(\paramCompArg{1},\paramCompArg{2})=(a_1,\lambda) \in \paramDomain=[1,10000] \times [5,25]$. 

%\YC{Are $f_1$ and $f_2$ defined anywhere? Also this is supposed to be boundary conditions, so I assume that they are defined only on $\Gamma\equiv\partial\Omega$. Or are you saying that $f_1$ and $f_2$ are defined in Eq.~\eqref{eq_BurgersPDE_exactSol} and the boundary conditions on $\Gamma$ are defined through the exact solutions on $\Gamma$? Please explicitly explain.} 

%%% Global FOM (FD) %%%

%\KTC{This next sentence is confusing. You need the boundary conditions for (8.3) and the exact solution to be defined.}

We use the finite-difference method \CH{with three-point centered difference scheme and uniform grid} to discretize
Eq.~\eqref{eq_BurgersPDE_original}. The exact solution on the boundary $\boundary$ is used as nonhomogeneous Dirichlet boundary condition to solve for the interior unknown nodes. Analogously to the previous example, we also employ a ``coarse'' and ``fine'' mesh, characterized by  1440 (120$\times$12) and 2880 (240$\times$12) quadrilateral elements, respectively. 
Figure~\ref{fig_ex2_globMesh} depicts
these meshes, while
Table~\ref{tab_ex2_globFD_params} reports the corresponding parameters. We
emphasize that this problem is characterized by two degrees of freedom per
node as opposed to the previous example.
Figure~\ref{fig_ex2_FDsols} plots the FD
reference solution on the ``fine'' mesh with $\paramComp=(7692.5384,21.9230)$.
As observed from Figure~\ref{fig_ex2_FDsols}, the solution presents a shock
which is characterized by $\paramComp=(a_1,\lambda)$, where $a_1$ relates to
the distance of the shock from the left edge and $\lambda$ relates to the steepness of the shock, respectively. 

%\KTC{I'm confused about this. The distance from the left to the shock is not equal to the number 7692.5384, please clarify.}

\subsubsection{Full-order model}

%%%%%%%%%%%%%% fig 3: 4x2 and 8x2 configurations %%%%%%%%%%%%%
%\begin{figure}[h!]
%\begin{center}
%\subfigure[]{ \resizebox*{8cm}{!}{\includegraphics{figures_1/bg42fn_subdoms_numbering.eps}} \label{fig_ex2_subdoms_4x2} }%
%\subfigure[]{ \resizebox*{8cm}{!}{\includegraphics{figures_1/bg82fn_subdoms_numbering.eps}} \label{fig_ex2_subdoms_8x2} }%
%\caption{(a) 4x2 and (b) 8x2 configurations with one layer overlapped. }%
%\label{fig_ex2_4x2config_8x2config}
%\end{center}
%\end{figure}

\begin{figure}[h!]
\centering
\begin{subfigure}[b]{0.45\textwidth}
	\includegraphics[width=8.1cm]{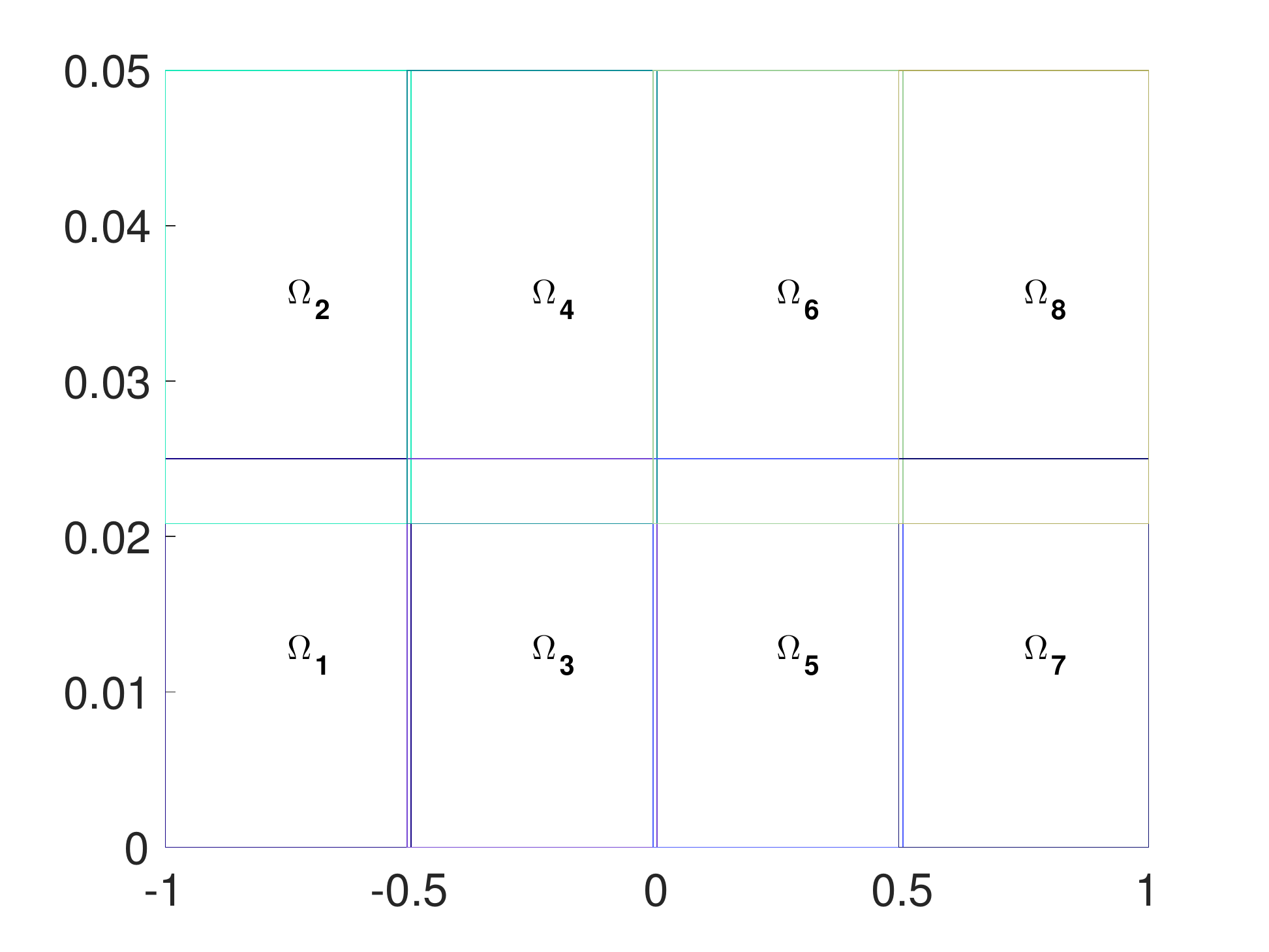}
	\caption{4x2 configuration}
	\label{fig_ex2_subdoms_4x2}
\end{subfigure}
~
\begin{subfigure}[b]{0.45\textwidth}
	\includegraphics[width=8.1cm]{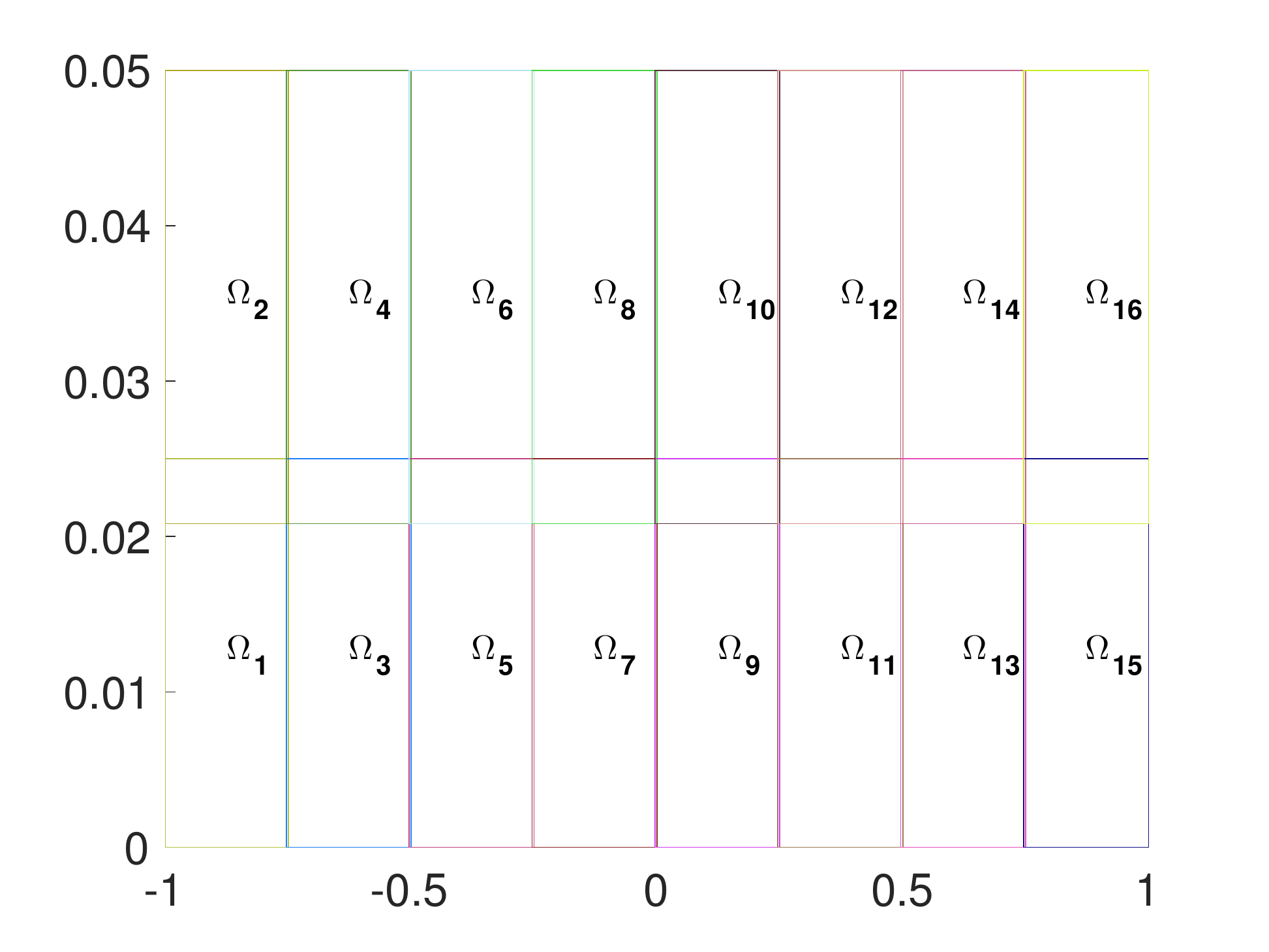}
	\caption{8x2 configuration}
	\label{fig_ex2_subdoms_8x2}
\end{subfigure}
\caption{Burgers' equation, two domain-decomposition configurations based on
	the finite-difference mesh.}\label{fig_ex2_4x2config_8x2config}
\end{figure}

%%%%%%%%%%%%% table 7: Burgers eqn: parameters for FOM configs %%%%%%%%%%%%%
\begin{table}[h!]
\center
\caption{Burgers equation, parameters used for three FOM configurations} \label{tab_ex2_DDFOMconfig_params}
{\begin{tabular}{|c||c|c|c|}
	\hline \rule{0pt}{2.5ex}
	&  4x2 ``coarse''  &  8x2 ``fine''  &  4x2 ``fine''  \\ [0.2ex]
	%      &     &   &     \\[0.5ex]
	\hline \rule{0pt}{2.5ex} \hspace{-2.5mm}
	$\nsubdomains$  	 	 & 8   & 16   & 8    \\
	$\nconstraints$ 	 	 & 704 & 1488 & 1184 \\
	$\nports$			 	 & 13  & 29   & 13   \\
	\# DOFs  on $\domain_{1}$ & 434 & 434  & 854  \\ 
	\# nodes on $\domain_{1}$ & 217 & 217  & 427  \\ \hline
	 & \multicolumn{2}{c|}{weak scaling} & \\ \hline
	 & & \multicolumn{2}{c|}{strong scaling} \\ \hline	
\end{tabular}}
%$\dagger$The number of Greedy iterations to evaluate the RB effective compliance tensor is fixed at 45.
%\tabnote{$^{\rm b}$An example table footnote to show the text turning over when a long footnote is inserted.}
\end{table}

\begin{table}[h!]
\center
\caption{Burgers equation, parameters on each $\domaini$ for the 4x2 ``fine''
	configuration.
In this case, there are $\nports=13$ total ports with $\ndofPortsArg{1} = 16$, $\ndofPortsArg{2} = 232$, $\ndofPortsArg{3} = 20$, $\ndofPortsArg{4} = 16$, $\ndofPortsArg{5} = 232$, $\ndofPortsArg{6} = 20$, $\ndofPortsArg{7} = 16$, $\ndofPortsArg{8} = 232$, $\ndofPortsArg{9} = 20$, $\ndofPortsArg{10} = 236$, $\ndofPortsArg{11} = 8$, $\ndofPortsArg{12} = 8$, $\ndofPortsArg{13} = 8$. } \label{tab_ex2_4x2config_params}
{\begin{tabular}{|c||c|c|c|c|c|c|c|c|}
	\hline \rule{0pt}{2.5ex}
	& $\domainArg{1}$ & $\domainArg{2}$ & $\domainArg{3}$  & $\domainArg{4}$  & $\domainArg{5}$  & $\domainArg{6}$  & $\domainArg{7}$  & $\domainArg{8}$  \\ [0.2ex]
	%      &     &   &     \\[0.5ex]
	\hline \rule{0pt}{2.5ex} \hspace{-2.5mm}
	$\nresi$  		 & 590 & 708 & 600 & 720 & 600 & 720 & 600 & 720  \\
	$\ndofInteriori$ & 464 & 580 & 464 & 580 & 464 & 580 & 472 & 590  \\
	$\ndofBoundaryi$ & 256 & 260 & 280 & 288 & 280 & 288 & 260 & 264  \\
	$\ndofi(=\ndofInteriori+\ndofBoundaryi)$ & 720 & 840 & 744 & 868 & 744 & 868 & 732 & 854  \\
	\hline \rule{0pt}{2.5ex}
	Number of subdomain ports $\card{\portsSubdomains{i}}$ &  3 &  3 &  5 &  5 &
	5 &  5 &  3 &  3   \\
	%\hline \rule{0pt}{2.5ex}
	%$\ndofPortsArg{1}$ &  16 & 232 &  16 &  20 &  16 &  20 & 236 &  20   \\
	%$\ndofPortsArg{2}$ &   8 &   8 &   8 &   8 &   8 &   8 &   8 &   8   \\
	%$\ndofPortsArg{3}$ & 232 &  20 & 232 & 232 & 232 & 232 &  16 & 236   \\
	%$\ndofPortsArg{4}$ &     &     &   8 &   8 &   8 &   8 &     &      \\
	%$\ndofPortsArg{5}$ &     &     &  16 &  20 &  16 &  20 &     &   \\			
	\hline
\end{tabular}}
%$\dagger$The number of Greedy iterations to evaluate the RB effective compliance tensor is fixed at 45.
%\tabnote{$^{\rm b}$An example table footnote to show the text turning over when a long footnote is inserted.}
\end{table}

%%% FOM %%%

After applying the finite-difference discretization, we introduce the
algebraically non-overlapping decomposition of the problem described in
Section \ref{sec_DDFOM_formulation}. As in the previous example, the chosen
algebraic decomposition corresponds to a spatial domain decomposition in
space. In particular, we employ decompositions into both $4\times 2$ (such
that $\nsubdomains=8$) and $8\times 2$ (such that $\nsubdomains=16$)
configurations as depicted in Figure \ref{fig_ex2_4x2config_8x2config}.
Table \ref{tab_ex2_DDFOMconfig_params}
lists the parameters used for each of these configurations.
The pairwise comparison of the
$4\times 2$ ``coarse'' and $8\times 2$ ``fine'' configurations is interpreted
as
weak scaling, while the pairwise comparision of the $4\times 2$ ``fine'' and $8\times 2$
``fine'' configuations interpreted as strong scaling, respectively.
For reference, Table
\ref{tab_ex2_4x2config_params} reports the parameters characterizing each subdomain
$\domainArg{i}$, $i=1,\ldots, \nsubdomains$ of the $4\times 2$ ``fine''
configuration.

\subsubsection{DD-LSPG and DD-GNAT approximations: one online computation} \label{sec:BurgersOneOnline}

%%%%%%%%%%%%% table 17: Burgers eqn: one online computation %%%%%%%%%%%%%
\begin{table}[h!]
	\small
\center
\caption{ Burgers 4x2 ``fine'' configuration, ROM methods performance at point
	$\paramComp=(7692.5384,21.9230) \notin \trainSample$ for one online
	computation. Recall from Section
	\ref{sec_basis_construction} that $\energyCriterion\in[0,1]$ denotes the energy
	criterion employed by POD.} 
{\begin{tabular}{|c||c|c||c|c||c|c||c|c|}
	\hline \rule{0pt}{1.5ex}
	constraint	& \multicolumn{8}{c|}{strong} \\	
	\hline \rule{0pt}{2.5ex}	
	basis	& \multicolumn{2}{c||}{port} & \multicolumn{2}{c||}{skeleton} &
	\multicolumn{2}{c||}{full-interface} & \multicolumn{2}{c|}{subdomain} \\ [0.3ex]
	\hline \rule{0pt}{2.5ex}
	method  &  DD-LSPG  &  DD-GNAT &  DD-LSPG  &  DD-GNAT &  DD-LSPG  &  DD-GNAT
	&  DD-LSPG  &  DD-GNAT \\ [0.3ex]
	\hline \rule{0pt}{2.5ex} %\hspace{-2.5mm}
	$\energyCriterion$ for state   & $1-10^{-4}$ & $1-10^{-4}$ & $1-10^{-4}$ & $1-10^{-4}$ & $1-10^{-4}$ & $1-10^{-4}$ & $1-10^{-5}$ & $1-10^{-5}$ \\
	%$\energyCriterion$ on $\boundaryi$ & $1-10^{-4}$ & $1-10^{-4}$ & $1-10^{-4}$ & $1-10^{-4}$ & $1-10^{-4}$ & $1-10^{-4}$  & $1-10^{-5}$ & $1-10^{-5}$ \\
	$\energyCriterion$ for residual &    & $1-10^{-8}$ &   & $1-10^{-8}$ &   	& $1-10^{-8}$  &  & $1-10^{-8}$ \\
	$\nsampleArg{i}/\nrbResi$		  &  & 2 &  & 2 &  & 2  &  & 2  \\
	\hline \rule{0pt}{2.5ex} \hspace{-2.5mm}	
	rel. error  & 0.0073 & 0.0077 & 0.0109 & 0.0108 & 0.7982 & 0.7917  & 1.0000 & 1.0000 \\
	speedup  	& 15.62  & 23.06  & 13.93  & 20.30  & 12.51  & 18.48  & 31.17  & 46.80 \\
	\hline
\end{tabular}} \\ [1ex]
%$\dagger$The number of Greedy iterations to evaluate the RB effective compliance tensor is fixed at 45.
%\tabnote{$^{\rm b}$An example table footnote to show the text turning over when a long footnote is inserted.}
\label{tab_ex2_oneOnlineComp_inputParams}
\end{table}

%%%%%%%%%%%%% table 18: Burgers eqn: ROM parameters \Omega_i %%%%%%%%%%%%%
\begin{table}[h!]
\center
\caption{Burgers $4\times 2$ ``fine'' configuration, ROM parameters on first four $\domaini (1 \le i \le 4)$, resulting from Table~\ref{tab_ex2_oneOnlineComp_inputParams}.}
\label{tab_ex2_oneOnlineComp_ROMparams}
{\begin{tabular}{|c||c|c|c|c||c|c|c|c||c|c|c|c||c|c|c|c|}
	\hline \rule{0pt}{2.5ex}
	basis	& \multicolumn{4}{c||}{port} & \multicolumn{4}{c||}{skeleton} &
	\multicolumn{4}{c||}{full-interface} & \multicolumn{4}{c|}{subdomain}  \\
	\hline \rule{0pt}{2.5ex}
	& $\domainArg{1}$ & $\domainArg{2}$ & $\domainArg{3}$  & $\domainArg{4}$ & $\domainArg{1}$ & $\domainArg{2}$ & $\domainArg{3}$  & $\domainArg{4}$ & $\domainArg{1}$ & $\domainArg{2}$ & $\domainArg{3}$  & $\domainArg{4}$  & $\domainArg{1}$ & $\domainArg{2}$ & $\domainArg{3}$  & $\domainArg{4}$  \\ [0.2ex]
	\hline \rule{0pt}{2.5ex} \hspace{-2.5mm}
	$\nconstraintsROM$ 	& \multicolumn{4}{c||}{69} & \multicolumn{4}{c||}{84} & 
	\multicolumn{4}{c||}{48}& \multicolumn{4}{c|}{78} 
	\\	\hline
	$\nrbInteriori$ & 3 & 4 & 4 & 5 & 3 & 4 & 4 & 5 & 3 & 4 & 4 & 5 & 5 & 6 & 8 & 9 
	\\
	$\nrbBoundaryi$ & 9 & 10 & 17 & 19 & 12 & 12 & 12 & 12 & 3 & 4 & 5 & 6 & 5 & 6 & 8 & 9 
	\\  \hline \rule{0pt}{2.5ex}
	$\nrbPortArg{1}$ 	& 3 & 3 & 3 & 4  &    &    &    &    &    &    &    &  &  &  &  &  
	\\
	$\nrbPortArg{2}$ 	& 3 & 3 & 3 & 3  &    &    &    &    &    &    &    &  &  &  &  &   
	\\
	$\nrbPortArg{3}$ 	& 3 & 4 & 5 & 5  &    &    &    &    &    &    &    &  &  &  &  &   
	\\
	$\nrbPortArg{4}$ 	&   &   & 3 & 3  &    &    &    &    &    &    &    &  &  &  &  &   
	\\	
	$\nrbPortArg{5}$ 	&   &   & 3 & 4  &    &    &    &    &    &    &    &  &  &  &  &   
	\\	\hline \rule{0pt}{2.5ex}
	$\nsampleArg{i}$ & 80 & 88 & 108 & 124 & 80 & 88 & 108 & 124 & 80 & 88 & 108 & 124 & 80 & 88 & 108 & 124 
	\\
	$\nrbResi$ 		 & 40 & 44 &  54 &  62 & 40 & 44 &  54 &  62 & 40 & 44 &  54 &  62 & 40 & 44 &  54 &  62 
	\\  \hline
\end{tabular}}
%$\dagger$The number of Greedy iterations to evaluate the RB effective compliance tensor is fixed at 45.
%\tabnote{$^{\rm b}$An example table footnote to show the text turning over when a long footnote is inserted.}
\end{table}

\begin{figure}[h!]
\centering
\begin{subfigure}[b]{0.3\textwidth}
\includegraphics[width=5.5cm]{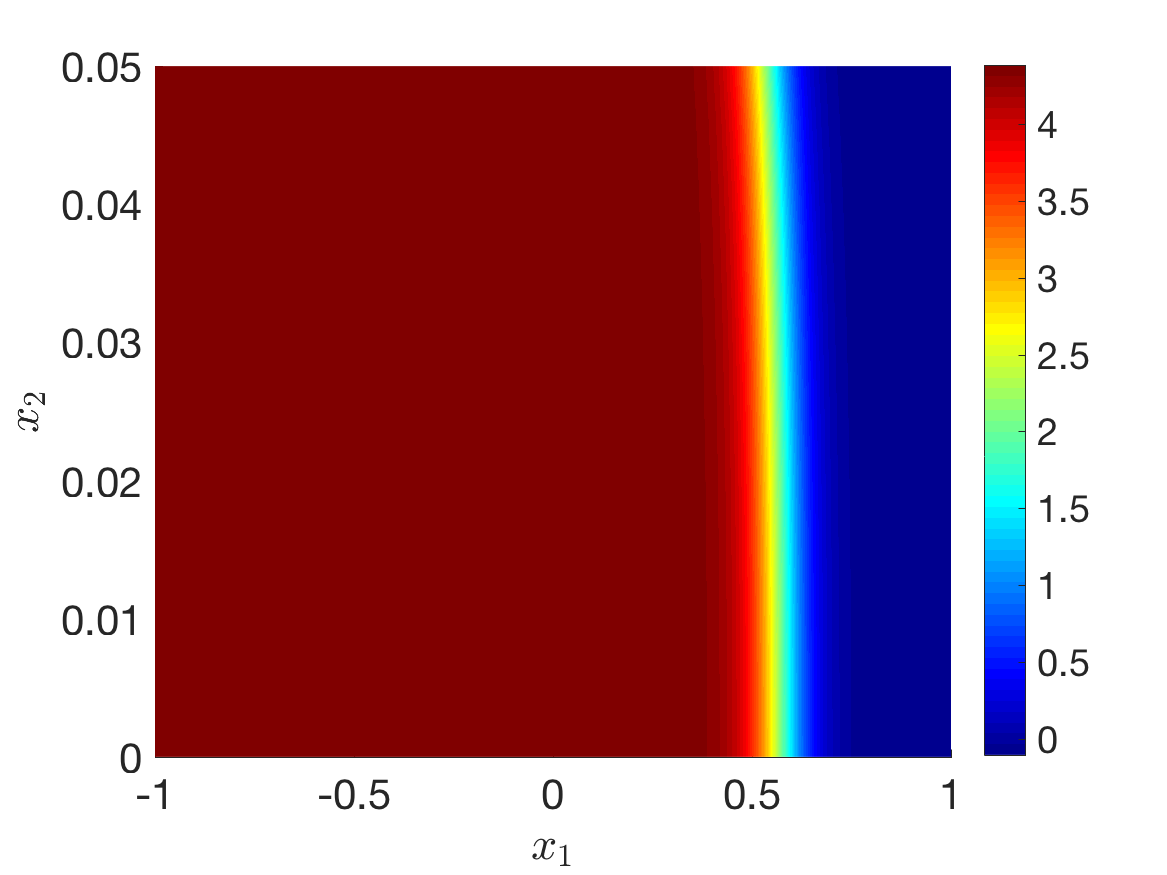}
\caption{FOM solution ($u_1$)}
\label{fig_ex2_bg42fn_solU_DDFOM_Omega}
\end{subfigure}
~
\begin{subfigure}[b]{0.3\textwidth}
\includegraphics[width=5.5cm]{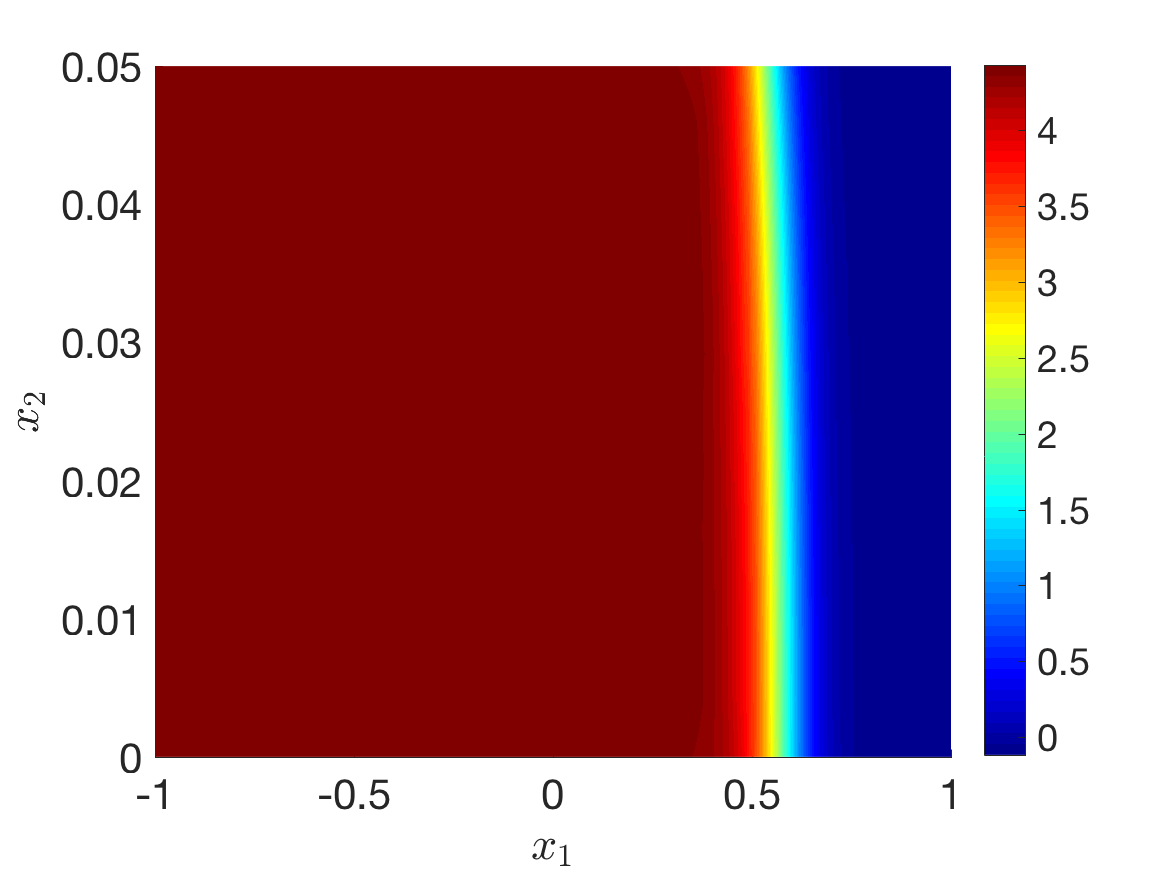}
\caption{DD-LSPG solution ($u_1$)}
\label{fig_ex2_bg42fn_solU_DDROM_Omega}
\end{subfigure}
~
\begin{subfigure}[b]{0.3\textwidth}
\includegraphics[width=5.5cm]{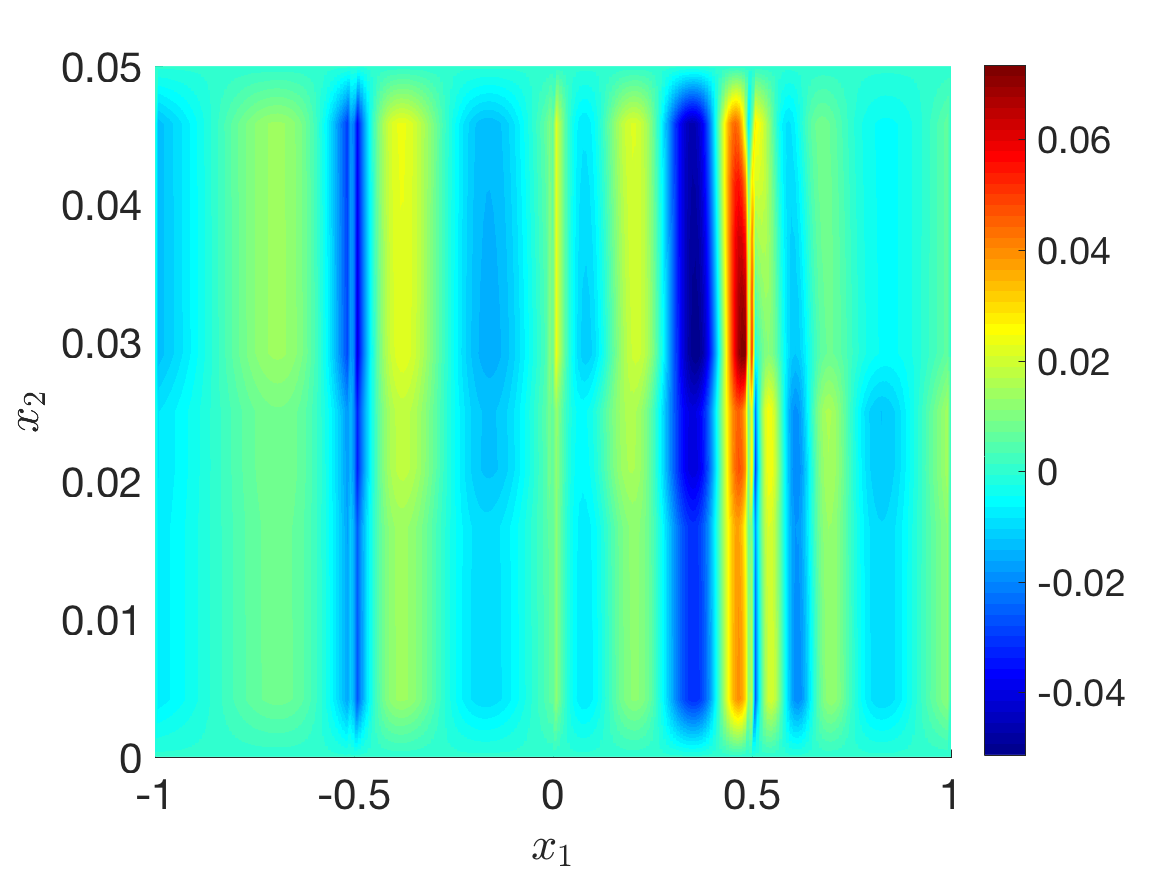}
\caption{DD-LSPG error ($u_1$)}
\label{fig_ex2_bg42fn_solU_DDROMerr_Omega}
\end{subfigure}
~
\begin{subfigure}[b]{0.3\textwidth}
\includegraphics[width=5.5cm]{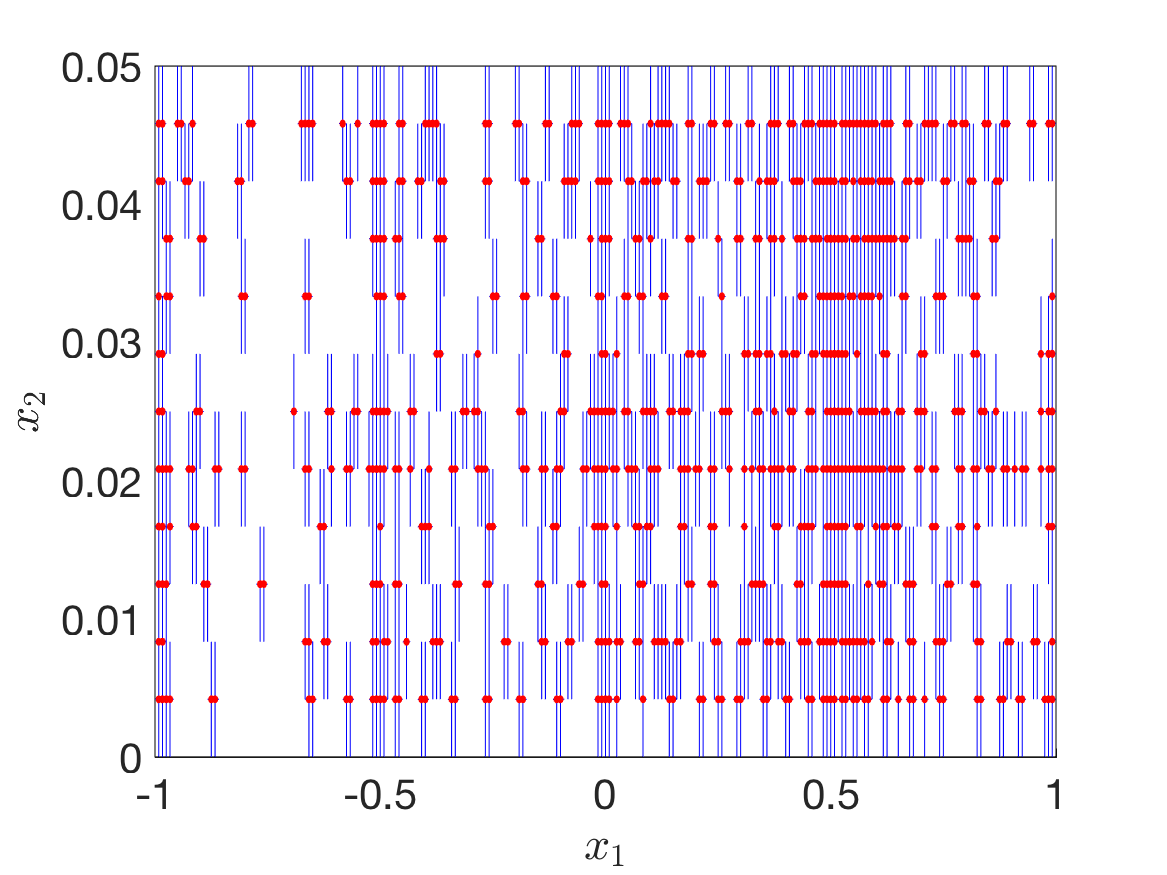}
\caption{Sample mesh ($u_1$)}
\label{fig_ex2_bg42fn_solU_sampleMesh_Omega}
\end{subfigure}
~
\begin{subfigure}[b]{0.3\textwidth}
\includegraphics[width=5.5cm]{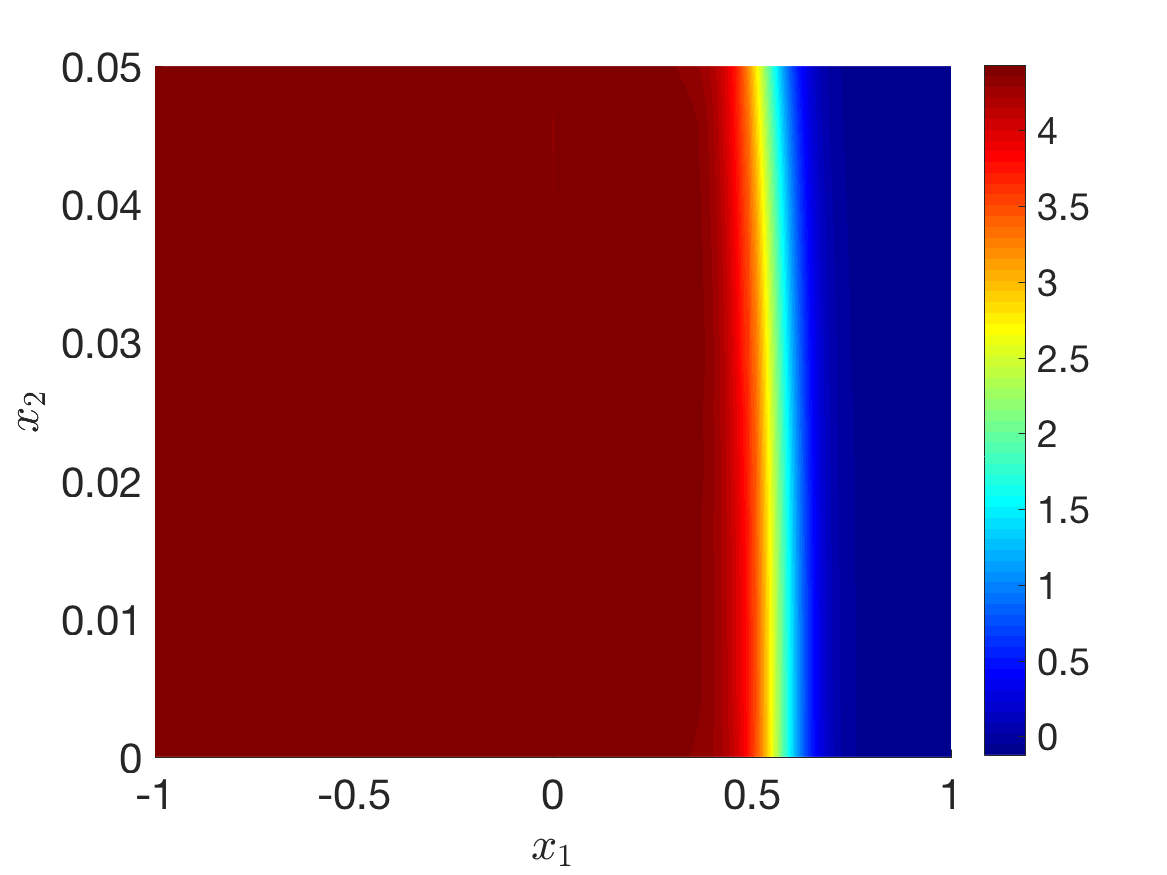}
\caption{DD-GNAT solution ($u_1$)}
\label{fig_ex2_bg42fn_solU_DDGNAT_Omega}
\end{subfigure}
~
\begin{subfigure}[b]{0.3\textwidth}
\includegraphics[width=5.5cm]{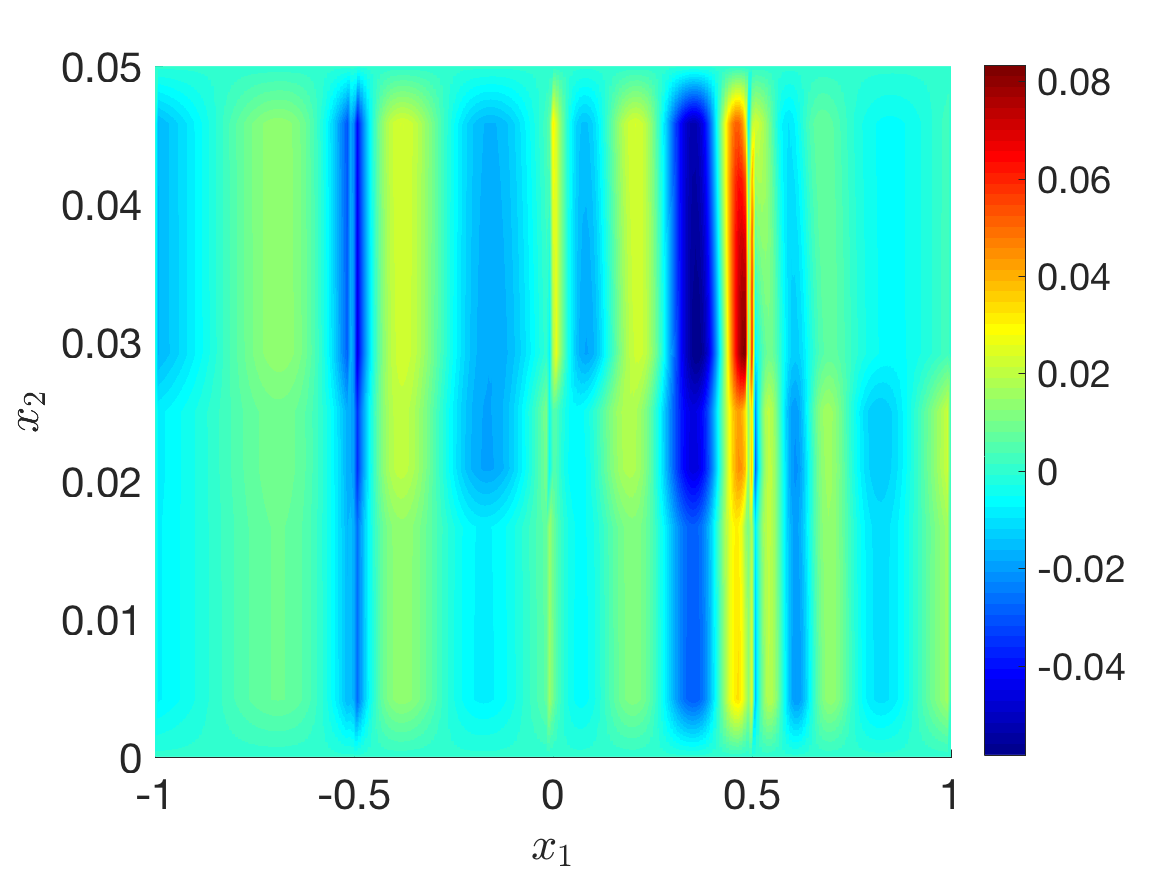}
\caption{DD-GNAT error ($u_1$)}
\label{fig_ex2_bg42fn_solU_DDGNATerr_Omega}
\end{subfigure}	
~
\begin{subfigure}[b]{0.3\textwidth}
\includegraphics[width=5.5cm]{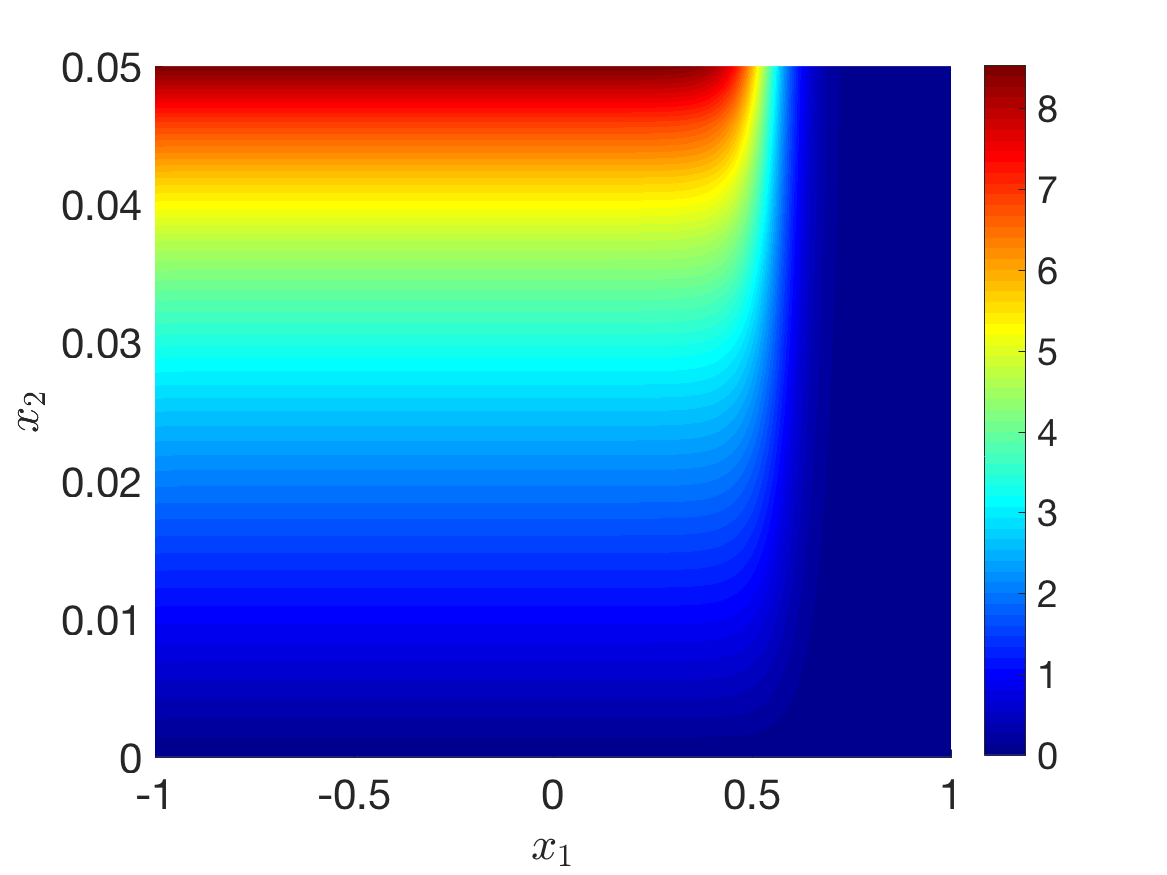}
\caption{FOM solution ($u_2$)}
\label{fig_ex2_bg42fn_solV_DDFOM_Omega}
\end{subfigure}
~
\begin{subfigure}[b]{0.3\textwidth}
\includegraphics[width=5.5cm]{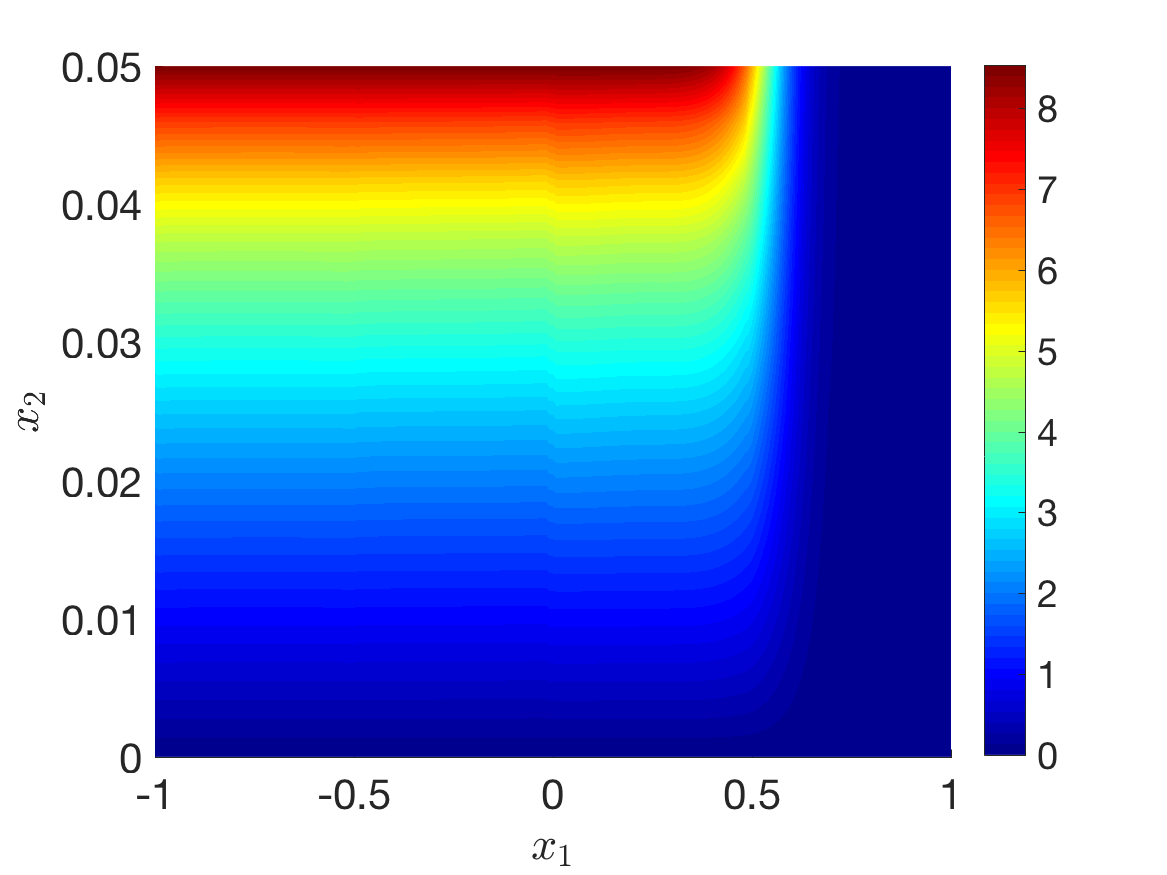}
\caption{DD-LSPG solution ($u_2$)}
\label{fig_ex2_bg42fn_solV_DDROM_Omega}
\end{subfigure}
~
\begin{subfigure}[b]{0.3\textwidth}
\includegraphics[width=5.5cm]{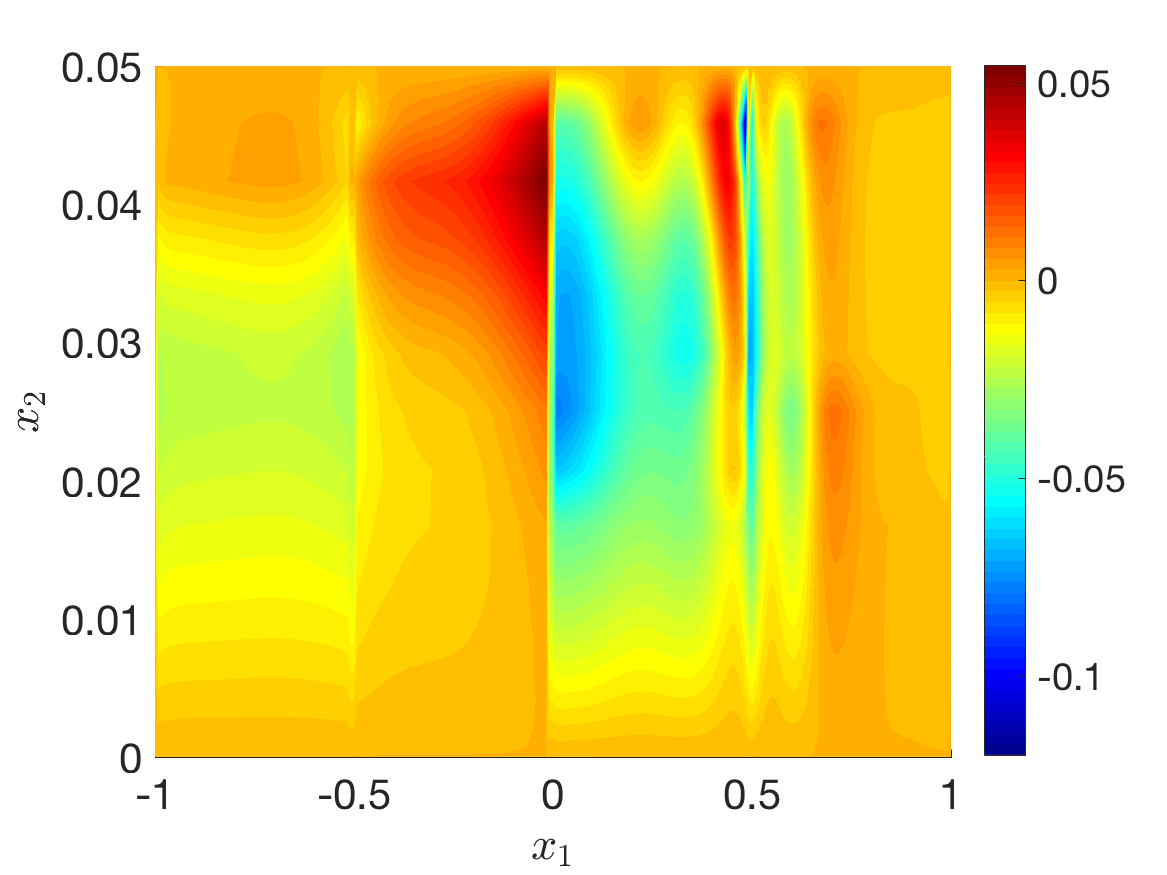}
\caption{DD-LSPG error ($u_2$)}
\label{fig_ex2_bg42fn_solV_DDROMerr_Omega}
\end{subfigure}	
~
\begin{subfigure}[b]{0.3\textwidth}
\includegraphics[width=5.5cm]{figures_1/bg42fn_sol_sampleMesh_Omega.png}
\caption{Sample mesh ($u_2$)}
\label{fig_ex2_bg42fn_solV_sampleMesh_Omega}
\end{subfigure}
~
\begin{subfigure}[b]{0.3\textwidth}
\includegraphics[width=5.5cm]{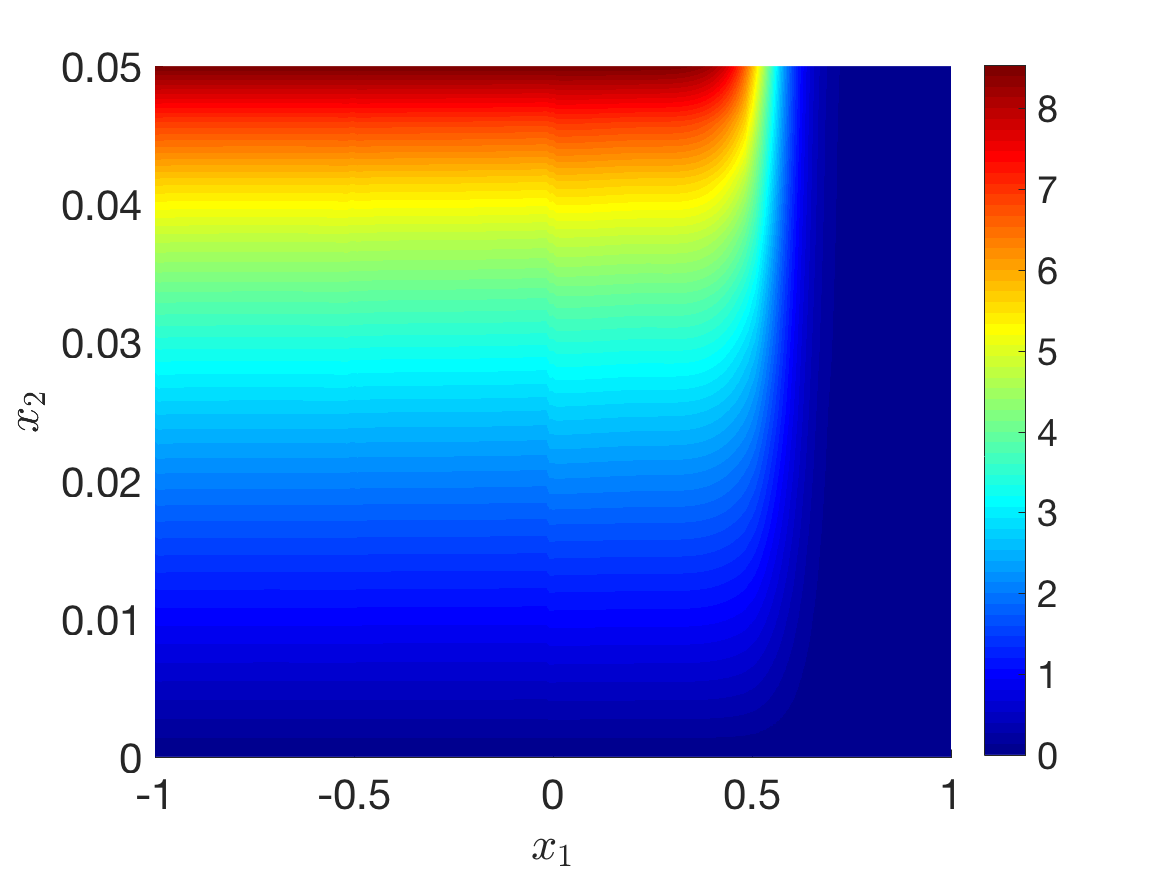}
\caption{DD-GNAT solution ($u_2$)}
\label{fig_ex2_bg42fn_solV_DDGNAT_Omega}
\end{subfigure}
~
\begin{subfigure}[b]{0.3\textwidth}
\includegraphics[width=5.5cm]{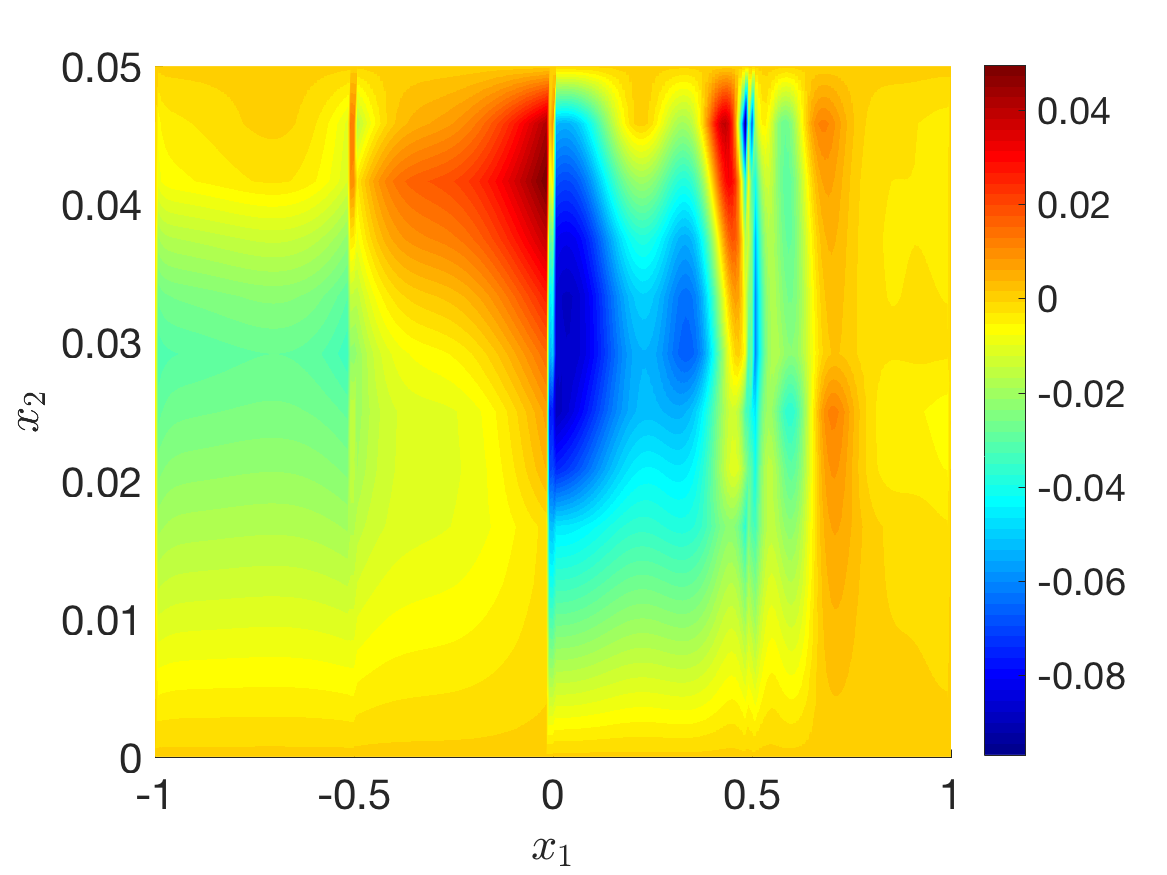}
\caption{DD-GNAT error ($u_2$)}
\label{fig_ex2_bg42fn_solV_DDGNATerr_Omega}
\end{subfigure}	
\caption{Burgers equation, 4x2 ``fine'' configuration, port bases in
Table~\ref{tab_ex2_oneOnlineComp_inputParams}, solutions visualized on $\domain$.} \label{fig_ex2_bg42fn_sol_Omega}
\end{figure}

%%% DD-LSPG & DD-GNAT description: one online computation (provide all necessary parameter values) %%%

We apply the same procedure described in Section \ref{sec:heatOneOnline} to generate the reduced bases required for the reduced-order models. In particular, we solve the FOM \eqref{eq_originalAlgebraic} for $\param\in\trainSample\subset\paramDomain$, where we again define the training-parameter set $\trainSample$ according to a $20\times 20$ equispaced sampling of the parameter domain $\paramDomain$, yielding $\numTrainSample=400$ samples. We then apply the methods described in Section \ref{sec_basis_construction} to create port, skeleton, full-interface, and full-subdomain bases from these training data. At each iteration of the Newton--Raphson algorithm used to solve the FOM equations \eqref{eq_originalAlgebraic}, the residual vector is saved, and the resulting residual snapshots are employed to generate the residual bases $\rbResi$, $i=1,\ldots,\nsubdomains$
that are used by DD-GNAT via POD. Lastly, the GNAT offline algorithm~\ref{alg_Greedy_GNAT} is performed to create sample meshes $\sampleSetOfSampleMesh{i}$ for all subdomains $\domainArg{i}$.

%For ROM approximations, a training parameter set $\trainSample$ is created by
%an equidistant distribution over $\paramDomain$ with $n_{\rm
%train}=(20\times20)=400$ samples. The global FOM solver is used to compute all
%FOM solutions of Eq.~\eqref{eq_BurgersPDE_original} for $\paramComp \in
%\trainSample$, then the section \ref{sec_basis_construction} is implemented to create port, skeleton,
%full-interface and full-subdomain bases. The FOM residuals are collected to build basis vectors for residual $\{\rbResArg{i}\}$ using POD snapshot method. Lastly, the GNAT offline algorithm~\ref{alg_Greedy_GNAT} is performed to create sample meshes for all subdomains $\domainArg{i}$.

We now compare the methods DD-LSPG and GNAT for fixed values of their parameters, and
for the randomly selected online point $\paramComp=(7692.5384,21.9230) \notin
\trainSample$. Table~\ref{tab_ex2_oneOnlineComp_inputParams} reports the chosen input parameters and associated performance of the methods, \CH{while the resulting ROM parameters over first four subdomains $\domain_i$ are listed on Table~\ref{tab_ex2_oneOnlineComp_ROMparams}.} Again, the results on Table~\ref{tab_ex2_oneOnlineComp_inputParams} confirm the comments in Remark~\ref{rem:globalSol}, which suggested that enforcing strong compatibility can yield poor results for full-interface and full-subdomain bases, and that only port and skeleton bases are well-suited for strong compatibility constraints. Figure \ref{fig_ex2_bg42fn_sol_Omega} visualizes the DD-LSPG and DD-GNAT solutions for the port-bases case; it shows that DD-LSPG and DD-GNAT yield accurate results for port bases with strong constraints as anticipated.

%\KTC{Comments on Table~\ref{tab_ex2_oneOnlineComp_inputParams}, Figures~\ref{fig_ex2_bg42fn_sol_O7} and \ref{fig_ex2_bg42fn_sol_Omega}? }

%For one specific online computation, figures~\ref{fig_ex2_bg42fn_sol_O7} and \ref{fig_ex2_bg42fn_sol_Omega} visualize DD-LSPG and DD-GNAT solutions on subdomain $\domainArg{7}$ and domain $\domain$, respectively. For this specific computation, the parameters are chosen as on table~\ref{tab_ex2_oneOnlineComputation}.

%In particular, for this computation the parameters are chosen as: $\paramComp=(7692.5384,21.9230)$, the energy rate of $\domain$=$1-10^{-5}$, energy rate of $\boundary$=$1-10^{-5}$, strong constraint with portBF, $nz/nR=1$ and energy rate of $\residual$=$1-10^{-12}$ over all subdomains. 

\subsubsection{DD-LSPG and DD-GNAT approximations: parameter study}

%%%%%%%%%%%%% table 10: Burgers eqn: many online computations %%%%%%%%%%%%%
\begin{table}[h!]
\center
\caption{Burgers
	equation, ROM-method parameters limits for parameter study (skel.=skeleton,
	intf.=full-interface, subdom.=subdomain). Recall from Section
	\ref{sec_basis_construction} that $\energyCriterion\in[0,1]$ denotes the energy
	criterion employed by POD.} \label{tab_ex2_manyOnlineComputation}
{\begin{tabular}{|c||c|c|}
	\hline \rule{0pt}{2.5ex}
	method  &  DD-LSPG  &  GNAT  \\ [0.2ex]
	%      &     &   &     \\[0.5ex]
	\hline \rule{0pt}{2.5ex} %\hspace{-2.5mm}
	$\energyCriterion$ on $\domaini$ for interior/boundary bases   & $\{1-10^{-4}, 1-10^{-7}\}$ & $\{1-10^{-4}, 1-10^{-7}\}$  \\
	$\energyCriterion$ on $\boundaryi$ for interior/boundary bases & $\{1-10^{-4}, 1-10^{-7}\}$ & $\{1-10^{-4}, 1-10^{-7}\}$  \\
 & & \\	
		$\energyCriterion$ for full-subdomain bases & $\{1-10^{-5}, 1-10^{-6}, $ & $\{1-10^{-5}, 1-10^{-6}, $ \\
	& $1-10^{-7}, 1-10^{-9}\}$ & $1-10^{-7}, 1-10^{-9}\}$ \\
 & & \\	
		$\energyCriterion$ for $\residuali$ &  & $\{1-10^{-4}, 1-10^{-6}, $ \\
	&  & $1-10^{-8}, 1-10^{-10}\}$ \\
 & & \\	
	$\nsampleArg{i}/\nrbResi$ 		    &  &   \{1, 1.5, 2, 4\}  \\
	number of constraints	   		    & \{1, 2, 3, 4, 5, strong\}	 &  \{1, 2, 3, 4, 5, strong\}  \\
	basis types	 & \{port, skel., intf., subdom.\} & \{port, skel., intf., subdom.\}  \\
	\hline
\end{tabular}}
%$\dagger$The number of Greedy iterations to evaluate the RB effective compliance tensor is fixed at 45.
%\tabnote{$^{\rm b}$An example table footnote to show the text turning over when a long footnote is inserted.}
\end{table}

% NEW NEW NEW 29 May 2017!!!
%%%%%%%%%%%%%%% fig 8: Pareto front plots of 3 configurations bg %%%%%%%%%%%%%
\begin{figure}[h!]
\centering
\begin{subfigure}[b]{0.3\textwidth}
\includegraphics[width=5.5cm]{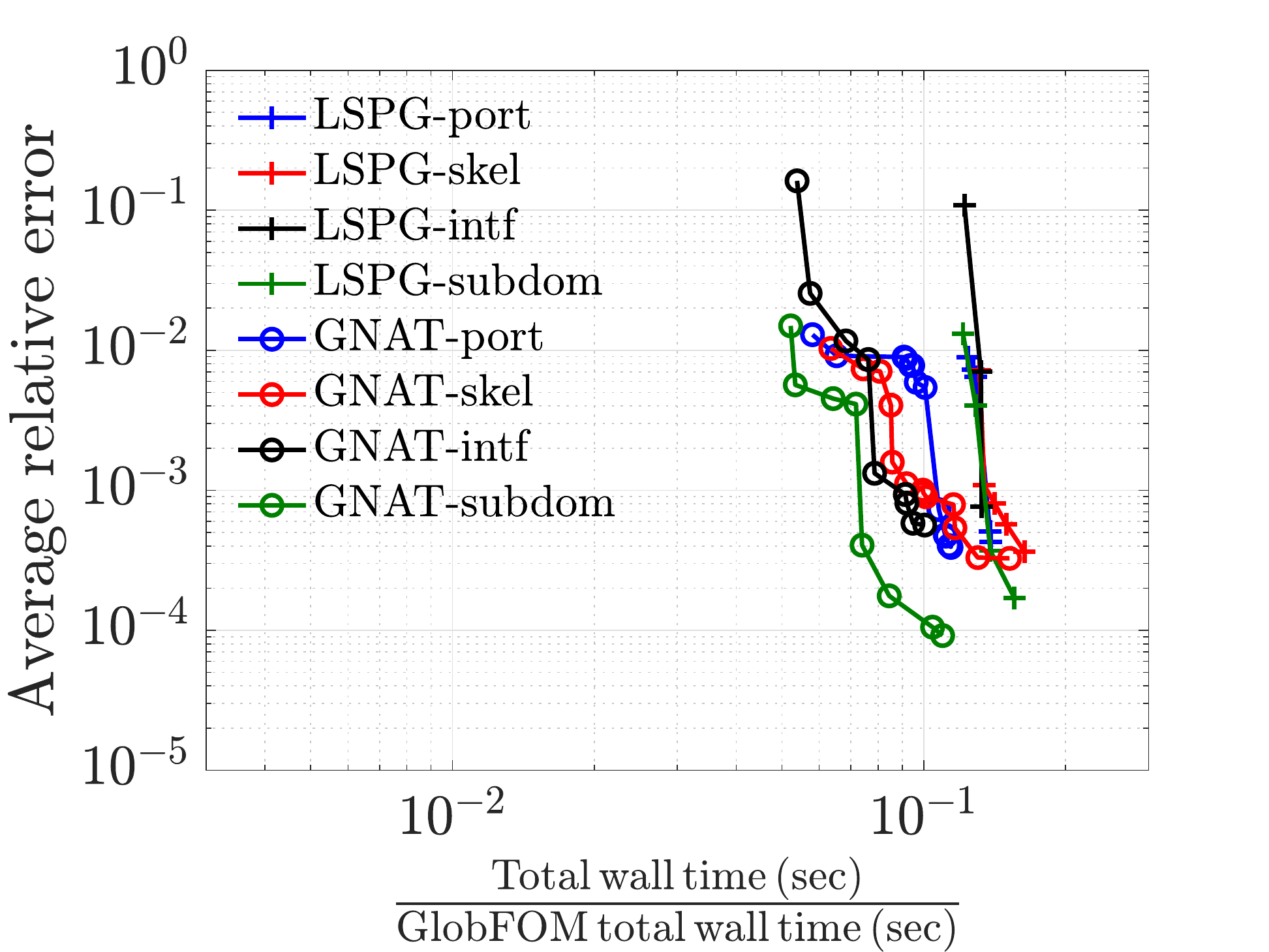}
\caption{Burger 4x2 ``coarse''}
\label{fig_ex2_bg42ce_pareto_wallAll}
\end{subfigure}
~
\begin{subfigure}[b]{0.3\textwidth}
\includegraphics[width=5.5cm]{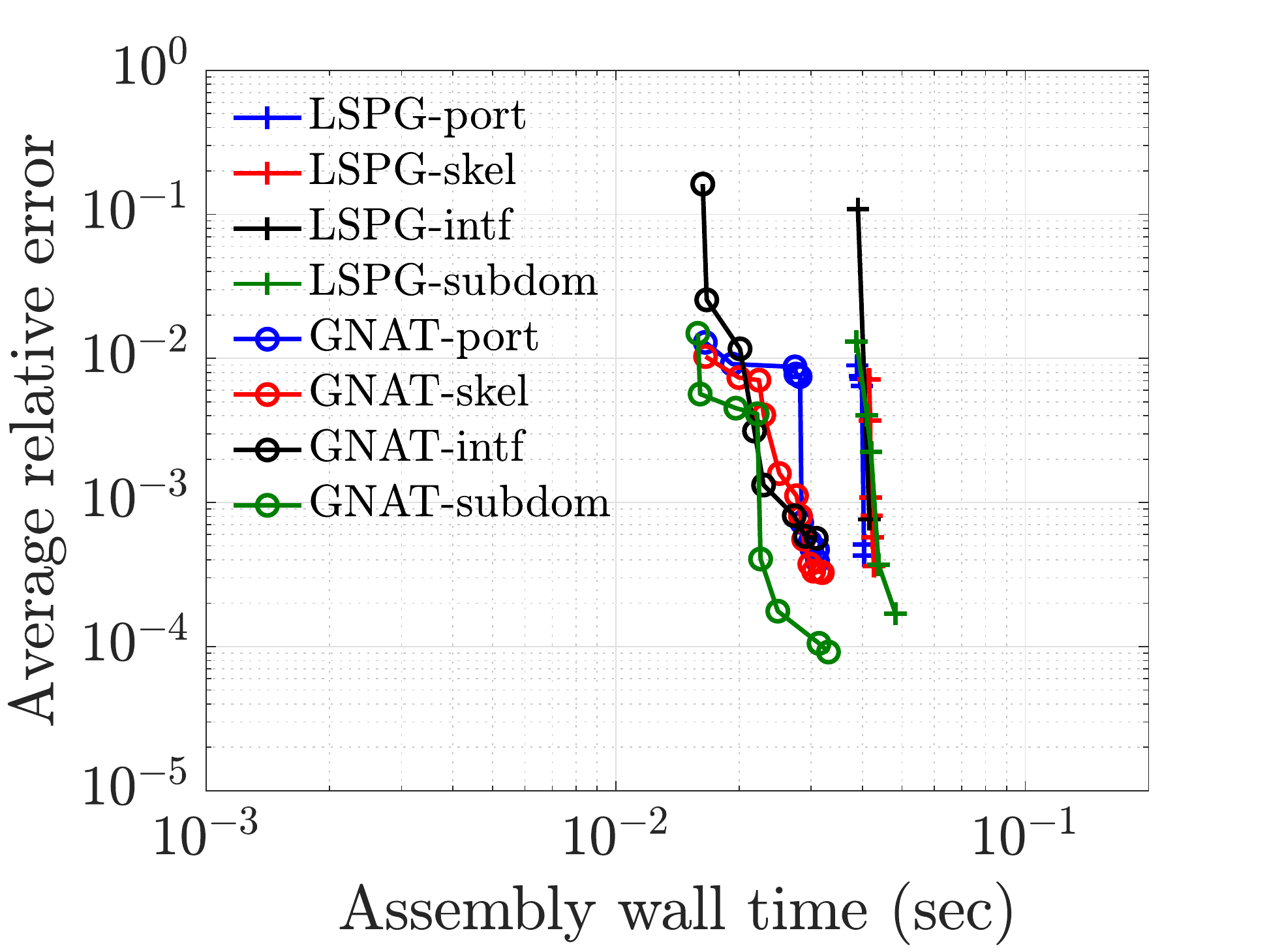}
\caption{Burger 4x2 ``coarse''}
\label{fig_ex2_bg42ce_pareto_wallAsmb}
\end{subfigure}
~
\begin{subfigure}[b]{0.3\textwidth}
\includegraphics[width=5.5cm]{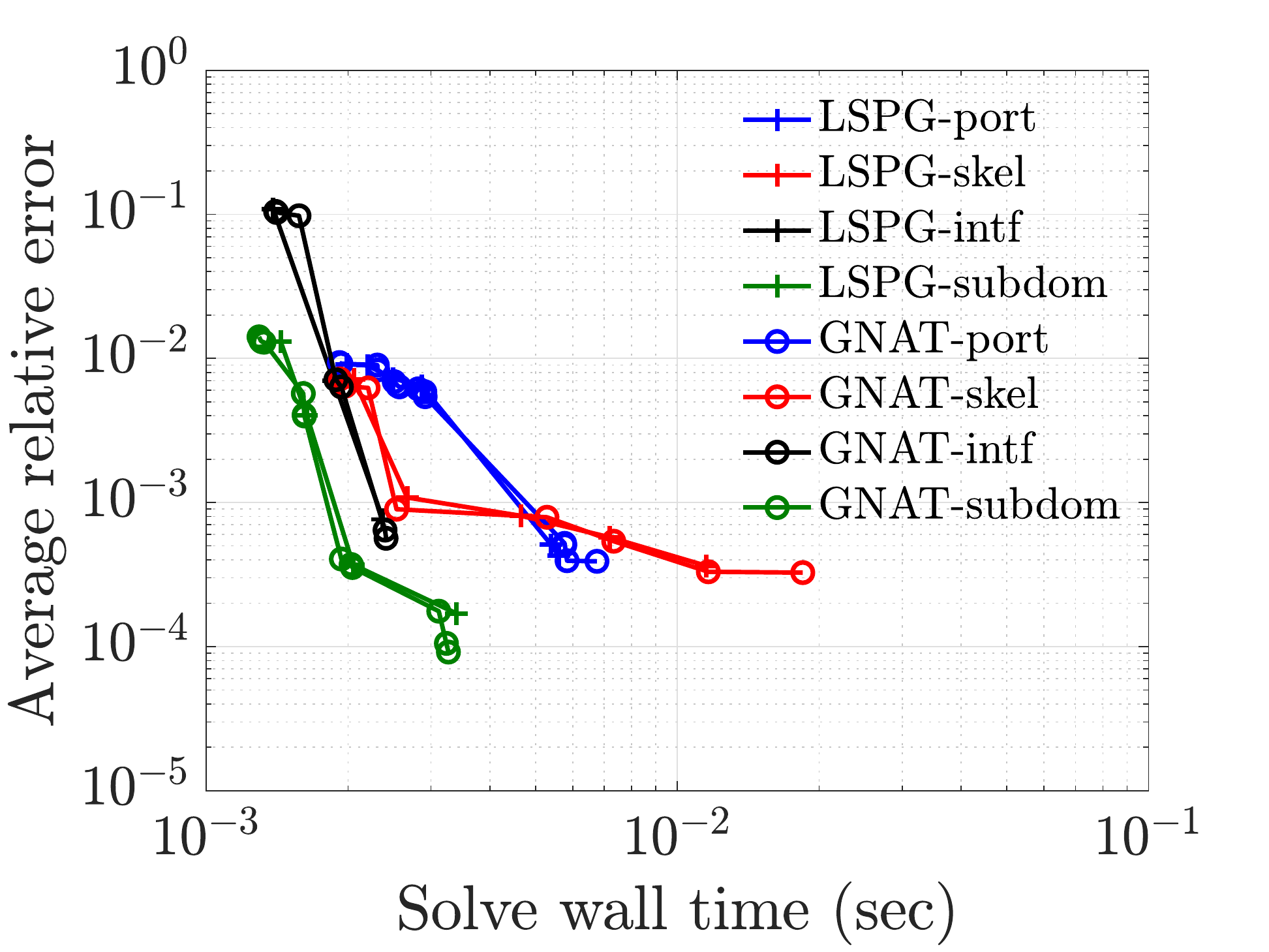}
\caption{Burger 4x2 ``coarse''}
\label{fig_ex2_bg42ce_pareto_wallSolv}
\end{subfigure}
~
\begin{subfigure}[b]{0.3\textwidth}
\includegraphics[width=5.5cm]{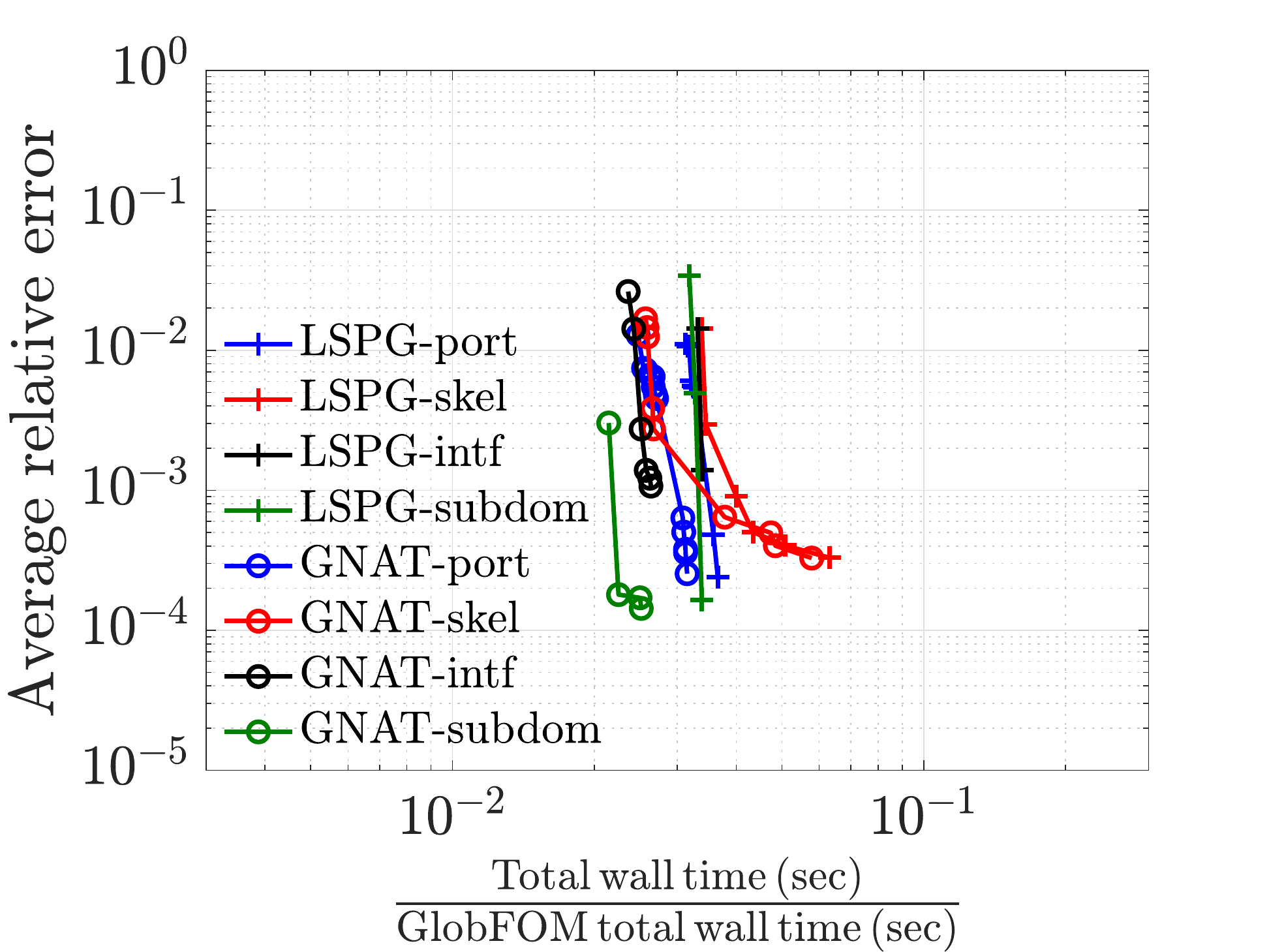}
\caption{Burger 8x2 ``fine''}
\label{fig_ex2_bg82fn_pareto_wallAll}
\end{subfigure}
~
\begin{subfigure}[b]{0.3\textwidth}
\includegraphics[width=5.5cm]{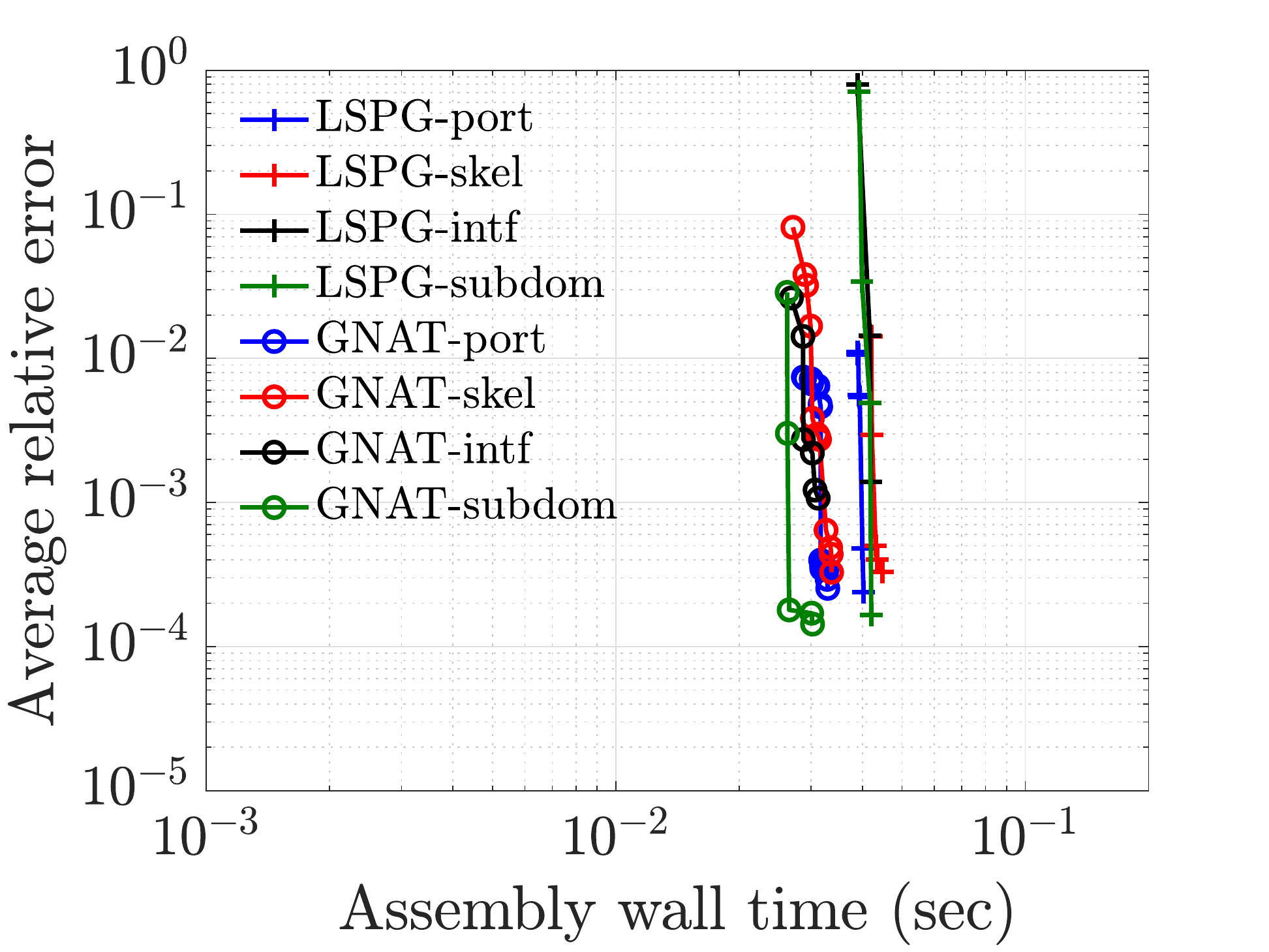}
\caption{Burger 8x2 ``fine''}
\label{fig_ex2_bg82fn_pareto_wallAsmb}
\end{subfigure}
~
\begin{subfigure}[b]{0.3\textwidth}
\includegraphics[width=5.5cm]{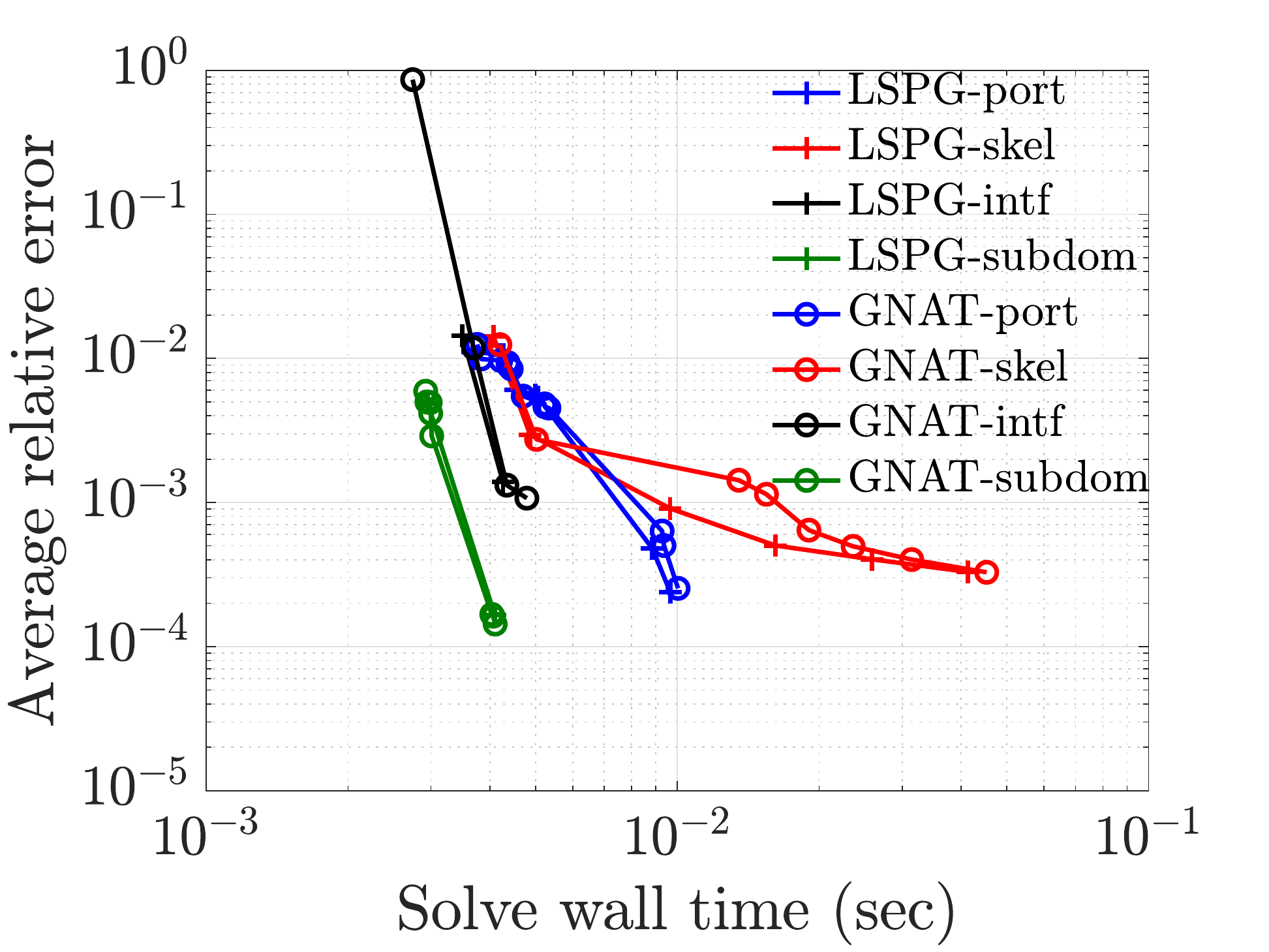}
\caption{Burger 8x2 ``fine''}
\label{fig_ex2_bg82fn_pareto_wallSolv}
\end{subfigure}	
~
\begin{subfigure}[b]{0.3\textwidth}
\includegraphics[width=5.5cm]{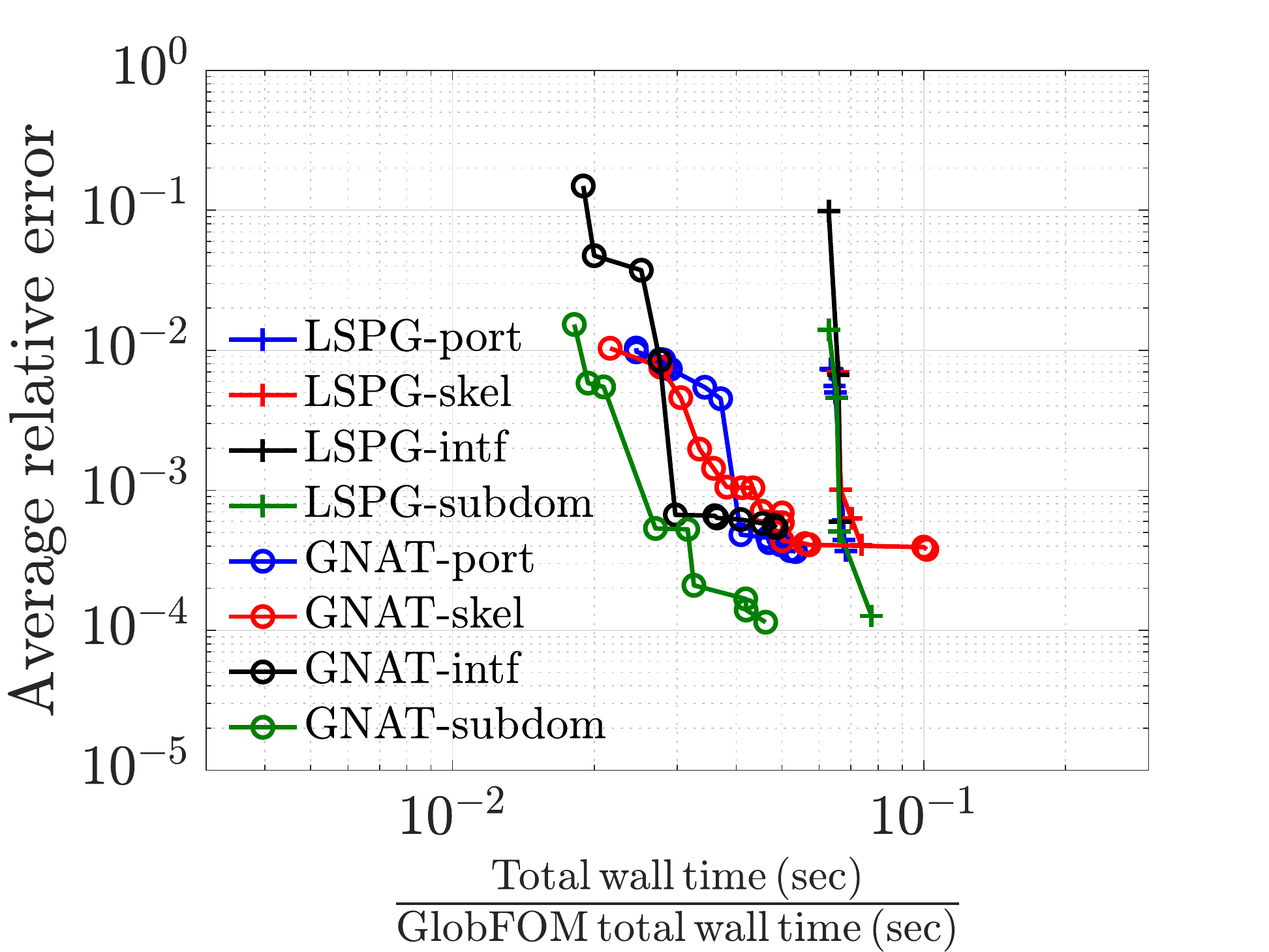}
\caption{Burger 4x2 ``fine''}
\label{fig_ex2_bg42fn_pareto_wallAll}
\end{subfigure}
~
\begin{subfigure}[b]{0.3\textwidth}
\includegraphics[width=5.5cm]{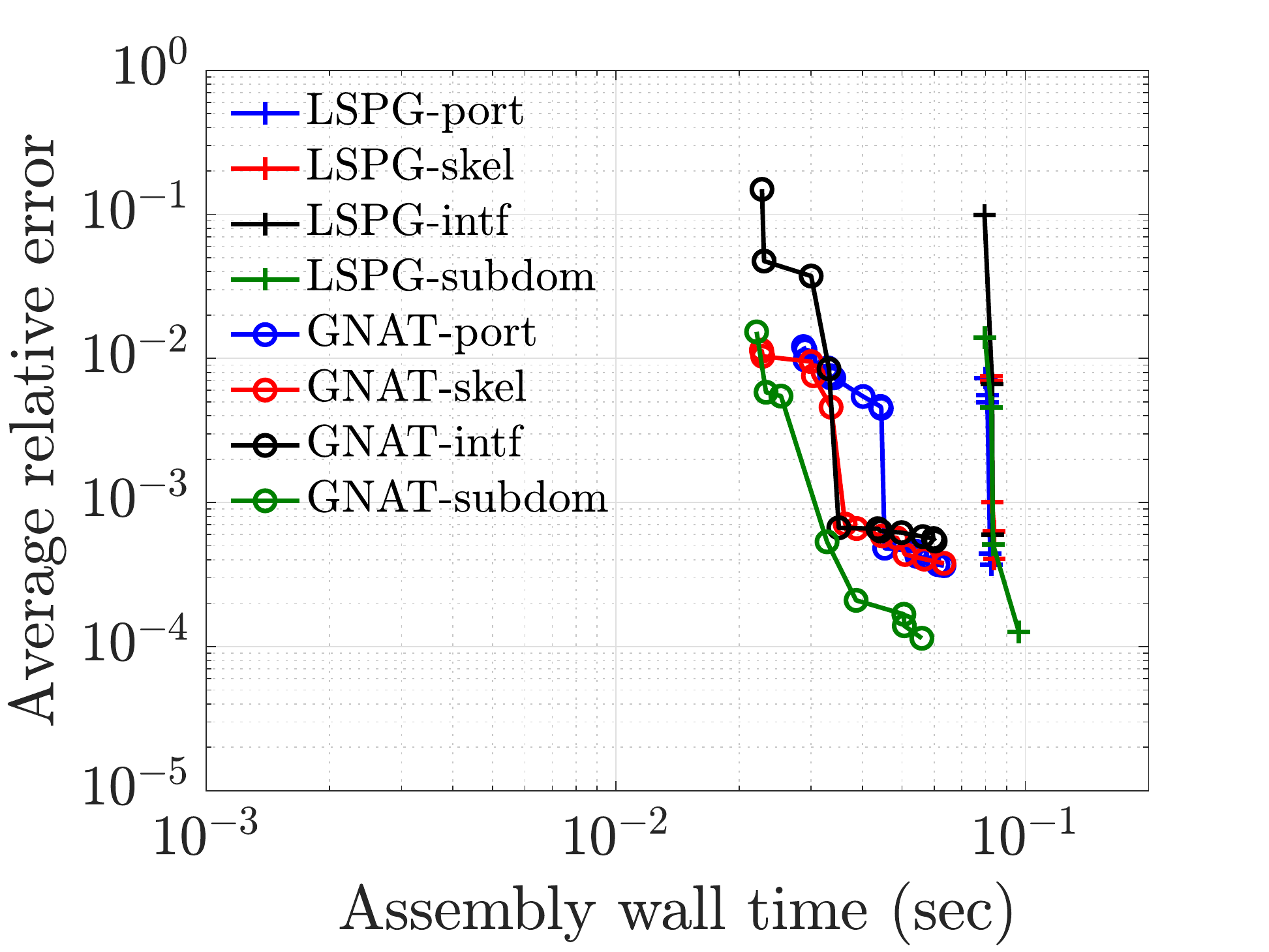}
\caption{Burger 4x2 ``fine''}
\label{fig_ex2_bg42fn_pareto_wallAsmb}
\end{subfigure}
~
\begin{subfigure}[b]{0.3\textwidth}
\includegraphics[width=5.5cm]{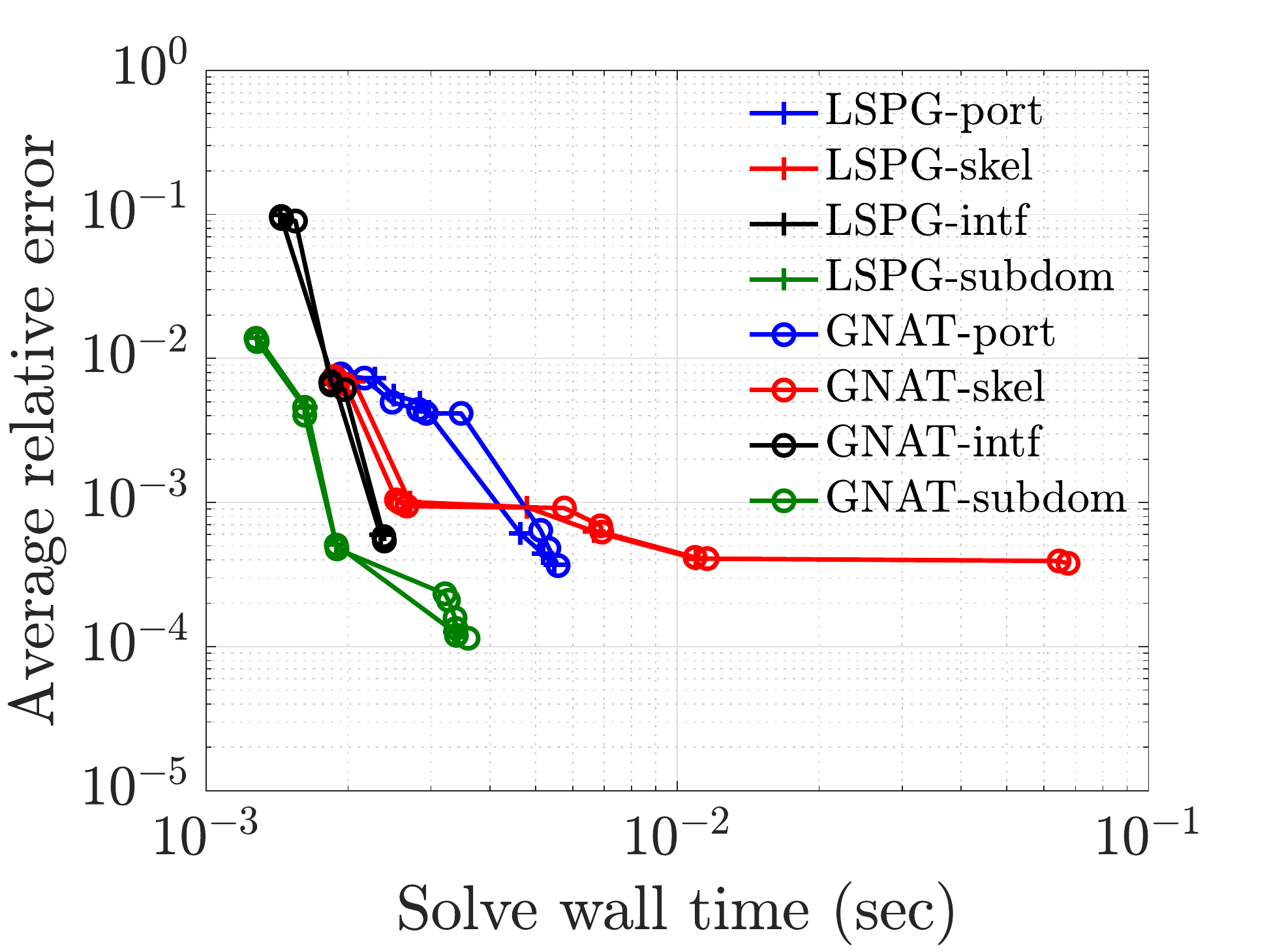}
\caption{Burger 4x2 ``fine''}
\label{fig_ex2_bg42fn_pareto_wallSolv}
\end{subfigure}	
\caption{Burgers' equation, Pareto front plots for wall-all (normalized with respect global FOM wall-all timing), wall-assemble and wall-solve timing of three different configurations for parameters reported in Table~\ref{tab_ex2_manyOnlineComputation}.} \label{fig_ex2_all_pareto}
\end{figure}

%%%%%%%%%%%%%% fig 9: 4x2fn config: average RMS error vs #constraint per port %%%%%%%%%%%%%
\begin{figure}[h!]
\centering
\begin{subfigure}[b]{0.3\textwidth}
\includegraphics[width=5.5cm]{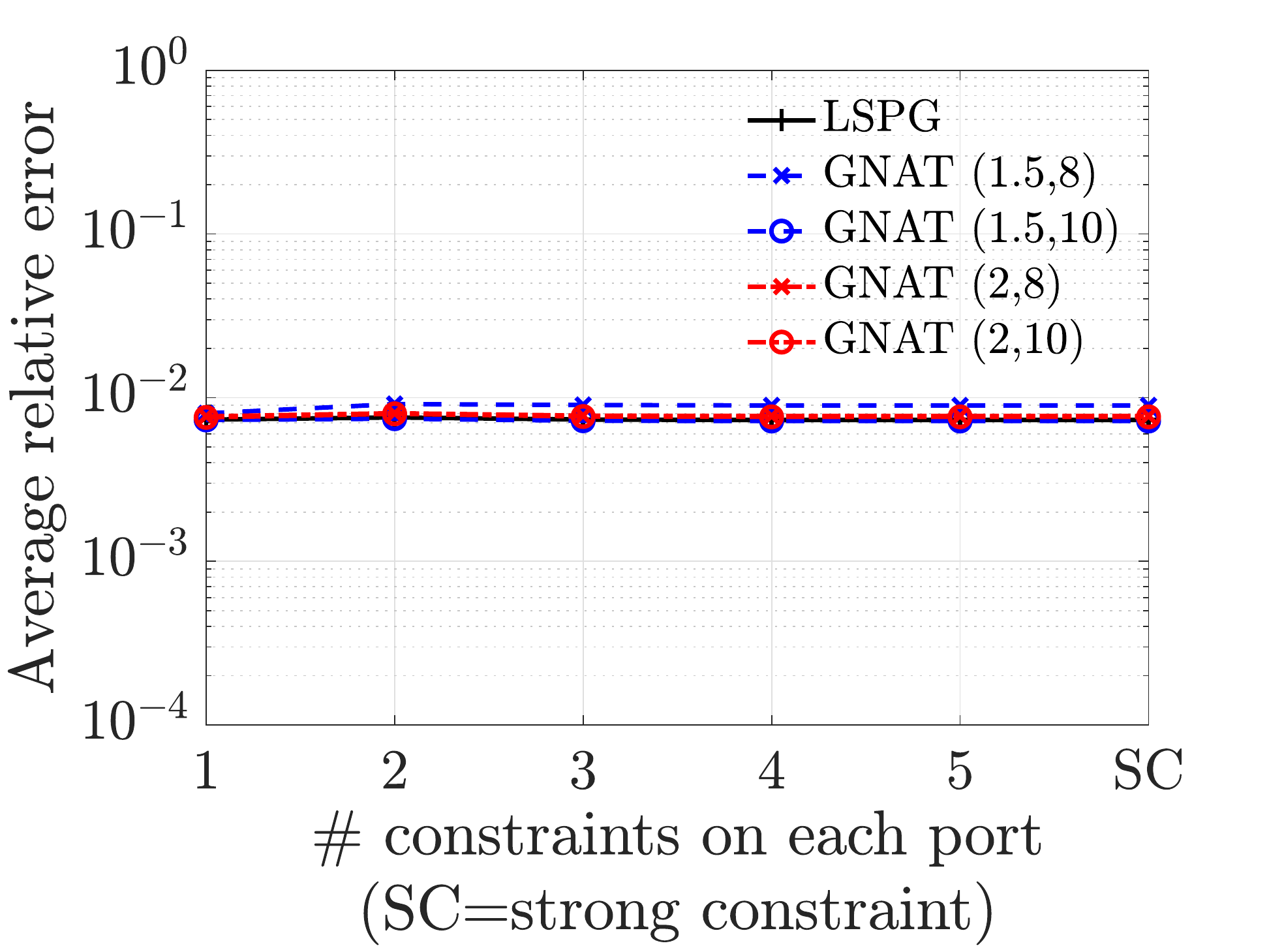}
\caption{port, 
$\energyCriterion=10^{-4}$ on $\domaini$,
	$\energyCriterion=10^{-4}$ on $\boundaryi$
}
\label{fig_ex2_4x2_rmsErr_portBF_55}
\end{subfigure}
~
\begin{subfigure}[b]{0.3\textwidth}
\includegraphics[width=5.5cm]{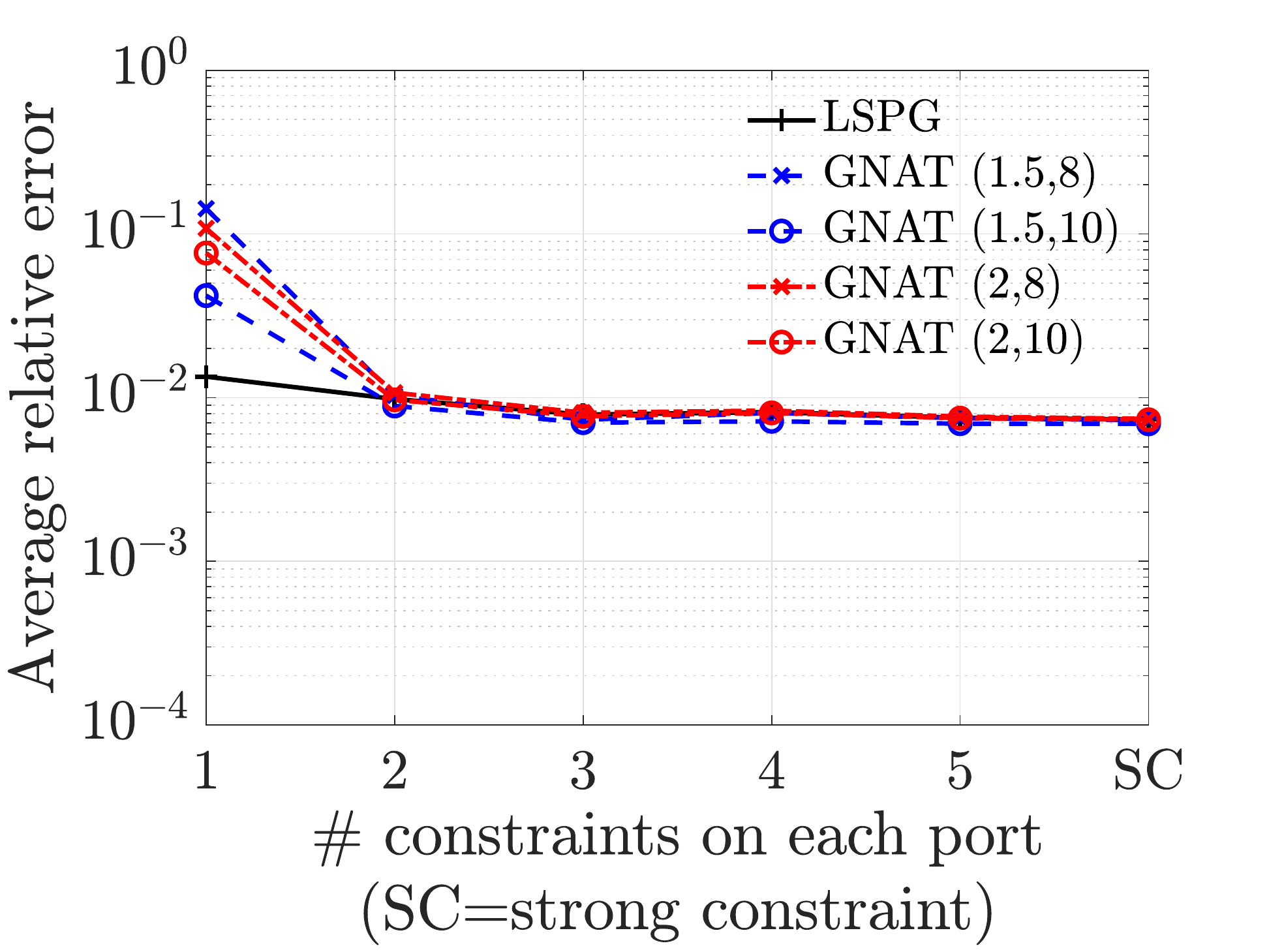}
\caption{port, 
$\energyCriterion=10^{-4}$ on $\domaini$,
	$\energyCriterion=10^{-7}$ on $\boundaryi$
	}
\label{fig_ex2_4x2_rmsErr_portBF_58}
\end{subfigure}
~
\begin{subfigure}[b]{0.3\textwidth}
\includegraphics[width=5.5cm]{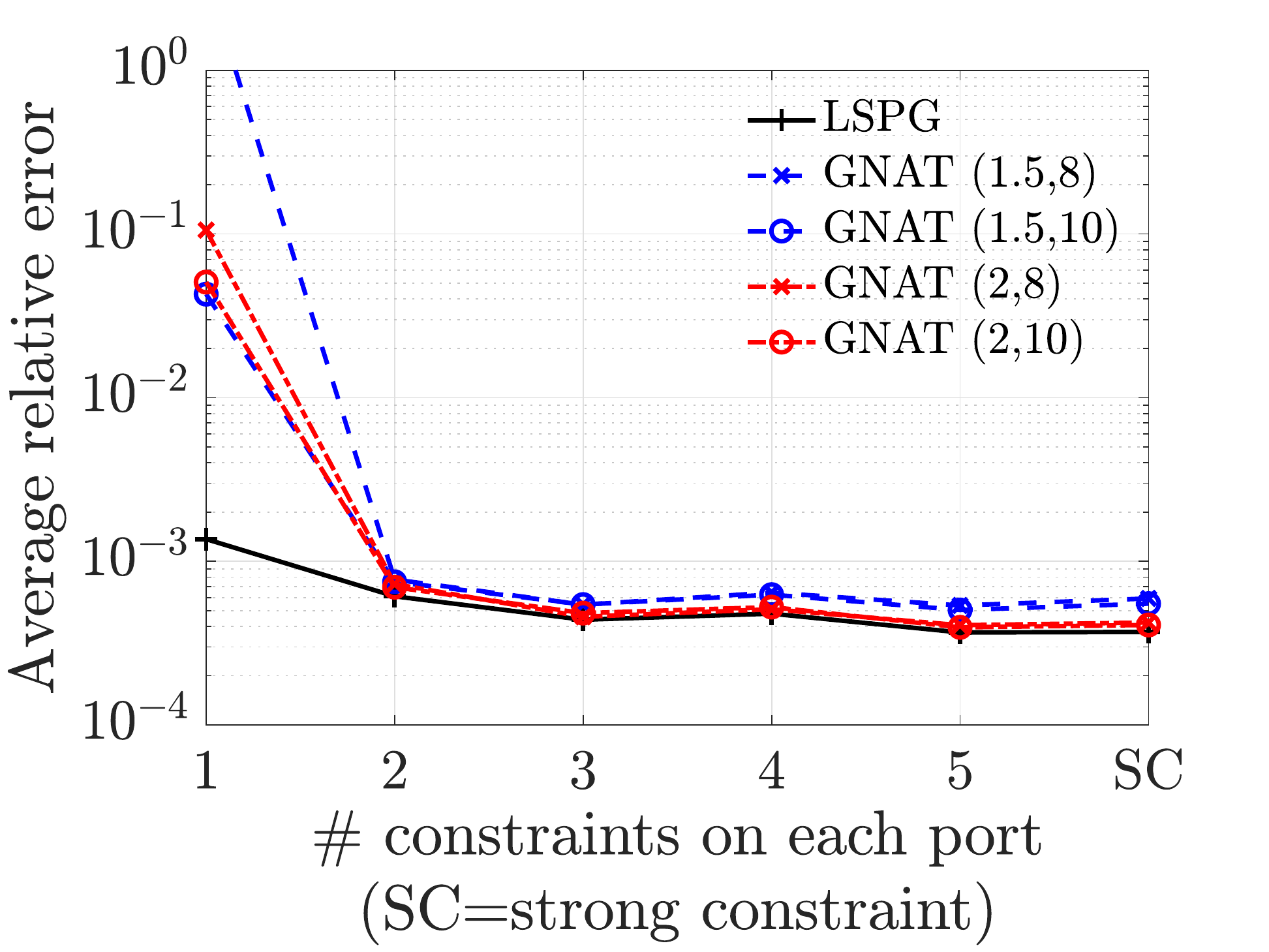}
\caption{port, 
$\energyCriterion=10^{-7}$ on $\domaini$,
	$\energyCriterion=10^{-7}$ on $\boundaryi$
	}
\label{fig_ex2_4x2_rmsErr_portBF_88}
\end{subfigure}
~
\begin{subfigure}[b]{0.3\textwidth}
\includegraphics[width=5.5cm]{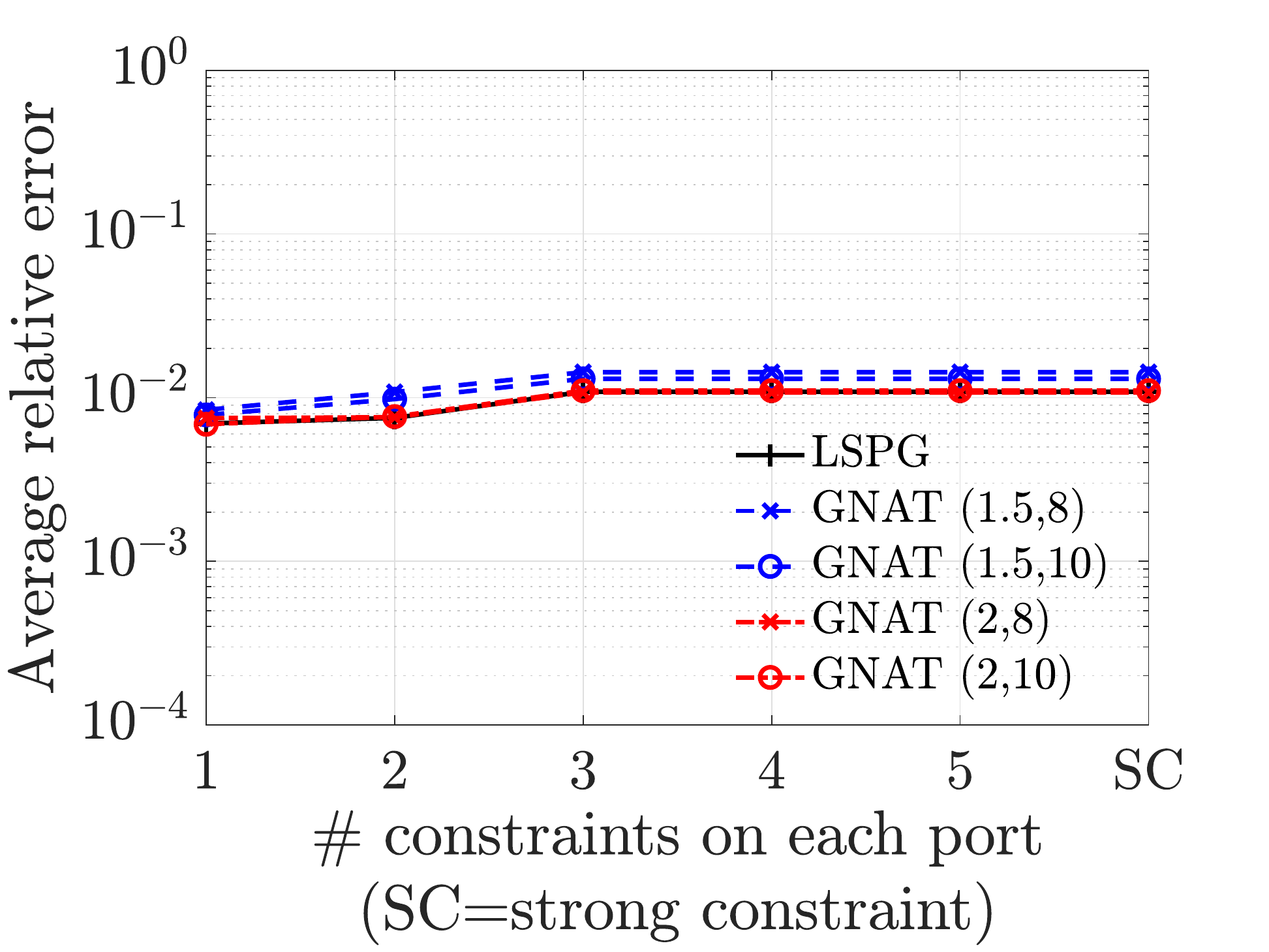}
\caption{skeleton, 
$\energyCriterion=10^{-4}$ on $\domaini$,
	$\energyCriterion=10^{-4}$ on $\boundaryi$
}
\label{fig_ex2_4x2_rmsErr_skelBF_55}
\end{subfigure}
~
\begin{subfigure}[b]{0.3\textwidth}
\includegraphics[width=5.5cm]{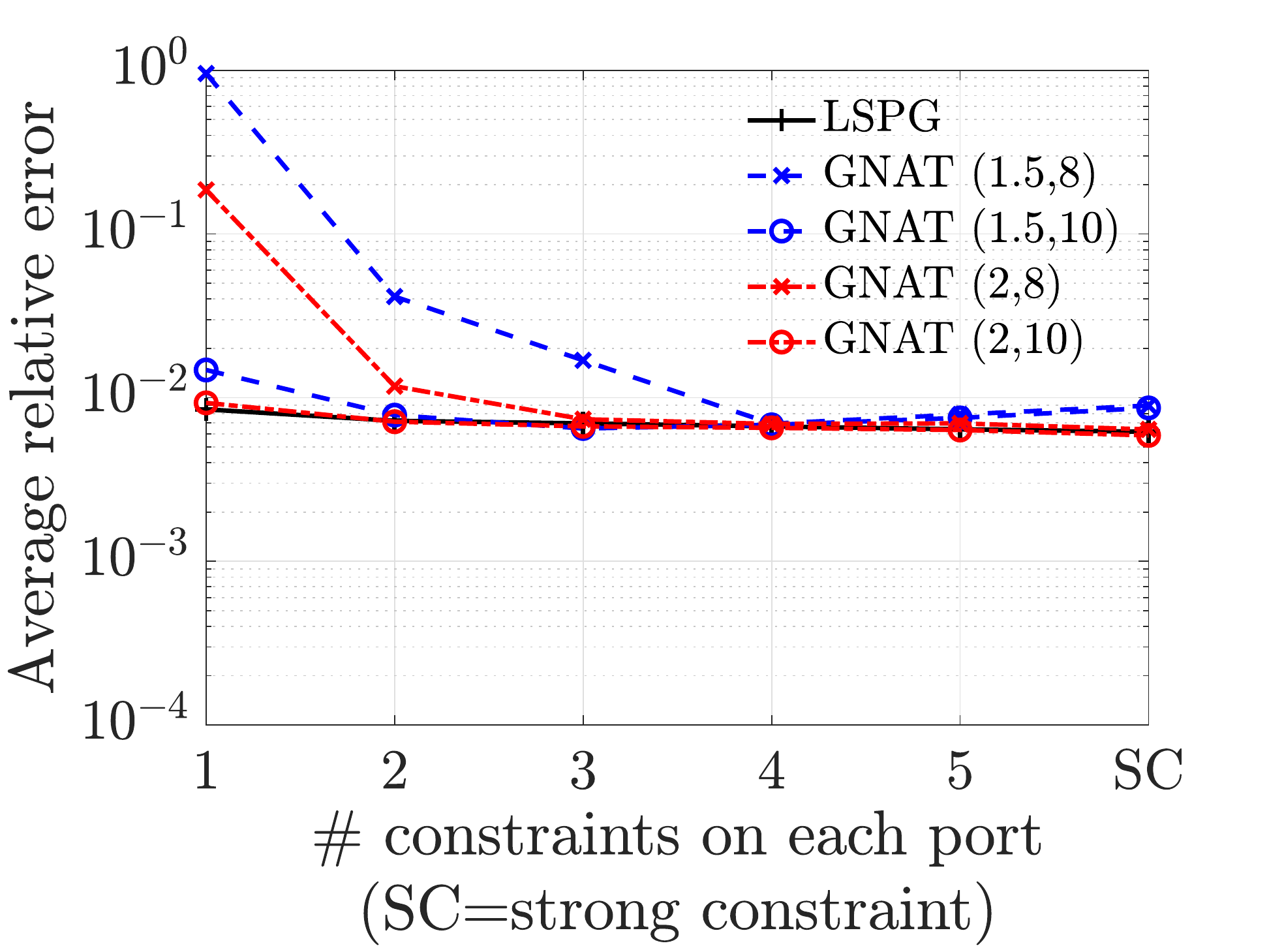}
\caption{skeleton, 
$\energyCriterion=10^{-4}$ on $\domaini$,
	$\energyCriterion=10^{-7}$ on $\boundaryi$
}
\label{fig_ex2_4x2_rmsErr_skelBF_58}
\end{subfigure}
~
\begin{subfigure}[b]{0.3\textwidth}
\includegraphics[width=5.5cm]{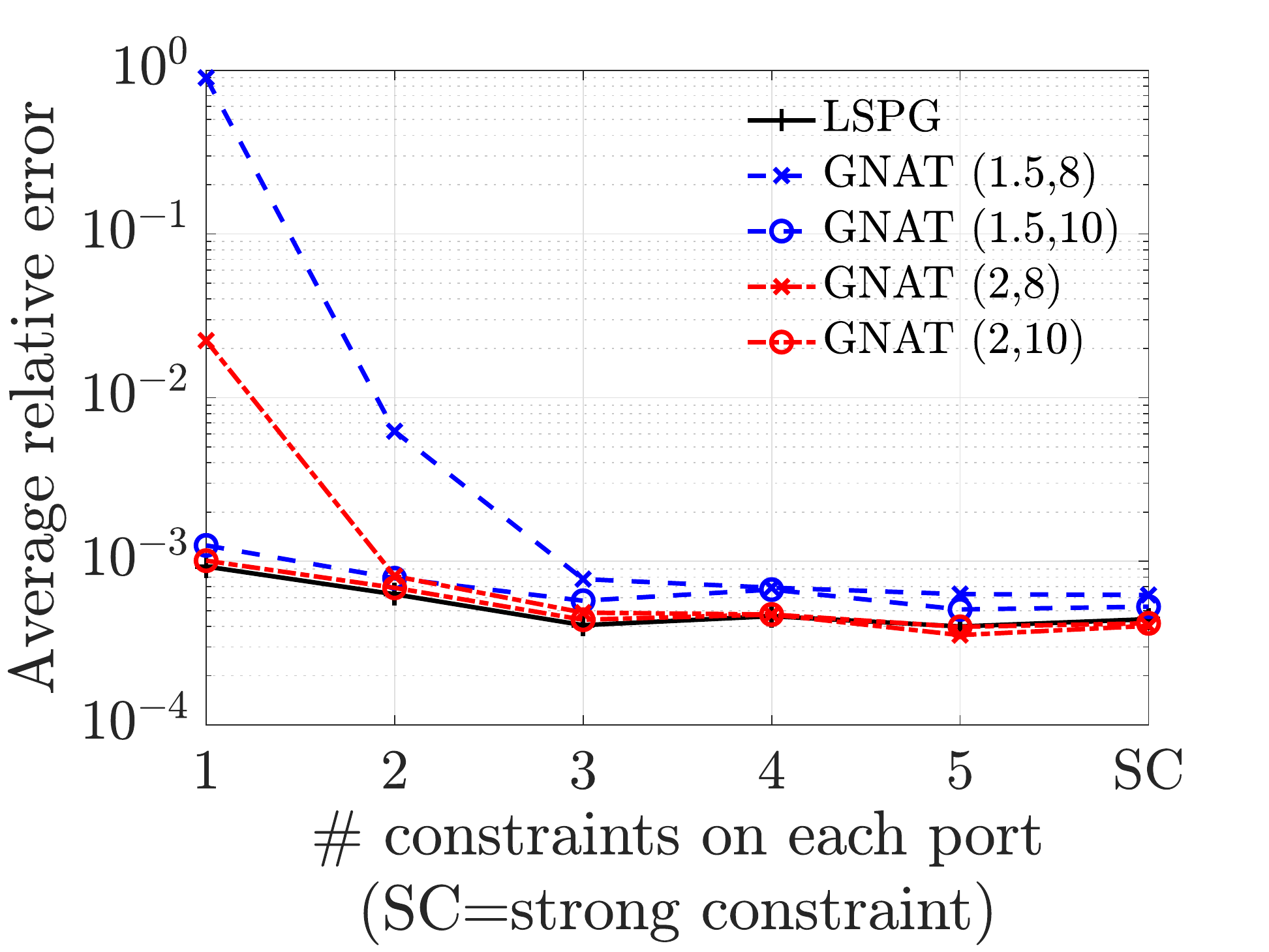}
\caption{skeleton, 
$\energyCriterion=10^{-7}$ on $\domaini$,
	$\energyCriterion=10^{-7}$ on $\boundaryi$
}
\label{fig_ex2_4x2_rmsErr_skelBF_88}
\end{subfigure}
~
\begin{subfigure}[b]{0.3\textwidth}
\includegraphics[width=5.5cm]{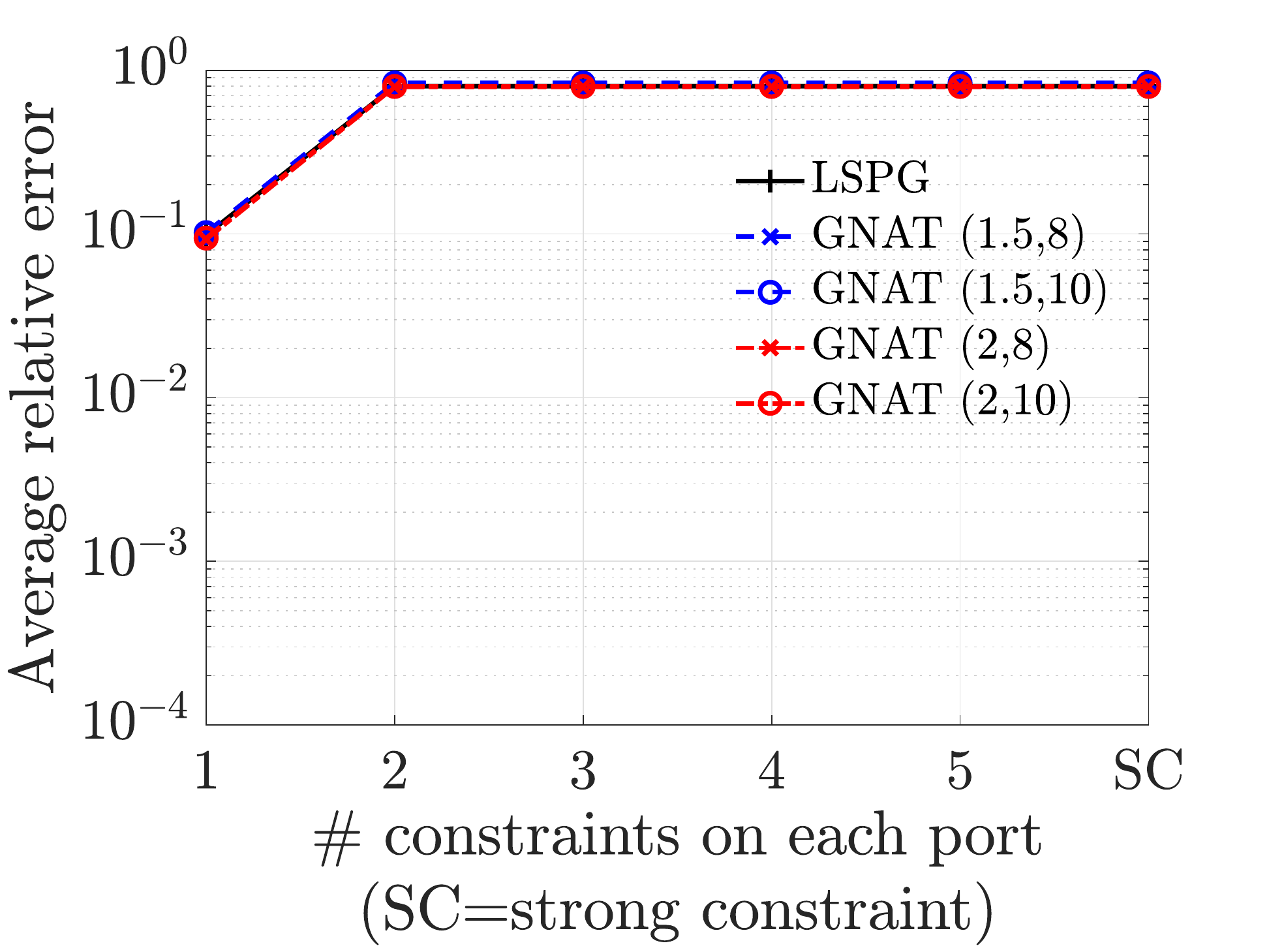}
\caption{full-interface, $\energyCriterion=10^{-4}$ on $\domaini$,
	$\energyCriterion=10^{-4}$ on $\boundaryi$}
\label{fig_ex2_4x2_rmsErr_intfBF_55}
\end{subfigure}
~
\begin{subfigure}[b]{0.3\textwidth}
\includegraphics[width=5.5cm]{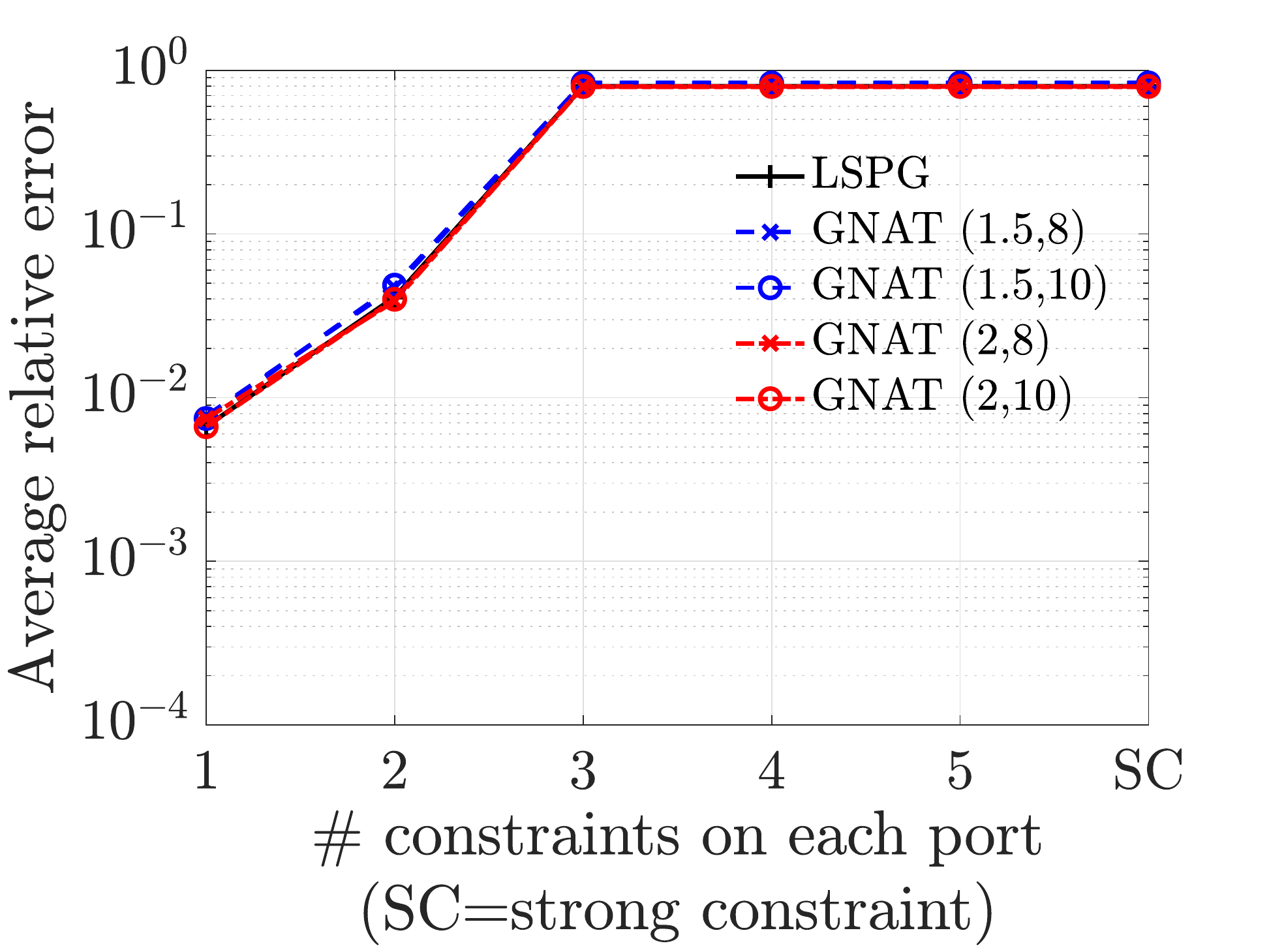}
\caption{full-interface, $\energyCriterion=10^{-4}$ on $\domaini$,
	$\energyCriterion=10^{-7}$ on $\boundaryi$}
\label{fig_ex2_4x2_rmsErr_intfBF_58}
\end{subfigure}	
~
\begin{subfigure}[b]{0.3\textwidth}
\includegraphics[width=5.5cm]{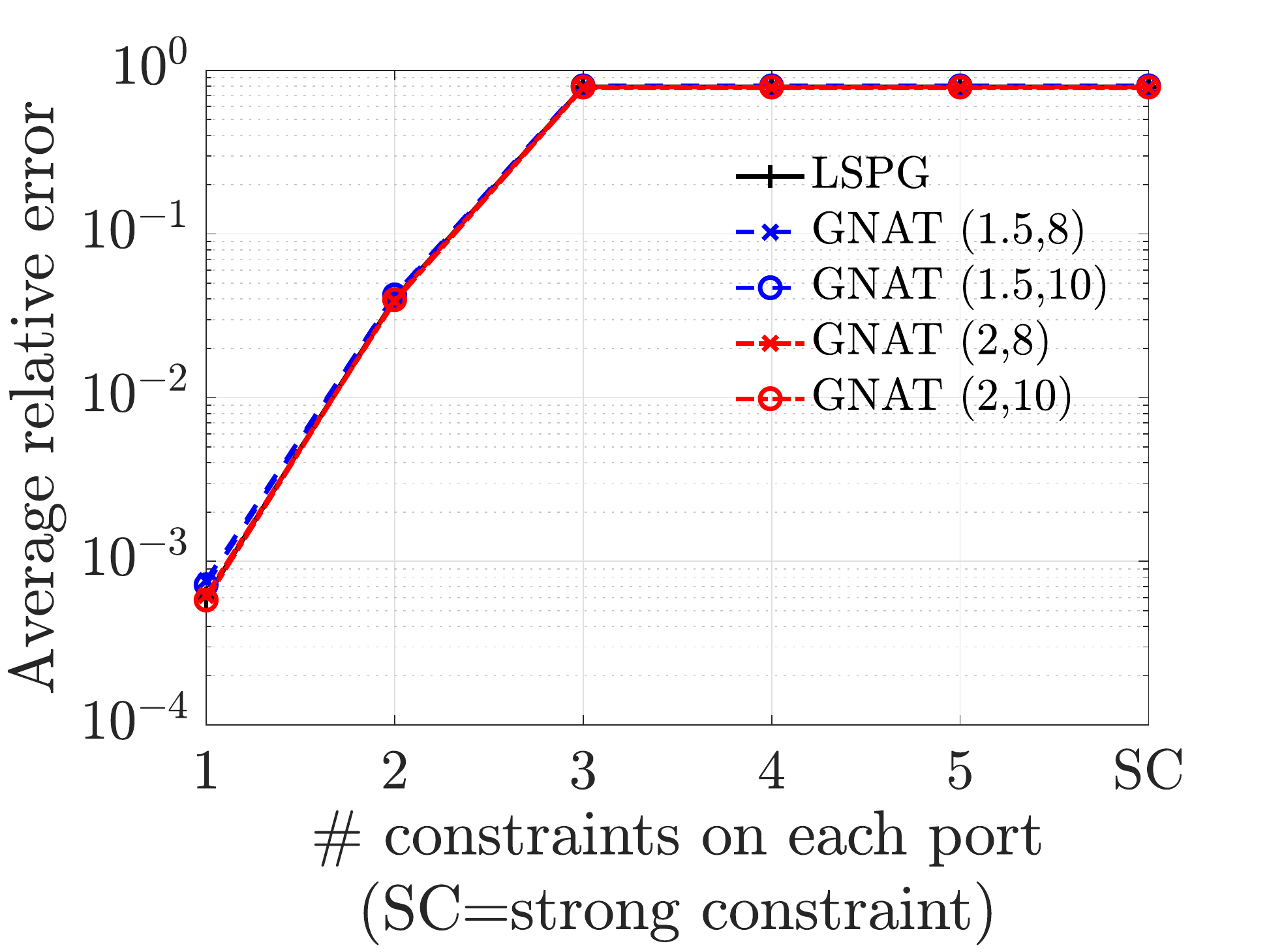}
\caption{full-interface, $\energyCriterion=10^{-7}$ on $\domaini$,
	$\energyCriterion=10^{-7}$ on $\boundaryi$}
\label{fig_ex2_4x2_rmsErr_intfBF_88}
\end{subfigure}	
~
\begin{subfigure}[b]{0.3\textwidth}
\includegraphics[width=5.5cm]{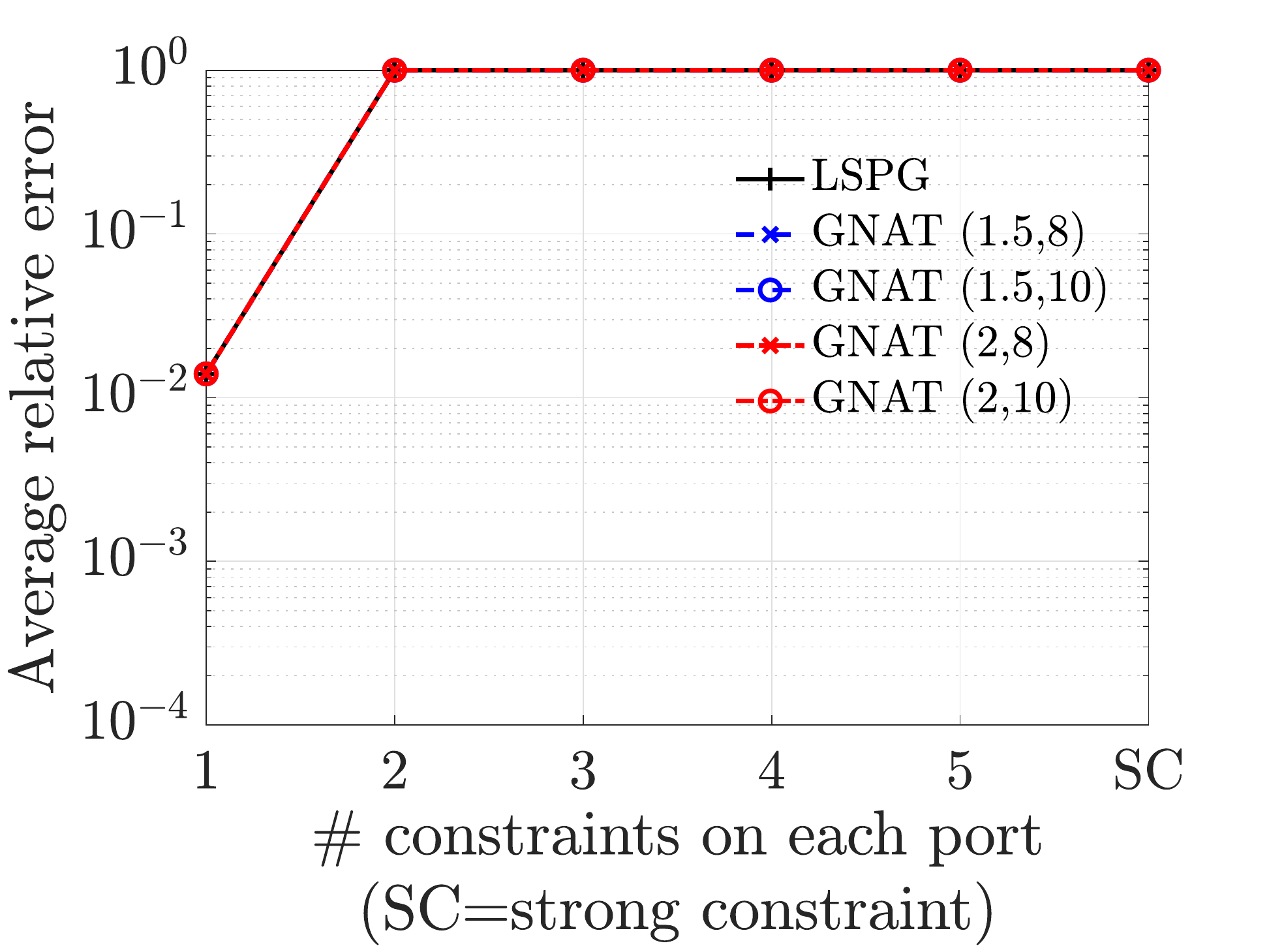}
	\caption{subdomain, $\energyCriterion=10^{-5}$}
\label{fig_ex2_4x2_rmsErr_subdomBF_5}
\end{subfigure}
~
\begin{subfigure}[b]{0.3\textwidth}
\includegraphics[width=5.5cm]{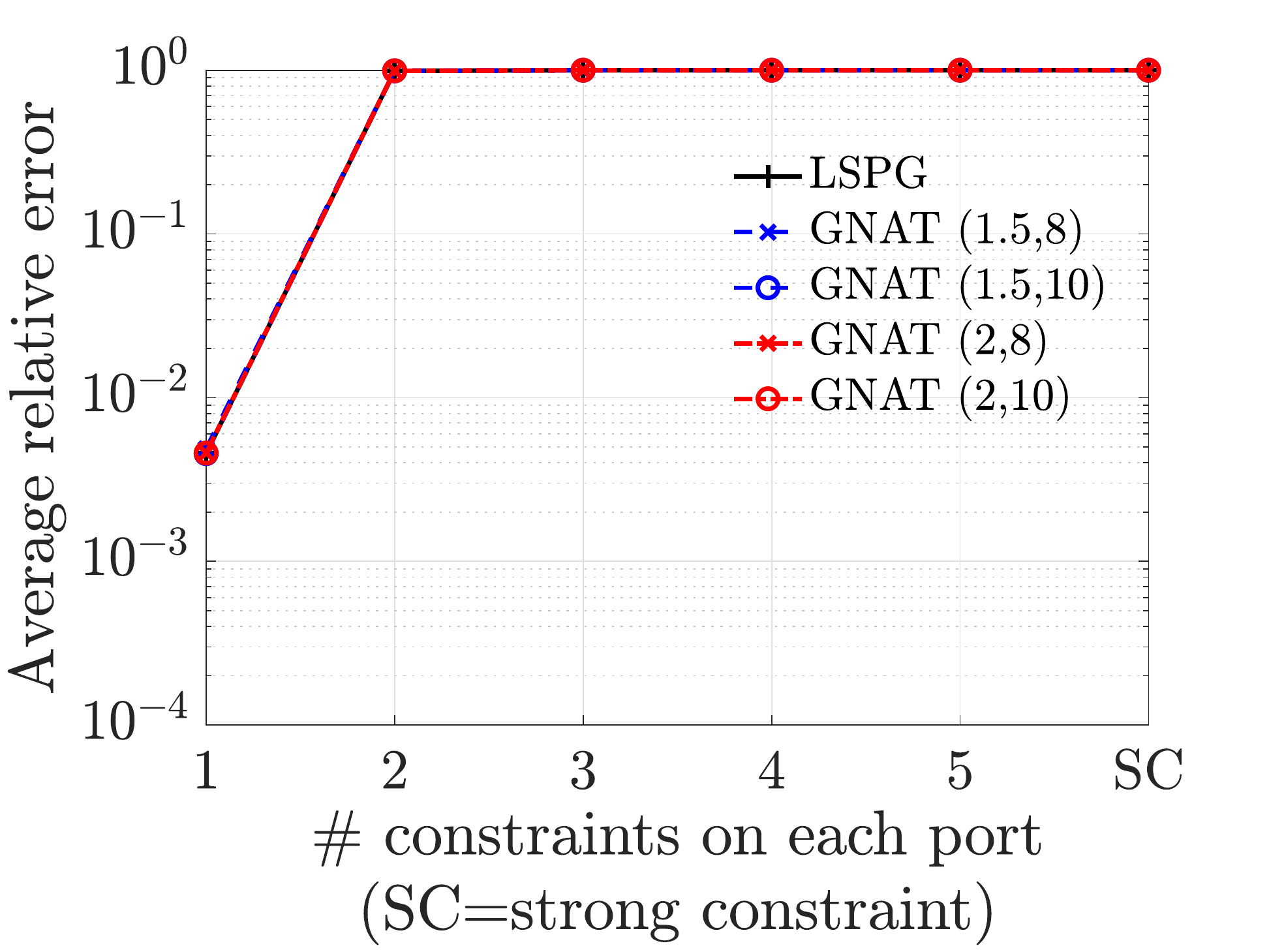}
\caption{subdomain, $\energyCriterion=10^{-6}$}
\label{fig_ex2_4x2_rmsErr_subdomBF_6}
\end{subfigure}
~
\begin{subfigure}[b]{0.3\textwidth}
\includegraphics[width=5.5cm]{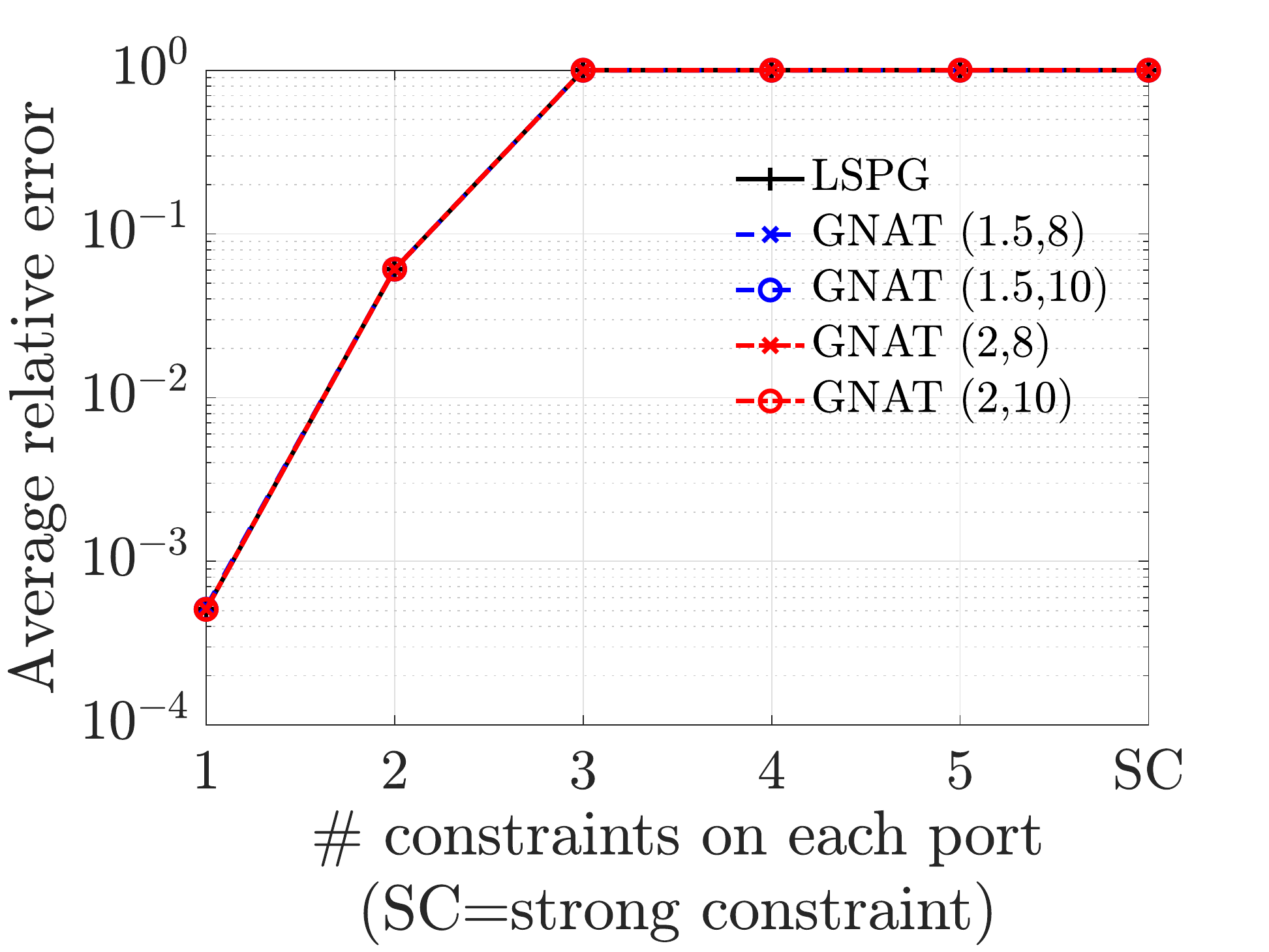}
\caption{subdom, $\energyCriterion=10^{-7}$}
\label{fig_ex2_4x2_rmsErr_subdomBF_7}
\end{subfigure}
\caption{Burgers equation, 4x2 ``fine'' configuration, average relative error
(DD-LSPG and GNAT) versus number of constraint per port for parameters reported in Table~\ref{tab_ex2_manyOnlineComputation}.
In the legend $\text{GNAT}(x,y)$ implies the GNAT model with
$\nsampleArg{i}/\nrbResi=x$ and
$\energyCriterion=10^{-y}$ for $\residuali$. } \label{fig_ex2_bg42fn_rmsErr}
\end{figure}

%\KTC{Please verify that I correctly interpreted the legend. You didn't explain GNAT(x,y) before.} \CH{Verified, Kevin!}

%%% DD-LSPG & DD-GNAT description: many online computations (provide all necessary parameter values) %%%

We again compare the performance of the ROM methods across a wide variation of
all method parameters. Table~\ref{tab_ex2_manyOnlineComputation} reports the
tested parameter values for each method. 
We employ the same approach to reporting wall times as previously described in
Section \ref{subsubsect_ex1_many_online_computations}.
%For each weak constraint case, for
%example $\nportTestFunction{j}=2$, $j=1,\ldots,\nports$, we create five random
%matrices $\randomMat$ in \eqref{eq_constraint_mat_ROM} to create five
%different sets of $\{\constraintMatROMi\}$ (or
%$\{\constraintMatROMmodArg{i}\}$), then use each set to perform one
%simulation, and record the associated timing and relative error of that
%simulation. These five timing and relative errors are then averaged to obtain
%one final timing and relative error for the case $\nportTestFunction{j}=2$, $j=1,\ldots,\nports$. For strong constraint case, we perform only one
%simulation as there is no random matrix $\randomMat$ appeared in the
%formulation \eqref{eq_constraint_mat_ROM}. Note that for timing purposes, each
%simulation is repeated five times and the average timing will be recorded as
%the final timing of that simulation. Also, the reported wall time is defined
%as: $wall \; \text{time} = \max_{1 \le i \le \nsubdomains} \{time_i\}$ for assemble, solving and overall timings. 
Again, as described previously in Section
\ref{subsubsect_ex1_many_online_computations}, we then construct a Pareto
front for each method.  Figure~\ref{fig_ex2_all_pareto} reports these Pareto
fronts, while Figure~\ref{fig_ex2_bg42fn_rmsErr} plots the average relative
error versus number of constraints per port for the 4x2 ``fine''
configuration. 

Comparing Figures \ref{fig_ex2_all_pareto} and \ref{fig_ex1_all_pareto}
illustrates that nearly identical overall trends are apparent for the two
examples; we thus refer to the discussion in Section
\ref{subsubsect_ex1_many_online_computations} to provide the primary
interpretations for the current case. The primary difference between the
previous example and the current one is that the full-subdomain bases
outperform the full-interface bases in this case and thus yield
the best overall performance; further, the skeleton basis yields
worse wall-time performance for smaller errors compared with the port bases in
the present example. We emphasize that---as in the previous example---DD-GNAT
yields the best overall performance, achieving $>50\times$ speedup with $<1\%$ relative
error, and performs best with the full-subdomain
basis in this case.

Lastly, Figure~\ref{fig_ex2_bg42fn_rmsErr} reports the average relative error
as a function of the number of constraints per port with parameters reported
in Table~\ref{tab_ex2_manyOnlineComputation} for the 4x2 ``fine''
configuration.  Again, comparing Figures \ref{fig_ex2_bg42fn_rmsErr} and
\ref{fig_ex1_heat44fn_rmsErr} illuminate that nearly identical overall trends
are observed in this example as in the previous one.  In particular, the
subfigures in the top two rows of Figure~\ref{fig_ex2_bg42fn_rmsErr} imply
that strong compatibility constraints yield better accuracy than weak
compatibility for both port and skeleton basis types, while the two last rows
show that weak constraint case with a small number of constraints per port
yield the best accuracy for full-interface and full-subdomain basis types.
The discussion in Remark \ref{rem:globalSol} accounts for this behavior:
approaches that  ensure
neighboring components have \textit{compatible} bases on shared ports perform
best with strong compatibility constraints, while approaches that allow for
neighboring components to have \textit{incompatible} bases on shared ports
perform best with weak compatibility constraints.

\section{Conclusions}\label{sec_conclusions}

This work proposed the domain-decomposition least-squares Petrov--Galerkin
(DD-LSPG) model-reduction method applicable to parameterized systems of
nonlinear algebraic equations. In contrast to previous works, we adopt an
algebraically non-overlapping decomposition strategy, allowing it to be
applicable to multiple discretization techniques in the case of parameterized
PDEs; further, in constrast with previous DDROM methods for nonlinear systems,
it is a ``complete ROM'' approach rather than a hybrid ROM/FOM technique. We
equipped DD-LSPG with hyper-reduction, four different strategies
for constructing subdomain bases, supported both strong and weak
compatibility constraints, and proposed an SQP solver that exposes
parallelization. Further, we developed both \textit{a posteriori}
and \textit{a priori} error bounds for the technique. Numerical experiments
revealed several interesting performance attributes of the DD-LSPG
methodology:
\begin{enumerate} 
\item The best type of compatibility constraint is strongly dependent on the
	type of subdomain bases; in particular, subdomain bases that admit
		basis incompatibilities on shared interfaces (i.e., full-interface and
		full-subdomain bases) require weak compatibility constraints to avoid
		trivial interface solutions, while subdomain bases that guarantee
		shared-interface compatibility (i.e., port and skeleton bases) perform
		well with strong constraints.
\item Hyper-reduction is essential to keep assembly costs low when the number
	of DOFs per subdomain is large; this is evidenced by the substantial
		performance gains of DD-GNAT over DD-LSPG for such cases. 
\item The best overall performance was achieved by full-subdomain and
		full-interface bases that employed weak compatibility constraints, with
		the worst performance obtained by port bases, as the latter case generally
		yields a large number of interface DOFs compared with the other approaches.
		Skeleton bases generally yielded intermediate performance, but are
		impractical for truly extreme-scale problems or decomposable systems, as
		they require full-system snapshots to be constructed.
\item \CH{Bottom-up training is possible and promising with our proposed framework although more research needs to be done to make it more mature. }
\end{enumerate}

\YC{Our proposed DD-ROM method is less efficient than the monolithic ROM approach in the
online phase because the monolithic ROM approach can produce a smaller number of reduced
bases than our method. However, our method introduces a way of accomplishing a
domain-decomposition ROM that can be useful for truly large-scale problems where
the monolithic ROM may not be feasible due to expensive training phase, e.g.,
insufficient memory (thanks to bottom-up training).}
\YC{Indeed, while reduced-order models have demonstrated success in many
applications across computational science and engineering, they encounter
challenges when applied both to nonlinear extreme-scale models due to the
prohibitive cost of generating requisite training data, and to decomposable
systems due to many-query problems (e.g., design
\cite{amsallem2015design, choi2019accelerating, choi2020gradient,
white2020dual}) often requiring repeated reconfigurations of system components.
We believe that our current work is a step toward addressing these challenges.}

Future work will consider application to truly large-scale problems, alternative
parallel numerical solvers for DD-LSPG, more efficient ``bottom-up'' training
strategies that \YC{does not} require full-system snapshots and thus make the
approach directly amenable to extreme-scale and decomposable systems,
considering time-dependent problems, and supporting nonlinear trial manifolds
\cite{lee2020model,kim2020fast} for subdomains rather than strictly linear
subspaces spanned by reduced bases
\cite{hoang2015efficient,hoang2016hp,hoang2018fast}.  \YC{We will also consider
spatially distributed parameter-dependent problems because the DD-ROM should be
able to handle such a high dimensional parameter space efficiently. }

\section*{Acknowledgements}
This paper describes objective technical results and analysis. Any subjective views or opinions that might be expressed in the paper do not necessarily represent the views of the U.S. Department of Energy or the United States Government. Sandia National Laboratories is a multimission laboratory managed and operated by National Technology and Engineering Solutions of Sandia, LLC., a wholly owned subsidiary of Honeywell International, Inc., for the U.S. Department of Energy’s National Nuclear Security Administration under contract DE-NA-0003525.

\appendix

\section{Offline computational procedure for GNAT}\label{subsect_GNAT_offline}

%\KTC{This whole section is sloppy. Please make it rigorous and correct.} \CH{Fixed!} 

\subsection{Formation of residual bases}
Given a residual-snapshot matrix $\setOfGlobalResidualSnapshots=\left[ \residual^1(\state;\trainParam{1}), \ldots, \residual^{\numNewtonIterationArg{1}}(\state;\trainParam{1}), \ldots, \residual^1(\state;\trainParam{\numTrainSample}), \ldots, \residual^{\numNewtonIterationArg{\numTrainSample}}(\state;\trainParam{\numTrainSample}) \right] \in \mathbb{R}^{\ndof \times \sum_{j=1}^{\numTrainSample} \numNewtonIterationArg{j}}$, where $\numNewtonIterationArg{j}$ is the number of Newton iterations associated with parameter $\trainParam{j}, 1 \le j \le \numTrainSample$. We will build the residual bases on each subdomain as: $\rbResArg{i} = \texttt{POD}(\projectionResArg{i}\setOfGlobalResidualSnapshots,\energyCriterion)$,
$i=1,\ldots,\nsubdomains$.

%On each subdomain $\domain_i$, we
%first isolate the global residual snapshots to subdomain residual DOFs to form
%$\setOfSubdomResidualSnapshots{i} = \{ \residualArg{i}(\state;\trainParam{j})
%\}, j \in \natno{\numTrainSample}, i \in \natno{\nsubdomains}$ where $\setOfSubdomResidualSnapshots{i} \in \mathbb{R}^{\nresi \times \sum_{j=1}^{\numTrainSample} \numNewtonIterationArg{j}}$. We then compute the bases from the SVD (algorithm \ref{alg_SVD}) as: $\rbResArg{i} = \texttt{SVD}\left( \setOfSubdomResidualSnapshots{i} \right)$, where $\rbResArg{i} \in \RR{ \nresi \times \nrbResArg{i} }$. 

%\begin{align}\label{eq_svd_residual}
%\setOfSubdomResidualSnapshots{i} &= \bds{U}_{\residual} \bds{\Sigma}_{\residual} \bds{V}_{\residual}^T \in \mathbb{R}^{\nresi \times \sum_{j=1}^{\numTrainSample} \numNewtonIterationArg{j}}, \\
%	\rbResArg{i} &= [\bds{u}^{\residual}_1, \ldots, \bds{u}^{\residual}_{\nrbResArg{i}}] \in \mathbb{R}^{\nresi \times \nrbResArg{i}}, 
%\end{align}
%where $\bds{U}_{\residual} = [\bds{u}^{\residual}_1, \ldots, \bds{u}^{\residual}_{\sum_{j=1}^{\numTrainSample} \numNewtonIterationArg{j}}] \in \mathbb{R}^{\nresi \times \sum_{j=1}^{\numTrainSample} \numNewtonIterationArg{j}}$ and $\nrbResArg{i} \le \sum_{j=1}^{\numTrainSample} \numNewtonIterationArg{j}$. 

\subsection{Greedy algorithm} \label{subsubsect_GNAT_greedy_algorithm}

%\KTC{This section is sloppy, please clean up. For example it is not obvious how to use the output of this algorithm anywhere, as the variable $\sampleSetOfSampleMesh{}$ isn't referenced outside of the algorithm! \CH{I did modify so that sections~\ref{sec:heatOneOnline}, \ref{sec:BurgersOneOnline} use variable $\sampleSetOfSampleMesh{i}$.} Also, in all our examples, we just use $\gamma=1$ so you can assume that and simplify here.} \CH{Not true: in our second numerical example, $\gamma=2$ with $(u,v)$ two velocity components in $(x,y)$ axes.} \CH{Can you review the below, Kevin?}

%%%%%%%%%%%%% algorithm 3: Greedy algorithms for GNAT %%%%%%%%%%%%%
\begin{center}
\begin{minipage}[ht!]{16cm}
\vspace{0pt}
\begin{algorithm}[H]\label{alg_Greedy_GNAT}
\begin{algorithmic}[1]
	\caption{Greedy algorithm to construct spatial sample sets of all subdomains $\domain_i$ }
	\REQUIRE On each subdomain $\domainArg{i}$: residual basis $\rbResi \in \mathbb{R}^{\nresi \times \nrbResi}$, desired number of sample nodes $\nsampleArg{i}$, number of working columns of $\rbResi$ denoted by $\numWorkColumnsPhiRes{i} \le \min(\nrbResi,\numUnknownPerNode \nsampleArg{i})$, where $\numUnknownPerNode$ denotes the number of unknowns at a node.
	\ENSURE Spatial sample set $\sampleSetOfSampleMesh{i}$ on each subdomain $\domainArg{i}$, $i=1,\ldots,\nsubdomains$
	\FOR {$i=1:\nsubdomains$ \{drop subscript $i$ from this line for legibility\} }
	\STATE $\sampleSetOfSampleMesh{} \leftarrow$ \{corner nodes\}
	\STATE Compute the additional number of nodes to sample: $\additionalNumNodesOfSampleMesh{} = \nsampleArg{} - |\sampleSetOfSampleMesh{}|$
	\STATE Initialize counter for the number of working basis vectors used: $\counterOfNumWorkBasisVectorOfSampleMesh{} \leftarrow 0$
	\STATE Set the number of greedy iterations to perform: $\numGreedyIterOfSampleMesh{}=\min(\numWorkColumnsPhiRes{}, \additionalNumNodesOfSampleMesh{})$
	\STATE Compute the maximum number of right-hand sides in the least squares problems: $\maxNumRHSOfSampleMesh{} = {\rm ceil}(\numWorkColumnsPhiRes{} / \additionalNumNodesOfSampleMesh{}) $ 
	\STATE Compute the minimum number of working basis vectors per iteration: $\minNumWorkBasisVectorPerIterOfSampleMesh{} = {\rm floor}(\numWorkColumnsPhiRes{}/\numGreedyIterOfSampleMesh{})$
	\STATE Compute the minimum number of sample nodes to add per iteration: $\minNumSampleNodeToAddPerIterOfSampleMesh{} = {\rm floor}(\additionalNumNodesOfSampleMesh{}\maxNumRHSOfSampleMesh{}/\numWorkColumnsPhiRes{})$
	\FOR {$j=1,\ldots,\numGreedyIterOfSampleMesh{}$ \{greedy iteration loop\}} 
	\STATE Compute the number of working basis vectors for this iteration: $\numWorkBasisVectorForThisIterOfSampleMesh{} \leftarrow \minNumWorkBasisVectorPerIterOfSampleMesh{}$
	\STATE If ($j \le \numWorkColumnsPhiRes{} \mod \numGreedyIterOfSampleMesh{}$), then $\numWorkBasisVectorForThisIterOfSampleMesh{} \leftarrow \numWorkBasisVectorForThisIterOfSampleMesh{}+1$
	\STATE Compute the number of sample nodes to add during this iteration: $\numSampleNodeToAddForThisIterOfSampleMesh{} \leftarrow \minNumSampleNodeToAddPerIterOfSampleMesh{}$
	\STATE If ($\maxNumRHSOfSampleMesh{}=1$) and ($j \le \additionalNumNodesOfSampleMesh{} \mod \numWorkColumnsPhiRes{}$), then $\numSampleNodeToAddForThisIterOfSampleMesh{} \leftarrow \numSampleNodeToAddForThisIterOfSampleMesh{}+1$
	\IF {$j=1$} 
	\STATE {$[R^1 \ldots R^{\numWorkBasisVectorForThisIterOfSampleMesh{}}] \leftarrow [\phi^1_{r} \ldots \phi^{\numWorkBasisVectorForThisIterOfSampleMesh{}}_r]$}
	\ELSE { 
		\STATE \textbf{for} $q=1,\ldots, \numWorkBasisVectorForThisIterOfSampleMesh{}$ \{basis vector loop\}
		\STATE $R^q \leftarrow
		\phi^{\counterOfNumWorkBasisVectorOfSampleMesh{}+q}_r - [\phi^1_r \ldots
		\phi^{\counterOfNumWorkBasisVectorOfSampleMesh{}}_r]\alpha $, with $\alpha
		= \argmin{\gamma \in
		\mathbb{R}^{\counterOfNumWorkBasisVectorOfSampleMesh{}}} \|[\sampleMat\phi^1_r
		\ldots \sampleMat\phi^{\counterOfNumWorkBasisVectorOfSampleMesh{}}_r]\gamma -
		\sampleMat\phi^{\counterOfNumWorkBasisVectorOfSampleMesh{}+q}_r\|_2$
		\STATE \textbf{end for}	}
	\ENDIF
	\FOR{$k=1,\ldots,\numSampleNodeToAddForThisIterOfSampleMesh{}$ \{sample node loop\} }
	\STATE Choose node with largest average error: $n \leftarrow {\rm arg} \max\limits_{l \notin \sampleSetOfSampleMesh{}} \sum_{q=1}^{\numWorkBasisVectorForThisIterOfSampleMesh{}}\left( \sum_{j\in\delta(l)} (R^q_j)^2 \right)$, where $\delta(l)$ denotes the degrees of freedom associated with node $l$.
	\STATE $\sampleSetOfSampleMesh{} \leftarrow \sampleSetOfSampleMesh{} \cup \{n\}$
	\ENDFOR	
	\STATE $\counterOfNumWorkBasisVectorOfSampleMesh{} \leftarrow \counterOfNumWorkBasisVectorOfSampleMesh{} + \numWorkBasisVectorForThisIterOfSampleMesh{}$
	\ENDFOR
	\ENDFOR
\end{algorithmic}
\end{algorithm}
\end{minipage}% 
\end{center}

We adopt and adjust the original greedy algorithm developed earlier \cite{carlberg2013gnat} to build the sample mesh for each subdomain $\domainArg{i}, i=1,\ldots,\nsubdomains$. Algorithm~\ref{alg_Greedy_GNAT} presents
the modified greedy algorithm in which we drop the subscript $i$ of subdomains
for avoiding cumbersomeness, and note that $[\phi^1_r \ldots \phi^{\nrbResArg{i}}_r]
= \rbResArg{i}$ is the residual bases and $\sampleMat = \sampleMatArg{i}$ is the sampling matrix of each subdomain $\domainArg{i}$.

In comparison to the original greedy algorithm in \cite{carlberg2013gnat}, Algorithm~\ref{alg_Greedy_GNAT} has two modifications: (i) there is an outer ``for loop'' that loops over all subdomains (line 1, algorithm~\ref{alg_Greedy_GNAT}), and (ii) we include the \CH{``corner'' nodes (i.e., any interface node)} into the sample mesh before the first greedy iteration (line 2, algorithm~\ref{alg_Greedy_GNAT}). The latter modification ensures that there is at least one interface node be included in the sample mesh, as otherwise $\redStateBoundaryi$ (and hence $\stateApproxBoundaryi$) will not be updated through Newton iterations (since $\searchDirBoundaryik$ is always zero in \eqref{eq_prdumono_update}). Namely, there may have no connection between one subdomain with surrounding neighbor subdomains, or that subdomain is completely isolated. This phenomenon is called ``digraph connecting condition'' \cite{DigraphConnectingCondition} (see figure \ref{fig_divided_models} for an example of corner nodes of subdomains). We also note that the offline GNAT procedure is performed only once as it only depends on the residual bases of each subdomain, and completely do \textit{not} depend on basis types, constraint types and solver types of the DD-LSPG problem.

\bibliography{references_1}

\begin{thebibliography}{10}
\newcommand{\enquote}[1]{``#1''}

\bibitem{maday2002reduced}
Maday, Y. and R{\o}nquist, E.~M.,
  \enquote{\href{https://link.springer.com/article/10.1023/A:1015197908587}{A
  reduced-basis element method},} {\em Journal of scientific computing\/},
  Vol.~17, No. 1-4, 2002, pp.~447--459.

\bibitem{maday2004reduced}
Maday, Y. and Ronquist, E.~M.,
  \enquote{\href{https://epubs.siam.org/doi/abs/10.1137/S1064827502419932}{The
  reduced basis element method: application to a thermal fin problem},} {\em
  SIAM Journal on Scientific Computing\/}, Vol.~26, No.~1, 2004, pp.~240--258.

\bibitem{iapichino2012reduced}
Iapichino, L., Quarteroni, A., and Rozza, G.,
  \enquote{\href{https://www.sciencedirect.com/science/article/pii/S0045782512000473}{A
  reduced basis hybrid method for the coupling of parametrized domains
  represented by fluidic networks},} {\em Computer Methods in Applied Mechanics
  and Engineering\/}, Vol.~221, 2012, pp.~63--82.

\bibitem{huynh2013static_a}
Huynh, D. B.~P., Knezevic, D.~J., and Patera, A.~T.,
  \enquote{\href{https://www.cambridge.org/core/journals/esaim-mathematical-modelling-and-numerical-analysis/article/static-condensation-reduced-basis-element-method-approximation-and-a-posteriori-error-estimation/04A1E506B5BDFAEF55B0798E1DECD86E}{A
  static condensation reduced basis element method: approximation and a
  posteriori error estimation},} {\em ESAIM: Mathematical Modelling and
  Numerical Analysis\/}, Vol.~47, No.~1, 2013, pp.~213--251.

\bibitem{eftang2012adaptive}
Eftang, J., Huynh, D., Knezevic, D., Ronquist, E., and Patera, A.,
  \enquote{\href{https://pdfs.semanticscholar.org/d13d/827f492e0c9eb009c5a2cfaaa4b63e9ce490.pdf}{Adaptive
  port reduction in static condensation},} {\em IFAC Proceedings Volumes\/},
  Vol.~45, No.~2, 2012, pp.~695--699.

\bibitem{iapichino2016reduced}
Iapichino, L., Quarteroni, A., and Rozza, G.,
  \enquote{\href{https://www.sciencedirect.com/science/article/pii/S0898122115005696}{Reduced
  basis method and domain decomposition for elliptic problems in networks and
  complex parametrized geometries},} {\em Computers \& Mathematics with
  Applications\/}, Vol.~71, No.~1, 2016, pp.~408--430.

\bibitem{benzi2005numerical}
Benzi, M., Golub, G.~H., and Liesen, J.,
  \enquote{\href{https://www.cambridge.org/core/journals/acta-numerica/article/numerical-solution-of-saddle-point-problems/2596C5D03B23AF89FE5A756891029B12}{Numerical
  solution of saddle point problems},} {\em Acta numerica\/}, Vol.~14, 2005,
  pp.~1--137.

\bibitem{huynh2013static_b}
Huynh, D. B.~P., Knezevic, D.~J., and Patera, A.~T.,
  \enquote{\href{https://www.sciencedirect.com/science/article/pii/S0045782513000455}{A
  static condensation reduced basis element method: Complex problems},} {\em
  Computer Methods in Applied Mechanics and Engineering\/}, Vol.~259, 2013,
  pp.~197--216.

\bibitem{lovgren2006reduced}
L{\o}vgren, A.~E., Maday, Y., and R{\o}nquist, E.~M.,
  \enquote{\href{https://link.springer.com/chapter/10.1007/978-3-7643-7742-7_8}{The
  reduced basis element method for fluid flows},} {\em Analysis and Simulation
  of Fluid Dynamics\/}, Springer, 2006, pp. 129--154.

\bibitem{nguyen2008multiscale}
Nguyen, N.~C.,
  \enquote{\href{https://www.sciencedirect.com/science/article/pii/S0021999108004087}{A
  multiscale reduced-basis method for parametrized elliptic partial
  differential equations with multiple scales},} {\em Journal of Computational
  Physics\/}, Vol.~227, No.~23, 2008, pp.~9807--9822.

\bibitem{he2020situ}
He, W., Avery, P., and Farhat, C.,
  \enquote{\href{https://arxiv.org/abs/2004.00153}{In-situ adaptive reduction
  of nonlinear multiscale structural dynamics models},} {\em arXiv preprint
  arXiv:2004.00153\/}, 2020.

\bibitem{kaulmann2011new}
Kaulmann, S., Ohlberger, M., and Haasdonk, B.,
  \enquote{\href{https://www.sciencedirect.com/science/article/pii/S1631073X11003074}{A
  new local reduced basis discontinuous Galerkin approach for heterogeneous
  multiscale problems},} {\em Comptes Rendus Mathematique\/}, Vol.~349, No.
  23-24, 2011, pp.~1233--1238.

\bibitem{ohlberger2015error}
Ohlberger, M. and Schindler, F.,
  \enquote{\href{https://epubs.siam.org/doi/abs/10.1137/151003660}{Error
  control for the localized reduced basis multiscale method with adaptive
  on-line enrichment},} {\em SIAM Journal on Scientific Computing\/}, Vol.~37,
  No.~6, 2015, pp.~A2865--A2895.

\bibitem{abdulle2015reduced}
Abdulle, A. and Henning, P.,
  \enquote{\href{https://www.sciencedirect.com/science/article/pii/S0021999115002624}{A
  reduced basis localized orthogonal decomposition},} {\em Journal of
  Computational Physics\/}, Vol.~295, 2015, pp.~379--401.

\bibitem{martini2015reduced}
Martini, I., Rozza, G., and Haasdonk, B.,
  \enquote{\href{https://link.springer.com/article/10.1007/s10444-014-9396-6}{Reduced
  basis approximation and a-posteriori error estimation for the coupled
  Stokes-Darcy system},} {\em Advances in Computational Mathematics\/},
  Vol.~41, No.~5, 2015, pp.~1131--1157.

\bibitem{buhr2017arbilomod}
Buhr, A., Engwer, C., Ohlberger, M., and Rave, S.,
  \enquote{\href{https://epubs.siam.org/doi/abs/10.1137/15M1054213}{ArbiLoMod,
  a simulation technique designed for arbitrary local modifications},} {\em
  SIAM Journal on Scientific Computing\/}, Vol.~39, No.~4, 2017,
  pp.~A1435--A1465.

\bibitem{barbivc2011real}
Barbi{\v{c}}, J. and Zhao, Y.,
  \enquote{\href{https://dl.acm.org/doi/abs/10.1145/2010324.1964986}{Real-time
  large-deformation substructuring},} {\em ACM transactions on graphics
  (TOG)\/}, Vol.~30, No.~4, 2011, pp.~1--8.

\bibitem{kim2012physics}
Kim, T. and James, D.~L.,
  \enquote{\href{https://ieeexplore.ieee.org/abstract/document/6165281?casa_token=LF005zSONnoAAAAA:flq9QsfvC3YTCahaRYAjnbgHNDDrNrf941-X09SuPBY3e-K1AlCQeC3WCFADIJU9Zl74LQw}{Physics-based
  character skinning using multidomain subspace deformations},} {\em IEEE
  transactions on visualization and computer graphics\/}, Vol.~18, No.~8, 2012,
  pp.~1228--1240.

\bibitem{yang2013boundary}
Yang, Y., Xu, W., Guo, X., Zhou, K., and Guo, B.,
  \enquote{\href{https://ieeexplore.ieee.org/abstract/document/6461878?casa_token=NuUWVp8bh9AAAAAA:g-2pctNGfRveE6h4brqxh0F-jOEeOZrZH1Y6fdyFKANB5WRzJJvKOgdhqelBH9jGqICqsvI}{Boundary-aware
  multidomain subspace deformation},} {\em IEEE transactions on visualization
  and computer graphics\/}, Vol.~19, No.~10, 2013, pp.~1633--1645.

\bibitem{peiret2019schur}
Peiret, A., Andrews, S., K{\"o}vecses, J., Kry, P.~G., and Teichmann, M.,
  \enquote{\href{https://dl.acm.org/doi/abs/10.1145/3355621}{Schur
  complement-based substructuring of stiff multibody systems with contact},}
  {\em ACM Transactions on Graphics (TOG)\/}, Vol.~38, No.~5, 2019, pp.~1--17.

\bibitem{teng2015subspace}
Teng, Y., Meyer, M., DeRose, T., and Kim, T.,
  \enquote{\href{https://dl.acm.org/doi/abs/10.1145/2766904}{Subspace
  condensation: full space adaptivity for subspace deformations},} {\em ACM
  Transactions on Graphics (TOG)\/}, Vol.~34, No.~4, 2015, pp.~1--9.

\bibitem{buffoni2009iterative}
Buffoni, M., Telib, H., and Iollo, A.,
  \enquote{\href{https://www.sciencedirect.com/science/article/pii/S0045793008002259}{Iterative
  methods for model reduction by domain decomposition},} {\em Computers \&
  Fluids\/}, Vol.~38, No.~6, 2009, pp.~1160--1167.

\bibitem{toselli2006domain}
Toselli, A. and Widlund, O., {\em Domain decomposition methods - algorithms and
  theory\/}, Vol.~34, Springer Science \& Business Media, 2006.

\bibitem{sirovich1987turbulence}
Sirovich, L.,
  \enquote{\href{https://www.ams.org/journals/qam/1987-45-03/S0033-569X-1987-0910462-6/S0033-569X-1987-0910462-6.pdf}{Turbulence
  and the dynamics of coherent structures. Part I: Coherent structures},} {\em
  Quarterly of applied mathematics\/}, Vol.~45, No.~3, 1987, pp.~561--571.

\bibitem{kerfridengoury2012}
Kerfriden, P., Goury, O., Rabczuk, T., and Bordas, S.-A.,
  \enquote{\href{http://www.sciencedirect.com/science/article/pii/S004578251200374X}{A
  partitioned model order reduction approach to rationalise computational
  expenses in nonlinear fracture mechanics},} {\em Computer Methods in Applied
  Mechanics and Engineering\/}, Vol.~200, No.~5, 2012, pp.~850--866.

\bibitem{chaturantabut2010nonlinear}
Chaturantabut, S. and Sorensen, D.~C.,
  \enquote{\href{http://epubs.siam.org/doi/abs/10.1137/090766498}{Nonlinear
  model reduction via discrete empirical interpolation},} {\em SIAM Journal on
  Scientific Computing\/}, Vol.~32, No.~5, 2010, pp.~2737--2764.

\bibitem{corigliano2015model}
Corigliano, A., Dossi, M., and Mariani, S.,
  \enquote{\href{https://www.sciencedirect.com/science/article/pii/S0045782515000705}{Model
  Order Reduction and domain decomposition strategies for the solution of the
  dynamic elastic--plastic structural problem},} {\em Computer Methods in
  Applied Mechanics and Engineering\/}, Vol.~290, 2015, pp.~127--155.

\bibitem{gravouil2001multi}
Gravouil, A. and Combescure, A.,
  \enquote{\href{https://onlinelibrary.wiley.com/doi/abs/10.1002/1097-0207(20010110)50:1\%3C199::AID-NME132\%3E3.0.CO\;2-A}{Multi-time-step
  explicit--implicit method for non-linear structural dynamics},} {\em
  International Journal for Numerical Methods in Engineering\/}, Vol.~50,
  No.~1, 2001, pp.~199--225.

\bibitem{baiges2013domain}
Baiges, J., Codina, R., and Idelsohn, S.,
  \enquote{\href{https://www.sciencedirect.com/science/article/pii/S0045782513001965}{A
  domain decomposition strategy for reduced order models. Application to the
  incompressible Navier--Stokes equations},} {\em Computer Methods in Applied
  Mechanics and Engineering\/}, Vol.~267, 2013, pp.~23--42.

\bibitem{baiges2013explicit}
Baiges, J., Codina, R., and Idelsohn, S.,
  \enquote{\href{https://onlinelibrary.wiley.com/doi/full/10.1002/fld.3777}{Explicit
  reduced-order models for the stabilized finite element approximation of the
  incompressible Navier--Stokes equations},} {\em International Journal for
  Numerical Methods in Fluids\/}, Vol.~72, No.~12, 2013, pp.~1219--1243.

\bibitem{bui2008model}
Bui-Thanh, T., Willcox, K., and Ghattas, O.,
  \enquote{\href{https://epubs.siam.org/doi/abs/10.1137/070694855?casa_token=3F_i3GdhFUgAAAAA:096IUQYHm5Xik41YV3xm3Yfg8HVdOYvn-MOUPAhMazE0YCk218lKSKJNQKb9rv0Ut_aHqEZJPg}{Model
  reduction for large-scale systems with high-dimensional parametric input
  space},} {\em SIAM Journal on Scientific Computing\/}, Vol.~30, No.~6, 2008,
  pp.~3270--3288.

\bibitem{legresley2006application}
LeGresley, P.~A., {\em
  \href{https://www.semanticscholar.org/paper/Application-Of-Proper-Orthogonal-Decomposition-To-LeGresley/b2e1c87fddf18a45e9cebbd7c6282ff275f0489f}{Application
  of proper orthogonal decomposition (POD) to design decomposition methods}\/},
  Stanford University, 2006.

\bibitem{carlbergbou-mosleh2011}
Carlberg, K., Bou-Mosleh, C., and Farhat, C.,
  \enquote{\href{http://onlinelibrary.wiley.com/doi/10.1002/nme.3050/full}{Efficient
  non-linear model reduction via a least-squares Petrov--Galerkin projection
  and compressive tensor approximations},} {\em International Journal for
  Numerical Methods in Engineering\/}, Vol.~86, No.~2, 2011, pp.~155--181.

\bibitem{carlberg2017galerkin}
Carlberg, K., Barone, M., and Antil, H.,
  \enquote{\href{https://www.sciencedirect.com/science/article/pii/S0021999116305319}{Galerkin
  v. least-squares Petrov--Galerkin projection in nonlinear model reduction},}
  {\em Journal of Computational Physics\/}, Vol.~330, 2017, pp.~693--734.

\bibitem{carlberg2018conservative}
Carlberg, K., Choi, Y., and Sargsyan, S.,
  \enquote{\href{https://www.sciencedirect.com/science/article/pii/S002199911830319X}{Conservative
  model reduction for finite-volume models},} {\em Journal of Computational
  Physics\/}, Vol.~371, 2018, pp.~280--314.

\bibitem{choi2019space}
Choi, Y. and Carlberg, K.,
  \enquote{\href{https://epubs.siam.org/doi/abs/10.1137/17M1120531}{Space--Time
  Least-Squares Petrov--Galerkin Projection for Nonlinear Model Reduction},}
  {\em SIAM Journal on Scientific Computing\/}, Vol.~41, No.~1, 2019,
  pp.~A26--A58.

\bibitem{carlberg2013gnat}
Carlberg, K., Farhat, C., Cortial, J., and Amsallem, D.,
  \enquote{\href{https://www.sciencedirect.com/science/article/pii/S0021999113001472}{The
  GNAT method for nonlinear model reduction: effective implementation and
  application to computational fluid dynamics and turbulent flows},} {\em
  Journal of Computational Physics\/}, Vol.~242, 2013, pp.~623--647.

\bibitem{choi2020sns}
Choi, Y., Coombs, D., and Anderson, R.,
  \enquote{\href{https://epubs.siam.org/doi/abs/10.1137/19M1242963}{SNS: a
  solution-based nonlinear subspace method for time-dependent model order
  reduction},} {\em SIAM Journal on Scientific Computing\/}, Vol.~42, No.~2,
  2020, pp.~A1116--A1146.

\bibitem{astrid2008missing}
Astrid, P., Weiland, S., Willcox, K., and Backx, T.,
  \enquote{\href{https://ieeexplore.ieee.org/abstract/document/4668528}{Missing
  point estimation in models described by proper orthogonal decomposition},}
  {\em IEEE Transactions on Automatic Control\/}, Vol.~53, No.~10, 2008,
  pp.~2237--2251.

\bibitem{ryckelynck2005priori}
Ryckelynck, D.,
  \enquote{\href{https://www.sciencedirect.com/science/article/pii/S002199910400289X}{A
  priori hyperreduction method: an adaptive approach},} {\em Journal of
  computational physics\/}, Vol.~202, No.~1, 2005, pp.~346--366.

\bibitem{sirovichOrigGappy}
Everson, R. and Sirovich, L.,
  \enquote{\href{https://www.osapublishing.org/josaa/abstract.cfm?uri=josaa-12-8-1657}{{K}arhunen--{L}o\`{e}ve
  procedure for gappy data},} {\em Journal of the Optical Society of America
  A\/}, Vol.~12, No.~8, 1995, pp.~1657--1664.

\bibitem{halko2011finding}
Halko, N., Martinsson, P.-G., and Tropp, J.~A.,
  \enquote{\href{https://epubs.siam.org/doi/abs/10.1137/090771806}{Finding
  structure with randomness: Probabilistic algorithms for constructing
  approximate matrix decompositions},} {\em SIAM review\/}, Vol.~53, No.~2,
  2011, pp.~217--288.

\bibitem{mascarenhas2014divergence}
Mascarenhas, W.~F.,
  \enquote{\href{https://link.springer.com/article/10.1007/s10107-013-0720-6}{The
  divergence of the BFGS and Gauss Newton methods},} {\em Mathematical
  Programming\/}, Vol.~147, No. 1-2, 2014, pp.~253--276.

\bibitem{lubin2012parallel}
Lubin, M., Petra, C.~G., and Anitescu, M.,
  \enquote{\href{https://www.tandfonline.com/doi/abs/10.1080/10556788.2011.602976}{The
  parallel solution of dense saddle-point linear systems arising in stochastic
  programming},} {\em Optimization Methods and Software\/}, Vol.~27, No. 4-5,
  2012, pp.~845--864.

\bibitem{pestana2014antitriangular}
Pestana, J. and Wathen, A.~J.,
  \enquote{\href{https://epubs.siam.org/doi/abs/10.1137/130934933?casa_token=pwcz2gfVps0AAAAA:rtnvQi0gzoJIbkySyT1NRvt9_tAdICWpbKYH9h54V-YB8ytg97QhLVh41g61kXi55RyxhOi-zg}{The
  antitriangular factorization of saddle point matrices},} {\em SIAM Journal on
  Matrix Analysis and Applications\/}, Vol.~35, No.~2, 2014, pp.~339--353.

\bibitem{rees2018comparative}
Rees, T. and Scott, J.,
  \enquote{\href{https://onlinelibrary.wiley.com/doi/full/10.1002/nla.2103}{A
  comparative study of null-space factorizations for sparse symmetric saddle
  point systems},} {\em Numerical Linear Algebra with Applications\/}, Vol.~25,
  No.~1, 2018, pp.~e2103.

\bibitem{grepl2007efficient}
Grepl, M.~A., Maday, Y., Nguyen, N.~C., and Patera, A.~T.,
  \enquote{\href{http://journals.cambridge.org/action/displayAbstract?fromPage=online&aid=8194844&fileId=S0764583X07000313}{Efficient
  reduced-basis treatment of nonaffine and nonlinear partial differential
  equations},} {\em ESAIM: Mathematical Modelling and Numerical Analysis\/},
  Vol.~41, No.~03, 2007, pp.~575--605.

\bibitem{eftang2013port}
Eftang, J.~L. and Patera, A.~T.,
  \enquote{\href{https://doi.org/10.1002/nme.4543}{Port reduction in
  parametrized component static condensation: approximation and a posteriori
  error estimation},} {\em International Journal for Numerical Methods in
  Engineering\/}, Vol.~96, No.~5, 2013, pp.~269--302.

\bibitem{cfdblogvienna}
Masoudi, S. and Romano, F., \enquote{CFD Vienna blog,}
  \url{http://cfdblogvienna.blogspot.co.at/p/computational-fluid-dynamics.html},
  Nov. 2015.

\bibitem{amsallem2015design}
Amsallem, D., Zahr, M., Choi, Y., and Farhat, C.,
  \enquote{\href{https://link.springer.com/article/10.1007/s00158-014-1183-y}{Design
  optimization using hyper-reduced-order models},} {\em Structural and
  Multidisciplinary Optimization\/}, Vol.~51, No.~4, 2015, pp.~919--940.

\bibitem{choi2019accelerating}
Choi, Y., Oxberry, G., White, D., and Kirchdoerfer, T.,
  \enquote{\href{https://arxiv.org/abs/1909.11320}{Accelerating design
  optimization using reduced order models},} {\em arXiv preprint
  arXiv:1909.11320\/}, 2019.

\bibitem{choi2020gradient}
Choi, Y., Boncoraglio, G., Anderson, S., Amsallem, D., and Farhat, C.,
  \enquote{\href{https://www.sciencedirect.com/science/article/pii/S0021999120305611}{Gradient-based
  constrained optimization using a database of linear reduced-order models},}
  {\em Journal of Computational Physics\/}, 2020, pp.~109787.

\bibitem{white2020dual}
White, D.~A., Choi, Y., and Kudo, J.,
  \enquote{\href{https://link.springer.com/article/10.1007/s00158-019-02393-6}{A
  dual mesh method with adaptivity for stress-constrained topology
  optimization},} {\em Structural and Multidisciplinary Optimization\/},
  Vol.~61, No.~2, 2020, pp.~749--762.

\bibitem{lee2020model}
Lee, K. and Carlberg, K.~T.,
  \enquote{\href{https://www.sciencedirect.com/science/article/pii/S0021999119306783}{Model
  reduction of dynamical systems on nonlinear manifolds using deep
  convolutional autoencoders},} {\em Journal of Computational Physics\/},
  Vol.~404, 2020, pp.~108973.

\bibitem{kim2020fast}
Kim, Y., Choi, Y., Widemann, D., and Zohdi, T.,
  \enquote{\href{https://arxiv.org/abs/2009.11990}{A fast and accurate
  physics-informed neural network reduced order model with shallow masked
  autoencoder},} {\em arXiv preprint arXiv:2009.11990\/}, 2020.

\bibitem{hoang2015efficient}
Hoang, K.~C., Kerfriden, P., Khoo, B., and Bordas, S. P.~A.,
  \enquote{\href{https://onlinelibrary.wiley.com/doi/full/10.1002/num.21932}{An
  efficient goal-oriented sampling strategy using reduced basis method for
  parametrized elastodynamic problems},} {\em Numerical Methods for Partial
  Differential Equations\/}, Vol.~31, No.~2, 2015, pp.~575--608.

\bibitem{hoang2016hp}
Hoang, K., Fu, Y., and Song, J.,
  \enquote{\href{https://www.sciencedirect.com/science/article/pii/S0045782515003229}{An
  hp-proper orthogonal decomposition--moving least squares approach for
  molecular dynamics simulation},} {\em Computer Methods in Applied Mechanics
  and Engineering\/}, Vol.~298, 2016, pp.~548--575.

\bibitem{hoang2018fast}
Hoang, K.~C., Kim, T.-Y., and Song, J.-H.,
  \enquote{\href{https://www.sciencedirect.com/science/article/pii/S0168874X17304870}{Fast
  and accurate two-field reduced basis approximation for parametrized
  thermoelasticity problems},} {\em Finite Elements in Analysis and Design\/},
  Vol.~141, 2018, pp.~96--118.

\bibitem{DigraphConnectingCondition}
GeeksforGeeks, \enquote{Check if a directed graph is connected or not,}
  \url{https://www.geeksforgeeks.org/check-if-a-directed-graph-is-connected-or-not/},
  Dec. 2018.

\end{thebibliography}
\bibliographystyle{aiaa}
\end{document}